\numberwithin{equation}{section}
\title[Backward uniqueness and polynomial stability]{Optimal backward uniqueness and polynomial stability of second order equations with unbounded damping}
\author{Perry Kleinhenz and Ruoyu P. T. Wang}
\date{}
\newcommand{\Dc}{\mathcal{D}}
\newcommand{\M}{M}
\newcommand{\ra}{\rightarrow}
\newcommand{\<}{\left\langle}
\renewcommand{\>}{\right\rangle}
\newcommand{\e}{\epsilon}
\newcommand{\nm}[1]{\| #1 \|} %edited
\renewcommand{\supp}{\text{supp }}
\newcommand{\Ac}{\mathcal{A}}
\newcommand{\Lc}{\mathcal{L}}
\newcommand{\Hc}{\mathcal{H}}
\newcommand{\abs}[1]{\left|#1\right|}
\newcommand{\cim}{\operatorname{Im}}
\newcommand{\cre}{\operatorname{Re}}
\newcommand{\id}{\operatorname{Id}}
\newcommand{\Acd}{\dot{\Ac}}
\newcommand{\Hcd}{\dot{\Hc}}
\newcommand{\Dcd}{\dot{\Dc}}
\renewcommand{\ker}{\operatorname{Ker}}
\newcommand{\fpow}{\alpha}
\newcommand{\spec}{\operatorname{Spec}}
\newcommand{\bigo}{\mathcal{O}}
\begin{document}

\begin{abstract}
For general second order evolution equations, we prove an optimal condition on the degree of unboundedness of the damping, that rules out finite-time extinction. We show that control estimates give energy decay rates that explicitly depend on the degree of unboundedness, and establish a dilation method to turn existing control estimates for one propagator into those for another in the functional calculus. As corollaries, we prove Schrödinger observability gives decay for unbounded damping, weak monotonicity in damping, and quantitative unique continuation and optimal propagation for fractional Laplacians. As applications, we establish a variety of novel and explicit energy decay results to systems with unbounded damping, including singular damping, linearised gravity water waves and Euler--Bernoulli plates.
\end{abstract}

\maketitle

\section{Introduction}
\subsection{Introduction}
Let $H=H_0$ be a Hilbert space and $P:H_1\rightarrow H$ be a nonnegative self-adjoint operator with compact resolvent, defined on $H_1$, a dense subspace of $H$. This implies $P$ is Fredholm and $\ker P$ is finite dimensional. The operator $P$ admits a spectral resolution
\begin{equation}
P u=\int_0^\infty\rho^2\ dE_\rho(u),
\end{equation}
where $E_\rho$ is a projection-valued measure on $H$ and $\supp E_\rho\subset [0, \infty)$. For $s\in\mathbb{R}$, define the scaling operators and the interpolation spaces via
\begin{equation}\label{Lambdadef}
\Lambda^{s}u=\int_0^\infty (1+\rho^2)^{s}\ dE_\rho(u), \ H_s=\Lambda^{-s}(H_0).
\end{equation}
Those operators $\Lambda^{-s}: H\rightarrow H_s$ are bounded from above and below, and they commute with $P$. For $s>0$, $H_{-s}$ is isomorphic to $\mathcal{L}(H_s, H)$, the dual space of $H_s$ with respect to $H$. 

Let the observation space $Y$ be a Hilbert space. We will consider damping of the form $Q^*Q$, where the control operator $Q^*\in \mathcal{L}(Y, H_{-\gamma})$ and the observation operator $Q\in \mathcal{L}(H_{\gamma}, Y)$ for $\gamma\in[0,\frac{1}{2}]$. Note that $Q^*Q$ is not necessarily a bounded operator on $H$. We consider an abstract damped second-order evolution equation:
\begin{equation}
(\partial_t^2+P+Q^*Q\partial_t)u=0.
\end{equation}
It can be written as a first-order evolution system:
\begin{equation}
\partial_t\begin{pmatrix}
u\\\partial_t u
\end{pmatrix}=\mathcal{A}\begin{pmatrix}
u\\\partial_t u
\end{pmatrix}, \ \mathcal{A}=\begin{pmatrix}
0 & 1\\
-P & -Q^*Q
\end{pmatrix}. 
\end{equation}
Here $\Ac$ generates a strongly continuous semigroup $e^{t\Ac}$ on $\mathcal{H}=H_{1/2}\times H$. We want to understand the stability of $e^{t\Ac}$ on the energy space $\Hcd=\Hc/\ker\Ac$, that is, to estimate $(\mathcal{A}+i\lambda)^{-1}$ on $\Hcd$ uniformly in large $\abs{\lambda}$ for $\lambda\in\mathbb{R}$. The stability estimate will give us energy decay results $e^{t\Ac}$ from $\mathcal{D}\rightarrow \Hcd$: see further in \S \ref{s3-3}. 

\subsection{Main results}
Our results are trifold. The first result is on the optimal backward uniqueness of the damped wave semigroup $e^{t\Ac}$. The authors proved in \cite{kw22} that when $Q\in \mathcal{L}(H_{\gamma}, Y)$ for $\gamma<\frac{1}{4}$, $e^{t\Ac}$ is backward unique. Here we make an improvement to show the backward uniqueness still holds for the limit case $\gamma=\frac{1}{4}$. 
\begin{theorem}[Optimal backward uniqueness]\label{thmbackward}
Let $Q\in \mathcal{L}(H_\frac{1}{4}, Y)$. Then $e^{t\Ac}$ is backward unique, that is, if $e^{t\Ac}u=0$ for some $t>0$, then $u=0$. 
\end{theorem}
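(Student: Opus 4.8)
The plan is to prove backward uniqueness by a logarithmic convexity (Agmon--Nirenberg type) estimate for the energy along the flow, with the endpoint exponent forcing a delicate, exactly-tight control of the coupling between the damping and the elliptic operator. First some harmless reductions. Since $\mathcal{A}$ has compact resolvent, $\ker\mathcal{A}$ and $\ker\mathcal{A}^*$ are finite dimensional of equal dimension, and a direct computation gives $\ker\mathcal{A}=\ker\mathcal{A}^*=\ker P\times\{0\}$; because $e^{t\mathcal{A}}$ fixes $\ker\mathcal{A}$ and leaves $(\ker\mathcal{A})^\perp$ invariant, the hypothesis $e^{t_0\mathcal{A}}u=0$ already forces $u\in(\ker\mathcal{A})^\perp$, i.e.\ we may work on the energy space $\dot{\mathcal{H}}$, on which $P$ is strictly positive. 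Next, for fixed $\lambda$ in the resolvent set and $N$ large the bounded operator $R_\lambda:=\lambda^N(\lambda-\mathcal{A})^{-N}$ commutes with $e^{t\mathcal{A}}$, maps into $D(\mathcal{A}^N)$, and $R_\lambda u\to u$ as $\lambda\to\infty$; if $e^{t_0\mathcal{A}}u=0$ then $e^{t_0\mathcal{A}}(R_\lambda u)=0$ too, so it suffices to prove the theorem for data $u\in D(\mathcal{A}^N)$ and then let $\lambda\to\infty$. We may thus assume $U(t)=e^{t\mathcal{A}}u=(u(t),\partial_t u(t))$ and its time derivatives up to order $N$ are finite-energy solutions lying in the domains used below. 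By the semigroup property $U\equiv 0$ on $[t_0,\infty)$; shrinking $t_0$ to the first vanishing time of $E(t):=\|U(t)\|_{\mathcal{H}}^2$, we may assume $E>0$ on $[0,t_0)$ and $E(t_0)=0$, and the goal is to show $E(0)=0$.

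The energy identity reads $E'=2\operatorname{Re}\langle\mathcal{A}U,U\rangle_{\mathcal{H}}=-2\|Q\partial_t u\|_Y^2=:-2g$, so with the frequency function $\varphi:=g/E\ge 0$ we have $(\log E)'=-2\varphi$. Differentiating $g$ and using $\partial_t^2 u=-(Pu+Q^*Q\partial_t u)$ gives $g'=2\operatorname{Re}\langle Q\partial_t u,Q\partial_t^2 u\rangle_Y$; combined with a Cauchy--Schwarz inequality absorbing the nonnegative square term into $2\varphi^2$, the estimate for $\varphi'$ is reduced to a bound on the antisymmetric coupling between the damping and $P$, which after integrating by parts against $Pu$ amounts to controlling the cross term
\[
\mathcal{I}(t):=\operatorname{Re}\langle Q\partial_t u(t),\,QPu(t)\rangle_Y .
\]
Precisely, if $|\mathcal{I}(t)|\le C\|Q\partial_t u(t)\|_Y^2+C E(t)$ on $[0,t_0)$, then $\varphi'\le C\varphi+C$ there.

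The estimate on $\mathcal{I}$ is the main obstacle, and is exactly where $\gamma=\tfrac14$ is sharp: then $Q^*Q$ has precisely half the order of $P$, so $\mathcal{I}$ is a borderline quantity and the crude bound $|\mathcal{I}|\lesssim\|Q\partial_t u\|_Y\|Pu\|_{H_{1/4}}$ is useless, since $\|Pu\|_{H_{1/4}}$ is not controlled by the energy. For $\gamma<\tfrac14$ the term $\mathcal{I}$ is of strictly lower order than $E$ and the bound closes by interpolation and a compactness argument, which is the content of \cite{kw22}. At the endpoint I would not bound $\mathcal{I}$ pointwise but integrate in time: writing $\tfrac{d}{dt}\operatorname{Re}\langle Qu,QPu\rangle_Y=2\mathcal{I}+\mathcal{R}$, where $\mathcal{R}$ gathers the lower-order remainder produced by the commutator $[P,Q^*Q]$ acting on $u$ and $\partial_t u$, one gets
\[
\int_0^{t_0}\mathcal{I}\,dt=-\operatorname{Re}\langle Qu(0),QPu(0)\rangle_Y-\tfrac12\int_0^{t_0}\mathcal{R}\,dt .
\]
The boundary term at $t_0$ vanishes since $u(t_0)=0$, the one at $0$ is a fixed finite constant, and the remainder is controlled by the total dissipation $\int_0^{t_0}\|Q\partial_t u\|_Y^2\,dt=E(0)-E(t_0)<\infty$ together with an interpolation inequality between $\mathcal{H}$ and $D(\mathcal{A})$; here one uses crucially that at $\gamma=\tfrac14$ the product $Q^*Q$ factors through $H$ at the $\tfrac14$-level, i.e.\ $Q\in\mathcal{L}(H_{1/4},Y)$ and $Q^*\in\mathcal{L}(Y,H_{-1/4})$, so that distributing a factor $P^{1/4}$ onto each of $Q$ and $Q^*$ places them in $\mathcal{L}(H,Y)$ and $\mathcal{L}(Y,H)$. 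The bookkeeping is exactly saturated, with no room to spare, which is precisely why this endpoint was left open in \cite{kw22}. (Alternatively one may run the frequency-function machinery simultaneously at all levels $\|\mathcal{A}^k U\|_{\mathcal{H}}^2$, obtaining a coupled system of differential inequalities for the associated frequencies that, at $\gamma=\tfrac14$, closes without loss.)

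Granting $\varphi'\le C\varphi+C$ on $[0,t_0)$, Gr\"onwall gives $\varphi(t)\le(\varphi(0)+1)e^{Ct_0}=:M$ there, hence $(\log E)'=-2\varphi\ge-2M$, so $\log E(t)\ge\log E(0)-2Mt_0$ on $[0,t_0)$, and by continuity $E(t_0)\ge e^{\log E(0)-2Mt_0}$. Since $E(t_0)=0$ this forces $E(0)=0$, i.e.\ $u=0$, completing the proof.
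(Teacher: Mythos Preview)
Your approach via log-convexity is genuinely different from the paper's, which proceeds entirely in the frequency domain: the paper proves a resolvent estimate $\|P_\lambda^{-1}\|_{\mathcal{L}(H)}\le C|\lambda|^{-1}$ along rays $\lambda=|\lambda|e^{i\theta}$ with $|\theta-\tfrac{3}{2}\pi|$ small, using the interpolation $\|Qu\|\le C\|u\|_{1/4}\le \epsilon|\delta|^{-1/2}h^{-1/2}\|u\|+C_\epsilon|\delta|^{1/2}h^{1/2}\|u\|_{1/2}$ (exactly saturated at $\gamma=\tfrac14$), and then invokes the abstract backward-uniqueness criterion of \cite{lrt01} via \cite[Prop.~2.16]{kw22}. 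No time-domain differential inequalities appear.

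Your argument, however, has a genuine gap at the decisive step. The differential inequality $\varphi'\le C\varphi+C$ hinges on the pointwise bound $|\mathcal{I}(t)|\le Cg(t)+CE(t)$, which you correctly identify as unprovable by crude estimates. Your proposed fix --- integrate in time and write $\tfrac{d}{dt}\operatorname{Re}\langle Qu,QPu\rangle_Y=2\mathcal{I}+\mathcal{R}$ --- produces a remainder $\mathcal{R}=\operatorname{Re}\langle [P,Q^*Q]u,\partial_t u\rangle$ that you assert is ``lower-order''. But in this abstract setting there is no pseudodifferential calculus: $Q^*Q\in\mathcal{L}(H_{1/4},H_{-1/4})$ is an arbitrary bounded operator, and the commutator $[P,Q^*Q]$ has no reason to gain any regularity. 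In particular $\mathcal{R}$ is a priori no better than $\mathcal{I}$ itself, so the integration-by-parts buys nothing. Even if one granted an integral bound $\int_0^{t_0}|\mathcal{I}|\,dt<\infty$, this does not yield the \emph{pointwise} inequality $\varphi'\le C\varphi+C$ needed for Gr\"onwall; you would need $\varphi$ bounded, not $\int\varphi$ bounded, to get the lower bound $E(t_0)\ge E(0)e^{-2Mt_0}>0$.

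There are also smaller inaccuracies in the reductions: $\ker\mathcal{A}^*$ is not $\ker P\times\{0\}$ (a direct computation gives $\ker\mathcal{A}^*=\{(\Lambda^{-1}Q^*Qv,v):v\in\ker P\}$), and $e^{t\mathcal{A}}$ does not in general leave $(\ker\mathcal{A})^\perp$ invariant --- the invariant complement is the range of the Riesz projector $\Pi_\bullet$, which is non-orthogonal. These are fixable, but the commutator issue is not: the endpoint $\gamma=\tfrac14$ in the paper is handled by choosing rays steep enough that the interpolation closes, a mechanism with no analogue in your time-domain scheme.
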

The limit case $\gamma=\frac{1}{4}$ is optimal: in \cite{cc01}, an example of $Q$ such that $Q\in \mathcal{L}(H_\gamma, Y)$ for all $\gamma>\frac{1}{4}$ was constructed. For such an example, $e^{t\Ac}\equiv 0$ for any $t\ge 2$: finite time extinction happens and the backward uniqueness fails. 

Our second result is to connect control estimates to resolvent estimates that imply decay results, even when $Q^*Q$ fails to be bounded on $H$. Such connection allows us to separate the analysis of damped resolvent into that of the propagator $P$, and that of the control operator $Q$. 
\begin{definition}[Control estimates]
Fix $\gamma\in [0,\frac{1}{2}]$ and $\mu\ge 0$. Let $Q\in \mathcal{L}(H_{\gamma}, Y)$. We say the control estimate holds for $P$ and $Q$, if there are $\lambda_0\ge 0$, $N>0$ such that
\begin{equation}\label{1l1}
\|u\|_{H}\le M(\lambda)\langle \lambda\rangle^{-1}\|(P-\lambda^2)\Lambda^{\mu-\gamma} u\|_{H}+m(\lambda)\|Q \Lambda^{\mu} u\|_{Y}+C\|\Lambda^{-N} u\|_{H}
\end{equation}
uniformly for any $u\in H_{1+\mu-\gamma}$ and all $\lambda\ge \lambda_0$. Here $M(\lambda), m(\lambda)$ are positive continuous functions on $[0, \infty)$, where there is $C>0$ such that $M(\lambda)\ge C$, $m(\lambda)\ge C$ for all $\lambda\ge 0$. 
\end{definition}
We have a flexible parameter $\mu$. We mostly choose $\mu=\gamma$, or sometimes $\mu=0$. 

\begin{theorem}[Control and stability]\label{thmcontrol}The following are true:
\begin{enumerate}[wide]
\item Let the control estimate \eqref{1l1} hold for $P$ and $Q$. Then there exists $\lambda_0'> 0,$ such that
\begin{equation}
\|(\mathcal{A}+i\lambda)^{-1}\|_{\mathcal{L}(\mathcal{H})}\le C\abs{\lambda}^{4\mu}(M(\abs{\lambda})^2+m(\abs{\lambda})^2),
\end{equation}
for all $\lambda\in \mathbb{R}$ with $\abs{\lambda}\ge\lambda_0'$.

\item Suppose there is $\lambda_0\ge0$ such that $\|(\mathcal{A}+i\lambda)^{-1}\|\le K(\abs{\lambda})$ for all $\lambda\in\mathbb{R}$ with $\abs{\lambda}\ge \lambda_0$, where $K(\abs{\lambda})\ge C'>0$. Then there exists $C>0$ such that the control estimate holds for $P$ and $Q$, that is for any $\mu \in [0, \frac{1}{2}+\gamma]$, there exists $C>0,$ such that
\begin{equation}
\|u\|_{H}\le C\langle \lambda\rangle^{2(\gamma-\mu)}K(\abs{\lambda}) \left(\langle \lambda\rangle^{-1}\|(P-\lambda^2)\Lambda^{\mu-\gamma}u\|_{H}+ \|Q\Lambda^{\mu}u\|_Y\right),
\end{equation}
for all $\lambda\ge \lambda_0$ and $u\in H_{\frac{1}{2}+\mu}$.
\end{enumerate}
\end{theorem}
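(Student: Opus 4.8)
Here is the strategy I would follow.

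\medskip

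\noindent\textbf{Part (1): from the control estimate to the resolvent bound.} By Hille--Yosida it suffices to establish the a priori bound $\nor{(u,v)}_{\Hc}\le C\abs\lambda^{4\mu}\brac{M(\abs\lambda)^2+m(\abs\lambda)^2}\nor{(\Ac+i\lambda)(u,v)}_{\Hc}$ for $(u,v)\in\Dc(\Ac)$ and $\abs\lambda$ large, together with injectivity of $\Ac+i\lambda$ on the same range; since $\Ac$ has compact resolvent, $\Ac+i\lambda$ is then Fredholm of index zero, hence boundedly invertible with the claimed bound. (Injectivity: a kernel element has $v=-i\lambda u$ and $(P-\lambda^2)u=-i\lambda Q^*Qu$, so pairing with $u$ forces $Qu=0$ and $Pu=\lambda^2u$; then \eqref{1l1} reads $\nor u_H\le C(1+\lambda^2)^{-N}\nor u_H$, impossible for large $\abs\lambda$ unless $u=0$.) Writing $(f,g)=(\Ac+i\lambda)(u,v)$ and eliminating $v=f-i\lambda u$ reduces matters to the stationary equation $(P-\lambda^2)u=i\lambda f-g-Q^*Qv=:\Psi\in H_{-\gamma}$, while pairing $(\Ac+i\lambda)(u,v)$ with $(u,v)$ in the energy space and taking real parts gives the dissipation bound $\nor{Qv}_Y^2\lesssim\nor{(f,g)}_{\Hc}\nor{(u,v)}_{\Hc}$.

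The core step is a spectral decomposition $u=\mathbbm 1_{\{P<\lambda^2/2\}}u+\mathbbm 1_{\{\lambda^2/2\le P<2\lambda^2\}}u+\mathbbm 1_{\{P\ge2\lambda^2\}}u$. On the two outer blocks $P-\lambda^2$ is elliptic ($\abs{P-\lambda^2}\gtrsim\ang\lambda^2$, resp.\ $\gtrsim\Lambda$), so those blocks and their contributions to $v$ are controlled directly by $\nor\Psi_{H_{-\gamma}}$; the key observation here is that for $f\in H_{1/2}$ one has $\nor{\mathbbm 1_{\{P\ge\lambda^2/2\}}\,i\lambda f}_{H_{-\gamma}}\lesssim\nor f_{H_{1/2}}$, so the only part of $\Psi$ of full size $\sim\ang\lambda\nor{(f,g)}_{\Hc}$ sits at low frequency, where it is already absorbed by ellipticity. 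On the middle block $\Lambda^{\pm s}\sim\ang\lambda^{\pm2s}$ and everything lies in $H_\infty$, so one may feed the $\Lambda^{-\mu}$-rescaling of the middle (and high) block to the control estimate \eqref{1l1}: its left side is $\ang\lambda^{-2\mu}$ times the quantity sought, its first right-hand term is $\lesssim M(\lambda)\brac{\nor{(f,g)}_{\Hc}+\ang\lambda^{-1}\nor{Qv}_Y}$, and its second term is $m(\lambda)\nor{Qu}_Y\lesssim m(\lambda)\ang\lambda^{-1}\brac{\nor{(f,g)}_{\Hc}+\nor{Qv}_Y}$ because $Qu=(i\lambda)^{-1}(Qf-Qv)$ with $Qv$ equal to the dissipation, while the lower-order term is absorbed for $\abs\lambda$ large. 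Reassembling the blocks, inserting the dissipation bound for $\nor{Qv}_Y$, and running a Young's-inequality absorption against $\nor{(u,v)}_{\Hc}$ yields $\nor{(u,v)}_{\Hc}\lesssim\ang\lambda^{4\mu}(M^2+m^2)\nor{(f,g)}_{\Hc}$: one factor $\ang\lambda^{2\mu}$ enters from each passage between $\nor\cdot_{H_{-\mu}}$ and the full energy norm, and the squares of $M,m$ from the Young step. The main obstacle is exactly this bookkeeping: since $Q^*Q$ is unbounded, $(P-\lambda^2)u$ only lies in $H_{-\gamma}$, forcing the control estimate to be applied to a rescaled, frequency-truncated function, and one must verify that every absorption closes even when $M(\lambda),m(\lambda)$ grow.

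\medskip

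\noindent\textbf{Part (2): from the resolvent bound to the control estimate.} Assume $\nor{(\Ac+i\lambda)^{-1}}\le K(\abs\lambda)$ for $\abs\lambda\ge\lambda_0$, fix $\mu\in[0,\tfrac12+\gamma]$, and take $u\in H_{1/2+\mu}$ with the right side of the asserted estimate finite (so $(P-\lambda^2)\Lambda^{\mu-\gamma}u\in H$). Split $u$ at $\{P=2\lambda^2\}$; on $\{P\ge2\lambda^2\}$, ellipticity gives $\nor{\mathbbm 1_{\{P\ge2\lambda^2\}}u}_H\lesssim\ang\lambda^{-1}\nor{(P-\lambda^2)\Lambda^{\mu-\gamma}u}_H$ and controls the high-frequency part of $\nor{Q\Lambda^\mu u}_Y$, so (using $K\ge C>0$) this block is dominated by the right side with room to spare, and it suffices to treat $u$ spectrally localized to $\{P<2\lambda^2\}$, where $\Lambda^\mu\sim\ang\lambda^{2\mu}$ and all objects below lie in $H_\infty$. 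For such $u$, the control estimate is — via the substitution $v=f-i\lambda u$ of Part (1) — a rescaled resolvent estimate for $(P-\lambda^2-i\lambda Q^*Q)$: applying $(\Ac+i\lambda)^{-1}$ to $(0,-(P-\lambda^2-i\lambda Q^*Q)u)$ returns $(u,-i\lambda u)$, so $\ang\lambda\nor u_H\le\nor{(u,-i\lambda u)}_{\Hc}\le K$ times the $H$-norm of the (suitably interpreted) datum, which after unwinding the $\Lambda^\mu$-localization is $\lesssim\ang\lambda^{2\gamma}\nor{(P-\lambda^2)\Lambda^{\mu-\gamma}u}_H+\ang\lambda^{1+2\mu}\nor{Q\Lambda^\mu u}_Y$; dividing by $\ang\lambda^{1+2\mu}$ gives the claimed bound with factor $\ang\lambda^{2(\gamma-\mu)}K(\abs\lambda)$. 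The point needing care, and the main obstacle, is that for $\gamma>0$ the datum $(0,-(P-\lambda^2-i\lambda Q^*Q)u)$ need not lie in $\Hc$ (as $Q^*Qu\in H_{-\gamma}$), so the resolvent bound applies to it only after either correcting the test pair inside $\Dc(\Ac)$ via a $Q^*Q$-compatibility solve, or passing through a density argument with truncated damping; in both cases the extra error feeds back only into $\nor{Q\Lambda^\mu u}_Y$-type terms.
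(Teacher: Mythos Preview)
Your approach is workable but genuinely different from the paper's. The paper never works at the system level; it routes both directions through the scalar pencil $P_\lambda=P-i\lambda Q^*Q-\lambda^2$ via the resolvent equivalence Lemma~\ref{3t5}, and then proves Propositions~\ref{3t1} and~\ref{3t9} about $P_\lambda^{-1}$.

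For Part~(1) (Proposition~\ref{3t1}), the paper does \emph{not} use a three-block elliptic/characteristic/elliptic decomposition. Instead it applies \eqref{1l1} to $\Lambda^{-\mu}u$, kills the compact error $\|\Lambda^{-N}u\|$ by an iterative pairing estimate (the identity $\|u\|_s\le C\|(h^2P-1)u\|_s+Ch\|u\|_{s+1/2}$ repeated $\lceil 2\delta\rceil$ times and interpolated), then restores the $H$-norm on the left by another iteration/interpolation of the same kind, and finally closes with the dissipation identity $\|Qu\|_Y^2=h^{-1}\cim\langle f,u\rangle$ plus Young. Your block decomposition buys a more geometric picture but creates a commutator issue you do not mention: applying $Q$ to the middle block gives $\|Q\Pi_{\sim}u\|_Y$, not $\|Qu\|_Y$, and $Q$ does not commute with $\Pi_\sim$. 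This is fixable (bound $\|Q(1-\Pi_\sim)u\|_Y\le C\|(1-\Pi_\sim)u\|_{H_\gamma}$ and use ellipticity on the outer blocks), but it is precisely the sort of step the paper's interpolation argument avoids. Also, your opening sentence asserts ``$\Ac$ has compact resolvent''; this is not established and may fail when $\gamma=\tfrac12$ (see Remark~\ref{3t10}); the paper instead proves Fredholmness of $P_\lambda$ directly in Lemma~\ref{lemmainvertibility}.

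For Part~(2) (Proposition~\ref{3t9}), the paper avoids the technical obstacle you flag by never needing the datum $(0,-(P_\lambda)u)$ to lie in $\Hc$. It first extracts $\|P_\lambda^{-1}\|_{\mathcal{L}(H)}\le K(|\lambda|)\langle\lambda\rangle^{-1}$ from the system bound via \eqref{3l2}, then upgrades this to $\|P_\lambda^{-1}\|_{H_{-\gamma}\to H}\le C\langle\lambda\rangle^{2\gamma-1}K$ using the functional-calculus inequality \eqref{3l1}; from that, \eqref{3l23} is immediate. The $\mu$-dependence then comes from iterating $\|v\|_s\le C\langle\lambda\rangle^{-1}(\langle\lambda\rangle^{-1}\|(P-\lambda^2)v\|_s+\|v\|_{s+1/2})$ for $s\le -\tfrac12$ and interpolating, rather than from a spectral cutoff. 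Your proposed fixes (``$Q^*Q$-compatibility solve'' or truncated damping) would work, but the paper's route through $P_\lambda$ is cleaner and makes the regularity bookkeeping trivial.
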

Note that estimates \eqref{1l1} are not the non-uniform Hautus test studied in \cite{mil12,cpsst19}. Indeed, Theorem \ref{thmcontrol} outperforms results based on the non-uniform Hautus test in many cases where $Q^*Q$ is unbounded on $H$: see \S \ref{s2-1}. Moreover, it implies that we can sometimes choose a small $\mu$ to improve the decay rates. Theorem \ref{thmcontrol} also accommodates compact error terms $\|\Lambda^{-N}u\|_H$ in \eqref{1l1}: this allows us to use microlocal analysis, as in \S \ref{s2}.  

Our third result turns control estimates for $P$ into control estimates for $P^\alpha$. Here we define
\begin{equation}
P^\alpha=\int_0^\infty \rho^{2\alpha}\ dE_\rho, \ \text{and}\ \Lambda_{(\alpha)}^s=\int_0^\infty (1+\rho^{2\alpha})^{s}\ dE_\rho
\end{equation}
are scaling operators for $H_s^{(\alpha)}=\Lambda^{-s}_{(\alpha)}(H)$ with respect to $P^\alpha$. Let $e(\lambda)$ be a nonnegative function. 
\begin{theorem}[Dilation and contraction]\label{thmdilate}
Let $\gamma\in[0, \frac{1}{2}]$, $\mu\ge 0$. Assume the control estimate \eqref{1l1} holds for $P$ and $Q$, that is there exists $N>0$ and for $\lambda>\lambda_0$, 
\begin{equation}
\|u\|_{H}\le M(\lambda)\lambda^{-1}\|(P-\lambda^2)\Lambda^{\mu-\gamma} u\|_{H}+m(\lambda)\|Q\Lambda^{\mu} u\|_{Y}+e(\lambda)\|\Lambda^{-N} u\|_{H},
\end{equation}
uniformly for all $u\in H_{1+\mu-\gamma}$. Fix positive $\fpow\ge 2\gamma$. Then there exists $C, \lambda_0'$ such that the control estimate holds for $P^\alpha$ and $Q\in \mathcal{L}(H^{(\alpha)}_{\gamma/\alpha}, Y)$. That is for $\lambda > \lambda_0'$, 
\begin{multline}\label{1l2}
\|u\|\le C\left(M(\lambda^{\frac{1}{\fpow}})\lambda^{\frac{1}{\alpha}-1}+m(\lambda^{\frac{1}{\fpow}})\lambda^{1+(-2+2(2\gamma+1-\fpow)_+)/\alpha}\right)\langle \lambda\rangle^{-1}\|(P^{\fpow}-\lambda^2)\Lambda_{\alpha}^{(\mu-\gamma)/\fpow} u\|_H\\
+m(\lambda^{\frac{1}{\fpow}})\|Q\Lambda_\fpow^{\mu/\fpow}u\|_Y+Ce(\lambda^{\frac{1}{\alpha}})\|\Lambda_\alpha^{-N/\fpow}u\|_H,
\end{multline}
uniformly for all $u\in H_{1+(\mu-\gamma)/\alpha}^{(\alpha)}$, where $(2\gamma+1-\fpow)_+=\max\{2\gamma+1-\fpow, 0\}$.
\end{theorem}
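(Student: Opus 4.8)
The plan is to transport the hypothesis for $P$ at the dilated frequency $\nu:=\lambda^{1/\alpha}$ (so that $\lambda^2=\nu^{2\alpha}$ and $P^\alpha-\lambda^2=P^\alpha-\nu^{2\alpha}$) to a control estimate for $P^\alpha$ at frequency $\lambda$, using two elementary functional-calculus facts. First, $\rho^{2\alpha}-\nu^{2\alpha}=(\rho^2-\nu^2)g_\nu(\rho)$ with $g_\nu(\rho):=(\rho^{2\alpha}-\nu^{2\alpha})/(\rho^2-\nu^2)$ strictly positive and continuous, and $g_\nu(\rho)\ge c_\alpha(\nu^2+\rho^2)^{\alpha-1}$; hence for any fixed $R$ the operator $g_\nu(P)^{-1}$ restricted to frequencies $\rho<R\nu$ has norm $\le C_R\nu^{2-2\alpha}$. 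Second, for each fixed $s$ the weights $(1+\rho^2)^s$ and $(1+\rho^{2\alpha})^{s/\alpha}$ are two-sidedly comparable, so $\Lambda^s\asymp\Lambda_{(\alpha)}^{s/\alpha}$ as operators, $H_s=H_{s/\alpha}^{(\alpha)}$ with equivalent norms, and $Q\in\Lc(H_\gamma,Y)=\Lc(H_{\gamma/\alpha}^{(\alpha)},Y)$. With $R>1$ fixed I would split $u=u_\flat+u_\sharp$ by the spectral projections $\Pi_\flat,\Pi_\sharp$ of $P$ onto $\{\rho<R\nu\}$ and $\{\rho\ge R\nu\}$.

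On the high-frequency piece, $P^\alpha-\lambda^2$ is elliptic: for $\rho\ge R\nu$ one has $\rho^{2\alpha}-\nu^{2\alpha}\ge(1-R^{-2\alpha})\rho^{2\alpha}\ge C_R\lambda^2$, so it is boundedly invertible on $\operatorname{ran}\Pi_\sharp$. Writing $u_\sharp=\Lambda_{(\alpha)}^{-(\mu-\gamma)/\alpha}(P^\alpha-\lambda^2)^{-1}\Pi_\sharp[\Lambda_{(\alpha)}^{(\mu-\gamma)/\alpha}(P^\alpha-\lambda^2)u]$ and using $\alpha\ge2\gamma$ (so the symbol $(1+\rho^{2\alpha})^{-(\mu-\gamma)/\alpha}|\rho^{2\alpha}-\nu^{2\alpha}|^{-1}$ does not grow on $\rho\ge R\nu$), I get $\|u_\sharp\|\le C\nu^{2(\gamma-\mu)_+-2\alpha}\|(P^\alpha-\lambda^2)\Lambda_{(\alpha)}^{(\mu-\gamma)/\alpha}u\|_H\le C\lambda^{1/\alpha-2}\|(P^\alpha-\lambda^2)\Lambda_{(\alpha)}^{(\mu-\gamma)/\alpha}u\|_H$, since $2(\gamma-\mu)_+\le2\gamma\le1$. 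The same maneuver on the observation tail, using $\|Qv\|_Y\lesssim\|\Lambda^\gamma v\|_H$, bounds $\|Q\Lambda_{(\alpha)}^{\mu/\alpha}u_\sharp\|_Y$ by $C\nu^{4\gamma-2\alpha}\|(P^\alpha-\lambda^2)\Lambda_{(\alpha)}^{(\mu-\gamma)/\alpha}u\|_H$; here $\alpha\ge2\gamma$ is exactly what makes $\rho^{4\gamma}|\rho^{2\alpha}-\nu^{2\alpha}|^{-1}$ bounded on $\rho\ge R\nu$, and $\nu^{4\gamma-2\alpha}=\lambda^{4\gamma/\alpha-2}\le\lambda^{(-2+2(2\gamma+1-\alpha)_+)/\alpha}$, which is what produces the second coefficient appearing in \eqref{1l2}.

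For the low-frequency piece I would set $w:=\Lambda^{-\mu}\Lambda_{(\alpha)}^{\mu/\alpha}u_\flat$, which is spectrally truncated and so lies in every $H_s$, and apply the hypothesis \eqref{1l1} for $P,Q$ to $w$ at frequency $\nu$. By the weight equivalences $\|w\|_H\asymp\|u_\flat\|$ and $\|\Lambda^{-N}w\|_H\asymp\|\Lambda_{(\alpha)}^{-N/\alpha}u_\flat\|_H\le\|\Lambda_{(\alpha)}^{-N/\alpha}u\|_H$, while $Q\Lambda^\mu w=Q\Lambda_{(\alpha)}^{\mu/\alpha}u_\flat$, whence $\|Q\Lambda^\mu w\|_Y\le\|Q\Lambda_{(\alpha)}^{\mu/\alpha}u\|_Y+\|Q\Lambda_{(\alpha)}^{\mu/\alpha}u_\sharp\|_Y$ with the last term already controlled above. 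The main term is
\[
(P-\nu^2)\Lambda^{\mu-\gamma}w=\Lambda^{-\gamma}\Lambda_{(\alpha)}^{\mu/\alpha}g_\nu(P)^{-1}\Pi_\flat(P^\alpha-\lambda^2)u=\big(\Lambda^{-\gamma}\Lambda_{(\alpha)}^{\gamma/\alpha}\big)g_\nu(P)^{-1}\Pi_\flat\big[\Lambda_{(\alpha)}^{(\mu-\gamma)/\alpha}(P^\alpha-\lambda^2)u\big],
\]
so since $\Lambda^{-\gamma}\Lambda_{(\alpha)}^{\gamma/\alpha}\asymp\id$ and $\|g_\nu(P)^{-1}\Pi_\flat\|\le C_R\nu^{2-2\alpha}$ it is $\le C\nu^{2-2\alpha}\|\Lambda_{(\alpha)}^{(\mu-\gamma)/\alpha}(P^\alpha-\lambda^2)u\|_H$; multiplying by $M(\nu)\nu^{-1}$ gives the coefficient $M(\lambda^{1/\alpha})\nu^{1-2\alpha}=M(\lambda^{1/\alpha})\lambda^{1/\alpha-1}\langle\lambda\rangle^{-1}$, and $e(\nu)=e(\lambda^{1/\alpha})$. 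Adding the $u_\flat$ and $u_\sharp$ estimates, using $M(\nu),m(\nu)\ge c>0$ so that the crude $\|u_\sharp\|$ bound is absorbed into the $M$-term, and taking $\lambda_0'$ large enough that $\lambda^{1/\alpha}\ge\lambda_0$ and $\langle\lambda\rangle\asymp\lambda$, yields \eqref{1l2}.

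The only genuine obstacle is that $g_\nu(P)^{-1}$ is \emph{not} bounded on $H$ when $\alpha<1$ — its symbol grows like $\rho^{2-2\alpha}$ — so one cannot dilate the hypothesis applied to $u$ itself; this is exactly why the frequency split is needed and why the high frequencies must be handled through ellipticity of $P^\alpha-\lambda^2$ rather than through the control estimate. Everything after the split is weight bookkeeping, and the slightly ungainly exponent $(2\gamma+1-\alpha)_+$ is precisely what the elliptic bound on $Q\Lambda_{(\alpha)}^{\mu/\alpha}u_\sharp$ forces, with $\alpha\ge2\gamma$ the hypothesis that barely makes that bound hold.
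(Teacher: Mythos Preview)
Your argument is correct and is close in spirit to the paper's, though the execution differs in a couple of instructive ways.

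The paper proves a more general statement (Proposition~\ref{3t4}) for any $f(P)$ with $f'(s)\ge K^{-1}s^{\alpha-1}$, and only then specializes to $f(s)=s^\alpha$. To handle general $f$ it uses the mean value theorem in place of your explicit factorization $\rho^{2\alpha}-\nu^{2\alpha}=(\rho^2-\nu^2)g_\nu(\rho)$, and this forces a finer four-region decomposition $[0,\rho_0)\cup[\rho_0,\tfrac12\tilde\lambda)\cup[\tfrac12\tilde\lambda,\tfrac32\tilde\lambda]\cup(\tfrac32\tilde\lambda,\infty)$: the control estimate is applied only on the narrow window $S_\sim=[\tfrac12\tilde\lambda,\tfrac32\tilde\lambda]$, while the three elliptic regions separately furnish the bound $\|(\id-\Pi_\sim)\Lambda^{\gamma+\mu}u\|\le C\tilde\lambda^{-2+2(2\gamma+1-\alpha)_+}\|(f(P)-\lambda^2)\Lambda^{\mu-\gamma}u\|$ that controls the observation tail. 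Your two-region split $\{\rho<R\nu\}\cup\{\rho\ge R\nu\}$ works precisely because the factor $g_\nu(\rho)$ is uniformly comparable to $\nu^{2\alpha-2}$ on \emph{all} of $[0,R\nu]$, so the control estimate can be dilated there in one stroke; this is specific to $f(s)=s^\alpha$ and would not survive for general $f$. In return you get a cleaner argument, and (a minor bonus) your high-frequency bound $\nu^{4\gamma-2\alpha}$ on $\|Q\Lambda_{(\alpha)}^{\mu/\alpha}u_\sharp\|$ is actually sharper than the paper's $\nu^{-2+2(2\gamma+1-\alpha)_+}$ when $\alpha>2\gamma+1$, though both suffice for the stated theorem. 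The hypothesis $\alpha\ge 2\gamma$ enters at the same place in both proofs: it is exactly what makes $\rho^{4\gamma}|\rho^{2\alpha}-\nu^{2\alpha}|^{-1}$ bounded on the high-frequency region.
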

Theorem \ref{thmdilate} is designed synergistically with Theorem \ref{thmcontrol}: one can immediately apply Theorem \ref{thmcontrol} to \eqref{1l2}, as the control estimates \eqref{1l1} for $P^\alpha$ and $Q\in \mathcal{L}(H^{(\alpha)}_{\gamma/\alpha}, Y)$. Note that the case of $\alpha>1$ ($\lambda^{\alpha}$ is a convex function on $\mathbb{R}_{>0}$) was known in \cite[Theorem 3.5]{mil12} for $P$ positive. Here Theorem \ref{thmdilate} works for all $\alpha>0$ ($\lambda^{\alpha}$ can be either a convex or concave function on $\mathbb{R}_{>0}$) and $P$ nonnegative ($\ker P$ does not have to be trivial). We remark here that cutting out the $\ker P$ out of $\mathcal{A}$ when $Q^*Q$ is not relatively compact (when $\gamma=\frac{1}{2}$) is not trivial: see Remark \ref{3t10}.

The paper is organised in the following manner: immediately in Section \ref{s2} we present corollaries and new and novel decay results obtained as straightforward applications of Theorems \ref{thmcontrol} and \ref{thmdilate}. In Section \ref{s3}, we establish facts about semigroup $e^{t\Ac}$, prove Theorems \ref{thmbackward}, \ref{thmcontrol}, \ref{thmdilate} and auxiliary results used in Section \ref{s2}. 

\subsection{Acknowledgement}
The authors thank Jeffrey Galkowski, Jeremy Marzuola, David Seifert, Jian Wang and Jared Wunsch for discussions around the results. The authors are grateful to Jeffrey Galkowski for pointing out an improved isolation of pole at 0 when the damping is not relatively compact. RPTW is partially supported by NSF grant DMS-2054424 and EPSRC grant EP/V001760/1.

\section{Applications}\label{s2}
In this section, we discuss consequences of Theorems \ref{thmcontrol} and \ref{thmdilate}. In \S \ref{s2-1}, we show the Schrödinger observability gives explicit $\gamma$-dependent energy decay rates for unbounded damping. In \S\ref{s2-2}, we show a larger damping will always decay at least as fast as a known slower rate. In \S\ref{s2-3}, we prove a global Carleman estimate, and an optimal propagation estimate for fractional Laplacians $\Delta^\alpha$ without potential ($\alpha>0$). In \S\ref{s2-4}, we obtain energy decay results for wave equations with $L^p$-damping on manifolds. In \S\ref{s2-5}, we obtain energy decay results for linearised gravity water wave equations with damping on manifolds, where both the propagator and the damping may be non-local. In \S\ref{s2-6}, we obtain energy decay results for Euler--Bernoulli plate equations with both viscous and structural damping, the later of which is differential and unbounded. 

\subsection{Schrödinger observability gives decay}\label{s2-1}
The abstract Schr\"odinger equation for $P$, 
\begin{equation}
	(i\partial_t+P)u=0, \ u|_{t=0}=u_0\in H,
\end{equation}
is uniquely solved by $u=e^{itP}u_0$, where $e^{itP}$ is the unitary Schr\"odinger operator group on $H$ generated by the anti-self-adjoint operator $iP: H_1\rightarrow H$. We say that the Schr\"odinger equation is exactly observable via $Q\in \mathcal{L}(H_{\gamma}, Y)$ if there exists $T>0, C_T>0$ such that for any $u_0 \in H_1$ we have 
\begin{equation}
\|u_0\|_{H}\le C_T\int_{0}^{T} \|Qe^{itP} u_0\|_{Y}\ dt.
\end{equation}
In \cite{mil05,mil12} (see also \cite{mil05,jl20,ll21}), an equivalent resolvent condition was given. The Schr\"odinger equation is exactly observable via $Q\in \mathcal{L}(H_\gamma, Y)$ if and only if there is $C>0$ such that
\begin{equation}\label{2l19}
\|u\|_{H}\le C\|(P-\lambda^2)u\|_{H}+C\|Qu\|_{Y},
\end{equation}
for every $\lambda\ge 0$ and $u\in H_1$. When $\Omega$ is an open set and the Schrödinger equation is exactly observable via $\mathbbm{1}_{\Omega}$, we say the Schr\"odinger equation is exactly observable from $\Omega$. The Schrödinger observability has been investigated in many systems: see further in \cite{jaf90,mil05,bz12,am14,alm16,dj18,djn22}. 
\begin{example}
The Schrödinger equation is observable from any open set, on $\mathbb{T}^d$ or compact hyperbolic surfaces. 
\end{example}
Notably, in \cite{aln14}, it was proved that the Schrödinger observability gives an energy decay rate $\|e^{t\Ac}\|_{\mathcal{D}\rightarrow \Hcd}\le C\langle t\rangle^{-1/2}$ in abstract systems when $\gamma=0$, that is, when $Q$ is bounded on $H$. 

We are interested in investigating the case when $Q$ is no longer bounded on $H$. We first discuss a recent development in semigroup theory \cite{cpsst19}, where it was shown that the non-uniform Hautus test gives rise to stability estimates without a priori dependence on the degree of unboundedness. Following our remarks in \cite[Remark 2.29]{kw22}, this can be improved to give stability estimates that explicitly depend on the degree of unboundedness $\gamma$:

\begin{theorem}[Non-uniform Hautus test, \cite{cpsst19}]\label{t4}
Let $Q\in \mathcal{L}(H_{\gamma}, Y)$ for $\gamma\in [0,\frac{1}{2}]$. Assume there exists $C>0$ such that
\begin{equation}\label{2l1}
\|u\|_{H}\le M(\lambda)\langle \lambda\rangle^{-1}\|(P-\lambda^2)u\|_{H}+m(\lambda)\|Q u\|_{Y}
\end{equation}
uniformly for any $u\in H_1$ and all $\lambda\ge\lambda_0>0$. Then
\begin{equation}
\|(\mathcal{A}+i\lambda)^{-1}\|_{\mathcal{L}(\mathcal{H})}\le C\abs{\lambda}^{8\gamma}M(\abs{\lambda})^2m(\abs{\lambda})^2,
\end{equation}
for all $\lambda\in \mathbb{R}$ with $\abs{\lambda}\ge\lambda_0'$ for some $\lambda_0'>0$.
\end{theorem}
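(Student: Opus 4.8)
The plan is to follow the scheme of \cite{cpsst19}, now tracking how each step depends on $\gamma$. Given $F=(f,g)\in\Hc$, write $U=(u,v)=(\Ac+i\lambda)^{-1}F\in\Dc(\Ac)$; the goal is to bound $\nm{U}_{\Hc}$. The identity $(\Ac+i\lambda)U=F$ gives $v=f-i\lambda u$, and eliminating $v$ yields the damped stationary equation
\begin{equation}
(P-\lambda^2-i\lambda Q^*Q)u=i\lambda f-g-Q^*Qf ,
\end{equation}
whose right-hand side lies only in $H_{-\gamma}$ (not in $H$, because of the term $Q^*Qf$). Pairing this equation with $u$ in the $H_{-\gamma}$--$H_\gamma$ duality (legitimate since $u\in H_1$ and $\gamma\le\tfrac12$) and separating real and imaginary parts, the usual energy identities bound $\ang{Pu,u}$, $\nm{Qu}_Y$ and $\nm{v}_H$ in terms of $\langle\lambda\rangle\nm{u}_H$ and $\nm{F}_{\Hc}$; as in the classical argument this reduces the full resolvent estimate to a ``stationary'' control bound for $\nm{u}_H$ in terms of the quantities $\langle\lambda\rangle^{-1}\nm{(P-\lambda^2)u}_H$, $\nm{Qu}_Y$ and $\nm{F}_{\Hc}$.

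The obstruction to applying the hypothesis \eqref{2l1} directly is precisely that $(P-\lambda^2)u=i\lambda f-g-Q^*Qv$ lies only in $H_{-\gamma}$, while \eqref{2l1} demands the right-hand side in $H$. I would repair this with a spectral truncation: fix a threshold $\Lambda_*\ge 1$, set $\Pi_{\Lambda_*}=\int_0^{\Lambda_*}dE_\rho$, and split $u=u_L+u_H$ with $u_L=\Pi_{\Lambda_*}u$. The high part is controlled directly by ellipticity, with $\nm{u_H}_H$ and (since $\gamma\le\tfrac12$) $\nm{Qu_H}_Y\lesssim\nm{u_H}_{H_\gamma}$ both gaining negative powers of $\Lambda_*$ against $\nm{U}_{\Hc}$. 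For the low part $(P-\lambda^2)u_L=\Pi_{\Lambda_*}(i\lambda f-g-Q^*Qv)$ does lie in $H$, but truncating $Q^*Qv\in H_{-\gamma}$ back into $H$ costs a factor $\Lambda_*^{2\gamma}$:
\begin{equation}
\nm{(P-\lambda^2)u_L}_H\lesssim\langle\lambda\rangle\nm{F}_{\Hc}+\Lambda_*^{2\gamma}\nm{Q^*Qv}_{H_{-\gamma}},\qquad \nm{Q^*Qv}_{H_{-\gamma}}\lesssim\nm{Qv}_Y\lesssim\nm{F}_{\Hc}+\langle\lambda\rangle\nm{Qu}_Y .
\end{equation}
Applying \eqref{2l1} to $u_L\in H_1$, adding the bounds for $u_H$, and re-inserting the energy-step control of $\nm{Qu}_Y$ then produces an inequality for $\nm{u}_H$ in which the only unknowns are $\nm{U}_{\Hc}$ itself, $M(\abs\lambda)$, $m(\abs\lambda)$, $\langle\lambda\rangle$, and the free parameter $\Lambda_*$, the latter entering through the positive power $\Lambda_*^{2\gamma}$ and the negative powers from the high-frequency truncation.

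It remains to optimise. Substituting $\nm{U}_{\Hc}\lesssim\langle\lambda\rangle\nm{u}_H+\nm{F}_{\Hc}$ gives a closed inequality for $\nm{U}_{\Hc}$ with $\Lambda_*$ free; one chooses $\Lambda_*$ to be a suitable positive power of $\langle\lambda\rangle$ (times a power of $m(\abs\lambda)$) large enough that the error terms carrying negative powers of $\Lambda_*$ are absorbed into the left-hand side, then uses Young's inequality on the remaining half-power terms. Each use of the truncation sits at frequency $\sim\langle\lambda\rangle$ and costs $\langle\lambda\rangle^{2\gamma}$; these accumulate into an effective control cost $\sim\langle\lambda\rangle^{4\gamma}M(\abs\lambda)m(\abs\lambda)$, and the resolvent estimate squares that cost, which is the source of the $\langle\lambda\rangle^{8\gamma}$. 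I expect the bookkeeping here to be the real difficulty: the truncation is invoked in two distinct places — on the source $Q^*Qv$ and on the high-frequency observation $Qu_H$ — so $\Lambda_*$ must be chosen to make all the resulting $\Lambda_*$-powers simultaneously subcritical, which is delicate when the high-frequency gains degenerate as $\gamma\uparrow\tfrac12$ and when the $H_{-\gamma}$--$H_\gamma$ pairing of the energy step becomes borderline near $\gamma=\tfrac14$; it is in carrying those ranges through carefully, rather than by crude absorption, that one extracts the exponent $8\gamma$ and not a larger one. When $\gamma=0$ no truncation is needed and the argument collapses to the classical one of \cite{aln14,cpsst19}.
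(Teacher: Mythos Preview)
Your decomposition is not the one that produces the exponent $8\gamma$, and I do not believe the outlined argument closes. A single low/high cutoff $\Pi_{\Lambda_*}=\int_0^{\Lambda_*}dE_\rho$ puts the characteristic frequency $\rho\sim\lambda$ in the low piece, and then the crucial term you need to control is $m\,\|Qu_L\|_Y\le m\,\|Qu\|_Y+m\,\|Qu_H\|_Y$. The only generic bound on the tail is $\|Qu_H\|_Y\lesssim\|u_H\|_{H_\gamma}\lesssim\Lambda_*^{2\gamma-1}\|u\|_{H_{1/2}}\lesssim\Lambda_*^{2\gamma-1}\|U\|_{\Hc}$. After substituting $\|U\|_{\Hc}\lesssim\langle\lambda\rangle\|u\|_H+\|F\|_{\Hc}$ this produces a term of size $m\,\langle\lambda\rangle\,\Lambda_*^{2\gamma-1}\|u\|_H$ on the right. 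To absorb it you must take $\Lambda_*^{1-2\gamma}\gg m\langle\lambda\rangle$, i.e.\ $\Lambda_*\sim(m\langle\lambda\rangle)^{1/(1-2\gamma)}$, which blows up as $\gamma\uparrow\tfrac12$ and, for $\gamma\in(\tfrac14,\tfrac12)$, makes the cost $\Lambda_*^{2\gamma}\sim(m\langle\lambda\rangle)^{2\gamma/(1-2\gamma)}$ strictly worse than the claimed $\langle\lambda\rangle^{4\gamma}$. In particular your heuristic ``each truncation sits at frequency $\sim\langle\lambda\rangle$ and costs $\langle\lambda\rangle^{2\gamma}$'' cannot hold: with $\Lambda_*\sim\langle\lambda\rangle$ the tail term is $\sim m\langle\lambda\rangle^{2\gamma}\|u\|_H$, which is never small. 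This is not a bookkeeping issue; it is the wrong decomposition.

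The paper's proof (\S\ref{s3-7}) uses instead a \emph{wavepacket} projector onto a narrow spectral window around the characteristic frequency,
\[
\Pi_h=\int_{h^{-1}-\epsilon/M}^{\,h^{-1}+\epsilon/M} dE_\rho,\qquad h=|\lambda|^{-1},
\]
so that $\|(h^2P-1)\Pi_h u\|\le 3\epsilon M^{-1}h\,\|\Pi_h u\|$ and the control hypothesis collapses to $\|\Pi_h u\|\le Cm\,\|Q\Pi_h u\|_Y$. The complement $\Pi_h^\perp$ is then handled not by a crude frequency gain but through the semiclassical elliptic regulariser $\Lambda_h=h^2P+1$: one writes $\Pi_h^\perp$ and $Q\Pi_h^\perp$ via the identity $\id=\Lambda_h^{-1}P_h+2\Lambda_h^{-1}+ih\Lambda_h^{-1}Q^*Q$ and uses the sharp operator bounds $\|Q\Lambda_h^{-1}\|_{\mathcal L(H)}\le Ch^{-2\gamma}$ and $\|Q\Lambda_h^{-1}Q^*\|_{\mathcal L(H)}\le Ch^{-4\gamma}$. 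These two factors (one $h^{-2\gamma}$ and one $h^{-4\gamma}$, appearing multiplicatively with $M$ and $m$) are exactly what produce $h^{-8\gamma}$ after the final $\|Qu\|^2\le h^{-1}|\langle f,u\rangle|$ step; there is no optimisation over a free parameter, and the argument is uniform up to and including $\gamma=\tfrac12$. If you want to salvage your approach, you would need to replace the low/high split by this two-sided localisation near $\rho=\lambda$ and trade the raw Sobolev gain $\Lambda_*^{2\gamma-1}$ for the regulariser bounds on $Q\Lambda_h^{-1}$ and $Q\Lambda_h^{-1}Q^*$.
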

\begin{remark}
We claim our main theorem, Theorem \ref{thmcontrol} of control, is in general better than Theorem \ref{t4}: (1) the control estimate in \eqref{1l1} accommodates compact errors in Theorem \ref{thmcontrol}, and enables the use of classical microlocal analysis. (2) The $4\mu$-loss in the asymptotics in Theorem \ref{thmcontrol} is in general only half of the $8\gamma$-loss in this theorem, as illustrated by the next two propositions. (3) Choosing a small flexible parameter $\mu$ in Theorem \ref{thmcontrol} allows further improvement over the $\gamma$-loss, as in \S\ref{s2-6}. 
\end{remark}
To keep the manuscript self-contained, we give a complementary proof of Theorem \ref{t4} in \S \ref{s3-7} using semiclassical methods. As a direct corollary, the energy decay induced by the Schrödinger observability in \cite{aln14} can be generalised to the case $\gamma>0$:

\begin{proposition}[Observatory stability via $Q$, \cite{cpsst19}]\label{t1}
Assume the Schr\"odinger equation is exactly observable via $Q\in \mathcal{L}(H_{\gamma}, Y)$ for $\gamma\in[0, \frac{1}{2}]$. Then there exists $C>0$ such that
\begin{equation}
\|e^{t\Ac}\|_{\mathcal{D}\rightarrow \Hcd}\le C\langle t\rangle^{-\frac{1}{2+8\gamma}}.
\end{equation}
\end{proposition}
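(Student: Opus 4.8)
\emph{Proof proposal.} The plan is to translate Schr\"odinger exact observability into a resolvent inequality of exactly the shape demanded by Theorem \ref{t4}, feed it into that theorem to obtain a polynomial bound for the damped resolvent on the imaginary axis, and then pass from that bound to the energy decay rate by a quantified Tauberian theorem for bounded semigroups (Borichev--Tomilov).

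Concretely, I would first invoke the resolvent characterisation of observability recalled above: exact observability of $(i\partial_t+P)u=0$ via $Q\in\mathcal{L}(H_\gamma,Y)$ is equivalent to the existence of $C>0$ with
\[
\|u\|_{H}\le C\|(P-\lambda^2)u\|_{H}+C\|Qu\|_{Y}\qquad\text{for all }\lambda\ge0,\ u\in H_1.
\]
Since $H_1\hookrightarrow H_\gamma$, the term $\|Qu\|_Y$ is meaningful on $H_1$ even when $\gamma\in(0,\tfrac12]$, so this is no loss. This estimate is precisely hypothesis \eqref{2l1} of Theorem \ref{t4} with the admissible choice $M(\lambda)=C\langle\lambda\rangle$, $m(\lambda)=C$ (both positive, continuous and bounded below on $[0,\infty)$), and $\lambda_0=1$. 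Theorem \ref{t4} then produces $\lambda_0'>0$ and $C>0$ with
\[
\|(\mathcal{A}+i\lambda)^{-1}\|_{\mathcal{L}(\mathcal{H})}\le C|\lambda|^{8\gamma}M(|\lambda|)^2m(|\lambda|)^2\le C|\lambda|^{2+8\gamma}
\]
for all real $\lambda$ with $|\lambda|\ge\lambda_0'$. Using the structural facts about $\mathcal{A}$ from Section \ref{s3} --- that $e^{t\mathcal{A}}$ is a bounded $C_0$-semigroup on $\Hcd=\Hc/\ker\mathcal{A}$ and that $i\mathbb{R}\cap\spec(\mathcal{A})=\emptyset$ once $0$ has been quotiented out --- one obtains in addition a uniform resolvent bound on the compact set $|\lambda|\le\lambda_0'$, hence $\|(\mathcal{A}+i\lambda)^{-1}\|_{\mathcal{L}(\Hcd)}=\mathcal{O}(\langle\lambda\rangle^{2+8\gamma})$. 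The Borichev--Tomilov theorem then upgrades this to $\|e^{t\mathcal{A}}\|_{\mathcal{D}\to\Hcd}=\mathcal{O}(t^{-1/(2+8\gamma)})$, which together with boundedness for small $t$ gives the claimed $C\langle t\rangle^{-1/(2+8\gamma)}$.

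The genuinely hard analytic work is all contained in Theorem \ref{t4}, so this deduction is essentially bookkeeping; the two points that need a word of justification are (i) that the observability--resolvent equivalence survives for the unbounded range $\gamma>0$, which it does because $Q$ still acts on the dense subspace $H_1$, and (ii) the hypotheses of Borichev--Tomilov on $\Hcd$, whose one subtle ingredient is the isolation of the spectral point $0$ of $\mathcal{A}$ when $Q^*Q$ is not relatively compact (the endpoint $\gamma=\tfrac12$), as flagged in Remark \ref{3t10} --- both being supplied by the general semigroup facts of Section \ref{s3}. At $\gamma=0$ the rate reduces to $\langle t\rangle^{-1/2}$, recovering \cite{aln14}.
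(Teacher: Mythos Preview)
Your proposal is correct and follows essentially the same route as the paper: the observability inequality \eqref{2l19} is fed into Theorem \ref{t4} with $M(\lambda)=C\langle\lambda\rangle$, $m(\lambda)=C$, giving the $|\lambda|^{2+8\gamma}$ resolvent bound, and the passage to decay is handled via the semigroup framework of \S\ref{s3}. The only cosmetic difference is that the paper packages the unique continuation check (your point (ii)) as Lemma \ref{uniqelemma}(1) and the Borichev--Tomilov step as Lemma \ref{3t7}, whereas you cite the underlying ingredients directly.
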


In \cite[Remark 2.29]{kw22}, the authors obtained better decay estimates under the same geometric assumptions on the support of $W$, without additional regularity assumptions. In this manuscript, we further investigate such improvement, and find it comes from another observability condition. Indeed, observing the system using $Q\Lambda^\gamma$ gives an improvement:
\begin{proposition}[Observatory stability via $Q\Lambda^\gamma$]\label{t2}
Assume the Schr\"odinger equation is exactly observable via $Q\Lambda^{\gamma}\in \mathcal{L}(H_{2\gamma}, Y)$ for $\gamma\in[0, \frac{1}{2}]$ and $N> 0$. Then there exists $C>0$ such that
\begin{equation}
\|e^{t\Ac}\|_{\mathcal{D}\rightarrow \Hcd}\le C\langle t\rangle^{-\frac{1}{2+4\gamma}}.
\end{equation}
\end{proposition}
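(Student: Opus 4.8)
The plan is to turn the hypothesis into an instance of the control estimate \eqref{1l1} with the flexible parameter chosen as $\mu=\gamma$, apply Theorem \ref{thmcontrol}(1), and conclude with the semigroup decay criterion of \S\ref{s3-3}. First I would apply the resolvent characterisation of Schr\"odinger observability recorded as \eqref{2l19} (from \cite{mil05,mil12}) to the composite observation operator $Q\Lambda^\gamma\in\mathcal{L}(H_{2\gamma},Y)$, in the role of ``$Q$'' there with regularity exponent $2\gamma\in[0,1]$. Exact observability via $Q\Lambda^\gamma$ is then equivalent to the existence of $C>0$ with
\[
\|u\|_{H}\le C\|(P-\lambda^2)u\|_{H}+C\|Q\Lambda^\gamma u\|_{Y}
\]
for all $\lambda\ge 0$ and $u\in H_1$. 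Allowing, if one wishes, a harmless compact remainder $C\|\Lambda^{-N}u\|_{H}$ on the right changes nothing below, since Theorem \ref{thmcontrol} accommodates such error terms.

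Next I would read off a control estimate. Writing $\|(P-\lambda^2)u\|_{H}=\ang{\lambda}\cdot\ang{\lambda}^{-1}\|(P-\lambda^2)u\|_{H}$, the inequality above is precisely \eqref{1l1} for $P$ and $Q$ with $\mu=\gamma$ — so that $\Lambda^{\mu-\gamma}=\id$ and $Q\Lambda^{\mu}=Q\Lambda^\gamma$ — with $M(\lambda)=C\ang{\lambda}$, $m(\lambda)=C$, $\lambda_0=1$; these are positive, continuous, and bounded below. Feeding this into Theorem \ref{thmcontrol}(1) with $\mu=\gamma$ produces $\lambda_0'>0$ and $C>0$ with
\[
\|(\mathcal{A}+i\lambda)^{-1}\|_{\mathcal{L}(\mathcal{H})}\le C\abs{\lambda}^{4\gamma}\bigl(M(\abs{\lambda})^2+m(\abs{\lambda})^2\bigr)\le C\abs{\lambda}^{4\gamma}\ang{\lambda}^2\le C\abs{\lambda}^{2+4\gamma}
\]
for all real $\lambda$ with $\abs{\lambda}\ge\lambda_0'$. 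The improvement over Proposition \ref{t1} is visible here: the $4\gamma$-loss of Theorem \ref{thmcontrol} replaces the $8\gamma$-loss of the non-uniform Hautus test (Theorem \ref{t4}), while the $\ang{\lambda}^2=M(\lambda)^2$ factor, reflecting that Schr\"odinger observability carries no spectral gain, is common to both.

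To finish I would invoke \S\ref{s3-3}: on the energy space $\Hcd=\Hc/\ker\Ac$ the semigroup $e^{t\Ac}$ is bounded, and the whole imaginary axis lies in the resolvent set of the generator restricted to $\Hcd$ — the bounded part $0<\abs{\lambda}\le\lambda_0'$ being spectrum-free by the dissipativity and unique continuation argument there, the pole at $0$ being removed by the quotient (with the isolation of that pole when the damping is not relatively compact, i.e.\ $\gamma=\tfrac12$, supplied by Remark \ref{3t10}), and the range $\abs{\lambda}\ge\lambda_0'$ covered by the bound above. A polynomial resolvent bound $\|(\mathcal{A}+i\lambda)^{-1}\|=\bigo(\abs{\lambda}^{2+4\gamma})$ then yields, via the Borichev--Tomilov type decay theorem recorded in \S\ref{s3-3}, $\|e^{t\Ac}\|_{\mathcal{D}\to\Hcd}\le C\ang{t}^{-1/(2+4\gamma)}$.

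I do not expect a serious obstacle: the argument is essentially a one-line specialisation of Theorem \ref{thmcontrol}(1) with $\mu=\gamma$. The only points requiring a little care are (i) verifying that the resolvent characterisation \eqref{2l19} applies to the composite $Q\Lambda^\gamma$ on $H_{2\gamma}$ up to and including the endpoint $\gamma=\tfrac12$, where $2\gamma=1$ and $Q\Lambda^{1/2}\in\mathcal{L}(H_1,Y)$; and (ii) the passage from the resolvent bound on $\mathcal{H}$ to a genuine one on $\Hcd$, together with the removal of the pole at $0$ when the damping is not relatively compact — both furnished by the general framework of \S\ref{s3-3}.
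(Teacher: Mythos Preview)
Your proposal is correct and follows exactly the paper's proof: read the Schr\"odinger observability of $Q\Lambda^\gamma$ as the control estimate \eqref{1l1} with $\mu=\gamma$, $M=C\ang{\lambda}$, $m=C$, apply Theorem \ref{thmcontrol}(1) to get $\|(\Ac+i\lambda)^{-1}\|\le C\abs{\lambda}^{2+4\gamma}$, and conclude via Lemma \ref{3t7}. The one place where the paper is more explicit than your sketch is the verification of $\operatorname{UCP}_{P,Q}$ needed for Lemma \ref{3t7}: the hypothesis only gives $\operatorname{UCP}_{P,Q\Lambda^\gamma}$ (via Lemma \ref{uniqelemma}(1)), and the paper then chains Lemma \ref{uniqelemma}(4) and (5) with $V=\Lambda^{-\gamma}$ to deduce $\operatorname{UCP}_{P,Q}$ --- equivalently, on an eigenfunction $\Lambda^\gamma$ acts as a nonzero scalar, so $Qu=0$ would force $Q\Lambda^\gamma u=0$.
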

Note the $4\gamma$-improvement in the decay order. We remark here that the observability via $Q\Lambda^\gamma$ is not artificial. Many systems are observable in such way and decay faster: see for example, the rest of \S \ref{s2}. In the bounded case of $\gamma=0$, both observability results reduce back to \cite{aln14} and give the same $\langle t\rangle^{-1/2}$-decay.
\begin{proof}[Proof of Propositions \ref{t1} and \ref{t2}]
We prove Proposition \ref{t2}, and the proof of Proposition \ref{t1} is similar. As in \eqref{2l19}, the Schrödinger observability via $Q\Lambda^{\gamma}$ is equivalent to
\begin{equation}
\|u\|_{H}\le C\|(P-\lambda^2)u\|_{H}+C\|Q \Lambda^{\gamma} u\|_{Y},
\end{equation}
for any $u\in H_{1}$. Apply Theorem \ref{thmcontrol}(1) with $M=C\langle \lambda\rangle$, $m=C$, $\mu=\gamma$ to obtain
\begin{equation}
\|(\mathcal{A}+i\lambda)^{-1}\|_{\mathcal{L}(\mathcal{H})}\le C\abs{\lambda}^{2+4\gamma},
\end{equation}
uniformly for large $\abs{\lambda}$, $\lambda\in\mathbb{R}$. Lemma \ref{uniqelemma}(1) implies $\operatorname{UCP}_{P, Q\Lambda^{\gamma}}$ holds. See the definition of the UCP in Definition \ref{3t6}. Lemma \ref{uniqelemma}(4) implies $\operatorname{UCP}_{P, \Lambda^{-\gamma}}$ holds. Lemma \ref{uniqelemma}(5) then implies $\operatorname{UCP}_{P, Q}$ holds. Apply Lemma \ref{3t7} to conclude the energy decay. 
\end{proof}

\subsection{Weak Monotonicity}\label{s2-2}
One peculiar feature of the damped wave equation is the overdamping phenomenon: strictly larger damping may not lead to faster decay. The phenomenon, still mysterious, has been intensely investigated in \cite{aln14,sta17,kle19b,dk20,sun22,kw22}. In other words, the decay rate is not monotonic with respect to the size of damping. However we claim the decay rate is still weakly monotonic: larger damping will decay at least as fast as a known slower rate. This is characterised in the next proposition:

\begin{proposition}\label{2l15}
Let $\gamma\in[0,\frac{1}{2}]$. Consider $Q_1 \in L(H_{\gamma}, Y_1)$, $Q_2\in L(H_{\gamma}, Y_2)$ and assume there is $C_0>0$ such that $\|Q_1 u\|_{Y_1}  \leq C_0\|Q_2 u\|_{Y_2}$ for all $u \in H_1/\ker(P)$. Then the following are true:
\begin{enumerate}
	\item If $\gamma=0$ and $\|e^{t\Ac_{Q_1}}\|_{\mathcal{D}\rightarrow \Hcd}\le e^{Ct}$, then $\|e^{t\Ac_{Q_2}}\|_{\mathcal{D}\rightarrow \Hcd}\le e^{C't}$.
	\item If $\gamma>0$ and $\|e^{t\Ac_{Q_1}}\|_{\mathcal{D}\rightarrow \Hcd}\le e^{Ct}$, then $\|e^{t\Ac_{Q_2}}\|_{\mathcal{D}\rightarrow \Hcd}\le C'\langle t\rangle^{-\frac{1}{4\gamma}}$.
	\item If $\|e^{t\Ac_{Q_1}}\|_{\mathcal{D}\rightarrow \Hcd}\le C\langle t\rangle^{-\alpha}$ for $\alpha> 0$, then $\|e^{t\Ac_{Q_2}}\|_{\mathcal{D}\rightarrow \Hcd}\le C'\langle t\rangle^{-\frac{\alpha}{2+4\gamma\alpha}}$. 
	\item If $\|e^{t\Ac_{Q_1}}\|_{\mathcal{D}\rightarrow \Hcd}\le C/\log(2+t)$, then $\|e^{t\Ac_{Q_2}}\|_{\mathcal{D}\rightarrow \Hcd}\le C'/\log(2+t)$.
\end{enumerate}
\end{proposition}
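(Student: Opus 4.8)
The plan is to route every case through Theorem~\ref{thmcontrol}, using the pointwise domination $\|Q_1 u\|_{Y_1}\le C_0\|Q_2 u\|_{Y_2}$ as the device that converts a control estimate for $(P,Q_1)$ into one for $(P,Q_2)$. The scheme has five steps, uniform across the cases. First, translate the hypothesis on $e^{t\Ac_{Q_1}}$ into a resolvent bound $\|(\Ac_{Q_1}+i\lambda)^{-1}\|\le K(\abs\lambda)$ on $\Hcd$ for $\abs\lambda\ge\lambda_0$, via the standard (easy) converse direction of the relevant characterisations: the Gearhart--Pr\"uss theorem gives $K\equiv\mathrm{const}$ from exponential stability, the Borichev--Tomilov theorem gives $K(s)=Cs^{1/\alpha}$ from the rate $\langle t\rangle^{-\alpha}$, and the Batty--Duyckaerts characterisation gives $K(s)=Ce^{cs}$ from the rate $1/\log(2+t)$; each hypothesis also forces $i\mathbb{R}\subset\rho(\Ac_{Q_1})$ on $\Hcd$, hence $\operatorname{UCP}_{P,Q_1}$ by Lemma~\ref{uniqelemma}(1). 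Second, apply Theorem~\ref{thmcontrol}(2) with $\mu=\gamma$ (so $\Lambda^{\mu-\gamma}=\id$ and the prefactor $\langle\lambda\rangle^{2(\gamma-\mu)}$ is $1$) to get, for $u\in H_{1+\gamma}$ modulo $\ker P$,
\begin{equation*}
\|u\|_H\le CK(\abs\lambda)\bigl(\langle\lambda\rangle^{-1}\|(P-\lambda^2)u\|_H+\|Q_1\Lambda^\gamma u\|_{Y_1}\bigr).
\end{equation*}
Third, apply the domination to $\Lambda^\gamma u\in H_1/\ker P$ --- legitimate since $\Lambda^\gamma$ commutes with $P$ and preserves the orthocomplement of $\ker P$ --- to replace $\|Q_1\Lambda^\gamma u\|_{Y_1}$ by $C_0\|Q_2\Lambda^\gamma u\|_{Y_2}$; this is precisely the control estimate \eqref{1l1} for $P$ and $Q_2$ with $\mu=\gamma$, $M(\lambda)=CK(\lambda)$, $m(\lambda)=CC_0K(\lambda)$ and no error term, after a routine density extension from $H_{1+\gamma}$ to $H_1$. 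Fourth, feed this into Theorem~\ref{thmcontrol}(1) to obtain $\|(\Ac_{Q_2}+i\lambda)^{-1}\|\le C\abs\lambda^{4\gamma}K(\abs\lambda)^2$ for $\abs\lambda$ large. Fifth, transfer $\operatorname{UCP}_{P,Q_1}$ to $\operatorname{UCP}_{P,Q_2}$ (if $Q_2v=0$ the domination forces $Q_1v=0$, so $v=0$) and invoke Lemma~\ref{3t7}, exactly as in the proof of Proposition~\ref{t2}, to turn the resolvent bound for $\Ac_{Q_2}$ into the corresponding decay statement.

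Running the arithmetic of the fifth step produces precisely the stated rates. If $\gamma=0$ and $Q_1$ is exponentially stable, $K$ is constant, $\abs\lambda^{4\gamma}K^2$ is bounded, so $\Ac_{Q_2}$ has a bounded resolvent on $i\mathbb{R}$ and $e^{t\Ac_{Q_2}}$ is exponentially stable: this is (1). If $\gamma>0$ and $Q_1$ is exponentially stable, $\abs\lambda^{4\gamma}K^2\lesssim\abs\lambda^{4\gamma}$, and Borichev--Tomilov gives $\|e^{t\Ac_{Q_2}}\|_{\mathcal D\to\Hcd}\lesssim\langle t\rangle^{-1/(4\gamma)}$: this is (2). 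If $\|e^{t\Ac_{Q_1}}\|_{\mathcal D\to\Hcd}\lesssim\langle t\rangle^{-\alpha}$, then $\abs\lambda^{4\gamma}K(\abs\lambda)^2\lesssim\abs\lambda^{4\gamma+2/\alpha}$, so Borichev--Tomilov gives $\|e^{t\Ac_{Q_2}}\|_{\mathcal D\to\Hcd}\lesssim\langle t\rangle^{-1/(4\gamma+2/\alpha)}=\langle t\rangle^{-\alpha/(2+4\gamma\alpha)}$: this is (3). If $\|e^{t\Ac_{Q_1}}\|_{\mathcal D\to\Hcd}\lesssim1/\log(2+t)$, then $\abs\lambda^{4\gamma}K(\abs\lambda)^2\le e^{c'\abs\lambda}$ (the polynomial factor being absorbed), so the Batty--Duyckaerts characterisation gives $\|e^{t\Ac_{Q_2}}\|_{\mathcal D\to\Hcd}\lesssim1/\log(2+t)$: this is (4). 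I would present this as one argument parametrised by $K$ and read off the four cases at the end.

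The genuinely delicate point is the one flagged in Remark~\ref{3t10}: the conclusion is on the quotient $\Hcd=\Hc/\ker\Ac$, whereas Theorem~\ref{thmcontrol}(1) only gives a large-frequency bound, so the pole of $(\Ac_{Q_2}+i\lambda)^{-1}$ at $0$ and any spectrum of $\Ac_{Q_2}$ elsewhere on $i\mathbb{R}$ must be isolated. For $\gamma<\tfrac12$ this is straightforward: $Q_2^*Q_2$ is relatively compact, $\Ac_{Q_2}$ has discrete spectrum, $0$ is removed by the quotient, and $\operatorname{UCP}_{P,Q_2}$ --- obtained for free from $\operatorname{UCP}_{P,Q_1}$ via the domination, as in step five --- rules out nonzero imaginary eigenvalues. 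For $\gamma=\tfrac12$ the damping need not be relatively compact and one must isolate the pole at $0$ as in Remark~\ref{3t10}; this is the step I expect to cost the most care, but it is shared with the other corollaries of Section~\ref{s2} and is exactly what Lemma~\ref{3t7} is built to handle. The remaining items --- verifying $M,m\ge C$ in \eqref{1l1}, the density extension used to apply the domination to $\Lambda^\gamma u$, and the absorption of polynomial factors into the exponential in case~(4) --- are routine.
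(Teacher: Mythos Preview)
Your proof is correct and follows essentially the same five-step scheme as the paper: convert the decay hypothesis on $e^{t\Ac_{Q_1}}$ into a resolvent bound via Lemma~\ref{3t7}, descend to a control estimate via Theorem~\ref{thmcontrol}(2), swap $Q_1$ for $Q_2$ via the domination, ascend via Theorem~\ref{thmcontrol}(1), and close with $\operatorname{UCP}_{P,Q_2}$ and Lemma~\ref{3t7}. The only substantive difference is that the paper takes $\mu=0$ in Theorem~\ref{thmcontrol}(2) (so the domination is applied directly to $u$, and the $\langle\lambda\rangle^{2\gamma}$ sits in $M,m$), whereas you take $\mu=\gamma$ (so the $\langle\lambda\rangle^{4\gamma}$ appears instead through the $\abs{\lambda}^{4\mu}$ factor in Theorem~\ref{thmcontrol}(1)); the arithmetic is identical and both choices need the same routine density extension of the domination from $H_1$ to $H_\gamma$. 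One small correction: your reference to Lemma~\ref{uniqelemma}(1) for deducing $\operatorname{UCP}_{P,Q_1}$ from the spectral condition is wrong --- that part of the lemma concerns Schr\"odinger observability --- and the correct reference is Lemma~\ref{3t7} (or Lemma~\ref{3t8}), which is also what handles the pole-at-$0$ issue you raise at the end.
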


\begin{corollary}\label{2t15}
Let $P=\Delta$. Consider nonnegative $W_1\in C^\infty(\M)$, $W_2\in L^\infty(\M)$, with $W_1\le W_2$. If $\|e^{t\Ac(\sqrt{W_1})}\|_{\mathcal{D}\rightarrow \Hcd}\le C\langle t\rangle^{-\alpha}$ for $\alpha>0$, then $\|e^{t\Ac(\sqrt{W_2})}\|_{\mathcal{D}\rightarrow \Hcd}\le C'\langle t\rangle^{-\frac{\alpha}{2}}$.
\end{corollary}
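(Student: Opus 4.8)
The plan is to recognise Corollary \ref{2t15} as the bounded-damping case $\gamma=0$ of Proposition \ref{2l15}(3). First I would set up the abstract framework with $H=Y_1=Y_2=L^2(\M)$ and $P=\Delta$ (nonnegative, self-adjoint, with compact resolvent on a compact $\M$), and take $Q_i$ to be multiplication by $\sqrt{W_i}$ for $i=1,2$, so that $Q_i^*Q_i$ is multiplication by $W_i$ and $\Ac(\sqrt{W_i})$ is precisely the operator $\Ac_{Q_i}$ of Proposition \ref{2l15}. Since $W_1\in C^\infty(\M)\subset L^\infty(\M)$ and $W_2\in L^\infty(\M)$, both $\sqrt{W_1}$ and $\sqrt{W_2}$ lie in $L^\infty(\M)$ (the square root of a bounded function is bounded, even though $\sqrt{W_1}$ need not be smooth); hence $Q_1,Q_2\in \mathcal{L}(L^2(\M),L^2(\M))=\mathcal{L}(H_0,Y)$, i.e.\ we are in the regime $\gamma=0$. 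The smoothness of $W_1$ is not otherwise used here; it is recorded because the hypothesis on $e^{t\Ac(\sqrt{W_1})}$ is typically established for smooth dampings.

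Next I would check the comparison hypothesis of Proposition \ref{2l15}. From $W_1\le W_2$ almost everywhere on $\M$, for every $u\in H_1$ one has
\begin{equation}
\|Q_1 u\|_{L^2(\M)}^2=\int_\M W_1|u|^2\le \int_\M W_2|u|^2=\|Q_2 u\|_{L^2(\M)}^2,
\end{equation}
so the pointwise domination $\|Q_1 u\|_{Y_1}\le C_0\|Q_2 u\|_{Y_2}$ holds with $C_0=1$; in particular it holds on $H_1/\ker P$.

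Finally, granting the hypothesis $\|e^{t\Ac(\sqrt{W_1})}\|_{\mathcal{D}\to\Hcd}\le C\langle t\rangle^{-\alpha}$ with $\alpha>0$, I would invoke Proposition \ref{2l15}(3) with $\gamma=0$, which yields
\begin{equation}
\|e^{t\Ac(\sqrt{W_2})}\|_{\mathcal{D}\to\Hcd}\le C'\langle t\rangle^{-\frac{\alpha}{2+4\cdot 0\cdot\alpha}}=C'\langle t\rangle^{-\frac{\alpha}{2}},
\end{equation}
which is exactly the assertion. There is no genuine obstacle in this argument: the only two points needing a line of justification are that a bounded multiplier $\sqrt{W}$ places us in the case $\gamma=0$, and that the square-root comparison $\|Q_1u\|\le\|Q_2u\|$ is immediate from the pointwise inequality $W_1\le W_2$; everything quantitative is already contained in Proposition \ref{2l15}.
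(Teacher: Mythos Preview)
Your proposal is correct and matches the paper's approach exactly: the corollary is stated as an immediate consequence of Proposition \ref{2l15}(3) with $\gamma=0$, and your verification that $Q_i=\sqrt{W_i}\in\mathcal{L}(L^2,L^2)$ and $\|Q_1u\|\le\|Q_2u\|$ via the pointwise bound $W_1\le W_2$ is precisely what is needed. The paper does not even write out a separate proof, treating the reduction as obvious.
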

The immediate corollary allows us to obtain decay information for any $L^{\infty}$ damping by studying a smaller smooth damping. Part (1) of the proposition is a well known tool used to prove that the geometric control condition implies exponential energy decay for $C^0$ damping from the same result for $C^{\infty}$ damping. 
\begin{example}
On $\M=\mathbb{T}^2$, given any nonempty open set $\Omega$, the authors of \cite[Theorem 2.6]{aln14} showed for every $\epsilon>0$ there exists a smooth bounded nonnegative function $W_\epsilon$ supported in $\Omega$ such that $\|e^{t\Ac(\sqrt{W_\epsilon})}\|_{\mathcal{D}\rightarrow \Hcd}\le C\langle t\rangle^{-1+\epsilon}$. Note $W_\epsilon\le C \mathbbm{1}_\Omega$ and Corollary \ref{2t15} implies $\|e^{t\Ac(\sqrt{W_\epsilon})}\|_{\mathcal{D}\rightarrow \Hcd}\le C_\epsilon\langle t\rangle^{-1/2+\epsilon/2}$ for every $\epsilon>0$. This is consistent with the $\langle t\rangle^{-1/2}$-decay governed by the Schrödinger observability in \cite[Theorem 2.3]{aln14}. 
\end{example}

\begin{proof}[Proof of Proposition \ref{2l15}]
We will prove (3), and the other three are similar. Apply Lemma \ref{3t7} to see $\operatorname{UCP}_{P,Q_1}$ holds and $\|(\Ac_{Q_1}+i\lambda)^{-1}\|_{\mathcal{L}(\Hc)}\le C\abs{\lambda}^{1/\alpha}$ for large $\abs{\lambda}$, $\lambda\in\mathbb{R}$. See the definition of $\operatorname{UCP}$ in Definition \ref{3t6}. Apply Theorem \ref{thmcontrol}(2), with $\mu=0$, to see for every $u\in H_1$, 
\begin{equation}
\|u\|_{H}\le C\langle \lambda\rangle^{2\gamma}\abs{\lambda}^{\frac{1}{\alpha}} \left(\langle \lambda\rangle^{-1}\|(P-\lambda^2)\Lambda^{-\gamma}u\|_{H}+ \|Q_1 u\|_{Y_1}\right),
\end{equation}
for large $\lambda$. By the assumption, the last term can be replaced by $\|Q_2 u\|_{Y_2}$. Apply Theorem \ref{thmcontrol}(1) to see for large $\abs{\lambda}$, $\lambda\in\mathbb{R}$, $\|(\Ac_{Q_2}+i\lambda)^{-1}\|_{\mathcal{L}(\Hc)}\le C\abs{\lambda}^{4\gamma+2/\alpha}$. By Lemma \ref{uniqelemma}(2), we know $\operatorname{UCP}_{P,Q_2}$ holds. Apply Lemma \ref{3t7}, and $\|e^{t\Ac_{Q_2}}\|_{\mathcal{D}\rightarrow \Hcd}\le C'\langle t\rangle^{-\frac{\alpha}{2+4\gamma\alpha}}$ as desired.  
\end{proof}

\subsection{Quantitative unique continuation and propagation for fractional Laplacian}\label{s2-3}
Eigenfunctions of Laplacian can be uniquely continued from any open set $\Omega$ in compact manifolds $\M$. Indeed, for $\lambda\in\mathbb{R}$, 
\begin{equation}
(\Delta-\lambda^2)u=0, \ u\neq 0\ \Rightarrow \ \mathbbm{1}_\Omega u\neq0.
\end{equation}
This is called the unique continuation principle, first observed by Carleman \cite{car39} in 1939. Its quantitative form, called the Carleman estimate, is the following: uniformly for $\lambda\ge \lambda_0>0$,
\begin{equation}
\|u\|_{H^{2}}\le e^{C\lambda}(\|(\Delta-\lambda^2)u\|_{L^2}+\|\mathbbm{1}_\Omega u\|_{L^2}).
\end{equation}
Carleman estimates quantitatively describe the tunnelling phenomenons in quantum physics. Many results \cite{hor4,leb93,lr95,lr97,bur98,bn02,mil05,bdl07,lr10,bj16,wan20} on the quantitative unique continuation for Laplacian are known, yet little is known about that for fractional Laplacians: see \cite{mil06,mil12,rul17} and the references there within. Here we present a global Carleman estimate for fractional Laplacians without potentials, as a simple corollary of Theorem \ref{thmdilate}.
\begin{proposition}[Global Carleman estimates]\label{2t17}
Let $\M$ be compact manifold without boundary, and $\Omega$ be a non-empty open set. Let $\alpha>0$. Then there exist $C, \lambda_0>0$ such that
\begin{equation}
\|u\|_{H^{\alpha+(\alpha-1)_{+}}}\le e^{C\lambda^{1/\alpha}}(\|(\Delta^\alpha-\lambda^2)u\|_{H^{-\alpha+(\alpha-1)_+}}+\|\mathbbm{1}_\Omega u\|_{L^2})
\end{equation}
for every $\lambda\ge \lambda_0$ and $u\in H^{\alpha+(\alpha-1)_{+}}(\M)$. 
\end{proposition}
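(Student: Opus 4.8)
The plan is to cast the classical global Carleman estimate for $\Delta$ as a control estimate, dilate it with Theorem~\ref{thmdilate} at exponent $\alpha$, and then read off the statement in the ordinary Sobolev scale. On a compact boundaryless manifold, for every nonempty open $\Omega$ one has the classical estimate $\|u\|_{L^2}\le e^{C\lambda}(\|(\Delta-\lambda^2)u\|_{L^2}+\|\mathbbm{1}_\Omega u\|_{L^2})$ for $\lambda\ge\lambda_0$, which is precisely the control estimate \eqref{1l1} for $P=\Delta$, $Q=\mathbbm{1}_\Omega$ with $\gamma=\mu=0$, $M(\lambda)=\lambda e^{C\lambda}$, $m(\lambda)=e^{C\lambda}$ and no compact error (the error being already absorbed by the classical unique continuation of $\Delta$-eigenfunctions). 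Since $\alpha>0=2\gamma$, Theorem~\ref{thmdilate} applies with dilation power $\alpha$; note $\Delta$ is only nonnegative, but the theorem permits $\ker P\neq 0$. The dilation replaces $M(\lambda),m(\lambda)$ by $M(\lambda^{1/\alpha})=\lambda^{1/\alpha}e^{C\lambda^{1/\alpha}}$ and $m(\lambda^{1/\alpha})=e^{C\lambda^{1/\alpha}}$, and since $\Lambda^0_{(\alpha)}=\id$ and every polynomial-in-$\lambda$ prefactor in \eqref{1l2} is swallowed by a slightly larger exponential, this yields
\begin{equation}
\|u\|_{L^2}\le e^{C\lambda^{1/\alpha}}\big(\|(\Delta^\alpha-\lambda^2)u\|_{L^2}+\|\mathbbm{1}_\Omega u\|_{L^2}\big),\qquad\lambda\ge\lambda_0'.
\end{equation}
This is already the asserted estimate at the $(L^2,L^2)$ level with the correct rate; what remains is to produce the indices $\alpha+(\alpha-1)_+$ and $-\alpha+(\alpha-1)_+$.

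Identify $H^{(\alpha)}_s$ with the usual Sobolev space $H^{2\alpha s}$, and recall that $\Delta^\alpha$ is a classical elliptic pseudodifferential operator of order $2\alpha$ (Seeley), so elliptic regularity gives $\|u\|_{H^{2\alpha}}\lesssim\|(\Delta^\alpha-\lambda^2)u\|_{L^2}+(1+\lambda^2)\|u\|_{L^2}$; together with the previous display this boosts the left side to $\|u\|_{H^{2\alpha}}$ while only enlarging the constant to $e^{C\lambda^{1/\alpha}}$. To drop both sides by $2\alpha-(\alpha+(\alpha-1)_+)=\min(1,\alpha)$ derivatives, put $c=\min(1,\alpha)/(2\alpha)$ and apply the resulting estimate — which is valid for every nonempty open set, in particular for some $\tilde\Omega\Subset\Omega$ — to $(1+\Delta^\alpha)^{-c}u$ in place of $u$. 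Since $(1+\Delta^\alpha)^{-c}$ commutes with $\Delta^\alpha-\lambda^2$, one arrives at
\begin{equation}
\|u\|_{H^{\alpha+(\alpha-1)_+}}\le e^{C\lambda^{1/\alpha}}\Big(\|(\Delta^\alpha-\lambda^2)u\|_{H^{-\alpha+(\alpha-1)_+}}+\big\|\mathbbm{1}_{\tilde\Omega}(1+\Delta^\alpha)^{-c}u\big\|_{L^2}\Big).
\end{equation}

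The last term is not $\|\mathbbm{1}_\Omega u\|_{L^2}$ because $(1+\Delta^\alpha)^{-c}$ is nonlocal, and repairing this is the main obstacle. Since $(1+\Delta^\alpha)^{-c}$ has negative order it is pseudolocal, so with $\chi\in C_c^\infty(\Omega)$ equal to $1$ near $\overline{\tilde\Omega}$ one obtains $\|\mathbbm{1}_{\tilde\Omega}(1+\Delta^\alpha)^{-c}u\|_{L^2}\lesssim\|\mathbbm{1}_\Omega u\|_{L^2}+C_N\|\Lambda^{-N}u\|_{L^2}$, the smoothing remainder arising from the disjoint supports of $\tilde\Omega$ and $\operatorname{supp}(1-\chi)$. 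It then remains to absorb $e^{C\lambda^{1/\alpha}}C_N\|\Lambda^{-N}u\|_{L^2}$ back into the inequality without spoiling the explicit rate. I would do this by a spectral truncation $u=\Pi_{\le R}u+\Pi_{>R}u$ with $R\sim e^{c\lambda^{1/\alpha}}$: the high-frequency piece is absorbed into the left side, the part of $\Pi_{\le R}u$ away from the resonance is inverted through the gap $|\rho_j^{2\alpha}-\lambda^2|$ and controlled by the equation term, and the near-resonant spectral cluster — where $\rho_j^{2\alpha}=\lambda^2+O(1)$, so the $\Delta$-eigenvalues $\rho_j^2$ are $O(\lambda^{2/\alpha})$ — is controlled by the classical Lebeau--Robbiano spectral inequality for $\Delta$, which delivers exactly the factor $e^{C\lambda^{1/\alpha}}$. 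Here ``without potential'' is essential: the eigenfunctions of $\Delta^\alpha$ are precisely those of $\Delta$, so the $\Delta$-spectral inequality is available. Carrying out this last absorption with an \emph{explicit} constant (a soft compactness argument would give only an implicit one) is the one genuinely delicate point; recasting the classical estimate, invoking Theorem~\ref{thmdilate}, and the elliptic bookkeeping are all routine.
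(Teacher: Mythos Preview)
Your argument is viable but takes a substantially longer route than the paper's, and the difference is instructive. You invoke Theorem~\ref{thmdilate} with $\gamma=\mu=0$, which lands you at the $(L^2,L^2)$ Carleman estimate for $\Delta^\alpha$; you then have to shift the right-hand side down to $H^{-\alpha+(\alpha-1)_+}$, and since $(1+\Delta^\alpha)^{-c}$ does not commute with $\mathbbm{1}_\Omega$ this generates the smoothing remainder $e^{C\lambda^{1/\alpha}}\|\Lambda^{-N}u\|$ that you then propose to absorb by a spectral-truncation/Lebeau--Robbiano argument. That final absorption can indeed be made to work (the exponential factors only stack to another $e^{C\lambda^{1/\alpha}}$), but as you yourself note it is the delicate step and you have only sketched it.

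The paper avoids this entire shift-and-absorb manoeuvre by exploiting the free parameter $\gamma$ in Theorem~\ref{thmdilate}. It starts instead from the $H^{-1}$ form of the classical Carleman estimate, $\|u\|_{L^2}\le e^{C\lambda}(\|(\Delta-\lambda^2)u\|_{H^{-1}}+\|\mathbbm{1}_\Omega u\|_{L^2})$, and reads this as a control estimate with $\mu=0$ and $\gamma=\tfrac12$ when $\alpha\ge 1$ (so $\alpha\ge 2\gamma$ is satisfied), or with $\gamma=\tfrac{\alpha}{2}$ when $\alpha\in(0,1)$ (so $\alpha=2\gamma$ exactly). After dilation the $\Lambda_{(\alpha)}^{(\mu-\gamma)/\alpha}$ factor puts $(\Delta^\alpha-\lambda^2)u$ directly into $H^{-1}$ (respectively $H^{-\alpha}$), which is precisely the target index $-\alpha+(\alpha-1)_+$, while the observation term remains $\|\mathbbm{1}_\Omega u\|_{L^2}$ untouched. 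One then only has to boost the \emph{left} side by elliptic regularity for $\Delta^\alpha$, which introduces no commutators with $\mathbbm{1}_\Omega$ and no error to absorb. In short: choosing $\gamma>0$ so that the Sobolev index comes out right after dilation is the trick you are missing; it converts your ``genuinely delicate point'' into two lines.
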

\begin{remark}\label{2t20}
This result is non-trivial, even assuming known results in semiclassical analysis. Indeed, $(h^{2}\Delta)^\alpha$ is not a semiclassical pseudodifferential operator when $\alpha \notin\mathbb{N}$, and standard semiclassical tools, such as elliptic parametrices and the Gårding inequality, fail to yield the desired Carleman estimates without careful separation of the singularity at $0$ of the semiclassical symbol $\abs{\xi}^{2\alpha}$. When $\alpha>1$, a possible alternative direction of proof may follow from the wavepacket conditions in \cite{mil12}. 
\end{remark}
\begin{proof}
1. Consider the standard Carleman estimate from \cite[Proposition 3.2]{bur20} (see also \cite{leb93,bur98,bj16,wan20}): for each $u\in H^1$, 
\begin{equation}\label{2l22}
\|u\|_{L^2}\le e^{C\lambda}(\|(\Delta-\lambda^2)u\|_{H^{-1}}+\|\mathbbm{1}_\Omega u\|_{L^2}). 
\end{equation}
When $\alpha\ge 1$, apply Theorem \ref{thmdilate} of dilation with $\mu=0$, $\gamma=\frac{1}{2}$, $e(\lambda)=0$, $M=m=e^{C\lambda}$ to see $\alpha\ge 2\gamma$ and
\begin{equation}
\|u\|_{L^2}\le e^{C\lambda^{1/\alpha}}(\|(\Delta^\alpha-\lambda^2)u\|_{H^{-1}}+\|\mathbbm{1}_\Omega u\|_{L^2}).
\end{equation}
Note we used that polynomial growth is bounded by exponential growth. Now combine this with
\begin{equation}
\|u\|_{H^{2\alpha-1}}\le C\|\Delta^{\alpha}u\|_{H^{-1}}+C\|u\|_{L^2}\le C\|(\Delta^{\alpha}-\lambda^2)u\|_{H^{-1}}+(1+C\lambda^2)\|u\|_{L^2},
\end{equation}
to observe
\begin{equation}
\|u\|_{H^{2\alpha-1}}\le e^{C\lambda^{1/\alpha}}(\|(\Delta^\alpha-\lambda^2)u\|_{H^{-1}}+\|\mathbbm{1}_\Omega u\|_{L^2}).
\end{equation}

2. When $\alpha\in (0,1)$, note \eqref{2l22} implies
\begin{equation}
\|u\|_{L^2}\le e^{C\lambda}(\|(\Delta-\lambda^2)u\|_{H^{-\alpha}}+\|\mathbbm{1}_\Omega u\|_{L^2}). 
\end{equation}
Now apply Theorem \ref{thmdilate} of dilation with $\mu=0$, $\gamma=\frac{\alpha}{2}$, $e(\lambda)=0$, $M=m=e^{C\lambda}$. Note the key fact $\alpha=2\gamma$ allows us the use of Theorem \ref{thmdilate}. We now have
\begin{equation}
\|u\|_{L^2}\le e^{C\lambda^{1/\alpha}}(\|(\Delta^\alpha-\lambda^2)u\|_{H^{-\alpha}}+\|\mathbbm{1}_\Omega u\|_{L^2}).
\end{equation}
Combining this with 
\begin{equation}
\|u\|_{H^{\alpha}}\le C\|\Delta^{\alpha}u\|_{H^{-\alpha}}+C\|u\|_{L^2}\le C\|(\Delta^{\alpha}-\lambda^2)u\|_{H^{-\alpha}}+(1+C\lambda^2)\|u\|_{L^2},
\end{equation}
gives the desired inequality.
\end{proof}

On another hand, the wave equation has solutions concentrating near and propagating along Hamiltonian flows (geodesic flows in $T^*\M$). To be able to observe such solutions, we must observe at least one point of every geodesic. This leads to the definition of the geometric control condition:
\begin{definition}[Geometric control condition]
We say an open set $\Omega\subset \M$ satisfies the geometric control condition (for $\Delta$), if there is $T>0$ such that every unit-speed geodesic of length $T>0$ has a nonempty intersection with $\Omega$. 
\end{definition}
In \cite{rt74,blr92,bg97}, it was shown that the exact observability of the wave equation is equivalent to the geometric control condition. The exact observability of the wave equation from $\Omega$ open is equivalent to the propagation estimate, a stationary control estimate of type
\begin{equation}
\|u\|_{L^2}\le C\lambda^{-1}\|(\Delta-\lambda^2)u\|_{L^2}+C\|\mathbbm{1}_\Omega u\|_{L^2}. 
\end{equation}
For observability from time dependent $\Omega$ and its consequences for energy decay see \cite{lrltt17, kle22b, kle23}. It is natural to ask if the propagation estimate still holds for systems other than waves, for example, linearised water waves. This is to say whether we can replace the Laplacian $\Delta$ by the fractional Laplacian $\Delta^\alpha$ for $\alpha>0$. When $\alpha\notin\mathbb{N}$, $\Delta^\alpha$ is not a local operator, and will move the information of $u$ in and out of the support of $u$: this is the complicated part of the problem, as discussed in Remark \ref{2t20}. Here we present an optimal propagation estimate for fractional Laplacian:
\begin{proposition}[Optimal geometric propagation]\label{2t16}
Let $\M$ be a smooth manifold without boundary. Let $\Omega$ satisfy the geometric control condition (for $\Delta$). Then for $\alpha\in (0,\infty)$, there exists $C>0$ such that uniformly for $u\in H^{2\alpha}(\M)$ and large real $\lambda$
\begin{equation}\label{2l16}
\|u\|_{L^2}\le C\lambda^{-2+\frac{1}{\alpha}}\|(\Delta^\alpha-\lambda^2)u\|_{L^2}+C\|\mathbbm{1}_\Omega u\|_{L^2}.
\end{equation}
This estimate is optimal in the asymptotics of both terms when $\Omega$ is not dense in $\M$. 
\end{proposition}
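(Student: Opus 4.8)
The target estimate \eqref{2l16} is, after the semiclassical rescaling $h=\lambda^{-1/\alpha}$ (so $\lambda^2=h^{-2\alpha}$), precisely the geometric propagation estimate for $(h^2\Delta)^\alpha-1$, whose Hamilton flow on $\{|\xi|=1\}$ is the geodesic flow up to a time reparametrisation; as Remark~\ref{2t20} explains, a direct semiclassical proof is obstructed because $(h^2\Delta)^\alpha$ is not a pseudodifferential operator and $|\xi|^{2\alpha}$ is singular at $\xi=0$. The plan is therefore to bootstrap from the wave case through the dilation Theorem~\ref{thmdilate}, by induction on $\alpha$. The input is the wave propagation estimate: since $\Omega$ satisfies the geometric control condition, the wave equation is exactly observable from $\Omega$ \cite{rt74,blr92,bg97}, equivalently there are $C,\lambda_0>0$ with
\begin{equation}
\|u\|_{L^2}\le C\lambda^{-1}\|(\Delta-\lambda^2)u\|_{L^2}+C\|\mathbbm{1}_\Omega u\|_{L^2},\qquad\lambda\ge\lambda_0,\ u\in H^2(\M),
\end{equation}
which is exactly the control estimate \eqref{1l1} for $P=\Delta$, $Q=\mathbbm{1}_\Omega\in\mathcal{L}(L^2,L^2(\Omega))$, $\gamma=\mu=0$, $M\equiv m\equiv C$ and no error term.

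For the base case $\alpha\in(0,\tfrac32]$ I would apply Theorem~\ref{thmdilate} to these data; the condition $\alpha\ge 0=2\gamma$ holds. Since $\mu=\gamma=0$, every $\Lambda$-weight in \eqref{1l2} is the identity, the observation term stays $C\|\mathbbm{1}_\Omega u\|_{L^2}$, there is no error, and the coefficient of $\langle\lambda\rangle^{-1}\|(\Delta^\alpha-\lambda^2)u\|_{L^2}$ is $C\bigl(\lambda^{1/\alpha-1}+\lambda^{1+(-2+2(1-\alpha)_+)/\alpha}\bigr)$; the second exponent is $-1$ for $\alpha\le1$ and $1-2/\alpha$ for $1\le\alpha\le\tfrac32$, and in both ranges it does not exceed $1/\alpha-1$ (the second case amounting exactly to $\alpha\le\tfrac32$), so the coefficient is $\le C\lambda^{1/\alpha-1}$ and \eqref{1l2} becomes \eqref{2l16}. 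For $\alpha>\tfrac32$ this direct route degrades to the strictly worse coefficient $C\lambda^{-2/\alpha}$, so I would instead induct. Assuming \eqref{2l16} for $\beta:=\alpha-\tfrac12\in(1,\alpha)$, I would read it as the control estimate for $P'=\Delta^\beta$, $Q=\mathbbm{1}_\Omega$, $\gamma=\mu=0$, with $M(\lambda)=C\lambda^{1/\beta-1}$, $m\equiv C$, no error (legitimate since Theorem~\ref{thmdilate} only invokes the displayed inequality, not the normalisation $M\ge C$), and apply Theorem~\ref{thmdilate} with dilation power $a:=\alpha/\beta>1$, so $(P')^a=\Delta^\alpha$ and $a\ge0=2\gamma$. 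Then $(1-a)_+=0$, and using $1/a=\beta/\alpha$ together with $\beta=\alpha-\tfrac12$ the two contributions to the coefficient collapse to the same power,
\begin{equation}
M(\lambda^{1/a})\,\lambda^{1/a-1}=C\lambda^{(1/\beta-1)\beta/\alpha+\beta/\alpha-1}=C\lambda^{1/\alpha-1},\qquad \lambda^{1-2/a}=\lambda^{1-2\beta/\alpha}=\lambda^{1/\alpha-1},
\end{equation}
with observation term again $C\|\mathbbm{1}_\Omega u\|_{L^2}$ and no error; this is \eqref{2l16} for $\alpha$. Subtracting $\tfrac12$ finitely many times lands in $(0,\tfrac32]$, so the induction closes and \eqref{2l16} holds for all $\alpha>0$.

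For optimality, assuming $\Omega$ is not dense, I would fix an open ball $B\subset\M\setminus\overline{\Omega}$, a function $0\neq\chi\in C_c^\infty(B)$, and on a neighbourhood of $\operatorname{supp}\chi$ a solution $\psi$ of the eikonal equation $|d\psi|_g\equiv1$, and form the coherent state $u_\mu=\chi e^{i\mu\psi}$ extended by $0$, for which $\|u_\mu\|_{L^2}\sim1$, $\mathbbm{1}_\Omega u_\mu=0$ and $\|(\Delta-\mu^2)u_\mu\|_{L^2}=O(\mu)$. The spectral measure $dE_\rho(u_\mu)$ concentrates in $\{|\rho-\mu|\lesssim1\}$ with Schwartz tails, so bounding $|\rho^{2\alpha}-\mu^{2\alpha}|\lesssim\mu^{2\alpha-2}|\rho^2-\mu^2|$ on $\{\rho\le2\mu\}$ and discarding the rapidly decaying part on $\{\rho>2\mu\}$ gives $\|(\Delta^\alpha-\mu^{2\alpha})u_\mu\|_{L^2}\lesssim\mu^{2\alpha-1}\|u_\mu\|_{L^2}$; with $\lambda=\mu^\alpha$ this reads $\|(\Delta^\alpha-\lambda^2)u_\mu\|_{L^2}\lesssim\lambda^{2-1/\alpha}\|u_\mu\|_{L^2}$ while $\mathbbm{1}_\Omega u_\mu=0$, so the exponent $-2+1/\alpha$ in the first term of \eqref{2l16} cannot be improved. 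Testing instead on a Laplace eigenfunction $\phi$, which is also an eigenfunction of $\Delta^\alpha$, makes the first term vanish and forces $\|\mathbbm{1}_\Omega\phi\|_{L^2}\ge c\|\phi\|_{L^2}$, so the constant on the second term cannot be replaced by $o(1)$.

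The main obstacle is the inductive step: the direct dilation from $\Delta$ is sharp only up to $\alpha=\tfrac32$, and for larger $\alpha$ one must re-dilate from $\Delta^{\alpha-1/2}$, the half-shift being exactly what forces the two competing exponents $1/a-1$ and $1-2/a$ in \eqref{1l2} to coincide; choosing any smaller base power (e.g.\ $\alpha/2$) fails. A secondary technical point is the spectral-concentration bound for $u_\mu$ in the optimality argument, where the non-locality of $\Delta^\alpha$ — specifically the tail of $dE_\rho(u_\mu)$ beyond $\rho\sim\mu$ — has to be controlled by hand rather than by an elliptic parametrix.
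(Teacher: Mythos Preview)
Your proof is correct, but it takes a genuinely different route from the paper in both parts.

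For the estimate, the paper simply splits at $\alpha=1$: it applies Theorem~\ref{thmdilate} (with $\gamma=\mu=0$, $M=m=C$) for $\alpha\le 1$, and for $\alpha>1$ it just quotes the convex dilation result \cite[Theorem~3.5]{mil12}. You instead push the direct application of Theorem~\ref{thmdilate} to the sharp range $\alpha\le\tfrac32$, and for $\alpha>\tfrac32$ you bootstrap by re-dilating from $\Delta^{\alpha-1/2}$, iterating down into $(1,\tfrac32]$. Your argument is self-contained (it avoids the external reference to Miller), but it does rely on the observation that the \emph{proof} of Theorem~\ref{thmdilate} (Proposition~\ref{3t4}) never uses the normalisation $M(\lambda)\ge C$ which is formally part of the definition of a control estimate; this is correct---$M$ enters linearly in step~6 of that proof, and the absorption of the $(\id-\Pi_\sim)$-contribution only needs $m\ge C$, which you have---but it is worth flagging explicitly. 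The paper's route is shorter; yours shows that Theorem~\ref{thmdilate} alone, applied iteratively, already captures the full range.

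For optimality, the paper works on $\mathbb{S}^1$ with $\chi e_\lambda$ and uses the pseudodifferential commutator $[\Delta^\alpha,\chi]\in\Psi^{2\alpha-1}$ directly, then asserts that the argument extends to general compact manifolds. Your coherent-state construction $\chi e^{i\mu\psi}$ with eikonal $\psi$ is more elaborate (you need the spectral-tail argument to handle the nonlocality of $\Delta^\alpha$), but it has the advantage of working immediately on any manifold without appealing to a lower bound on $\|\chi\phi_k\|$---a bound which is not automatic for general Laplace eigenfunctions and which the paper's generalisation remark glosses over. Both approaches give the same conclusion for the second term via eigenfunctions.
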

With this propagation estimate, we recover a Schrödinger observability result for fractional Laplacian without potential. Note a stronger version is proved in \cite[Theorem 1]{mac21} using semiclassical defect measures. 
\begin{corollary}[\cite{mac21}]
When $\alpha\in [\frac{1}{2}, \infty)$, the Schrödinger group $e^{it\Delta^\alpha}$ is exactly observable from $\Omega$, an open set that satisfies the geometric control condition (for $\Delta$). When $\alpha\in (0, \frac{1}{2})$, the Schrödinger group $e^{it\Delta^\alpha}$ is never exactly observable from any $\Omega$ that is not dense in $\M$.
\end{corollary}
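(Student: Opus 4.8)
The plan is to deduce the corollary from Proposition~\ref{2t16} via the resolvent characterisation of Schrödinger observability recorded in \eqref{2l19} (from \cite{mil05,mil12}), applied to $P=\Delta^\alpha$ and the bounded observation operator $Q=\mathbbm{1}_\Omega\in\mathcal{L}(L^2(\M),L^2(\M))$, so that $\gamma=0$; here $\M$ is taken compact so that $\Delta^\alpha$ has compact resolvent and the characterisation applies. It then suffices to decide, for each $\alpha$, whether there is a $\lambda$-independent constant $C$ with $\|u\|_{L^2}\le C\|(\Delta^\alpha-\lambda^2)u\|_{L^2}+C\|\mathbbm{1}_\Omega u\|_{L^2}$ for all $\lambda\ge 0$ and $u\in H^{2\alpha}(\M)$.

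For $\alpha\ge\frac12$ one has $-2+\frac1\alpha\le 0$, so the coefficient $\lambda^{-2+1/\alpha}$ in \eqref{2l16} stays bounded for $\lambda\ge\lambda_0$, and Proposition~\ref{2t16} already delivers the sought estimate in the high-frequency range. To extend it to all $\lambda\ge 0$ I would run the usual compactness–uniqueness argument on the compact interval $[0,\lambda_0]$: a failure produces $\lambda_n\to\lambda_\ast\in[0,\lambda_0]$ and $u_n$ with $\|u_n\|_{L^2}=1$, $\|(\Delta^\alpha-\lambda_n^2)u_n\|_{L^2}\to 0$ and $\|\mathbbm{1}_\Omega u_n\|_{L^2}\to 0$; boundedness in $H^{2\alpha}$ and the compact embedding $H^{2\alpha}\hookrightarrow L^2$ yield a limit $u$ with $\|u\|_{L^2}=1$, $(\Delta^\alpha-\lambda_\ast^2)u=0$ and $u|_\Omega\equiv 0$. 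Since $\Delta^\alpha$ is defined by the functional calculus, its $\lambda_\ast^2$-eigenspace coincides with $\ker(\Delta-\lambda_\ast^{2/\alpha})$ (and with the constants when $\lambda_\ast=0$), so unique continuation for the second-order elliptic operator $\Delta$ — e.g. Proposition~\ref{2t17}, or classically \cite{car39} — forces $u\equiv 0$, a contradiction. Combining the two ranges of $\lambda$ gives \eqref{2l19}, hence exact observability from $\Omega$.

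For $\alpha\in(0,\frac12)$ with $\Omega$ not dense the relevant sign flips: $-2+\frac1\alpha>0$, equivalently $2-\frac1\alpha<0$. Here I would invoke the optimality clause of Proposition~\ref{2t16}, which (via coherent states concentrated at a point $x_0\notin\overline{\Omega}$, essentially supported in a small ball disjoint from $\Omega$) supplies $\lambda_n\to\infty$ and normalised $u_n$ with $\|\mathbbm{1}_\Omega u_n\|_{L^2}\to 0$ and $\lambda_n^{-2+1/\alpha}\|(\Delta^\alpha-\lambda_n^2)u_n\|_{L^2}=O(1)$; because $2-\frac1\alpha<0$, this last bound forces $\|(\Delta^\alpha-\lambda_n^2)u_n\|_{L^2}=O(\lambda_n^{2-1/\alpha})\to 0$ as well. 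Such a sequence rules out any $\lambda$-independent constant $C$ in the estimate above, so by \eqref{2l19} the group $e^{it\Delta^\alpha}$ is not exactly observable from $\Omega$.

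Since Proposition~\ref{2t16} carries the analytic weight, the residual difficulties are organisational. In the positive case the main (but mild) obstacle is ruling out eigenfunctions of $\Delta^\alpha$ vanishing on an open set; this is dispatched cleanly by the functional-calculus identification of its eigenspaces with those of $\Delta$ together with classical unique continuation. In the negative case one must verify that the extremal test functions furnished by the optimality of Proposition~\ref{2t16} are genuine $L^2$-quasimodes for $\Delta^\alpha-\lambda_n^2$ — that is, that $\|(\Delta^\alpha-\lambda_n^2)u_n\|_{L^2}\to 0$ — which is exactly where the inequality $\alpha<\frac12$ enters, through the sign of $2-\frac1\alpha$.
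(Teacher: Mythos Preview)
Your proof is correct and follows essentially the same route as the paper: both derive the resolvent condition \eqref{2l19} from Proposition~\ref{2t16} when $\alpha\ge\tfrac12$, and both invoke its optimality clause when $\alpha<\tfrac12$. The only cosmetic differences are that the paper packages your compactness--uniqueness step and the eigenspace identification into a citation of \cite[Theorem~2.4]{mil12} together with Lemma~\ref{uniqelemma}(1),(3), and that the extremisers in the proof of Proposition~\ref{2t16} are cutoff eigenfunctions $\chi e_\lambda$ (with $\chi\mathbbm{1}_\Omega\equiv 0$, so in fact $\mathbbm{1}_\Omega u_n=0$ exactly) rather than coherent states --- your parenthetical is slightly off, but since you only invoke the optimality as a black box this does not affect the argument.
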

\begin{proof}
When $\alpha\ge \frac{1}{2}$, we have $-2+\frac{1}{\alpha}\le 0$. Thus for large real $\lambda$, 
\begin{equation}
\|u\|_{L^2}\le C\|(\Delta^\alpha-\lambda^2)u\|+C\|\mathbbm{1}_\Omega u\|. 
\end{equation}
On the other hand, Lemma \ref{uniqelemma}(1) and (3) implies $\operatorname{UCP}_{\Delta^\alpha, \mathbbm{1}_{\Omega}}$ holds. Now apply \cite[Theorem 2.4]{mil12} to conclude the desired observability. When $0<\alpha<\frac{1}{2}$, the optimality of \eqref{2l16} implies the failure of observability.
\end{proof}

\begin{proof}[Proof of Proposition \ref{2t16}]
1. We began with the classical propagation of singularity result
\begin{equation}
\|u\|_{L^2}\le C\lambda^{-1}\|(\Delta-\lambda^2)u\|_{L^2}+C\|\mathbbm{1}_\Omega u\|_{L^2}
\end{equation}
uniformly for large real $\lambda$. See further in \cite{rt74,blr92} and \cite[Theorem E.47]{dz19}. When $\alpha\le 1$, we apply Theorem \ref{thmdilate} with $M=m=C$, $\gamma=\mu=0$ to obtain \eqref{2l16}. When $\alpha>1$, we apply \cite[Theorem 3.5]{mil12}.

2. We now show the estimate is optimal. Let $\M=\mathbb{S}^1=[0,2\pi]_x$ and $\Omega=(0, 1)$. Consider eigenfunctions $e_\lambda=\exp(i\lambda^{1/\alpha} x)$ to the Laplacian with $\Delta e_\lambda=\lambda^{2/\alpha}e_\lambda$, $\lambda^{1/\alpha}\in\mathbb{N}$. By the functional calculus, $\Delta^\alpha e_\lambda=\lambda^2 e_\lambda$. Moreover, note $\|u\|_{L^2}=\sqrt{2\pi}$ and $\|\mathbbm{1}_\Omega u\|_{L^2}\equiv 1$. This implies the $C\|\mathbbm{1}_\Omega u\|_{L^2}$ is optimal. On another hand, fix $\chi\in C^\infty$ such that $\chi \mathbbm{1}_\Omega\equiv 0$. Now estimate
\begin{equation}
\Delta^\alpha (\chi e_\lambda)=\chi\Delta^\alpha e_\lambda+[\Delta^\alpha, \chi]e_\lambda. 
\end{equation}
Note that $\Delta^\alpha$ is a classical pseudodifferential operator of order $2\alpha$ and $[\Delta^\alpha,\chi]\in \Psi^{2\alpha-1}$. This implies
\begin{equation}\label{2l17}
\|(\Delta^\alpha-\lambda^2)(\chi e_\lambda)\|_{L^2}\le C\|[\Delta^\alpha, \chi]e_\lambda\|_{L^2}\le C\|e_\lambda\|_{H^{2\alpha-1}}=C\lambda^{2-\frac{1}{\alpha}}\|e_\lambda\|_{L^2}\le C2\pi \lambda^{2-\frac{1}{\alpha}}\|\chi e_\lambda\|_{L^2}.
\end{equation}
Plugging $\chi e_{\lambda}$ into \eqref{2l16} gives
\begin{equation}
\|\chi e_\lambda\|_{L^2}\le C\lambda^{-(2-\frac{1}{\alpha})}\|(\Delta^\alpha-\lambda^2)(\chi e_\lambda)\|_{L^2}.
\end{equation}
Compare it with \eqref{2l17} to observe the optimality of $C\lambda^{-(2-\frac{1}{\alpha})}\|(\Delta^\alpha-\lambda^2)u\|_{L^2}$. This argument generalises the case $\alpha=\frac{1}{2}$ in the remark after \cite[Theorem 2]{amw23}. This argument works on any compact manifold without boundary, as long as $\Omega$ is not dense in $\M$, which implies there is a cutoff $\chi,$ such that $\chi\mathbbm{1}_\Omega\equiv 0$.
\end{proof}

\subsection{Singular damping on manifolds}\label{s2-4}
In this section, let $\M$ be a $d$-dimensional compact manifold. Let $P=\Delta\ge 0$ and $H=L^2(\M)$. Then $H_s=H^{2s}(\M)$, the $L^2$-Sobolev space with $2s$ weak derivatives. We will consider the damping of the form $Q^*Q=W$, where $Qu=\sqrt{W}u$ for some potential $W\in L^p(\M)$ for $p\in (d,\infty]$. Then the semigroup $e^{t\Ac}$ is naturally the solution operator where $(u, \partial_t u)=e^{t\Ac}(u_0,u_1)$ solves the initial value problem for the damped wave equation
\begin{equation}
(\partial_t^2+W(x)\partial_t+\Delta)u(x,t)=0, \ u(x,0)=u_0(x), \ \partial_t u(x, 0)=u_1(x),
\end{equation}
where the damping $W\in L^p$. When $W\in L^\infty$, there are many results known under different regularity, geometric and dynamical assumptions: see \cite{rt74,bg97,bur98,bh07,aln14,csvw14,bc15,kle19b,dk20,jin20,sun22, kle22}, and \cite{blr92,cv02,nis13,wan21,wan21b} for the boundary cases. However when $W\notin L^\infty$, very few results are known in \cite{cc01,fst18,fhs20,kw22}. Here we use Theorem \ref{thmcontrol} to give some new decay results when $W\notin L^\infty$. 
\begin{proposition}[$L^p$-damping]\label{lpdecay}
Let $\M$ be a $d$-dimensional compact manifold without boundary. Fix $p\in [\frac{d}{2},\infty]$ such that $p>1$. Let $W$ be $L^p(\M)$, then $Q\in\mathcal{L}(H^{\frac{d}{2p}}, L^2)$ is defined via $Qu=\sqrt{W}u$. Let $\Omega_{\e}$ be the interior of $\{W\ge\epsilon\}$. Then the following are true: if for some $\epsilon>0$, 
\begin{enumerate}[wide]
\item $\Omega_{\e}$ satisfies the geometric control condition, then 
\begin{equation}
\|e^{t\Ac}\|_{\mathcal{D}\rightarrow \Hcd}\le C\langle t\rangle^{-\frac{p}{d}}.
\end{equation}
\item the Schr\"odinger equation is exactly observable from $\Omega_{\e}$, then
\begin{equation}
\|e^{t\Ac}\|_{\mathcal{D}\rightarrow \Hcd}\leq C\<t\>^{-\frac{1}{2+\frac{d}{p}}}.
\end{equation}
\item $\Omega_{\e}$ is open and nonempty, then
 \begin{equation}
\|e^{t\Ac}\|_{\mathcal{D}\rightarrow \Hcd} \leq \frac{C}{\log(2+t)}.
\end{equation}
\end{enumerate}
When $d=1$ and $W$ is $L^1$, the above decay rates hold with $\frac{p}{d}$ replaced by $1-0$.
\end{proposition}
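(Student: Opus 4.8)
The plan is to reduce each of the three parts to the control estimate \eqref{1l1} for $P=\Delta$ and the damping operator $Q$, $Qu=\sqrt Wu$, with the flexible parameter $\mu=0$, and then to invoke Theorem \ref{thmcontrol}(1) followed by Lemma \ref{3t7}. First one records the bookkeeping: setting $\gamma=\tfrac d{4p}$, H\"older's inequality and the Sobolev embedding $H^{2\gamma}(\M)\hookrightarrow L^{2p/(p-1)}(\M)$ give $\|Qu\|_{L^2}^2=\int_\M W|u|^2\le \|W\|_{L^p}\|u\|_{L^{2p/(p-1)}}^2\le C\|u\|_{H^{2\gamma}}^2$, so $Q\in\mathcal L(H_\gamma,L^2)$ with $H_\gamma=H^{2\gamma}(\M)$, and the assumption $\tfrac d2\le p\le\infty$ is precisely what makes $\gamma\in[0,\tfrac12]$. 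Since $W\ge\epsilon$ a.e.\ on $\Omega_\epsilon$ there is the pointwise domination $\mathbbm{1}_{\Omega_\epsilon}|u|\le\epsilon^{-1/2}\sqrt W|u|$, hence $\|\mathbbm{1}_{\Omega_\epsilon}u\|_{L^2}\le\epsilon^{-1/2}\|Qu\|_{L^2}$; this is the device by which each hypothesis, all of which only involve the \emph{bounded} operator $\mathbbm{1}_{\Omega_\epsilon}$, is transferred to the genuinely unbounded $Q$. It also yields, via Lemma \ref{uniqelemma}, that $\operatorname{UCP}_{P,Q}$ holds in all three cases: the observability or Carleman input first gives $\operatorname{UCP}_{P,\mathbbm{1}_{\Omega_\epsilon}}$, i.e.\ unique continuation for $\Delta$ off the open set $\Omega_\epsilon$, which the domination upgrades.

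Next I would write down the stationary estimate available for $\mathbbm{1}_{\Omega_\epsilon}$ in each case and substitute $\mathbbm{1}_{\Omega_\epsilon}\mapsto\epsilon^{-1/2}Q$: in case (1) the geometric control condition for $\Omega_\epsilon$ yields the wave propagation (semiclassical resolvent) estimate $\|u\|_{L^2}\le C\langle\lambda\rangle^{-1}\|(\Delta-\lambda^2)u\|_{L^2}+C\|\mathbbm{1}_{\Omega_\epsilon}u\|_{L^2}$ for large $\lambda$; in case (2), Schr\"odinger observability from $\Omega_\epsilon$ is, by \eqref{2l19}, equivalent to $\|u\|_{L^2}\le C\|(\Delta-\lambda^2)u\|_{L^2}+C\|\mathbbm{1}_{\Omega_\epsilon}u\|_{L^2}$; in case (3), $\Omega_\epsilon$ nonempty and open gives the global Carleman estimate \eqref{2l22}, $\|u\|_{L^2}\le e^{C\lambda}\bigl(\|(\Delta-\lambda^2)u\|_{H^{-1}}+\|\mathbbm{1}_{\Omega_\epsilon}u\|_{L^2}\bigr)$. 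Case (3) is then immediate: since $\gamma\le\tfrac12$ one has $\|(\Delta-\lambda^2)u\|_{H^{-1}}\le\|(\Delta-\lambda^2)\Lambda^{-\gamma}u\|_{L^2}$, so after the substitution and absorbing one factor of $\langle\lambda\rangle$ into the exponential we get \eqref{1l1} with $\mu=0$ and $M(\lambda)=m(\lambda)=e^{C\lambda}$.

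Cases (1) and (2) require a frequency localisation, because the substituted estimates control the strong norm $\|(\Delta-\lambda^2)u\|_{L^2}$ whereas \eqref{1l1} with $\mu=0$ only asks for $\|(\Delta-\lambda^2)\Lambda^{-\gamma}u\|_{L^2}$, and the two differ by the high-frequency-unbounded weight $(1+\Delta)^{\gamma}$. Fix $\chi\in C_c^\infty((\tfrac18,8))$ equal to $1$ on $[\tfrac14,4]$ and put $\chi_\lambda=\chi(\Delta/\lambda^2)$. Writing $u=\chi_\lambda u+(1-\chi_\lambda)u$ and applying the substituted estimate to $\chi_\lambda u$, one uses two elementary spectral-side bounds: $\|\chi_\lambda(\Delta-\lambda^2)u\|_{L^2}\le C\langle\lambda\rangle^{2\gamma}\|(\Delta-\lambda^2)\Lambda^{-\gamma}u\|_{L^2}$, because $(1+\sigma)^\gamma\lesssim\lambda^{2\gamma}$ on $\supp\chi(\cdot/\lambda^2)$; and $\|(1-\chi_\lambda)u\|_{L^2}+\|Q(1-\chi_\lambda)u\|_{L^2}\le C\langle\lambda\rangle^{2\gamma-1}\|(\Delta-\lambda^2)\Lambda^{-\gamma}u\|_{L^2}$, because $(1+\sigma)^{2\gamma}/|\sigma-\lambda^2|\lesssim\langle\lambda\rangle^{4\gamma-2}\le\langle\lambda\rangle^{2\gamma-1}$ off $\supp\chi(\cdot/\lambda^2)$ (here $\gamma\le\tfrac12$ is used), together with the $L^2$-boundedness of $Q\Lambda^{-\gamma}$. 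Combining, one arrives at \eqref{1l1} for $u\in H_{1-\gamma}$ and large $\lambda$ with $\mu=0$, $m(\lambda)\equiv C$, and $M(\lambda)=C\langle\lambda\rangle^{2\gamma}$ in case (1), $M(\lambda)=C\langle\lambda\rangle^{2\gamma+1}$ in case (2) — the extra power in case (2) being exactly the $\langle\lambda\rangle^{-1}$ gain that the wave estimate enjoys and the Schr\"odinger estimate does not.

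It remains to run the machine. Theorem \ref{thmcontrol}(1) with $\mu=0$ gives $\|(\mathcal A+i\lambda)^{-1}\|_{\mathcal L(\mathcal H)}\le C\bigl(M(|\lambda|)^2+m(|\lambda|)^2\bigr)$ for $|\lambda|$ large, hence $\le C\langle\lambda\rangle^{4\gamma}=C\langle\lambda\rangle^{d/p}$ in case (1), $\le C\langle\lambda\rangle^{2+4\gamma}=C\langle\lambda\rangle^{2+d/p}$ in case (2), and $\le e^{C|\lambda|}$ in case (3). Passing to $\Hcd$ and isolating the pole of $\mathcal A$ at $0$ as in Section \ref{s3} (a step needing a separate word when $\gamma=\tfrac12$, i.e.\ $p=\tfrac d2$; cf.\ Remark \ref{3t10}), Lemma \ref{3t7} together with $\operatorname{UCP}_{P,Q}$ converts these resolvent bounds into the asserted decay rates $\langle t\rangle^{-p/d}$, $\langle t\rangle^{-1/(2+d/p)}$ and $1/\log(2+t)$. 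For $d=1$ and $W\in L^1$ one only has $H^s(\M)\hookrightarrow L^\infty(\M)$ for $s>\tfrac12$ and not for $s=\tfrac12$, so $Q\in\mathcal L(H_\gamma,L^2)$ merely for $\gamma>\tfrac14$; performing the above for every such $\gamma$ and letting $\gamma\downarrow\tfrac14$ yields the stated rates with the endpoint value $p/d=1$ replaced by $1-0$. The one point I expect to demand care is the frequency-localisation step: it is elementary, but the precise exponents it produces are what fix the sharp decay rates, and the estimate for $\|Q(1-\chi_\lambda)u\|$ — the damping acting on the frequency tails — relies essentially on $\gamma\le\tfrac12$ and on the mapping property $Q\in\mathcal L(H_\gamma,L^2)$.
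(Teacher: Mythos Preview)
Your proof is correct and produces the same resolvent bounds $\langle\lambda\rangle^{4\gamma}$, $\langle\lambda\rangle^{2+4\gamma}$, $e^{C\lambda}$ and hence the same decay rates as the paper. The approaches diverge chiefly in cases (2) and (3). The paper takes $\mu=\gamma$ there: in case (2) it uses a microlocal elliptic estimate (from \cite{dz19}) to pass from $\|\mathbbm{1}_{\Omega'}u\|$ to $\|\sqrt{W}\Lambda^\gamma u\|$, leaving a compact $\|u\|_{H^{-N}}$ remainder absorbed by Theorem \ref{thmcontrol}; in case (3) it applies the Carleman estimate to $\langle hD\rangle^{2\gamma}u$ rather than to $u$. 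You instead keep $\mu=0$ throughout and resolve the norm mismatch by the purely spectral cutoff $\chi(\Delta/\lambda^2)$, with the off-band piece controlled by the functional-calculus inequality $(1+\sigma)^{2\gamma}/|\sigma-\lambda^2|\lesssim\langle\lambda\rangle^{4\gamma-2}$ on $\supp(1-\chi(\cdot/\lambda^2))$. Your route is more elementary --- it never leaves the functional calculus of $\Delta$ and needs no pseudodifferential tools beyond the input estimates themselves --- while the paper's choice $\mu=\gamma$ showcases the flexibility of that parameter and parallels the other applications in \S\ref{s2}. In case (1) both arguments take $\mu=0$ and are essentially the same, the paper starting from the $H^{-1}_h$ form of the propagation estimate and you from the $L^2$ form.
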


Before proving this result, we introduce normally-$L^p$ damping, a class of functions whose singularities structured near hypersurfaces, and state the analogous results that are comparable to those for $L^p$-damping. 

\begin{definition}[Normally $L^p$-functions]\label{2t14}
Assume the function $W(z)\ge 0$ on $\M$ and is $L^\infty$ on any compact subset of $\M\setminus N$. For $p\in [1,\infty]$, we say $W(z)$ is normally $L^p$ (with respect to $N$) if
\begin{equation}
w(\rho)=\operatorname{esssup}\{W(q,\rho): q\in N\}\in L^{p}(-\delta, \delta).
\end{equation}

\end{definition}
The name comes from the fact that $W$ blows up near $N=\rho^{-1}(0)$ like $w(\rho)$, a function $L^p$-integrable along $\rho$, the fiber variable of the normal bundle to $N$. For $1\le p<q\le \infty$, any normally $L^q$-damping is also normally $L^p$. The class of normally $L^\infty$-damping coincide with $L^\infty(\M)$. 
Note that by \cite[Lemma 2.3]{kw22} if $W$ is normally $L^p$ for $p \in (1,\infty)$ then the multiplier $\sqrt{W}$ is a bounded map from $H^{\frac{1}{2p}}(\M)$ to $L^2(\M)$, and if $W$ is normally $L^1$, then $\sqrt{W}$ is a bounded map from $H^{\frac{1}{2}+}(\M)$ to $L^2(\M)$. 

\begin{proposition}[Normally $L^p$-damping, \cite{kw22}]\label{normallpdecay}
Let $\M$ be a compact manifold without boundary. Fix $p\in (1,\infty)$. Let $W$ be normally $L^p$, then $Q\in\mathcal{L}(H^{\frac{1}{2p}}, L^2)$ is defined via $Qu=\sqrt{W}u$. Let $\Omega_{\e}$ be the interior of $\{W\ge\epsilon\}$. Then the following are true:
\begin{enumerate}[wide]
\item If for some $\epsilon>0$, $\Omega_{\e}$ geometrically controls $\M$, then 
\begin{equation}
\|e^{t\Ac}\|_{\mathcal{D}\rightarrow \Hcd}\le C\<t\>^{-p}.
\end{equation}
\item Assume the Schr\"odinger equation is exactly observable from $\Omega_{\e}$, then
\begin{equation}\label{2l10}
\|e^{t\Ac}\|_{\mathcal{D}\rightarrow \Hcd}\le C\langle t\rangle^{-\frac{1}{2+\frac{1}{p}}}.
\end{equation}
This is exactly \cite[Theorem 3]{kw22}.
\item If for some $\e>0$, $\Omega_{\e}$ is open and nonempty, then
 \begin{equation}
\|e^{t\Ac_Q}\|_{\Dc \ra \Hc} \leq  \frac{C}{\log(2+t)}.
\end{equation}
\end{enumerate}
When $W$ is normally $L^1$, then $\sqrt{W} \in \Lc(H^{1/2+0}, L^2)$ and the above decay rates hold with $p$ replaced by $1-0$. 
\end{proposition}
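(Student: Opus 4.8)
The plan is to reduce all four assertions to Theorem \ref{thmcontrol}(1), by establishing for each dynamical hypothesis a control estimate \eqref{1l1} for $P=\Delta$ and $Q=\sqrt{W}$ with the flexible parameter $\mu=\gamma$, where $\gamma:=\tfrac1{4p}$ so that $H_\gamma=H^{1/2p}$ and, by \cite[Lemma 2.3]{kw22}, $Q=\sqrt{W}\in\mathcal L(H_\gamma,L^2)$. Two elementary facts will be used throughout: the pointwise bound $\mathbbm 1_{\Omega_\e}\le\e^{-1}W$, which yields $\|\mathbbm 1_{\Omega_\e}u\|_{L^2}\le\e^{-1/2}\|Qu\|_{L^2}$ for $u\in H_\gamma$; and the boundedness of $\sqrt{W}\Lambda^{-\gamma}$ on $L^2$, which is again the content of \cite[Lemma 2.3]{kw22}. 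The classical inputs I would use, all for $\Delta$ with the \emph{sharp} cutoff observation, are: if $\Omega_\e$ geometrically controls $\M$, the propagation-of-singularities estimate $\|u\|_{L^2}\le C\langle\lambda\rangle^{-1}\|(\Delta-\lambda^2)u\|_{L^2}+C\|\mathbbm 1_{\Omega_\e}u\|_{L^2}$ (\cite{rt74,blr92}, \cite[Theorem E.47]{dz19}); if the Schrödinger flow is exactly observable from $\Omega_\e$, the resolvent condition \eqref{2l19}, namely $\|u\|_{L^2}\le C\|(\Delta-\lambda^2)u\|_{L^2}+C\|\mathbbm 1_{\Omega_\e}u\|_{L^2}$; and if $\Omega_\e$ is merely nonempty and open, the global Carleman estimate $\|u\|_{L^2}\le e^{C\lambda}(\|(\Delta-\lambda^2)u\|_{L^2}+\|\mathbbm 1_{\Omega_\e}u\|_{L^2})$ of \cite[Proposition 3.2]{bur20}.

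The heart of the argument, and the step I expect to be the main obstacle, is to upgrade the observation from the sharp cutoff $\mathbbm 1_{\Omega_\e}$ to the weighted operator $Q\Lambda^\gamma=\sqrt{W}\Lambda^\gamma$ that appears in \eqref{1l1} when $\mu=\gamma$, at no asymptotic cost --- i.e.\ to replace $\|\mathbbm 1_{\Omega_\e}u\|_{L^2}$ by $C\|\sqrt{W}\Lambda^\gamma u\|_{L^2}$ plus an error absorbable by $C\langle\lambda\rangle^{-1}\|(\Delta-\lambda^2)u\|_{L^2}+C\|\Lambda^{-N}u\|_{L^2}$. The scheme is to split $u$ into the part $\chi(\Delta/\lambda^2)u$ microlocalised to the characteristic shell $\{\rho\approx\lambda\}$ and the complementary elliptic part. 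On the elliptic part $\Delta-\lambda^2$ is invertible with gain, and since $\gamma<\tfrac12$ one recovers enough powers of $\lambda$ (after the multiplier bound $\sqrt{W}\in\mathcal L(H^{1/2p},L^2)$) to dump that contribution into $C\langle\lambda\rangle^{-1}\|(\Delta-\lambda^2)u\|_{L^2}$. On the characteristic shell $\Lambda^{-\gamma}$ acts as multiplication by a symbol of size $\lambda^{-2\gamma}$, so that $\|\mathbbm 1_{\Omega_\e}\chi(\Delta/\lambda^2)u\|_{L^2}\le\e^{-1/2}\|\sqrt{W}\chi(\Delta/\lambda^2)u\|_{L^2}$ is controlled by $\lambda^{-2\gamma}\|\sqrt{W}\Lambda^\gamma u\|_{L^2}\le\|\sqrt{W}\Lambda^\gamma u\|_{L^2}$, up to the commutators $[\sqrt{W},\chi(\Delta/\lambda^2)]$ and $[\sqrt{W},\Lambda^{-\gamma}]$. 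Bounding these two commutators against the rough weight $\sqrt{W}$ is the delicate point: here the normally-$L^p$ structure of $W$ is essential, since it confines the singular set to the hypersurface $N$ with $L^p$ control in the normal fibre and $L^\infty$ control tangentially, which is precisely what makes the relevant commutators bounded on $L^2$ (in the spirit of \cite[Lemma 2.3]{kw22} and the discussion around \cite[Remark 2.29]{kw22}).

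Granting the upgrade, the three classical inputs become control estimates \eqref{1l1} for $\Delta$ and $\sqrt{W}$ with $\mu=\gamma$ and $(M,m)$ equal to $(C,C)$, $(C\langle\lambda\rangle,C)$, and $(e^{C\lambda},e^{C\lambda})$ respectively, so Theorem \ref{thmcontrol}(1) gives, for large $\abs{\lambda}$ and using $4\gamma=\tfrac1p$,
\begin{equation}
\|(\mathcal A+i\lambda)^{-1}\|_{\mathcal L(\mathcal H)}\le C\abs{\lambda}^{4\gamma}(M^2+m^2)=C\abs{\lambda}^{1/p},\quad C\abs{\lambda}^{2+1/p},\quad\text{or}\quad e^{C'\abs{\lambda}}
\end{equation}
in the three cases. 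In each case $\operatorname{UCP}_{\Delta,\sqrt{W}}$ holds --- $\sqrt{W}u=0$ forces $\mathbbm 1_{\Omega_\e}u=0$, and $\operatorname{UCP}_{\Delta,\mathbbm 1_{\Omega_\e}}$ is classical since $\Omega_\e$ is nonempty and open --- so Lemma \ref{uniqelemma}(1),(3),(5) applies exactly as in the proof of Proposition \ref{t2}; and since $\gamma<\tfrac12$ the damping $W=Q^*Q$ is relatively compact, so removing $\ker\mathcal A$ is routine (cf.\ Remark \ref{3t10}). Lemma \ref{3t7} then turns the two polynomial resolvent bounds into $\|e^{t\mathcal A}\|_{\mathcal D\to\Hcd}\le C\langle t\rangle^{-p}$ and $\le C\langle t\rangle^{-1/(2+1/p)}$, and the exponential bound into $\le C/\log(2+t)$, which are (1), (2) (alternatively one may cite Proposition \ref{t2} directly for (2)) and (3). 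Finally, when $W$ is normally $L^1$ one has $\sqrt{W}\in\mathcal L(H^{1/2+0},L^2)$, i.e.\ $\gamma=\tfrac14+0$, still $<\tfrac12$, and the same argument gives the stated rates with $p$ replaced by $1-0$.
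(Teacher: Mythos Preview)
Your overall architecture (classical input $\Rightarrow$ control estimate \eqref{1l1} $\Rightarrow$ Theorem \ref{thmcontrol}(1) $\Rightarrow$ Lemma \ref{3t7}) matches the paper's, and your numerology with $\gamma=\tfrac{1}{4p}$ is correct. But the ``upgrade'' step you single out as the main obstacle is indeed a gap, and the paper does \emph{not} attempt it: bounding the commutators $[\sqrt{W},\chi(\Delta/\lambda^2)]$ and $[\sqrt{W},\Lambda^{-\gamma}]$ on $L^2$ when $\sqrt{W}$ is merely normally $L^{2p}$ is not covered by \cite[Lemma 2.3]{kw22} (which is a multiplier bound, not a commutator bound), and there is no off-the-shelf result that gives this for such rough coefficients. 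You have identified the right difficulty but not resolved it.

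The paper bypasses these commutators entirely with two simple devices you are missing. For (1), instead of $\mu=\gamma$ it takes $\mu=0$: then the observation term in \eqref{1l1} is just $\|Qu\|=\|\sqrt{W}u\|$, and the trivial pointwise bound $\mathbbm 1_{\Omega_\e}\le\e^{-1}W$ does the whole job; the $2\gamma$ loss is instead pushed into $M$ via $\|(h^2\Delta-1)u\|_{H^{-1}_h}\le Ch^{-2\gamma}\|(h^2\Delta-1)u\|_{H^{-2\gamma}}$, giving $M=C\langle\lambda\rangle^{2\gamma}$ and the same resolvent bound $C|\lambda|^{4\gamma}$. For (2) and (3), where $\mu=\gamma$ is used, the paper first passes to an $\Omega'\Subset\Omega_\e$ (Schr\"odinger observability and the Carleman estimate are stable under this shrinking), then chooses smooth cutoffs $\chi\equiv 1$ on $\Omega'$, $\supp\chi_1\subset\Omega_\e$, $\chi\chi_1=\chi$, and uses the \emph{classical} elliptic estimate for $\chi_1\langle D\rangle^{2\gamma}$ on $\WF(\chi)$ to get
\[
\|\mathbbm 1_{\Omega'}u\|\le\|\chi u\|\le C\|\chi_1\langle D\rangle^{2\gamma}u\|+C\|u\|_{H^{-N}}\le C\e^{-1/2}\|\sqrt{W}\Lambda^{\gamma}u\|+C\|u\|_{H^{-N}},
\]
the last step being again the pointwise bound $|\chi_1|^2\le C\e^{-1}W$. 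The rough factor $\sqrt{W}$ never needs to be commuted past any pseudodifferential operator; it is applied last. This is precisely the mechanism behind the compact error term in \eqref{1l1} that Theorem \ref{thmcontrol} is designed to absorb.
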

\begin{remark}
Note, the numerology of exponents here is exactly that of Proposition \ref{lpdecay} with $d=1$. This is because normally singular damping is only singular along the normal direction, which is one dimension.
\end{remark}
Here is a lemma establishing the multiplier properties of $L^{p}$-functions. 
\begin{lemma}\label{prop_lpmultiplier}
Let $\M$ be a $d$-dimensional compact manifold without boundary. Then for each $W\in L^{p}(\M)$:
\begin{enumerate}[wide]
\item When $p>1$, $\sqrt{W}$ maps $H^{\frac{d}{2p}}$ to $L^2$ as a multiplier. 
\item When $p=1$, $\sqrt{W}$ maps $H^{\frac{d}{2}+0}$ to $L^2$ as a multiplier. 
\end{enumerate}
\end{lemma}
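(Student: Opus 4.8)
The plan is to bound the multiplier norm of $\sqrt{W}$ directly, by combining Hölder's inequality with the sharp Sobolev embedding on the compact manifold $\M$. Since $\|\sqrt{W}u\|_{L^2(\M)}^2=\int_\M |W|\,|u|^2$, it suffices to produce $C>0$ with
\begin{equation}
\int_\M |W|\,|u|^2\le C\|W\|_{L^p(\M)}\,\|u\|_{H^s(\M)}^2,
\end{equation}
for $s=\frac{d}{2p}$ when $p>1$, and for $s=\frac{d}{2}+\epsilon$ with any fixed $\epsilon>0$ when $p=1$. I would record at the outset the trivial endpoint $p=\infty$: there $\sqrt{W}\in L^\infty(\M)$, so $\|\sqrt{W}u\|_{L^2}\le\|\sqrt{W}\|_{L^\infty}\|u\|_{L^2}$ and $s=\frac{d}{2p}=0$, nothing more being needed.

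For $p\in(1,\infty)$, let $p'=\frac{p}{p-1}$ be the conjugate exponent and apply Hölder to get
\begin{equation}
\int_\M |W|\,|u|^2\le\|W\|_{L^p(\M)}\,\bigl\||u|^2\bigr\|_{L^{p'}(\M)}=\|W\|_{L^p(\M)}\,\|u\|_{L^{2p'}(\M)}^2.
\end{equation}
The key observation is that $2s=\frac{d}{p}<d$ precisely because $p>1$, so $s=\frac{d}{2p}$ is a subcritical Sobolev exponent, and the endpoint Sobolev embedding $H^{s}(\M)\hookrightarrow L^{q}(\M)$ holds at $q=\frac{2d}{d-2s}=\frac{2p}{p-1}=2p'$ — this is the Gagliardo--Nirenberg--Sobolev / Hardy--Littlewood--Sobolev inequality on $\Rd$, transported to $\M$ by a finite partition of unity subordinate to a coordinate atlas together with the compactness of $\M$. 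Substituting $\|u\|_{L^{2p'}(\M)}\le C\|u\|_{H^{d/(2p)}(\M)}$ into the previous display gives part (1).

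For $p=1$ the conjugate exponent degenerates to $p'=\infty$, and Hölder gives $\int_\M |W|\,|u|^2\le\|W\|_{L^1(\M)}\,\|u\|_{L^\infty(\M)}^2$. Now $s=\frac{d}{2}$ is exactly the critical exponent at which $H^{d/2}\not\hookrightarrow L^\infty$, so one must concede an arbitrarily small loss of smoothness and invoke $H^{d/2+\epsilon}(\M)\hookrightarrow C^0(\M)\hookrightarrow L^\infty(\M)$ for any $\epsilon>0$, which yields part (2). The only point requiring any care in this argument is this borderline behaviour of the Sobolev embedding: the embedding into $L^{q}$ is sharp exactly at $q=2p'$, which is finite precisely when $p>1$, and that is the structural reason case (2) must pay the extra $+0$ of regularity; there is no other obstruction, and the remainder is routine. (This is the Euclidean analogue of \cite[Lemma~2.3]{kw22}, where the same scheme is carried out for damping singular only in the normal direction to a hypersurface, accounting for the substitution of $d$ by $1$ in the normally-$L^p$ statements.)
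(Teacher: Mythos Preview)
Your argument is correct and is essentially the same as the paper's: both proofs reduce to the Sobolev embedding $H^{d/(2p)}\hookrightarrow L^{2p/(p-1)}$ combined with H\"older's inequality (you apply H\"older to the pair $(|W|,|u|^2)$ with exponents $(p,p')$, while the paper equivalently applies it to $(\sqrt{W},u)$ with exponents $(2p,2p')$), and both handle the borderline $p=1$ via $H^{d/2+\epsilon}\hookrightarrow L^\infty$.
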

\begin{proof}
When $p>1$, the Sobolev inequality gives $H^{\frac{d}{2p}}\hookrightarrow L^{\frac{2p}{p-1}}$, on which $\sqrt{W}\in L^{2p}$ is a multiplier to $L^2$. When $p=1$, $H^{\frac{d}{2p}+0}\hookrightarrow L^{\infty}$. 
\end{proof}

\begin{proof}[Proof of Propositions \ref{lpdecay} and \ref{normallpdecay}]
1. We begin with the assumption that $W\in L^p$. Let $\gamma=\frac{d}{4p}\le \frac{1}{2}$, and note Lemma \ref{prop_lpmultiplier} implies $Q\in\mathcal{L}(H_{\gamma}, L^2)$, given by $Qu=\sqrt{W}u$, where $H_{\gamma}=H^{2\gamma}(\M)=H^{\frac{d}{2p}}(\M)$. 

2. In this step, we aim to obtain the stability estimates for $\Ac$.

2a. When $\Omega_{\epsilon}$ geometrically controls $\M$, from the propagation estimates \cite[Theorem E.47]{dz19}, there exists $C, h_0>0$ such that uniformly for $h\in (0,h_0)$ we have
\begin{equation}
\|u\|_{L^2}\le Ch^{-1}\|(h^2\Delta-1)u\|_{H^{-1}_h}+C\|\mathbbm{1}_{\Omega_\epsilon}u\|_{L^2}+\bigo(h^{N})\|u\|_{H^{-N}_h},
\end{equation}
the last term of which can be absorbed by the left. Note $\|\mathbbm{1}_{\Omega_\epsilon} u\|_{L^2}\le \e^{-\frac{1}{2}} \|\sqrt{W} u\|_{L^2}$ and
\begin{equation}
\|(h^2\Delta-1)u\|_{H^{-1}_h}\le C\|(h^2\Delta-1)u\|_{H^{-2\gamma}_h}\le Ch^{-2\gamma}\|(h^2\Delta-1)u\|_{H^{-2\gamma}}.
\end{equation}
Let $h=\lambda^{-1}$. Then for $\lambda>h_0^{-1}$, we have the control estimate \eqref{1l1}:
\begin{equation}\label{lphautus}
\|u\|_{L^2}\le C\lambda^{-1+2\gamma}\|(\Delta-\lambda^2)u\|_{H^{-2\gamma}}+C\|\sqrt{W} u\|_{L^2}. 
\end{equation}
Then Theorem \ref{thmcontrol}(1), with $\mu=0$, $M=C\langle \lambda\rangle^{2\gamma}$, $m=C$ for some constant $C$, implies 
\begin{equation}\label{lpgccresolve}
\|(\mathcal{A}+i\lambda)^{-1}\|_{\mathcal{L}(\mathcal{H})}\le C\abs{\lambda}^{4\gamma}=C\abs{\lambda}^{\frac{d}{p}}.
\end{equation}

2b. When the Schr\"odinger equation is exactly observable from $\Omega_\epsilon$, there is $\Omega'$ compactly supported in $\Omega_\epsilon$ from which the Schr\"odinger equation is also exactly observable. Then \eqref{2l19} implies
\begin{equation}\label{2l28}
\|u\|_{L^2}\le C\|(\Delta-\lambda^2)u\|_{L^2}+C\|\mathbbm{1}_{\Omega'}u\|_{L^2},
\end{equation}
for all $\lambda\ge 0$. Now consider two cutoff functions $\chi,\chi_1\in C^\infty(\M)$ such that $\chi\equiv 1$ on $\Omega'$ and $\chi\chi_1\equiv\chi$ on $\M$, $\supp{\chi_1}\subset \Omega_\epsilon$. Since $\Lambda^{\gamma}=\langle D\rangle^{2\gamma}$ is classically elliptic on $\WF(\chi)$, apply the elliptic estimate \cite[Theorem E.33]{dz19} to obtain
\begin{equation}
\|\mathbbm{1}_\Omega' u\|_{L^2}\le \|\chi u\|_{L^2}\le C\|\chi_1 \langle D\rangle^{2\gamma} u\|_{L^2}+C\|u\|_{H^{-N}}\le C{\epsilon}^{-\frac{1}{2}}\|\sqrt{W} \Lambda^\gamma u\|_{L^2}+C\|u\|_{H^{-N}},
\end{equation}
for arbitrarily large $N$. 
Thus we have the control estimate \eqref{1l1}:
\begin{equation}\label{lpschro}
\|u\|_{L^2}\le C\abs{\lambda}\abs{\lambda}^{-1}\|(\Delta-\lambda^2)u\|_{L^2}+C\|\sqrt{W} \Lambda^\gamma u\|_{L^2}+C\|u\|_{H^{-N}}.
\end{equation}
Then Theorem \ref{thmcontrol}(1), with $\mu=\gamma$, $M=C\abs{\lambda}$, $m=C$ implies 
\begin{equation}\label{lpschroresolve}
\|(\mathcal{A}+i\lambda)^{-1}\|_{\mathcal{L}(\mathcal{H})}\le C\abs{\lambda}^{2+4\gamma}=\abs{\lambda}^{2+\frac{d}{p}}.
\end{equation}

2c. When $\Omega_\epsilon$ is open and nonempty, by a Carleman estimate (see \cite{leb93, bur98,bj16,wan20}), we have uniformly for all $v\in H^{1}_h$ and for all $h\in (0,h_0)$ that
\begin{equation}
\|v\|_{H^1_h}\le e^{C/h}\left(\|(h^2\Delta-1)v\|_{L^2}+\|\mathbbm{1}_{\Omega_\epsilon}v\|_{L^2}\right)\le e^{C/h}\left(\|(h^2\Delta-1)v\|_{L^2}+\epsilon^{-\frac{1}{2}}\|\sqrt{W}v\|_{L^2}\right).
\end{equation}
Let $v=\langle hD\rangle^{2\gamma}u$ and we have
\begin{multline}
\|u\|_{L^2}\le \|u\|_{H^{1-2\gamma}_h}\le e^{C/h}\left(\|(h^2\Delta-1)u\|_{H^{-2\gamma}_h}+\|\sqrt{W}\langle hD\rangle^{2\gamma}u\|_{L^2}\right)\\
\le e^{C/h}\left(\|(h^2\Delta-1)u\|_{L^2}+\|\sqrt{W}\langle D\rangle^{2\gamma}u\|_{L^2}\right).
\end{multline}
Now let $\lambda=h^{-1}$ and for $\lambda \geq 1/h_0$ we have
\begin{equation}
\|u\|_{L^2}\le e^{C\lambda}\left(\lambda^{-2}\|(\Delta-\lambda^2)u\|_{L^2}+\|\sqrt{W}\Lambda^{\gamma}u\|_{L^2}\right),
\end{equation}
where the powers of $\lambda$ can be absorbed into the exponential term, after potentially changing the constant $C$. This yields the control estimate \eqref{1l1}:
\begin{equation}
\|u\|_{L^2}\le e^{C\lambda}\abs{\lambda}^{-1}\|(\Delta-\lambda^2)u\|_{L^2}+e^{C\lambda}\|\sqrt{W}\Lambda^{\gamma}u\|_{L^2},
\end{equation}
uniformly for all $\lambda\ge \lambda_0$ for some $\lambda_0>0$. Then Theorem \ref{thmcontrol} with $\mu=\gamma$, $M=m=e^{C\lambda}$ implies
\begin{equation}\label{lplogresolve}
\|(\Ac+i\lambda)^{-1}\|_{\mathcal{L}(\mathcal{H})}\le Ce^{2C\lambda}\abs{\lambda}^{4\gamma} \le Ce^{C\lambda}.
\end{equation}

3. We have obtained desired stability estimates and now we verify the unique continuation assumptions. Let $(\Delta-\lambda^2)u=0$ for $\lambda>0$. By the classical unique continuation principle (see for example, \cite[Theorem 8.6.5]{hor1}), $u$ is not identically $0$ on $\Omega_\epsilon$ and thus $Wu\neq 0$. Thus $\operatorname{UCP_{\Delta, W}}$ holds. Apply Lemma \ref{3t7} to \eqref{lpgccresolve}, \eqref{lpschroresolve} and \eqref{lplogresolve} to conclude the desired decay results. 

4. When $d=1, p=1$, then let $\gamma=\frac{d}{4}+0$, so $Q \in \Lc(H_{\gamma},L^2)$ by Lemma \ref{prop_lpmultiplier}. Then the statement follows from an analogous proof as in 2. and 3. Note, when $d=2, p=1$, we have $Q \in \Lc(H_{\gamma}, L^2)$ only for $\gamma>1/2$, and Theorem \ref{thmcontrol} cannot be applied. 

5. The proof of Proposition \ref{normallpdecay} is similar. If $W$ is normally $L^p$ then $Q=\sqrt{W} \in \Lc(H^{2\gamma}, L^2)$ for $\gamma=\frac{1}{4p}$ when $p>1$ and $\gamma=\frac{1}{4}+0$ when $p=1$. See further in \cite[Lemma 2.3]{kw22}.
\end{proof}

\subsection{Damped gravity water waves}\label{s2-5}
Water waves describe the displacement of the free surface of water after disturbances: for example, the oscillation of the surface of an ocean under the wind. Specifically, using the paradifferential formulation of the water waves developed in \cite{abz11}, we obtain the leading order linear model for damped gravity water waves:
\begin{equation}\label{2l18}
(\partial_t^2+W(x)\partial_t +\abs{D})u(t, x)=0. 
\end{equation}
Here $\abs{D}=\Delta^{\frac{1}{2}}$ is a non-local operator, whose non-local nature breaks many known semiclassical technologies. Recently in \cite{amw23}, it was shown that on $\mathbb{S}^1$, if $W\in C^{\frac{1}{2}}$ and $W>0$ somewhere, then the energy decays by $\langle t\rangle^{-1}$, and this is optimal. This is the first energy decay result on damped water waves. In this section, we look at  damped water wave equations of the form
\begin{equation}
(\partial_t^2+\abs{D}^{s}W(x)\abs{D}^{s}\partial_t +\abs{D})u(t, x)=0,
\end{equation}
for $s\in[0,\frac{1}{2}]$ under general geometric assumptions. When $s=0$, it reduces to \eqref{2l18}. The case $s=\frac{1}{2}$ is motivated by the open problem formulated in \cite{ala18,amw23} from a similar paradifferential formulation that leads to nonlinear damping. The analysis of such systems are known to be difficult due to the non-local nature of $\abs{D}$ contributed to the failure of semiclassical techniques. We now prove a range of new decay results to demonstrate the power of our theorems. 
\begin{proposition}
Let $\M$ be a compact manifold without boundary, $P=\abs{D}$, $Q=\sqrt{W(x)}\abs{D}^s\in\mathcal{L}(H^s, L^2)$, where $W\in L^\infty$ is non-negative and $s\in[0,\frac{1}{2}]$. Let $\Omega_{\e}$ be the interior of $\{W\ge\epsilon\}$. Then the following are true: if for some $\epsilon>0$, 
\begin{enumerate}[wide]
\item $\Omega_\epsilon$ satisfies the geometric control condition (for $\Delta$), then
\begin{equation}
\|e^{t\Ac}\|_{\mathcal{D}\rightarrow \Hcd}\le C\langle t\rangle^{-\frac{1}{2+4s}}.
\end{equation}
\item the Schr\"odinger equation (for $\Delta$) is exactly observable from $\Omega_{\e}$, then
\begin{equation}
\|e^{t\Ac}\|_{\mathcal{D}\rightarrow \Hcd}\le C\langle t\rangle^{-\frac{1}{6+4s}}.
\end{equation}
\item $\Omega_{\e}$ is nonempty, then
	\begin{equation}
	\|e^{t\Ac}\|_{\mathcal{D}\rightarrow \Hcd}\le \frac{C}{\log^{\frac{1}{2}}(2+t)}.
	\end{equation}
\end{enumerate}
\end{proposition}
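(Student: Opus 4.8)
The plan is to treat this as a direct application of Theorems \ref{thmcontrol} and \ref{thmdilate}, exactly mirroring the structure of the proof of Proposition \ref{lpdecay}. The propagator here is $P=\abs{D}=\Delta^{1/2}$, so I want to invoke Theorem \ref{thmdilate} with $\alpha=\tfrac12$ to transport the known control estimates for $\Delta$ (geometric propagation, Schrödinger observability, Carleman) into control estimates for $\abs{D}$. Since $\alpha=\tfrac12$, the requirement $\alpha\ge 2\gamma$ forces $\gamma\le\tfrac14$, which is compatible with the control operator $Q=\sqrt{W}\abs{D}^s$ having degree of unboundedness $\gamma=s/2$ (as $Q\in\mathcal{L}(H^{(\alpha)}_{s},L^2)$ with $H^{(\alpha)}_s = H^{\alpha s\cdot 2}=H^s$ for $P^\alpha=\Delta$... wait — more carefully, $Q\in\mathcal{L}(H^s,L^2)$, and in the $P=\abs{D}$ scaling $H_\sigma = H^\sigma$, so $Q\in\mathcal{L}(H_s,L^2)$, i.e.\ $\gamma=s\in[0,\tfrac12]$). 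I would use the flexible parameter $\mu$ to optimize: for case (1), start from the geometric propagation estimate $\|u\|_{L^2}\le C\lambda^{-1}\|(\Delta-\lambda^2)u\|_{L^2}+C\|\mathbbm{1}_{\Omega_\epsilon}u\|_{L^2}$, dilate via Theorem \ref{thmdilate} with $\alpha=\tfrac12$, $\gamma=s$, $\mu=s$ (or whichever choice minimizes the resulting resolvent growth), absorb $\mathbbm{1}_{\Omega_\epsilon}$ into $\sqrt{W}\abs{D}^s$ up to an elliptic error term, apply Theorem \ref{thmcontrol}(1), and read off the resolvent bound $\|(\mathcal{A}+i\lambda)^{-1}\|\le C\abs{\lambda}^{2+4s}$, giving $\langle t\rangle^{-1/(2+4s)}$.

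For the three cases I would proceed in order. Case (1): geometric control for $\Delta$ gives the $\lambda^{-1}$-weighted propagation estimate; dilation to $\abs{D}$ with the appropriate $\mu$ and Theorem \ref{thmcontrol}(1) produces resolvent growth $\abs{\lambda}^{2+4s}$. Case (2): Schrödinger observability for $\Delta$ gives $\|u\|_{L^2}\le C\|(\Delta-\lambda^2)u\|_{L^2}+C\|\mathbbm{1}_{\Omega_\epsilon}u\|_{L^2}$ (unweighted, $M=C\langle\lambda\rangle$), dilation via Theorem \ref{thmdilate} introduces the $\lambda^{1/\alpha}=\lambda^2$ factor from the $M(\lambda^{1/\alpha})\lambda^{1/\alpha-1}$ term, and then Theorem \ref{thmcontrol}(1) with the extra $\abs{\lambda}^{4\mu}$ loss yields resolvent growth $\abs{\lambda}^{6+4s}$, hence decay $\langle t\rangle^{-1/(6+4s)}$. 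Case (3): the global Carleman estimate for $\Delta$ with $M=m=e^{C\lambda}$, dilated via Theorem \ref{thmdilate} (which replaces $\lambda$ by $\lambda^{1/\alpha}=\lambda^2$ in the exponent, and polynomial factors are absorbed into the exponential), gives $\|(\mathcal{A}+i\lambda)^{-1}\|\le Ce^{C\lambda^2}$; by the standard logarithmic-decay theorem (Lemma \ref{3t7} or Borichev--Tomilov with exponential resolvent growth) this gives $C/\log^{1/2}(2+t)$ — note the square root arises precisely because the exponent is $\lambda^2$ rather than $\lambda$. Throughout, I would verify the unique continuation hypothesis via Lemma \ref{uniqelemma}: eigenfunctions of $\abs{D}$ coincide with eigenfunctions of $\Delta$, so the classical unique continuation principle applies, giving $\operatorname{UCP}_{\abs{D},W}$ (and then $\operatorname{UCP}_{\abs{D},Q}$ after accounting for the $\abs{D}^s$ factor using Lemma \ref{uniqelemma}(4)--(5)).

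The main obstacle is bookkeeping the exponents in Theorem \ref{thmdilate}'s formula \eqref{1l2} correctly, particularly the term $\lambda^{1+(-2+2(2\gamma+1-\alpha)_+)/\alpha}$ with $\alpha=\tfrac12$ and $\gamma=s\in[0,\tfrac12]$: here $2\gamma+1-\alpha = 2s+\tfrac12 > 0$ always, so $(2\gamma+1-\alpha)_+ = 2s+\tfrac12$, and the exponent becomes $1+(-2+2(2s+\tfrac12))/(1/2) = 1 + 2(4s-1) = 8s-1$, which needs to be checked against the other contributions to ensure the claimed rates come out exactly. A secondary subtlety is the absorption step: replacing $\|\mathbbm{1}_{\Omega_\epsilon}u\|_{L^2}$ by $\|\sqrt{W}\abs{D}^s u\|_{L^2}$ requires an elliptic estimate showing $\abs{D}^s$ is elliptic on $\operatorname{WF}(\chi)$ for a cutoff $\chi$ supported in $\Omega_\epsilon$ with $\chi\equiv 1$ on a slightly smaller observation set, producing a harmless compact error $C\|u\|_{H^{-N}}$ that Theorem \ref{thmcontrol} is designed to accommodate; since $\abs{D}^s = \langle D\rangle^s + \text{(lower order)}$ is elliptic away from $\xi=0$, and for case (2) one first shrinks $\Omega_\epsilon$ to a compactly contained observation set as in step 2b of Proposition \ref{lpdecay}, this goes through. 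I do not expect the non-locality of $\abs{D}$ to cause trouble here precisely because Theorem \ref{thmdilate} has already absorbed that difficulty into the functional calculus — this is the whole point of the dilation method.
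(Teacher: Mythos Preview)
Your plan for cases (1) and (2) is essentially the paper's: dilate the propagation (resp.\ Schrödinger) estimate for $\Delta$ to $\abs{D}$ via Theorem~\ref{thmdilate} with $\alpha=\tfrac12$, replace $\|\mathbbm{1}_{\Omega'}u\|$ by $\|\sqrt{W}\abs{D}^s u\|$ through an elliptic estimate (picking up a compact remainder $C\|u\|_{H^{-N}}$), and apply Theorem~\ref{thmcontrol}(1). One bookkeeping correction: the $\gamma$ in Theorem~\ref{thmdilate} is measured in the input ($\Delta$) scale, where $H_\sigma=H^{2\sigma}$; since the input estimate carries $\|(\Delta-\lambda^2)u\|_{H^{-s}}$ and $\|\mathbbm{1}_{\Omega'}u\|$, one has $\mu=0$ and $\gamma=s/2$. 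The constraint $\alpha\ge 2\gamma$ then reads $s\le\tfrac12$ (covering the full range), and the second exponent in \eqref{1l2} is $4s-1$ rather than your $8s-1$; either way it is dominated by $1+2s$, so the resolvent bounds $\abs{\lambda}^{2+4s}$ and $\abs{\lambda}^{6+4s}$ come out as you claim.

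Case (3), however, has a genuine gap. After dilation the control estimate for $\abs{D}$ has $m(\lambda)=e^{C\lambda^2}$, and your elliptic absorption step multiplies the remainder by this factor, yielding an error $e^{C\lambda^2}\|u\|_{H^{-N}}$. Theorem~\ref{thmcontrol} (see Proposition~\ref{3t1}) only tolerates errors of size $o(\lambda^{2\delta})\|u\|_{-\delta}$ for fixed $\delta$; an exponentially growing coefficient cannot be absorbed, so the argument stalls. The paper avoids the elliptic step altogether: it applies the dilated Carleman estimate (Proposition~\ref{2t17} with $\alpha=\tfrac12$) to $v=\abs{D}^s u$ rather than to $u$, so the observation term is already $\|\mathbbm{1}_{\Omega_\epsilon}\abs{D}^s u\|\le\epsilon^{-1/2}\|Qu\|$ via the pointwise bound $\mathbbm{1}_{\Omega_\epsilon}\le\epsilon^{-1}W$, with no microlocal remainder. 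A direct spectral inequality $\|u\|^2\le C\|(\abs{D}-\lambda^2)u\|^2+C\|\abs{D}^s u\|^2$ recovers $\|u\|$, and the resulting error-free estimate is then fed into Theorem~\ref{t4} (the non-uniform Hautus test), not Theorem~\ref{thmcontrol}. The paper explicitly remarks that routing through Theorem~\ref{thmcontrol} here would only reach $s\in[0,\tfrac14]$.
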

\begin{proof}
1. When $\Omega_\epsilon$ satisfies the geometric control condition, there is an open subset $\Omega'$ compact supported in $\Omega_\epsilon$ that also satisfies the geometric control condition. We have from \eqref{lphautus} that
\begin{equation}
\|u\|_{L^2}\le C\lambda^{-1+s}\|(\Delta-\lambda^2)u\|_{H^{-s}}+C\|\mathbbm{1}_{\Omega'}u\|_{L^2}+C\|u\|_{H^{-N}}. 
\end{equation}
Note $\abs{D}=\Delta^{\frac{1}{2}}$. Apply Theorem \ref{thmdilate} of dilation with $\alpha=\frac{1}{2}$, $\mu=0$, $\gamma=s$, $M(\lambda)=C\lambda^{s}, m(\lambda)=C$ to see 
\begin{equation}
\|u\|_{L^2}\le C(\lambda^{1+2s}+\lambda^{8s-1})\langle \lambda\rangle^{-1}\|(\abs{D}-\lambda^2) u\|_{H^{-s}}+C\|\mathbbm{1}_{\Omega'}u\|_L^2+C\|u\|_{H^{-N}}.
\end{equation}
Note that $1+2s>8s-1$, and this reduces to
\begin{equation}
\|u\|_{L^2}\le C\lambda^{1+2s}\langle \lambda\rangle^{-1}\|(\abs{D}-\lambda^2) u\|_{H^{-s}}+C\|\mathbbm{1}_{\Omega'}u\|_{L^2}+C\|u\|_{H^{-N}}.
\end{equation}
Now consider two cutoff functions $\chi,\chi_1\in C^\infty(\M)$ such that $\chi\equiv 1$ on $\Omega'$ and $\chi\chi_1\equiv\chi$ on $\M$, $\supp{\chi_1}\subset \Omega_\epsilon$. Now since $\chi_1\abs{D}^s$ is classically elliptic on $\WF\chi$, apply the elliptic estimate, \cite[Theorem E.33]{dz19}, to see
\begin{equation}
\|\mathbbm{1}_{\Omega'}u\|_{L^2}\le \|\chi u\|_{L^2}\le C\|\chi_1 \abs{D}^s u\|_{L^2}+C\|u\|_{H^{-N}}\le C\|\sqrt{W} \abs{D}^s u\|_{L^2}+C\|u\|_{H^{-N}}.
\end{equation}
This implies
\begin{equation}
\|u\|_{L^2}\le C\lambda^{1+2s}\langle \lambda\rangle^{-1}\|(\abs{D}-\lambda^2) u\|_{H^{-s}}+C\|\sqrt{W} \abs{D}^s u\|_{L^2}+C\|u\|_{H^{-N}}.
\end{equation}
Apply Theorem \ref{thmcontrol} to turn the control estimate with $M=C\lambda^{1+2s}$, $m=C$, $\mu=0$, $\gamma=s$ into 
\begin{equation}
\|(\Ac+i\lambda)^{-1}\|_{\mathcal{L}(\Hc)}\le C\abs{\lambda}^{2+4s},
\end{equation}
for large real $\lambda$. 

2. Assume there Schr\"odinger equation is exactly observable from $\Omega_\epsilon$. There is $\Omega'$ compactly supported in $\Omega_\epsilon$ from which the Schr\"odinger equation is also exactly observable. As in \eqref{2l19}, 
\begin{equation}
\|u\|_{L^2}\le C\langle \lambda\rangle\langle \lambda\rangle^{-1}\|(\Delta-\lambda^2)u\|_{L^2}+C\|\mathbbm{1}_{\Omega'} u\|_{L^2}. 
\end{equation}
Apply Theorem \ref{thmdilate} with $\alpha=\frac{1}{2}$, $\mu=\gamma=0$, $M=C \langle \lambda\rangle$, $m=C$ to see for large real $\lambda$,
\begin{equation}
\|u\|_{L^2}\le C(\lambda^{3}+\lambda^{-1})\langle \lambda\rangle^{-1}\|(\abs{D}-\lambda^2)u\|_{L^2}+C\|\mathbbm{1}_{\Omega'}u\|_{L^2},
\end{equation}
which simplifies to 
\begin{equation}
\|u\|_{L^2}\le C\lambda^3\langle \lambda\rangle^{-1}\|(\abs{D}-\lambda^2)u\|_{L^2}+C\|\mathbbm{1}_{\Omega'}u\|_{L^2}.
\end{equation}
Now consider two cutoff functions $\chi,\chi_1\in C^\infty(\M)$ such that $\chi\equiv 1$ on $\Omega'$ and $\chi\chi_1\equiv\chi$ on $\M$, $\supp{\chi_1}\subset \Omega_\epsilon$. Now $\chi_1\abs{D}^s\langle D\rangle^{s}$ is elliptic on $\WF\chi$. By the elliptic estimate, \cite[Theorem E.33]{dz19}, we have
\begin{multline}
\|\mathbbm{1}_\Omega' u\|_{L^2}\le \|\chi u\|_{L^2}\le C\|\chi_1\abs{D}^s\langle D\rangle^{s} u\|_{L^2}+C\|u\|_{H^{-N}}\\
\le C\|\sqrt{W} \abs{D}^s\Lambda^s u\|_{L^2}+C\|u\|_{H^{-N}}.
\end{multline}
This implies
\begin{equation}
\|u\|_{L^2}\le C\lambda^3\langle \lambda\rangle^{-1}\|(\abs{D}-\lambda^2)u\|_{L^2}+C\|\sqrt{W} \abs{D}^s\Lambda^s u\|_{L^2}+C\|u\|_{H^{-N}}.
\end{equation}
Apply Theorem \ref{thmcontrol} with $\mu=\gamma=s$, $M=C\lambda^3$, $m=C$, 
\begin{equation}
\|(\mathcal{A}+i\lambda)^{-1}\|_{\mathcal{L}(\mathcal{H})}\le C\abs{\lambda}^{6+4s},
\end{equation}
for real $\lambda$ with $\abs{\lambda}$ large. 

3. We now address the logarithmic decay. Note that Theorem \ref{thmcontrol} (or Theorem \ref{t4}) does not permit compact errors of size $e^{C\lambda^2}\|u\|_{H^{-N}}$, and this implies we cannot using any microlocal arguments that would leave us an unabsorbed error. We remind the reader that $E_\rho$ is the spectral measure with respect to $P=\abs{D}$. First we see for $\lambda\ge \lambda_0>0$ and $u\in H^2$, we have uniformly
\begin{equation}\label{2l23}
\|u\|^2=\int_0^\infty \ d\langle E_\rho u, u\rangle\le C\int_0^\infty ((\rho^2-\lambda^2)^2+\rho^{4s}) \ d\langle E_\rho u, u\rangle\le C\|(\abs{D}-\lambda^2) u\|^2+C\|\abs{D}^{s}u\|^2,
\end{equation}
where the constants do not depend on $\lambda$. Note Proposition \ref{2t17} with $\alpha=\frac{1}{2}$ implies for any $v\in H^{1/2}$, 
\begin{equation}
\| v\|_{H^{\frac{1}{2}}}\le e^{C\lambda^{2}}(\|(\abs{D}-\lambda^2) v\|_{H^{-\frac{1}{2}}}+\|\mathbbm{1}_{\Omega_\epsilon}  v\|_{L^2}).
\end{equation}
Specifically take $v=\abs{D}^s u$ and we have
\begin{equation}
\|\abs{D}^{s} u\|_{H^{\frac{1}{2}}}\le e^{C\lambda^{2}}(\|(\abs{D}-\lambda^2) u\|_{H^{s-\frac{1}{2}}}+\|\mathbbm{1}_{\Omega_\epsilon} \abs{D}^{s} u\|_{L^2}).
\end{equation}
Note $s-\frac{1}{2}\le 0$ and revisit \eqref{2l23} to see
\begin{equation}
\|u\|_{L^2}^2\le e^{C\lambda^{2}}(\|(\Delta-\lambda^2) u\|_{L^2}^2+\|\mathbbm{1}_{\Omega_\epsilon}\abs{D}^{s}u\|_{L^2}^2)
\end{equation}
uniformly for $\lambda\ge \lambda_0$ and $u\in H^2$. Apply Theorem \ref{t4} with $M=m=e^{C\lambda^2}$ to see 
\begin{equation}
\|(\mathcal{A}+i\lambda)^{-1}\|_{\mathcal{L}(\mathcal{H})}\le e^{C\lambda^2}. 
\end{equation}
We remark here that if we used Theorem \ref{thmcontrol} here instead of Theorem \ref{t4}, we would would only get the desired decay up to $s\in [0,\frac{1}{4}]$, compared to the full range $s\in [0,\frac{1}{2}]$ we obtained. 

4. We now prove the unique continuation holds and show the energy decay. Note that $\operatorname{UCP}_{\Delta, \mathbbm{1}_{\Omega_\epsilon}}$ holds for any ${\Omega_\epsilon}$ open on compact manifolds. By Lemma \ref{uniqelemma}(3), this implies $\operatorname{UCP}_{\abs{D}, \mathbbm{1}_{\Omega_\epsilon}}$, and thus $\operatorname{UCP}_{\abs{D}, \sqrt{W}}$ holds since $\mathbbm{1}_{\Omega_\epsilon}\le W$. And naturally $\operatorname{UCP}_{\abs{D}, \abs{D}^s}$ holds, and $[\abs{D},\abs{D}^s]=0$ implies $\operatorname{UCP}_{\abs{D}, \sqrt{W}\abs{D}^s}$, by Lemma \ref{uniqelemma}(5). Apply Lemma \ref{3t7} to conclude the energy decay.  
\end{proof}

\subsection{Damped plates}\label{s2-6}
Let $\M$ be a compact manifold without boundary. 
In the Euler--Bernoulli beam theory, vibrating beams/plates with damping are modelled by the damped beam equation
\begin{equation}
(\partial_t^2+\Delta^2+W_v(x)\partial_t+\nabla^* W_s(x)\nabla \partial_t)u(t, x)=0,
\end{equation}
where $W_v, W_s\ge 0$ represents two types of damping. The viscous damping $W_v$ models the dissipation of energy due to viscous drag, for example, from the air or fluids that surround the plate. The structural damping $\nabla^* W_s\nabla$ models the dissipation of energy due to the parts of the plate made of viscoelastic materials. There are many stabilisation results in similar systems: see \cite{jaf90,las92,las02,teb09,dk18,ahr23,lrz23} and the references therein. 

Our semigroup setting naturally accommodates the damped plate equation with a mixture of viscous and viscoelastic structural damping, by letting $P=\Delta^2$, the bi-Laplacian operator, and $Qu=(\sqrt{W_v}u, \sqrt{W_s}\nabla u)$, $Y=L^2(\M)\times L^2(T\M)$. We obtained several new results on the energy decay for the damped plate equations. 
\begin{proposition}\label{2t18}
Let $\M$ be a compact manifold without boundary and $P=\Delta^2$. Let $W_v,W_s\in L^\infty(\M)$ be non-negative, and let $Q\in \mathcal{L}(H^1, Y)$ be defined via $Qu=(\sqrt{W_v}u, \sqrt{W_s}\nabla u)$, $Y=L^2(\M)\times L^2(T\M)$. Let $\Omega_\epsilon$ be the interior of $\{W_v\ge \epsilon\}\cup\{W_s\ge \epsilon\}$. Then the following are true: if for some $\epsilon>0$, 
\begin{enumerate}[wide]
\item $\Omega_\epsilon$ satisfies the geometric control condition (for $\Delta$), then 
\begin{equation}
\|e^{t\Ac}\|_{\Dc\rightarrow\Hcd}\le e^{-Ct}. 
\end{equation}
\item the Schr\"odinger equation (for $\Delta$) is exactly observable from $\Omega_{\e}$, then
\begin{equation}
\|e^{t\Ac}\|_{\Dc\rightarrow\Hcd}\le C\langle t\rangle^{-1}. 
\end{equation}
If we further assume $W_s\equiv 0$, then the same assumption implies $\|e^{t\Ac}\|_{\Dc\rightarrow\Hcd}\le e^{-Ct}$. 
\item $\Omega_\epsilon$ is nonempty, then
\begin{equation}
\|e^{t\Ac}\|_{\mathcal{D}\rightarrow \Hcd}\le \frac{C}{\log^2(2+t)}.
\end{equation}
\end{enumerate}
\end{proposition}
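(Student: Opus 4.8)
The plan is to produce, in each of the three cases, a control estimate \eqref{1l1} for $P=\Delta^2$ and the damping $Q$, and then feed it into Theorem \ref{thmcontrol}(1) and Lemma \ref{3t7}. Take $H=L^2(\M)$, so the scaling spaces for $P=\Delta^2$ are $H_s=H^{4s}(\M)$; since $\sqrt{W_v}$ is bounded $L^2\to L^2$ and $\sqrt{W_s}\nabla$ is bounded $H^1\to L^2$, we have $Q\in\mathcal{L}(H_{1/4},Y)$, that is $\gamma=\tfrac14$ in general and $\gamma=0$ when $W_s\equiv0$. The mechanism is that $\Delta^2=\Delta^\alpha$ with $\alpha=2$, so Theorem \ref{thmdilate} (with $\alpha=2$; note $\alpha\ge 2\gamma_\Delta$ below) converts the appropriate estimate for $\Delta$ into a control estimate for $\Delta^2$ observing some $\mathbbm{1}_{\Omega'}u$ with $\Omega'\Subset\Omega_\epsilon$, and a last step replaces this geometric observation by $Q$.

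For the inputs I would proceed as follows; in all three the parameter $\lambda$ refers to the $\Delta^2$-resolvent, so that $(\Delta-\lambda)$ is the relevant piece of $(\Delta^2-\lambda^2)=(\Delta-\lambda)(\Delta+\lambda)$. In (1), the geometric control condition gives the propagation estimate $\|u\|_{L^2}\le C\lambda^{-1/2}\|(\Delta-\lambda)u\|_{L^2}+C\|\mathbbm{1}_{\Omega'}u\|_{L^2}$ for $\Delta$, valid for $\Omega'\Subset\Omega_\epsilon$ and large $\lambda$; since $\Lambda^{1/4}(\Delta+\lambda)^{-1}$ has norm $\lesssim\lambda^{-1/2}$ on $H$ (both being functions of $\Delta$), the extra $\lambda^{-1/2}$ present here is exactly enough to rewrite this as the $\mu=0$ control estimate $\|u\|_{L^2}\le C\langle\lambda\rangle^{-1}\|(\Delta^2-\lambda^2)\Lambda^{-1/4}u\|_{L^2}+C\|\mathbbm{1}_{\Omega'}u\|_{L^2}$ for $\Delta^2$, with $M=m=C$. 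In (2), the resolvent form \eqref{2l19} of Schr\"odinger observability gives only $\|u\|_{L^2}\le C\|(\Delta-\lambda)u\|_{L^2}+C\|\mathbbm{1}_{\Omega'}u\|_{L^2}$, with no such surplus, so using $\|(\Delta+\lambda)^{-1}\|_{\mathcal{L}(H)}=\lambda^{-1}$ one only reaches the $\mu=\gamma=\tfrac14$ estimate $\|u\|_{L^2}\le C\langle\lambda\rangle^{-1}\|(\Delta^2-\lambda^2)u\|_{L^2}+C\|\mathbbm{1}_{\Omega'}u\|_{L^2}$; when $W_s\equiv0$ one has $\gamma=0$ and this is already a $\mu=0$ estimate. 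In (3), Proposition \ref{2t17} with $\alpha=2$ directly gives $\|u\|_{L^2}\le\|u\|_{H^3}\le e^{C\lambda^{1/2}}(\|(\Delta^2-\lambda^2)u\|_{H^{-1}}+\|\mathbbm{1}_{\Omega'}u\|_{L^2})$, the $\mu=0$ form with $M=m=e^{C\lambda^{1/2}}$, since $H^{-1}=H_{-1/4}$ for $\Delta^2$. After the replacement step, Theorem \ref{thmcontrol}(1) — with $\mu=0$ in (1) and (3), with $\mu=\tfrac14$ in (2), and with $\mu=0$ in (2) when $W_s\equiv0$ — yields $\|(\Ac+i\lambda)^{-1}\|_{\mathcal{L}(\Hc)}\le C$ in (1), $\le C\abs{\lambda}$ in (2) (resp.\ $\le C$ when $W_s\equiv0$), and $\le e^{C\abs{\lambda}^{1/2}}$ in (3); by Lemma \ref{3t7} these give exponential, $\langle t\rangle^{-1}$, and $1/\log^2(2+t)$ decay respectively.

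The replacement of $\|\mathbbm{1}_{\Omega'}u\|$ by $\|Q\Lambda^\mu u\|_Y$ is where the real work lies. Pick $\chi,\chi_1\in C^\infty(\M)$ with $\chi\equiv1$ on $\Omega'$, $\chi_1\equiv1$ on $\supp\chi$ and $\supp\chi_1\subset\Omega_\epsilon$; since $\Omega_\epsilon\subset\{W_v\ge\epsilon\}\cup\{W_s\ge\epsilon\}$ one has $\chi_1^2\lesssim_\epsilon W_v+W_s$ a.e.\ on $\M$. The viscous contribution is controlled immediately by $\|\sqrt{W_v}\Lambda^\mu u\|_Y$ after inserting an elliptic $\Lambda^\mu$ on $\WF\chi$ and commuting. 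For the structural contribution one needs $\|\chi\Lambda^\mu u\|_{L^2}\lesssim\|\sqrt{W_s}\nabla\Lambda^\mu u\|_{L^2}+(\text{negligible})$; since $\nabla$ is elliptic only away from $\xi=0$, I would split $u=\psi(D)u+(1-\psi(D))u$ into low and high frequencies, estimate the high-frequency part by the elliptic estimate for $\nabla$ on $\WF\chi_1$ together with a Poincar\'e inequality on small balls (to recover the locally constant modes), and absorb $\|\psi(D)u\|_{L^2}\lesssim\lambda^{-2}\|(\Delta^2-\lambda^2)u\|_{L^2}$ — which comes from the resolvent equation — into the propagator term, which dominates it in every case. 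The resulting errors are $\lambda$-independent multiples of $\|\Lambda^{-N}u\|_{L^2}$ (times $e^{C\lambda^{1/2}}$ in case (3)), hence admissible in \eqref{1l1}. When $W_s\equiv0$, $\Omega_\epsilon=\operatorname{int}\{W_v\ge\epsilon\}$ and $\|\mathbbm{1}_{\Omega'}u\|_{L^2}\le\epsilon^{-1/2}\|\sqrt{W_v}u\|_{L^2}$ makes the replacement trivial.

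Finally, unique continuation: $\operatorname{UCP}_{\Delta,\mathbbm{1}_{\Omega_\epsilon}}$ holds by classical unique continuation on compact manifolds, and Lemma \ref{uniqelemma}(3) promotes it to $\operatorname{UCP}_{\Delta^2,\mathbbm{1}_{\Omega_\epsilon}}$; since $\mathbbm{1}_{\Omega_\epsilon}\lesssim_\epsilon W_v+W_s$, and for $\lambda>0$ a solution of $\Delta^2u=\lambda^2u$ is a $\Delta$-eigenfunction and so cannot be locally constant on an open set unless it vanishes, Lemma \ref{uniqelemma}(5) gives $\operatorname{UCP}_{\Delta^2,Q}$; Lemma \ref{3t7} then converts the resolvent bounds into the stated energy decay. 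The two places I expect to be delicate are the structural-damping replacement — the low-frequency and Poincar\'e bookkeeping, made more awkward by $W_s$ being merely $L^\infty$ — and checking that the $\lambda^{-1/2}$ surplus in case (1) genuinely survives dilation, since it is exactly what separates the exponential decay there from the $\langle t\rangle^{-1}$ decay that $\mu=\tfrac14$ would force.
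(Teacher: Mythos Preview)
Your overall architecture---derive a control estimate for $\Delta^2$ observed by $\mathbbm{1}_{\Omega'}$, replace the observation by $Q$, then apply Theorem~\ref{thmcontrol}(1) and Lemma~\ref{3t7}---is exactly the paper's, and the details are essentially right. Two points of genuine difference are worth noting. First, your passage from the $\Delta$-estimate to the $\Delta^2$-estimate via the algebraic factorisation $(\Delta^2-\lambda^2)=(\Delta+\lambda)(\Delta-\lambda)$ and the spectral bound $\|\Lambda^{1/4}(\Delta+\lambda)^{-1}\|\lesssim\lambda^{-1/2}$ is cleaner than the paper's route, which invokes Proposition~\ref{2t16} and then runs a semiclassical argument (substituting $v=\langle hD\rangle^{-1}u$, using the semiclassical ellipticity of $h^4\Delta^2+1$, etc.) to reach the same $\mu=0$ control estimate in case~(1). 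Second, for the replacement step the paper does \emph{not} do a frequency split; instead it proves a short pairing lemma (Lemma~\ref{2t19}): pairing $(\Delta-\lambda)u=(\Delta+\lambda)^{-1}(\Delta^2-\lambda^2)u$ with $\psi^2 u$ yields $\|\psi u\|\lesssim\lambda^{-1}\|(\Delta^2-\lambda^2)u\|_{H^{-2}}+\lambda^{-1/2}\|\tilde\psi\nabla u\|$ directly, so that after a cutoff decomposition separating the $W_v$- and $W_s$-regions one gets $\|\chi_1 u\|\lesssim\lambda^{-1}\|(\Delta^2-\lambda^2)u\|_{H^{-2}}+\|Qu\|_Y$ with no Poincar\'e bookkeeping, no high/low split, and no compact remainder. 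This also means the paper runs case~(2) with $\mu=0$ and $M=\abs{\lambda}^{1/2}$ rather than your $\mu=\gamma=\tfrac14$ with $M=C$; both choices give the same $\abs{\lambda}$ resolvent bound, but $\mu=0$ makes the replacement step uniform across all three cases. Your frequency-split proposal would work, but the pairing lemma is the tool you were looking for.
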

\begin{remark}
(1) and (3) generalises \cite{dk18} and \cite{ahr23} to arbitrary compact manifolds. The exponential decay in (2) when $W_s\equiv 0$ (fully viscous) recovers \cite{jaf90}. 
\end{remark}
\begin{lemma}\label{2t19}
Let $\psi,\tilde\psi$ be smooth cutoffs such that $\psi\tilde\psi=\psi$. Then 
\begin{equation}
\|\psi u\|_{L^2}\le C\lambda^{-1}\|(\Delta^2-\lambda^2)u\|_{H^{-2}}+C\lambda^{-\frac{1}{2}}\|\tilde\psi \nabla u\|_{L^2},
\end{equation}
uniformly in large real $\lambda$. 
\end{lemma}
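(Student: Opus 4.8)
The plan is to peel off the fourth order operator by the factorisation $\Delta^2-\lambda^2=(\Delta+\lambda)(\Delta-\lambda)$. For $\lambda\ge 1$ the positive operator $\Delta+\lambda$ satisfies $\|(\Delta+\lambda)^{-1}\|_{\mathcal{L}(H^{-2},L^2)}\le C$ uniformly (check on eigenfunctions of $\Delta$: $(1+\mu)/(\mu+\lambda)\le 2$ for $\lambda\ge1$). Hence, writing $v:=(\Delta-\lambda)u\in L^2$ for $u\in H^2$, we have $v=(\Delta+\lambda)^{-1}(\Delta^2-\lambda^2)u$ and so $\|(\Delta-\lambda)u\|_{L^2}\le C\|(\Delta^2-\lambda^2)u\|_{H^{-2}}$. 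It therefore suffices to establish the second order Helmholtz estimate
$\|\psi u\|_{L^2}\le C\lambda^{-1}\|(\Delta-\lambda)u\|_{L^2}+C\lambda^{-1/2}\|\tilde\psi\nabla u\|_{L^2}$, uniformly in large $\lambda$, for $\psi,\tilde\psi$ (normalised so $0\le\psi,\tilde\psi\le1$) with $\psi\tilde\psi=\psi$; recombining with the factorisation bound then gives the lemma.

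\textbf{Proof of the Helmholtz estimate.} First I would record the global ($\psi=\tilde\psi=1$) case from the functional calculus: decomposing the spectrum of $\Delta$ into $\{\mu<\lambda/2\}$, $\{\lambda/2\le\mu\le2\lambda\}$, $\{\mu>2\lambda\}$, the operator $\Delta-\lambda$ is elliptic on the outer two bands, where $|\mu-\lambda|\ge\tfrac12\max(\lambda,\mu)$, so their contribution is $\lesssim\lambda^{-1}\|(\Delta-\lambda)u\|_{L^2}$, while on the middle band $\mu\sim\lambda$ forces $\|u_{\mathrm{mid}}\|^2\le\tfrac2\lambda\langle\Delta u,u\rangle$; altogether $\|u\|_{L^2}\le C\lambda^{-1}\|(\Delta-\lambda)u\|_{L^2}+C\lambda^{-1/2}\|\nabla u\|_{L^2}$. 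To localise, I would run a positive commutator computation: testing $(\Delta-\lambda)u=v$ against $\psi^2\bar u$ and integrating by parts gives
$\lambda\|\psi u\|^2=\|\psi\nabla u\|^2+2\operatorname{Re}\langle\nabla u,\psi(\nabla\psi)u\rangle-\operatorname{Re}\langle v,\psi^2 u\rangle$.
Since $\tilde\psi\equiv1$ on $\operatorname{supp}\psi\supset\operatorname{supp}\nabla\psi$, one bounds $\|\psi\nabla u\|\le\|\tilde\psi\nabla u\|$ and absorbs the $v$-term by Young's inequality, arriving at $\|\psi u\|\le C\lambda^{-1/2}\|\tilde\psi\nabla u\|+C\lambda^{-1/2}\|(\nabla\psi)u\|+C\lambda^{-1}\|v\|$. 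The term $\|(\nabla\psi)u\|$ is a lower order error carried on a set strictly inside $\tilde\psi$'s plateau; one feeds it back into the same estimate applied to a slightly larger cutoff $\psi\prec\psi_1\prec\tilde\psi$, and iterates, the geometric tail being killed by the global bound just proved (which is finite since $u\in H^2$).

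\textbf{Main obstacle.} The delicate point is exactly this last localisation: the positive commutator error does not absorb on the spot, and one must juggle spatial cutoffs and the $O(\sqrt\lambda)$ frequency scale simultaneously, so that the constants along the cutoff chain stay uniform. I expect the cleanest substitute for the iteration is a semiclassical elliptic parametrix with $h=\lambda^{-1/2}$: on $\operatorname{supp}\psi$ the pair $(\Delta-\lambda,\tilde\psi\nabla)$ is an overdetermined jointly elliptic system, since the symbols $|\xi|_g^2-1$ and $i\xi$ cannot vanish simultaneously, so one can write $\psi=\operatorname{Op}_h(a)\,(h^2\Delta-1)+\operatorname{Op}_h(b)\,\tilde\psi(hD)+hR$ with $a$ of order $-2$, $b$ bounded and microsupported where $|\xi|_g\sim1$, and $R$ microsupported (in $x$) in $\operatorname{supp}\psi$; the $\mathcal{O}(h^\infty)$ remainder and the $hR$-term are then controlled by the global estimate exactly as above. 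In either implementation, it is the global a priori bound from the functional calculus that prevents an infinite regress.
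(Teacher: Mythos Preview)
Your setup is identical to the paper's: factor $\Delta^2-\lambda^2=(\Delta+\lambda)(\Delta-\lambda)$, use that $(\Delta+\lambda)^{-1}:H^{-2}\to L^2$ is uniformly bounded for $\lambda\ge1$, and pair $(\Delta-\lambda)u$ with $\psi^2\bar u$ to get the commutator identity. The divergence is in how you treat the cross term
\[
2\operatorname{Re}\langle (\nabla\psi)\cdot\nabla u,\ \psi u\rangle.
\]
You bound it essentially by $\|\psi\nabla u\|\,\|(\nabla\psi)u\|$, which leaves you with the stray term $\lambda^{-1/2}\|(\nabla\psi)u\|$ and forces you to contemplate an iteration in cutoffs or a semiclassical parametrix. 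The paper instead observes that $\tilde\psi\equiv1$ on $\operatorname{supp}\nabla\psi$ (this is exactly what $\psi\tilde\psi=\psi$ gives), so pointwise $(\nabla\psi)\cdot\nabla u=(\nabla\psi)\cdot(\tilde\psi\nabla u)$, and hence
\[
\bigl|\langle (\nabla\psi)\cdot\nabla u,\ \psi u\rangle\bigr|\le C\|\tilde\psi\nabla u\|\,\|\psi u\|\le \epsilon\lambda\|\psi u\|^2+C\epsilon^{-1}\lambda^{-1}\|\tilde\psi\nabla u\|^2.
\]
Now the $\epsilon\lambda\|\psi u\|^2$ absorbs on the spot into the $\lambda\|\psi u\|^2$ coming from the identity, and the estimate closes in one step. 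No chain of cutoffs, no parametrix, no global bound needed.

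Your iteration scheme, as written, does not actually close: after finitely many steps you land on $C_k\lambda^{-(k+1)/2}\|u\|$ and invoke the global bound $\|u\|\le C\lambda^{-1}\|v\|+C\lambda^{-1/2}\|\nabla u\|$, but that reintroduces the \emph{unlocalised} $\|\nabla u\|$, which is not dominated by $\|\tilde\psi\nabla u\|$. So the tail is not killed. The parametrix alternative can be made to work but is substantial overkill for what is, once the cross term is paired correctly, a three-line computation.
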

\begin{proof}
We mainly follow the idea in \cite[Proposition 3.2]{bur20}. Assume without loss of generality $\lambda>0$. Let $(\Delta^2-\lambda^2)u=f$. This implies $(\Delta-\lambda)u=(\Delta+\lambda)^{-1}f=\tilde f$. Pair $(\Delta-\lambda)u$ with $\psi^2 u$ to see
\begin{equation}
\langle (\Delta-\lambda)u, \psi^2 u\rangle=\|\psi\nabla u\|^2-\lambda\|\psi u\|^2+\langle 2(\nabla \psi) \cdot \nabla u, \psi u\rangle.
\end{equation}
Estimate
\begin{gather}
\abs{\langle 2(\nabla \psi)\cdot\nabla u, \psi u\rangle}\le \epsilon \lambda\|\psi u\|^2+C\epsilon^{-1}\lambda^{-1}\|\tilde\psi \nabla u\|^2\\
\abs{\langle (\Delta-\lambda)u, \psi^2 u\rangle} \le \epsilon\lambda\|\psi u\|^2+C\epsilon^{-1}\lambda^{-1}\|\tilde f\|^2. 
\end{gather}
After absorption of terms we have as desired
\begin{equation}
\|\psi u\| \le C\lambda^{-\frac{1}{2}}\|\tilde\psi \nabla u\|+C\lambda^{-1}\|\tilde f\|\le C\lambda^{-\frac{1}{2}}\|\tilde\psi \nabla u\|+C\lambda^{-1}\|f\|_{H^{-2}},
\end{equation}
where we used $\|(\Delta+\lambda)\|_{L^2\rightarrow H^{-2}}\ge C^{-1}$ uniformly in large real $\lambda$. 
\end{proof}

\begin{proof}[Proof of Proposition \ref{2t18}]
1. We claim that for every smooth cutoff $\chi_1$ compactly supported in $\Omega_\epsilon$, 
\begin{equation}\label{2l27}
\|\chi_1 u\|_{L^2}\le C\lambda^{-1}\|(\Delta^2-\lambda^2)u\|_{H^{-2}}+\|\sqrt{W_v}u\|_{L^2}+\|\sqrt{W_s}\nabla u\|_{L^2(T\M)}. 
\end{equation}
To see  this let $\chi_2$ be a smooth cutoff compactly supported in $\operatorname{Int}\{W_s\ge\epsilon\}$, and identically 1 on a neighbourhood of $K=(\supp\chi_1\cap\overline{\{W_s \geq \e\}}) \setminus \operatorname{Int}\{W_v \geq \e\}$ (if such set is empty, we can choose any cutoff $\chi_2$ supported in $\operatorname{Int}(W_s\ge \epsilon)$). Note
\begin{equation}
K\cup\operatorname{Int}\{W_v \geq \e\}\subset \supp\chi_1\subset \Omega_\epsilon=\operatorname{Int}\{W_s \geq \e\}\cup \operatorname{Int}\{W_v \geq \e\}
\end{equation}
implies $K\subset \operatorname{Int}\{W_s\ge\epsilon\}$. To show the existence of such cutoff $\chi_2$, it suffices to show $\operatorname{Int}\{W_s\ge \epsilon\}\setminus K\neq \emptyset$. If $\operatorname{Int}\{W_s\ge \epsilon\}\cap\operatorname{Int}\{W_v\ge \epsilon\}=\emptyset$, then $K\subset \supp\chi_1\setminus \operatorname{Int}\{W_v\ge \epsilon\}$ is compactly supported in $\operatorname{Int}\{W_s\ge \epsilon\}$. If $\operatorname{Int}\{W_s\ge \epsilon\}\cap\operatorname{Int}\{W_v\ge \epsilon\}\neq\emptyset$, we have
\begin{equation}
\operatorname{Int}\{W_s\ge \epsilon\}\setminus K=(\operatorname{Int}\{W_s\ge \epsilon\}\setminus \supp\chi_1)\cup(\operatorname{Int}\{W_s\ge \epsilon\}\cap\operatorname{Int}\{W_v\ge \epsilon\})\neq\emptyset.
\end{equation}
Thus we have $\operatorname{Int}\{W_s\ge \epsilon\}\setminus K\neq \emptyset$ and the existence of $\chi_2$. Now $\supp(\chi_1\chi_2)\subset \operatorname{Int}\{W_s\ge\epsilon\}$ and there exists $\chi_3$ compactly supported in $\operatorname{Int}\{W_s>\epsilon\}$ with $\chi_1\chi_2\chi_3=\chi_1\chi_2$. Apply Lemma \ref{2t19} to see
\begin{equation}\label{2l26}
\|\chi_1\chi_2 u\|_{L^2}\le C\lambda^{-1}\|(\Delta^2-\lambda^2)u\|_{H^{-2}}+C\lambda^{-\frac{1}{2}}\|\chi_3 \nabla u\|_{L^2}.
\end{equation}
On another hand, note $\chi_2\equiv 1$ on $\supp\chi_1\cap \operatorname{Int}\{W_s>\epsilon\}$. This implies $\supp(1-\chi_2)$ is disjoint from $\supp\chi_1\cap \operatorname{Int}\{W_s>\epsilon\}$. Thus
\begin{equation}
\supp(\chi_1(1-\chi_2))\cap\operatorname{Int}\{W_s>\epsilon\}\subset \supp(1-\chi_2)\cap \supp\chi_1\cap \operatorname{Int}\{W_s>\epsilon\}=\emptyset. 
\end{equation}
With $\supp\chi_1\subset \operatorname{Int}\{W_v>\epsilon\}\cup \operatorname{Int}\{W_s>\epsilon\}$, we know $\supp(\chi_1(1-\chi_2))\subset \operatorname{Int}\{W_v>\epsilon\}$. This implies $\|\chi_1(1-\chi_2)u\|_{L^2}\le C\|\sqrt{W_v} u\|_{L^2}$. Now note $\supp\chi_3\subset \operatorname{Int}\{W_s>\epsilon\}$ and \eqref{2l26} implies \eqref{2l27} as desired.

2. When $\Omega_\epsilon$ satisfies the geometric control condition, there is an open subset $\Omega'$ compactly supported in $\Omega_\epsilon$ that also satisfies the geometric control condition. Apply Proposition \ref{2t16} with $\alpha=2$ to see uniformly for all $v\in H^{4}$ we have
\begin{equation}
\|v\|_{L^2}\le C\lambda^{-\frac{3}{2}}\|(\Delta^2-\lambda^2)v\|_{L^2}+C\|\mathbbm{1}_{\Omega'} v\|_{L^2},
\end{equation}
for large real $\lambda$. consider two cutoff functions $\chi,\chi_1\in C^\infty(\M)$ such that $\chi\equiv 1$ on $\Omega'$ and $\chi\chi_1\equiv\chi$ on $\M$, $\supp{\chi_1}\subset \Omega_\epsilon$. Semiclassicalise via $h=\lambda^{-1/2}$ to see
\begin{equation}
\|v\|_{L^2}\le Ch^{-1}\|(h^4\Delta^2-1)v\|_{L^2}+C\|\chi v\|_{L^2},
\end{equation}
uniformly for $h$ small. Let $v=\langle hD\rangle^{-1}u$, and for $u\in H^3$ we have
\begin{equation}\label{2l25}
\|u\|_{H^{-1}_h}\le Ch^{-1}\|(h^4\Delta^2-1)u\|_{H^{-1}_h}+C\|\chi\langle hD\rangle^{-1} u\|_{L^2}.
\end{equation}
Consider that $h^{4}\Delta+1$ is semiclassically elliptic: this implies
\begin{equation}\label{2l24}
\|u\|_{L^2}\le C\|(h^4\Delta^2+1)u\|_{H^{-1}_h} +\bigo(h^{\infty})\|u\|_{H_h^{-N}}\le C\|(h^4\Delta^2-1)u\|_{H^{-1}_h}+C\|u\|_{H^{-1}_h}
\end{equation}
uniformly in $h$ small, after the absorption of the small term. Note $\WF_h(\chi\langle hD\rangle^{-1})\subset \Ell_h\chi_1$. This implies
\begin{equation}
\|\chi\langle hD\rangle^{-1}u\|_{L^2}\le C\|\chi_1 u\|_{L^2}+\bigo(h^{\infty})\|u\|_{H^{-N}_h},
\end{equation}
for arbitrary $N$. Bring \eqref{2l25} into \eqref{2l24} to see
\begin{equation}
\|u\|_{L^2}\le Ch^{-1}\|(h^4\Delta^2-1)u\|_{H^{-1}_h}+C\|\chi_1 u\|_{L^2}+\bigo(h^{\infty})\|u\|_{H^{-N}_h}. 
\end{equation}
Absorb the small error. Now note $\|v\|_{H^{-1}_h}\le Ch^{-1}\|v\|_{H^{-1}}$ uniformly for $v\in H^{-1}$ and $h$ small. Then it reduces to an uniform estimate for all $u\in H^3$ that
\begin{equation}
\|u\|_{L^2}\le C\lambda^{-1}\|(\Delta^2-\lambda^2)u\|_{H^{-1}}+C\|\chi_1 u\|_{L^2},
\end{equation}
uniformly for real $\lambda$ large. Now \eqref{2l27} gives
\begin{equation}
\|u\|_{L^2}\le C\lambda^{-1}\|(\Delta^2-\lambda^2)u\|_{H^{-1}}+C\|Q u\|_{Y}.
\end{equation}
Now apply Theorem \ref{thmcontrol} with $\mu=0$, $\gamma=\frac{1}{4}$, $M=m=C$ to see
\begin{equation}
\|(\Ac+i\lambda)^{-1}\|_{\mathcal{L}(\Hc)}\le C,
\end{equation}
for $\lambda$ real and $\abs{\lambda}$ large. 

3a. Assume the Schr\"odinger equation is exactly observable from $\Omega_\epsilon$. There is $\Omega'$ compactly supported in $\Omega_\epsilon$ from which the Schr\"odinger equation is also exactly observable. We then have \eqref{2l28}:
\begin{equation}
\|u\|_{L^2}\le C\|(\Delta-\lambda)u\|_{L^2}+C\|\mathbbm{1}_{\Omega'}u\|_{L^2},
\end{equation}
uniformly for real $\lambda$ with $\abs{\lambda}$ large. After semiclassicalisation with $h=\lambda^{-1/2}$, it becomes
\begin{equation}\label{2l29}
\|u\|_{L^2}\le Ch^{-2}\|(h^2\Delta-1)u\|_{L^2}+C\|\mathbbm{1}_{\Omega'}u\|_{L^2}.
\end{equation}
Now note $(h^2\Delta-1)u=(h^2\Delta+1)^{-1}(h^4\Delta^2-1)u$, where $h^2\Delta+1$ is semiclassically elliptic and invertible from $L^2$ to $H^{-2}_h$. Thus \eqref{2l29} becomes
\begin{equation}\label{2l30}
\|u\|_{L^2}\le Ch^{-2}\|(h^4\Delta^2-1)u\|_{H^{-2}_h}+C\|\mathbbm{1}_{\Omega'}u\|_{L^2}.
\end{equation}
Replace $H^{-2}_h$ by $H^{-1}_h$ and it reduces to 
\begin{equation}
\|u\|_{L^2}\le C\lambda^{-\frac{1}{2}}\|(\Delta^2-\lambda^2)u\|_{H^{-1}}+C\|\mathbbm{1}_{\Omega'}u\|_{L^2},
\end{equation}
for large real $\lambda$. Now consider a cutoff function $\chi\in C^\infty(\M)$ such that $\chi\equiv 1$ on $\Omega'$ and $\supp{\chi}\subset \Omega_\epsilon$. We have from \eqref{2l27} that
\begin{equation}
\|\mathbbm{1}_{\Omega'}u\|_{L^2}\le \|\chi u\|_{L^2}\le C\lambda^{-1}\|(\Delta^2-\lambda^2)u\|_{H^{-2}}+\|\sqrt{W_v}u\|_{L^2}+\|\sqrt{W_s}\nabla u\|_{L^2(T\M)}. 
\end{equation}
We end up with
\begin{equation}
\|u\|_{L^2}\le C\lambda^{-\frac{1}{2}}\|(\Delta^2-\lambda^2)u\|_{H^{-1}}+C\|Q u\|_{Y},
\end{equation}
for large real $\lambda$. Apply Theorem \ref{thmcontrol} with $\mu=0$, $\gamma=\frac{1}{4}$, $M=\abs{\lambda}^{\frac{1}{2}}$, $m=C$ to see
\begin{equation}
\|(\Ac+i\lambda)^{-1}\|_{\mathcal{L}(\Hc)}\le C\abs{\lambda}, 
\end{equation}
uniformly for real $\lambda$ with $\abs{\lambda}$ large.

3b. If we further assume $W_s\equiv 0$, then $Q\in \mathcal{L}(L^2, Y)$ is bounded with $\gamma=0$. In \eqref{2l30}, by replacing $H^{-2}_h$ by $L^2$ we have
\begin{equation}
\|u\|_{L^2}\le C\lambda^{-1}\|(\Delta^2-\lambda^2)u\|_{L^2}+C\|\mathbbm{1}_{\Omega'}u\|_{L^2}. 
\end{equation}
Proceed similarly as in Step 3a, this time with $\gamma=0$ and $M(\lambda)=C$ to obtain 
\begin{equation}
\|(\Ac+i\lambda)^{-1}\|_{\mathcal{L}(\Hc)}\le C,
\end{equation}
when $W_s\equiv 0$. 

4. By Proposition \ref{2t17} with $\alpha=2$, we have
\begin{equation}
\|u\|_{H^3}\le e^{C\lambda^{1/2}}(\|(\Delta^2-\lambda^2)u\|_{H^{-1}}+\|\chi u\|_{L^2}),
\end{equation}
uniformly for large real $\lambda>0$, where $\chi$ is a non-trivial cutoff compactly supported in $\Omega_\epsilon$. Then \eqref{2l27} implies
\begin{equation}
\|u\|_{L^2}\le e^{C\lambda^{1/2}}(\|(\Delta^2-\lambda^2)u\|_{H^{-1}}+\|Q u\|_{Y}).
\end{equation}
Apply Theorem \ref{thmcontrol} with $\mu=0$, $\gamma=\frac{1}{4}$, $M=m=e^{C\lambda^{1/2}}$ to see
\begin{equation}
\|(\Ac+i\lambda)^{-1}\|_{\mathcal{L}(\Hc)}\le e^{C\lambda^{1/2}},
\end{equation}
uniformly for real $\lambda$ with $\abs{\lambda}$ large. 

5. We now check the unique continuation. Note $\operatorname{UCP}_{\Delta, \mathbbm{1}_{\Omega'}}$ for any open nonempty $\Omega'$ compactly supported in $\Omega_\epsilon$ on compact manifolds. By Lemma \ref{uniqelemma}(3), we have $\operatorname{UCP}_{\Delta^2, \mathbbm{1}_{\Omega'}}$. Then by \eqref{2l27}, $\operatorname{UCP}_{\Delta^2, Q}$ holds. Now apply Lemma \ref{3t7} to conclude the energy decay. 
\end{proof}

\section{Proofs of main theorems}\label{s3}
In this section, we prove Theorems \ref{thmbackward}, \ref{thmcontrol}, \ref{thmdilate} and \ref{t4}. In \S \ref{s3-1}, we show an interpolation inequality and some unique continuation properties in the functional calculus of $P$. In \S \ref{s3-2}, we show the 0-frequencies of the evolution semigroup $e^{t\Ac}$ can be isolated out even when the perturbation is not relatively compact, and obtain a semigroup decomposition result. In \S \ref{s3-3}, we show the energy decay of $e^{t\Ac}$, the stability of $\Ac$, and the resolvent estimate of $P_\lambda$ are equivalent. In \S \ref{s3-4}, we prove Theorem \ref{thmcontrol} of control and stability. In \S \ref{s3-5}, we prove Theorem \ref{thmbackward} of optimal backward uniqueness. In \S \ref{s3-6}, we prove Theorem \ref{thmdilate} of dilation and contraction. In \S \ref{s3-7}, we prove Theorem \ref{t4} of non-uniform Hautus test. 
\subsection{Preliminaries}\label{s3-1}
We remind readers that the non-negative self-adjoint operator $P:H_{1}\rightarrow H_0=H$ admits a spectral resolution
\begin{equation}
P u=\int_0^\infty\rho^2\ dE_\rho(u),
\end{equation}
where $E_\rho$ is a projection-valued measure on $H$ and $\supp E_\rho\subset [0, \infty)$. For $s\in\mathbb{R}$, the scaling operators were defined as
\begin{equation}
\Lambda^{s}u=\int_0^\infty (1+\rho^2)^{s}\ dE_\rho(u).
\end{equation}
We define the interpolation spaces $H_s=\Lambda^{-s}(H)$, equipped with the norm $\|u\|_{s}=\|\Lambda^{s}u\|_{H}$. Those operators $\Lambda^{-s}: H\rightarrow H^s$ are unitary, and they commute with $P$. The operator $P$ extends to a self-adjoint operator in $\mathcal{L}(H_{1/2}, H_{-1/2})$. Moreover $H_{-s}$ is isomorphic to $\mathcal{L}(H_s, H)$, the dual space of $H_s$ with respect to $H$. Note also, for $s<t$ we have $H_t \subset H_s$ compactly, and for $u \in H_t$ we have $\|u\|_s \leq C \|u\|_t$. Among those spaces we have an interpolation inequality:
\begin{lemma}[Interpolation]\label{interpolation}
Let $s < t$. For every $r\in (s,t)$, for every $u\in H_t$, and every $\alpha>0$, 
\begin{equation}\label{3l8}
\|u\|_{r}\le \left( \alpha\|u\|_{t}+\alpha^{\frac{s-r}{t-r}}\|u\|_s \right). 
\end{equation}
\end{lemma}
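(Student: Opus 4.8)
The plan is to reduce \eqref{3l8} to an elementary scalar inequality through the spectral theorem for $P$. Writing $d\nu_u(\rho) = d\langle E_\rho u, u\rangle$, the definitions of $\Lambda^\sigma$ and $\|\cdot\|_\sigma$ give $\|u\|_\sigma^2 = \int_0^\infty (1+\rho^2)^{2\sigma}\,d\nu_u(\rho)$ for each $\sigma \le t$; all three integrals at $\sigma = s, r, t$ are finite for $u \in H_t$ since $1+\rho^2 \ge 1$. Put $\theta = \frac{r-s}{t-s} \in (0,1)$, so that $2r = \theta\cdot 2t + (1-\theta)\cdot 2s$.

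First I would record the weighted arithmetic--geometric mean inequality in the form: for $a, b \ge 0$ and $\epsilon > 0$,
\[
a^\theta b^{1-\theta} = (\epsilon a)^\theta\,\bigl(\epsilon^{-\theta/(1-\theta)} b\bigr)^{1-\theta} \le \theta\epsilon\, a + (1-\theta)\epsilon^{-\theta/(1-\theta)} b.
\]
Taking $a = (1+\rho^2)^{2t}$ and $b = (1+\rho^2)^{2s}$ gives the pointwise bound $(1+\rho^2)^{2r} \le \theta\epsilon(1+\rho^2)^{2t} + (1-\theta)\epsilon^{-\theta/(1-\theta)}(1+\rho^2)^{2s}$ for every $\rho$. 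Integrating against $d\nu_u$ and then taking square roots (using $\sqrt{A+B} \le \sqrt A + \sqrt B$), I would obtain
\[
\|u\|_r \le \sqrt{\theta\epsilon}\,\|u\|_t + \sqrt{1-\theta}\,\epsilon^{-\theta/(2(1-\theta))}\,\|u\|_s.
\]

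To finish, I would set $\alpha = \sqrt{\theta\epsilon}$, which ranges over all of $(0,\infty)$ as $\epsilon$ does, so $\epsilon = \alpha^2/\theta$. Using $1-\theta = \frac{t-r}{t-s}$, hence $\frac{\theta}{1-\theta} = \frac{r-s}{t-r}$ and $-\frac{\theta}{1-\theta} = \frac{s-r}{t-r}$, the coefficient of $\|u\|_t$ becomes exactly $\alpha$ and the coefficient of $\|u\|_s$ becomes $\sqrt{1-\theta}\,\theta^{\theta/(2(1-\theta))}\,\alpha^{(s-r)/(t-r)}$. Since $0 < \theta < 1$, both $\sqrt{1-\theta}$ and $\theta^{\theta/(2(1-\theta))}$ are at most $1$, so this coefficient is $\le \alpha^{(s-r)/(t-r)}$, which gives \eqref{3l8}.

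There is no substantive obstacle here: the only things needing care are the bookkeeping of the exponent $-\theta/(1-\theta)$ and verifying it equals $\frac{s-r}{t-r}$, and the observation that the residual multiplicative constant on the $\|u\|_s$-term is genuinely $\le 1$, so that the inequality holds as stated with no spurious constant.
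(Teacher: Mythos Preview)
Your proof is correct and follows essentially the same approach as the paper: both arguments reduce to a pointwise weighted AM--GM inequality on the spectral multiplier $(1+\rho^2)^\sigma$ and then pass to norms via the spectral resolution. The only cosmetic difference is that the paper applies AM--GM directly to $(1+\rho^2)^r$ with the free parameter $\alpha$ built in and then invokes the $L^2(d\nu_u)$ triangle inequality, whereas you square first, introduce an auxiliary $\epsilon$, and substitute at the end; the resulting bookkeeping (checking that the residual constant $\sqrt{1-\theta}\,\theta^{\theta/(2(1-\theta))}\le 1$) is straightforward and your handling of it is fine.
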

\begin{proof}
Consider the algebraic inequality
\begin{multline}
(1+\rho^2)^r=(1+\rho^2)^t\alpha^{\frac{r-s}{t-s}}\left(\alpha^{\frac{s-r}{t-r}}(1+\rho^2)^{s-t}\right)^{\frac{t-r}{t-s}}\\
\le (1+\rho^2)^t\left(\frac{r-s}{t-s}\alpha+\frac{t-r}{t-s}\alpha^{\frac{s-r}{t-r}}(1+\rho^2)^{s-t}\right)
\le \alpha(1+\rho^2)^t+\alpha^{\frac{s-r}{t-r}}(1+\rho^2)^s,
\end{multline}
where we used the generalised AM-GM inequality in the first inequality. Now 
\begin{equation}
\|u\|_r\le \left\|\int_0^\infty \alpha(1+\rho^2)^t\ dE_\rho(u)\right\| +  \left\|\int_0^\infty \alpha^{\frac{s-r}{t-r}}(1+\rho^2)^s\ dE_\rho(u)\right\| 
 \le \left(\alpha\|u\|_{t} +\alpha^{\frac{s-r}{t-r}}\|u\|_s \right),
\end{equation}
as desired.
\end{proof}
Let $f$ be a real-valued Borel function on $\mathbb{R}$. We can define $f(P)$ via
\begin{equation}
f(P) u=\int_0^\infty f(\rho^2)\ dE_\rho(u),
\end{equation}
where the domain of $f(P)$ is the set of $u$ that $f(P)u\in H$. As a specific example, $P^{\frac{1}{2}}\in\mathcal{L}(H_{1/2},H)$ is essentially self-adjoint on $H$ and $P=(P^{\frac{1}{2}})^{2}$. 

We later will study the stability of the semigroup $e^{t\mathcal{A}}$ generated by $\mathcal{A}$. In some cases, there will be some oscillating modes which do not see the damping $Q^*Q$ at low frequencies. An effective damping would intercept oscillating modes at all positive frequencies, and we want a condition that ensures this. These are the unique continuation assumptions on $P$ and $Q$: 
\begin{definition}[Unique continuation]\label{3t6}
We say the unique continuation principle for $P$ and observation operator $Q\in\mathcal{L}(H_\gamma, Y)$, denoted $\operatorname{UCP}_{P,Q}$, holds if for all $\lambda>0$, all $u\in H_{1/2}$ and $u\neq 0$,
\begin{equation}
(P-\lambda^2)u=0\ \Rightarrow\ Q u\neq 0.
\end{equation}
\end{definition}
The following lemma gives practical conditions to check whether the unique continuation principle holds. 
\begin{lemma}[Unique continuation for functional calculus]\label{uniqelemma}
Let $\gamma\in[0, \frac{1}{2}]$, $Q\in \mathcal{L}(H_\gamma, H)$. Then the following are true:
\begin{enumerate}
\item If the Schr\"odinger equation $(i\partial_t +P)u=0$ is exactly observable via $Q$, then $\operatorname{UCP}_{P, Q}$ holds. 
\item Let $Q_1\in\mathcal{L}(H_{\gamma}, Y_1)$, $Q_2\in\mathcal{L}(H_{\gamma}, Y_2)$, and there is $C>0$ such that $\|Q_1 u\|_{Y_1}\le C\|Q_2 u\|_{Y_2}$ for all $u\in H_1/\ker P$. Then if $\operatorname{UCP}_{P, Q_1}$ holds, then $\operatorname{UCP}_{P, Q_2}$ holds. 
\item Let $f$ be a continuous function injectively mapping $[0,\infty)$ into $[0,\infty)$, and $f(0)=0$. If $\operatorname{UCP}_{P, Q}$ holds, then $\operatorname{UCP}_{f(P), Q}$ holds. 
\item Let $g$ be a continuous function mapping $[0,\infty)$ into $[0,\infty)$, and $g>0$ on $(0,\infty)$. Then $\operatorname{UCP}_{P, g(P)}$ holds. 
\item Assume $V\in\mathcal{L}(H_{1/2}, H_\gamma)$ commutes with $P$. If $\operatorname{UCP}_{P, Q}$ and $\operatorname{UCP}_{P, V}$ hold, then $\operatorname{UCP}_{P, QV}$ holds. 
\item If $\operatorname{UCP}_{P, Q}$ holds, then $\ker (P-i\lambda Q^*Q-\lambda^2)=\{0\}$ for all $\lambda>0$. 
\end{enumerate}
\end{lemma}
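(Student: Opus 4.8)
The plan is to exploit a single structural fact repeatedly: if $u\in H_{1/2}$ is nonzero, $\lambda>0$, and $(P-\lambda^2)u=0$, then the spectral measure $\langle E_\rho u,u\rangle$ is concentrated at $\rho^2=\lambda^2$, so $u$ is a genuine eigenfunction of $P$; by elliptic bootstrapping for $P-\lambda^2$ in the functional calculus, $u\in\bigcap_s H_s$, and $f(P)u=f(\lambda^2)u$ for every Borel function $f$. Parts (1)--(5) all reduce to producing such an eigenfunction and feeding it into the hypothesised unique continuation property; part (6) is separate and rests on an energy pairing.

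First I would dispatch (4): for an eigenfunction $u$ as above, $g(P)u=g(\lambda^2)u$, and $g(\lambda^2)>0$ since $\lambda^2>0$, so $g(P)u\neq0$. For (3), if $(f(P)-\mu^2)u=0$ with $u\neq0$ and $\mu>0$, then $\langle E_\rho u,u\rangle$ is supported on $\{\rho^2\geq0:f(\rho^2)=\mu^2\}$; a continuous injection $f:[0,\infty)\to[0,\infty)$ with $f(0)=0$ is strictly increasing, so this set is a single point $\rho_0^2$, and $\rho_0^2>0$ because $f(\rho_0^2)=\mu^2>0=f(0)$. Hence $(P-\rho_0^2)u=0$ and $\operatorname{UCP}_{P,Q}$ gives $Qu\neq0$. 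For (1), an eigenfunction $u$ with $Pu=\lambda^2u$ satisfies $e^{itP}u=e^{it\lambda^2}u$, so exact observability forces $\|u\|_H\leq C_T\int_0^T\|Qe^{itP}u\|_Y\,dt=C_T T\|Qu\|_Y$, whence $Qu\neq0$. For (2), an eigenfunction $u$ with $\lambda>0$ lies in $H_1$ and is orthogonal to $\ker P$, so the comparison $\|Q_1u\|_{Y_1}\leq C_0\|Q_2u\|_{Y_2}$ applies; if $Q_2u=0$ then $Q_1u=0$, contradicting $\operatorname{UCP}_{P,Q_1}$. For (5), $\operatorname{UCP}_{P,V}$ gives $Vu\neq0$, and since $V$ commutes with $P$ we have $(P-\lambda^2)Vu=V(P-\lambda^2)u=0$, so $Vu$ is again an eigenfunction with eigenvalue $\lambda^2>0$ (in particular $Vu\in\bigcap_s H_s$, so $QVu$ is well defined), and $\operatorname{UCP}_{P,Q}$ gives $QVu\neq0$.

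For (6), suppose $u\in\ker(P-i\lambda Q^*Q-\lambda^2)$ for some $\lambda>0$. Because $\gamma\leq\tfrac{1}{2}$ we have $H_{1/2}\hookrightarrow H_\gamma$ and $H_{-\gamma}\hookrightarrow H_{-1/2}$, so $P-i\lambda Q^*Q-\lambda^2$ is bounded $H_{1/2}\to H_{-1/2}$ and its kernel is a subspace of $H_{1/2}$. Pairing the equation with $u$ in the $H_{-1/2}$--$H_{1/2}$ duality gives $\|P^{1/2}u\|_H^2-i\lambda\|Qu\|_Y^2-\lambda^2\|u\|_H^2=0$, using $\langle Q^*Qu,u\rangle=\|Qu\|_Y^2$; taking imaginary parts and using $\lambda>0$ yields $Qu=0$. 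Then $Q^*Qu=0$, the equation collapses to $(P-\lambda^2)u=0$, and if $u\neq0$ this contradicts $\operatorname{UCP}_{P,Q}$; hence $u=0$.

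I expect no substantive obstacle: the content of each part is the eigenfunction reduction, and the only care needed is the strict-monotonicity step in (3) and, in (6), checking that the kernel genuinely lies in $H_{1/2}$ so that the energy pairing and the identity $\langle Q^*Qu,u\rangle=\|Qu\|_Y^2$ are legitimate --- both of which follow from $\gamma\leq\tfrac{1}{2}$ together with elliptic regularity for $P-\lambda^2$ in the functional calculus.
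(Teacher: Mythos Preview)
Your proof is correct and follows essentially the same approach as the paper's, reducing each part to the eigenfunction structure via the spectral measure. The only cosmetic differences are that in (1) you use the time-domain observability inequality directly while the paper invokes the equivalent resolvent characterization, and in (5) you apply $\operatorname{UCP}_{P,V}$ and $\operatorname{UCP}_{P,Q}$ in the opposite order; neither changes the substance.
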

\begin{proof}
1. 
Let $(P-\lambda^2)u=0$ for $u\in H_{1/2}$, $u\neq 0$. This implies $u\in H_{s}$ for every $s\in \mathbb{R}$. Specifically, $u\in H_{1}$. Now assume the Schrödinger equation $(i\partial_t +P)u=0$ is exactly observable via $Q$. By \cite{mil12} we have
\begin{equation}
	0<\|u\|\le C\|(P-\lambda^2)u\|+C\|Q u\|_Y=C\|Q u\|_Y
\end{equation} 
This implies $Qu\neq 0$.

2. Assume $\|Q_1 u\|_{Y_1}\le C\|Q_2 u\|_{Y_2}$ for all $u\in H_1/\ker P$. Then $\|Q_1 \Pi_0^\perp u\|_{Y_1}\le C\|Q_2 \Pi_0^\perp u\|_{Y_2}$ for all $u\in H_1$, where $\Pi_0^\perp$ is the orthogonal projection onto $(\ker P)^{\perp}$, defined in \eqref{3l34}. Assume $(P-\lambda^2)u=0$ for some $\lambda>0$, $u\neq 0,$ then $(P-\lambda^2)\Pi_0^\perp u=0$.  $\operatorname{UCP}_{P, Q_1}$ then implies $Q_1 \Pi_0^\perp u\neq 0$. Then $Q_2\Pi_0^\perp u\neq 0$ and thus $Q_2 u\neq 0$. 

3. Let $(f(P)-\lambda^2)u=0$ for $\lambda\in (0,\infty)$, so $\lambda\neq \sqrt{f(0)}$. This implies 
\begin{equation}
\|(f(P)-\lambda^2)u\|^2=\int_0^\infty (f(\rho^2)-\lambda^2)^2d\langle E_\rho u, u\rangle=0. 
\end{equation}
This implies $\langle E_\rho u, u\rangle=0$ away from $\rho_0=\sqrt{f^{-1}(\lambda^2)}\neq 0$. Then $u=E_{\rho_0} u$ and
\begin{equation}
(P-\rho_0^2)u=\int_{0}^\infty (\rho^2-\rho_0^2)\ dE_\rho(u)=0. 
\end{equation}
Thus $(P-\rho_0^2)u=0$ and our assumption implies $Qu\neq0$.

4. Assume $(P-\lambda^2)u=0, u \neq 0$. Then
\begin{multline}
0<\|u\|^2=\int_0^\infty \ d\langle E_\rho u, u\rangle\le C\int_0^\infty (\rho^2-\lambda^2)^2+g(\rho^2)^2 \ d\langle E_\rho u, u\rangle\\
\le C\|(P-\lambda^2) u\|^2+C\|g(P)u\|^2= C \|g(P)u\|^2.
\end{multline}
So $g(P)u \neq 0$.

5. Assume $(P-\lambda^2)u=0$, $QVu=0$. The commutativity implies $(P-\lambda^2)Vu=0$. Then $\operatorname{UCP}_{P, Q}$ implies $Vu=0$. Now we have $(P-\lambda^2)Vu=0$, $Vu=0$, and $\operatorname{UCP}_{P, V}$ gives $u=0$.

6. Suppose $(P-i\lambda Q^* Q - \lambda^2)u=0$. Pairing with $u$
\begin{equation}
\<Pu,u\> -\lambda^2 \<u,u\> - i \lambda \|Qu\|^2 = 0.
\end{equation} 
Taking real and imaginary parts gives $(P-\lambda^2)u=0$ and $Qu=0$, respectively. Thus by $\operatorname{UCP}_{P,Q}$ we have $u=0$.
\end{proof}

\subsection{Semigroup decomposition}\label{s3-2}
Fix $\gamma\in[0,\frac{1}{2}]$ and an observation operator $Q\in\mathcal{L}(H_{\gamma}, Y)$, where the observation space $Y$ is a Hilbert space. We call its adjoint $Q^*\in\mathcal{L}(Y, H_{-\gamma})$ the corresponding control operator. We will consider damping of the form $Q^*Q\in \mathcal{L}(H_\gamma, H_{-\gamma})$ and the evolution semigroup for the operator family $P_\lambda=P-i\lambda Q^*Q-\lambda^2$. Let the state space $\mathcal{H}=H_{1/2}\times H$. Let 
\begin{equation}
\mathcal{D}=\{(u,v)\in H_{1/2}\times H_{1/2}: P u+Q^*Q v\in H\}.
\end{equation}
Note that those spaces are equipped with the norms
\begin{equation}
\|(u,v)\|_{\mathcal{H}}^2=\|\Lambda^\frac{1}{2}u\|_H^2+\|v\|_{H}^2, \ \|(u,v)\|_{\mathcal{D}}^2=\|\Lambda^\frac{1}{2}u\|_H^2+\|\Lambda^\frac{1}{2}v\|_{H}^2+\|Pu+Q^*Qv\|_H^2. 
\end{equation}
We show in Lemma \ref{semigrouplemma} that the damped semigroup generator 
\begin{equation}
\mathcal{\Ac}=\mathcal{\Ac}_{P,Q}=
\begin{pmatrix}
0 & \id\\
-P & -Q^*Q
\end{pmatrix}
:\mathcal{D}\rightarrow \mathcal{H}
\end{equation}
generates a strongly continuous semigroup $e^{t\Ac}$ on $\mathcal{H}$ that satisfies $\partial_t e^{t\Ac}|_{\mathcal{D}}=\Ac$. 

Note $\ker \Ac=\ker P\times \{0\}$ is not trivial if $\ker P$ is not so. The positive dimension of $\ker P$ prohibits $P$ from being a positive (rather than non-negative) self-adjoint operator on $H$: $P$ being positive is the key assumption in \cite[\S 2.2]{cpsst19} that allows the generation of a contraction semigroup $e^{t\Ac}$. There are many physical systems in which $\ker P$ is not trivial, for example, $P=\Delta$ when studying waves on compact manifolds without boundary. 

In order to analyse the energy decay, we will decompose the semigroup $e^{t
\Ac}$ into a stationary part and $e^{t\Acd}$, evolution outside $\ker\Ac$. The strategy is the following. Firstly we show in Lemma \ref{lemmainvertibility} that if $\ker\Ac$ is not trivial, $0$ must be an isolated eigenvalue. Then we can define the Riesz projector of $\Ac$ that projects $\mathcal{H}$ onto $\ker\Ac$, given by
\begin{equation}
\Pi=\frac{1}{2\pi i}\int_\gamma (z\id-\mathcal{A})^{-1}~dz,
\end{equation}
where $\gamma$ is a small circle around $0$ in $\mathbb{C}$ containing $0$ as the only eigenvalue of $\Ac$ in its interior. Consider the range of $\Ac$, $\Hcd=\mathcal{A}(\mathcal{D})$. Define the complement Riesz projector $\Pi_\bullet=\id-\Pi$, then $\Pi_\bullet$ projects $\mathcal{H}$ onto $\Hcd$. The Riesz projectors $\Pi_\bullet$, $\Pi$ non-orthogonally decompose $\mathcal{H}$ into $\Hcd\oplus \ker\Ac$. Note
\begin{equation}
\Pi\Pi_\bullet=\Pi_\bullet\Pi=0, \ \Hcd=\ker \Pi, \ \Pi \mathcal{H}=\ker\mathcal{A}=\ker P\times \{0\}.
\end{equation}
Let $\Dcd=\Pi_\bullet \mathcal{D}=\mathcal{D}\cap \Hcd$. Equip $\Hcd, \Dcd$ with the energy norms
\begin{equation}
\|(u,v)\|_{\Hcd}^2=\|P^\frac{1}{2}u\|_H^2+\|v\|_{H}^2, \ \|(u,v)\|_{\Dcd}^2=\|P^\frac{1}{2}u\|_H^2+\|\Lambda^\frac{1}{2}v\|_{H}^2+\|Pu+Q^*Qv\|_H^2. 
\end{equation}
The energy norm ${\|\cdot\|}_{\Hcd}$ represents the total mechanical energy of the given state $(u, v)\in \Hcd$, and is equivalent to the norm ${\|\cdot\|}_{\Hc}$ since $P$ is positive on $H/\ker P$. ${\|\cdot\|}_{\Hcd}$ is a seminorm on $\Hcd$ vanishing on $\ker\Ac$: it totally ignores the evolution of $e^{t\Ac}$ on $\ker\mathcal{A}$, the zero-energy space. Then the Riesz projectors decompose $\mathcal{D}=\Dcd\oplus\ker\Ac$. Let
\begin{equation}
\Acd=\Ac|_{\Dcd}=\begin{pmatrix}
0 & \id\\
-P & -Q^*Q
\end{pmatrix}
:\Dcd\rightarrow \Hcd. 
\end{equation}
The nature of Riesz projectors implies
\begin{equation}
\Pi_\bullet \mathcal{A}=\mathcal{A} \Pi_\bullet=\Acd \Pi_\bullet, \ \Pi \mathcal{A}=\mathcal{A}\Pi=0, \ \mathcal{A}=\Acd\Pi_\bullet. 
\end{equation}
We show in Lemma \ref{semigrouplemma} that $\Acd$ generates a contraction semigroup $e^{t\Acd}$ on $\Hcd$, and we can decompose
\begin{equation}
e^{t\Ac}=e^{t\Acd}\Pi_\bullet+\Pi. 
\end{equation}
This implies
\begin{equation}\label{3l29}
\Pi_\bullet e^{t\Ac}=e^{t\Acd}\Pi_\bullet, \ \|e^{t\Ac} (u,v)\|_{\Hcd}=\|e^{t\Acd}\Pi_\bullet (u,v)\|_{\Hcd}.
\end{equation}
by noting $\|\Pi_\bullet (u,v)\|_{\Hcd}=\|(u,v)\|_{\Hcd}$ for any $(u,v)\in \mathcal{H}$. It is interpreted that the mechanical energy of the state $e^{t\Ac}(u,v)$ is completely represented by that of the state $e^{t\Acd}(u,v)$, and it suffices to look at only $e^{t\Acd}$ (rather than $e^{t\Ac}$) to understand the energy decay. 

Lemmata \ref{lemmainvertibility}, \ref{semigrouplemma} are devoted to establishing the semigroup properties claimed above. See Figure \ref{f2} for a spectral illustration. 
\begin{figure}
\includegraphics[page=2]{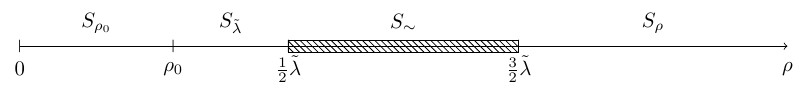}
\caption{Spectral illustration of $P_\lambda$. In the shaded region, $P_\lambda$ is invertible. At $0$, $P_\lambda$ may have a isolated pole of finite degree. In the region above the dotted line, $P_\lambda$ is Fredholm of index $0$.}\label{f2}
\end{figure}
\begin{lemma}[Meromorphic continuation of $P_\lambda^{-1}$]\label{lemmainvertibility}
Let $Q:H_{1/2}\rightarrow Y$ be bounded, and $w=\|Q\|_{H_{1/2}\rightarrow Y}>0$. Then in $\{\cim \lambda> -w^{-2}\}$, $P_{\lambda}=P-i\lambda Q^*Q-\lambda^2: H_{1/2}\rightarrow H_{-1/2}$ is an analytic family of Fredholm operators of index 0, and $P_\lambda^{-1}: H_{-1/2}\rightarrow H_{1/2}$ is a meromorphic family of Fredholm operators with poles of finite rank. 
\end{lemma}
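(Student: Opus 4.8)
\emph{Plan of proof.} The strategy is to absorb the possibly non-compact damping term $-i\lambda Q^*Q$ into an invertible ``elliptic'' piece, keeping only the zeroth order term $-(1+\lambda^2)\id$ as a compact remainder, and then invoke analytic Fredholm theory. First I would record the relevant duality bookkeeping. Since $\gamma\le\tfrac12$ one has $Q\in\Lc(H_{1/2},Y)$, $Q^*\in\Lc(Y,H_{-1/2})$ and $Q^*Q\in\Lc(H_{1/2},H_{-1/2})$, with $\langle Q^*Qu,u\rangle=\|Qu\|_Y^2$ and $\|Qu\|_Y^2\le w^2\|u\|_{H_{1/2}}^2$. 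Recall that $H_{1/2}\hookrightarrow H\hookrightarrow H_{-1/2}$ compactly, that $H_{-1/2}$ is the dual of $H_{1/2}$ with respect to $H$, and that $P+\id:H_{1/2}\to H_{-1/2}$ is an isomorphism with $\langle(P+\id)u,u\rangle=\int_0^\infty(1+\rho^2)\,d\langle E_\rho u,u\rangle=\|u\|_{H_{1/2}}^2$.

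Next, set $A_\lambda:=P+\id-i\lambda Q^*Q:H_{1/2}\to H_{-1/2}$, which is represented by the bounded sesquilinear form $a_\lambda(u,v)=\langle(P+\id)u,v\rangle-i\lambda\langle Qu,Qv\rangle_Y$ on $H_{1/2}$. Since
\[
\cre a_\lambda(u,u)=\|u\|_{H_{1/2}}^2+(\cim\lambda)\|Qu\|_Y^2\ge\bigl(1-\max(0,-\cim\lambda)\,w^2\bigr)\|u\|_{H_{1/2}}^2,
\]
the form $a_\lambda$ is coercive on $H_{1/2}$ precisely when $\cim\lambda>-w^{-2}$; the Lax--Milgram theorem then makes $A_\lambda:H_{1/2}\to H_{-1/2}$ an isomorphism, and as $\lambda\mapsto A_\lambda$ is affine, $\lambda\mapsto A_\lambda^{-1}\in\Lc(H_{-1/2},H_{1/2})$ is holomorphic on the half-plane $\{\cim\lambda>-w^{-2}\}$. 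Writing $\iota:H_{1/2}\hookrightarrow H_{-1/2}$ for the compact inclusion, I would then factor
\[
P_\lambda=A_\lambda-(1+\lambda^2)\iota=A_\lambda\bigl(\id_{H_{1/2}}-K(\lambda)\bigr),\qquad K(\lambda):=(1+\lambda^2)\,A_\lambda^{-1}\iota,
\]
and observe that $K(\lambda)$ is a compact operator on $H_{1/2}$ depending holomorphically on $\lambda$. Hence $P_\lambda$ is the composition of an isomorphism with an index-zero Fredholm operator, so $\lambda\mapsto P_\lambda$ is an analytic family of Fredholm operators of index $0$ on $\{\cim\lambda>-w^{-2}\}$.

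It remains to produce one point of invertibility and apply the analytic Fredholm theorem. At $\lambda=i$ one has $1+\lambda^2=0$, so $K(i)=0$, $\id-K(i)=\id$ is invertible, and $P_i=A_i$ is an isomorphism. Since $\{\cim\lambda>-w^{-2}\}$ is connected, the analytic Fredholm theorem (see, e.g., \cite{dz19}) gives that $\lambda\mapsto(\id-K(\lambda))^{-1}$ is a meromorphic family of operators on $H_{1/2}$ with poles of finite rank, and consequently $P_\lambda^{-1}=(\id-K(\lambda))^{-1}A_\lambda^{-1}:H_{-1/2}\to H_{1/2}$ is a meromorphic family with poles of finite rank, as claimed.

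The main obstacle — the only step beyond routine bookkeeping — is that for $\gamma=\tfrac12$ the damping $Q^*Q$ need not be a relatively compact perturbation of $P-\lambda^2$, so it cannot be swept into the compact remainder; placing it inside the elliptic part $A_\lambda$ forces the bound $\|Qu\|_Y^2\le w^2\|u\|_{H_{1/2}}^2$ to compete against $(\cim\lambda)\|Qu\|_Y^2$ in the coercivity estimate, which is exactly what produces the half-plane $\cim\lambda>-w^{-2}$ in the statement. (When $\gamma<\tfrac12$ the operators $Q$, and hence $i\lambda Q^*Q$, are already compact $H_{1/2}\to H_{-1/2}$, and the same argument gives meromorphy of $P_\lambda^{-1}$ on all of $\mathbb{C}$.)
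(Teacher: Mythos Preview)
Your argument is correct and takes a genuinely different, cleaner route than the paper's proof. The paper proceeds by a spectral frequency cutoff: with $\Pi_0$ the projector onto frequencies $[0,\rho_0)$ it writes $P_\lambda=F_{\lambda,\rho_0}+(\text{compact})$ where $F_{\lambda,\rho_0}=\Pi_0^\perp P_\lambda\Pi_0^\perp+\langle\rho_0\rangle^2\Pi_0$, shows $F_{\lambda,\rho_0}$ is coercive (hence invertible) for real $\lambda$ with $\rho_0>|\lambda|$, and then extends invertibility to $\{\cim\lambda>-w^{-2}\}$ by a Neumann-series perturbation from real base points $\lambda_0$, carefully choosing $\rho_0$ large; invertibility in $\{\cim\lambda>0\}$ is handled by a separate Lax--Milgram computation. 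You instead absorb the non-compact damping directly into $A_\lambda=P+\id-i\lambda Q^*Q$ and observe in one line that $\cre\langle A_\lambda u,u\rangle=\|u\|_{1/2}^2+(\cim\lambda)\|Qu\|_Y^2$, which via $\|Qu\|_Y^2\le w^2\|u\|_{1/2}^2$ is coercive exactly on $\{\cim\lambda>-w^{-2}\}$; then $P_\lambda=A_\lambda-(1+\lambda^2)\iota$ is a compact perturbation of an isomorphism, and the choice $\lambda=i$ (making $1+\lambda^2=0$) gives the point of invertibility for free. Your approach avoids the frequency decomposition and the Neumann-series step entirely and reaches the same half-plane; the paper's decomposition is more hands-on and could in principle be adapted to track constants or different regions, but for the lemma as stated your argument is strictly shorter.
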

\begin{remark}\label{3t10}
\begin{enumerate}[wide]
\item When $Q:H_\gamma\rightarrow H$ for some $\gamma<\frac{1}{2}$, $Q^*Q:H_\gamma\rightarrow H_\gamma$ is relatively compact with respect to $P: H_{1/2}\rightarrow H_{-1/2}$, and one can show $P_\lambda^{-1}$ is a meromorphic family of Fredholm operators with poles of finite order in the entire $\mathbb{C}$. When $\gamma=\frac{1}{2}$, $Q^*Q$ is no longer relatively compact with respect to $P$, and the loss of compactness in the perturbative term is not trivial. Here we carefully address the limit case and prove that we can still meromorphically continue the resolvent into the lower half plane until the line $\{\cim\lambda>-w^{-2}\}$. 

\item Here $\{\cim\lambda>-w^{-2}\}$ is sharp in general. Consider $Q^*Q=P$ and $P_{-i}=1: H_{1/2}\rightarrow H_{-1/2}$ is not Fredholm at $\lambda=-i\in \{\cim\lambda=-w^{-2}\}$. 
\end{enumerate}

\end{remark}
\begin{proof}
The major tools we use here are perturbative arguments and analytic Fredholm theory. See \cite[Appendix C]{dz19} for further details. 

1. We first show $P_\lambda:H_{1/2}\rightarrow H_{-1/2}$ is invertible in $\{\cim\lambda>0\}$. Consider the bilinear form
\begin{equation}
\langle P_\lambda u, v\rangle=\langle P^{\frac{1}{2}}u, P^{\frac{1}{2}}v\rangle-i\lambda\langle Qu, Qv\rangle_Y-\lambda^2\langle u, v\rangle
\end{equation} 
for $u, v\in H_{1/2}$. It is a bounded bilinear form on $H_{1/2}$. Moreover, it is coercive:
\begin{equation}
\langle P_\lambda u, u\rangle=\|P^{\frac{1}{2}}u\|^2+(\abs{\cim\lambda}^2-\abs{\cre\lambda}^2)\|u\|^2+\cim\lambda\|Qu\|_Y^2-i\cre\lambda(2\cim\lambda\|u\|^2 +\|Qu\|_Y^2).
\end{equation}
When $\cre\lambda=0$, $\cim\lambda>0$, 
\begin{equation}
\cre{\langle P_\lambda u, u\rangle}\ge \|P^{\frac{1}{2}}u\|^2+\abs{\cim\lambda}^2\|u\|^2\ge \min\{1, (\cim \lambda)^2\}\|u\|_{\frac{1}{2}}^2.
\end{equation}
For every fixed $\delta>0$, $\lambda\in\{\cim\lambda>\delta\abs{\cre\lambda}>0\}$, we have
\begin{equation}
\cre{\langle P_\lambda u, u\rangle}\ge \|P^{\frac{1}{2}}u\|^2-\abs{\cre\lambda}^2\|u\|^2, \ \cim{\langle P_\lambda u, u\rangle}\ge 2\delta\abs{\cre{\lambda}}^2\|u\|^2,
\end{equation}
and 
\begin{align}
\abs{\langle P_\lambda u, u\rangle}^2\ge &\left(\sqrt{\frac{2}{2+4\delta^2}}\|P^{\frac{1}{2}}u\|^2-\sqrt{\frac{2+4\delta^2}{2}}\abs{\cre\lambda}^2\|u\|^2\right)^2\\
&+\frac{4\delta^2}{2+4\delta^2}\|P^{\frac{1}{2}}u\|^{4}+2\delta^2\abs{\cre\lambda}^{4}\|u\|^4\ge c_{\delta, \cre\lambda} \|u\|_{\frac{1}{2}}^4. 
\end{align}
for some small $c_{\delta, \cre\lambda}>0$. By the Lax--Milgram theorem, $P_\lambda$ is invertible on $\{\cim\lambda>0\}$.

2. We now show $P_\lambda$ is Fredholm on $\{\cim\lambda> -w^{-2}\}$. Let $\rho_0>0$ be chosen later. Consider the spectral projector to frequencies $[0, \rho_0)$ of $P$, 
\begin{equation}\label{3l34}
\Pi_0=\int_0^{\rho_0} dE_\rho, \ \Pi_0^*=\Pi_0, \ \Pi_0^2=\Pi_0.
\end{equation}
Note that $\Pi_0: H_{1/2}\rightarrow H_{-1/2}$ is compact and depends on parameter $\rho_0>0$. Denote $\Pi_0^{\perp}=\id-\Pi_0$. Our strategy is the following decomposition: for fixed $\rho_0>0$, $\lambda\in \mathbb{C}$,
\begin{equation}\label{3l36}
P_\lambda=F_{\lambda, \rho_0}+(\Pi_0 P_\lambda \Pi_0+\Pi_0 P_\lambda \Pi_0^\perp+\Pi_0^\perp P_\lambda \Pi_0-\langle \rho_0\rangle^2\Pi_0), \ F_{\lambda, \rho_0}=\Pi_0^\perp P_\lambda \Pi_0^\perp+\langle \rho_0\rangle^2\Pi_0
\end{equation}
Note that $P_\lambda-F_{\lambda, \rho_0}$ is compact from $H_{1/2}\rightarrow H_{-1/2}$. It suffices to show for each $\lambda\in \{\cim\lambda> -w^{-2} \}$, there is $\rho_0>0$ that $F_{\lambda,\rho_0}$ is Fredholm. 

3. We show show at each $\lambda\in\mathbb{R}$, for all $\rho_0>\abs{\lambda}$, $F_{\lambda, \rho_0}$ is invertible from $H_{1/2}$ to $H_{-1/2}$. Consider
\begin{multline}
\cre(\langle \Pi_0^\perp P_\lambda \Pi_0^\perp u, u\rangle)=\langle (P-\lambda^2)\Pi_0^\perp u, \Pi_0^\perp u\rangle=\int_{\rho_0}^\infty \frac{\rho^2-\lambda^2}{1+\rho^2}\langle \rho\rangle^2 ~d\langle E_\rho u, u \rangle \\
\ge \int_{\rho_0}^\infty \left(1-\frac{1+\lambda^2}{1+\rho_0^2}\right)\langle \rho\rangle^2 ~d\langle E_\rho u, u \rangle\ge \left(1-\frac{1+\lambda^2}{1+\rho_0^2}\right) \|\Pi_0^\perp u\|_{\frac{1}{2}}^2. 
\end{multline} 
On another hand, 
\begin{equation}
\|\Pi_0 u\|^2 \ge \langle \rho_0\rangle^{-2}\int_0^{\rho_0}\langle \rho\rangle^2~d\langle E_\rho u,u\rangle=\langle \rho_0\rangle^{-2}\|\Pi_0 u\|_{\frac{1}{2}}^2.
\end{equation}
This implies via the orthogonality of $\Pi_0^{\perp}$ and $\Pi_0$:
\begin{multline}
\cre{\langle F_{\lambda, \rho_0} u, u\rangle}=\cre{\langle \Pi_0^\perp P_\lambda \Pi_0^\perp u, u\rangle}+\langle \rho\rangle^{-2}\|\Pi_0 u\|^{2}_{\frac{1}{2}} \ge \left(1-\frac{1+\lambda^2}{1+\rho_0^2}\right)\|\Pi_0^\perp u\|_{\frac{1}{2}}^2\\
+\|\Pi_0u\|_{\frac{1}{2}}^2\ge \left(1-\frac{1+\lambda^2}{1+\rho_0^2}\right)\|u\|_{\frac{1}{2}}^2. 
\end{multline}
By the Lax--Milgram theorem, $F_{\lambda, \rho_0}$ is invertible and $P_\lambda$ is Fredholm on $\lambda\in\mathbb{R}$. Moreover, for fixed $\lambda\in\mathbb{R}$ and $\rho_0>\abs{\lambda}$, we have
\begin{equation}
\|F_{\lambda,\rho_0}^{-1}\|_{H_{-1/2}\rightarrow H_{1/2}}\le \left(1-\frac{1+\lambda^2}{1+\rho_0^2}\right)^{-1}.
\end{equation}
Note that the right hand side can be as close as possible to $1$ by choosing a large $\rho_0$. 

4. Fix $\lambda_0\in \mathbb{R}$ and $\rho_0>\abs{\lambda_0}$, we know that $F_{\lambda_0, \rho_0}$ is invertible. We now show that for $\lambda\in\mathbb{C}$ close to $\lambda_0$, $F_{\lambda, \rho_0}$ is also invertible. Note
\begin{equation}
F_{\lambda, \rho_0}=F_{\lambda_0, \rho_0}-i(\lambda-\lambda_0)\Pi_0^\perp Q^*Q\Pi_0^\perp-(\lambda^2-\lambda_0^2)\Pi_0^\perp. 
\end{equation}
This implies
\begin{equation}\label{3l35}
F_{\lambda_0, \rho_0}^{-1} F_{\lambda, \rho_0}=\id-i(\lambda-\lambda_0)F_{\lambda_0, \rho_0}^{-1}\Pi_0^\perp Q^*Q\Pi_0^\perp-(\lambda^2-\lambda_0^2)F_{\lambda_0, \rho_0}^{-1}\Pi_0^\perp
\end{equation}
We estimate terms on the right: 
\begin{equation}
\|F_{\lambda_0, \rho_0}^{-1}\Pi_0^\perp Q^*Q\Pi_0^\perp u\|_{\frac{1}{2}}\le w^2\left(1-\frac{1+\lambda^2}{1+\rho_0^2}\right)^{-1}\|u\|_{\frac{1}{2}}.
\end{equation}
On another hand, 
\begin{equation}
\|\Pi_0^{\perp}u\|_{-\frac{1}{2}}^2=\int_{\rho_0}^\infty \langle \rho\rangle^{-2}\langle \rho\rangle~d\langle E_\rho u, u\rangle\le \langle \rho_0\rangle^{-2}\|u\|_{\frac{1}{2}}^2 
\end{equation}
implies
\begin{equation}
\|F_{\lambda_0, \rho_0}^{-1}\Pi_0^\perp u\|_{\frac{1}{2}}\le \langle \rho_0\rangle^{-1}\left(1-\frac{1+\lambda^2}{1+\rho_0^2}\right)^{-1}\|u\|_{\frac{1}{2}}. 
\end{equation}
Thus
\begin{multline}
\|i(\lambda-\lambda_0)\Pi_0^\perp Q^*Q\Pi_0^\perp+(\lambda^2-\lambda_0^2)\Pi_0^\perp\|_{H_{1/2}\rightarrow H_{1/2}}\\
\le \abs{(\lambda-\lambda_0)(w^2+\langle \rho_0\rangle^{-1}(\lambda+\lambda_0))}\left(1-\frac{1+\lambda^2}{1+\rho_0^2}\right)^{-1}
\end{multline}
is strictly bounded by $1$ when
\begin{equation}
\abs{\lambda-\lambda_0}< w^{-2}\left(1-\frac{1+\lambda^2}{1+\rho_0^2}\right)\left(1+\langle \rho_0\rangle^{-2}(2\abs{\lambda_0}+w^{-2})w^{-2}\right)^{-1}=w^{-2}R_{\rho_0}. 
\end{equation}
Indeed, for $\abs{\lambda-\lambda_0}<w^{-2}R_{\rho_0}$, we have
\begin{equation}
\|i(\lambda-\lambda_0)\Pi_0^\perp Q^*Q\Pi_0^\perp+(\lambda^2-\lambda_0^2)\Pi_0^\perp\|_{H_{1/2}\rightarrow H_{1/2}}< \frac{1+\langle \rho_0\rangle^{-1}\abs{\lambda+\lambda_0}w^{-2}}{1+\langle \rho_0\rangle^{-1}(2\abs{\lambda_0}+w^{-2})w^{-2}}<1,
\end{equation}
the last step of which we used that $R_{\rho_0}<1$. Indeed, $R_{\rho_0}$ can be made arbitrarily close to $1$ by choosing large $\rho_0$. Now the Neumann series
\begin{equation}
\left(\id-i(\lambda-\lambda_0)F_{\lambda_0, \rho_0}^{-1}\Pi_0^\perp Q^*Q\Pi_0^\perp-(\lambda^2-\lambda_0^2)F_{\lambda_0, \rho_0}^{-1}\Pi_0^\perp\right)^{-1}=\sum_{k=0}^\infty (i(\lambda-\lambda_0)\Pi_0^\perp Q^*Q\Pi_0^\perp+(\lambda^2-\lambda_0^2)\Pi_0^\perp)^k
\end{equation}
converges on $\{\abs{\lambda-\lambda_0}<w^{-2}R_{\rho_0}\}$. Thus $F_{\lambda_0, \rho_0}^{-1} F_{\lambda, \rho_0}$ in \eqref{3l35} is invertible on $H_{1/2}$. This implies $F_{\lambda,\rho_0}:H_{1/2}\rightarrow H_{-1/2}$ is invertible at all $\{\abs{\lambda-\lambda_0}<w^{-2}R_{\rho_0}\}$. Now note $R_{\rho_0}\rightarrow 1$ as $\rho_0\rightarrow \infty$. This implies for any $\lambda\in\{0\ge\cim\lambda>-w^{-2}\}$, there exists $\rho_0>\abs{\cre\lambda}$ such that $\abs{\cim\lambda}<w^{-2} R_{\rho_0}$ and thus $F_{\lambda, \rho_0}$ is invertible. 

5. We now prove that $P_\lambda:H_{1/2}\rightarrow H_{-1/2}$ is a holomorphic family of Fredholm operators in $\{\cim\lambda>-w^{-2}\}$. It is straightforward to check $P_\lambda$ is an analytic family of operators by
\begin{equation}
\langle P_\lambda u, v\rangle=\langle P u, v\rangle-i\lambda\langle Q^*Qu,v\rangle-\lambda^2\langle u, v\rangle,
\end{equation} 
for any $u, v$ in $H_{1/2}$, in the pairing between $H_{-1/2}$ and $H_{1/2}$. When $\lambda=0$, $P_0=P=P^*$, and $\ker P=\ker P^*$ are finite dimensional. This means $P_{\lambda}$ is Fredholm at $0$. In $\{0\ge \cim\lambda>-w^{-2}\}$, in Step 4 we have shown there exists $\rho_0>\abs{\lambda}$ such that $F_{\lambda,\rho_0}$ is invertible. Now \eqref{3l36} implies that $P_\lambda$, as a compact perturbation of $F_{\lambda,\rho_0}$, is Fredholm. In $\{\cim\lambda>0\}$, $P_\lambda$ is invertible and hence Fredholm. 

5. We conclude $P_{\lambda}^{-1}:H_{-1/2}\rightarrow H_{1/2}$ is a meromorphic family of Fredholm operators with poles of finite rank in $\{\cim\lambda>-w^{-2}\}$. Note that Step 1 implies $P_{i}$ is invertible. Apply \cite[Theorem C.8]{dz19} to conclude that $P_\lambda^{-1}$ is a meromorphic family of Fredholm operators with poles of finite rank. Moreover, the poles cannot accumulate at any interior point at $0$. There is then $\epsilon>0$ such that $P_\lambda$ is invertible on $\{\cim\lambda>0\}\cup\{\abs{\lambda}< \epsilon\}$. 
\end{proof}
 
\begin{lemma}[Semigroup decomposition]\label{semigrouplemma}
Let $Q: H_{1/2}\rightarrow H$. The following are true:
\begin{enumerate}[wide]
\item For $\lambda\in \mathbb{C}$, $\mathcal{A}+i\lambda$ is invertible if $P_\lambda$ is invertible. 
\item When $\ker P\neq \{0\}$, $0$ is an isolated eigenvalue of $\mathcal{A}$. 
\item $\Acd$ generates a contraction semigroup $e^{t\Acd}$ on $\Hcd$. 
\item $\mathcal{A}$ generates a strongly continuous bounded semigroup $e^{t\Ac}$ on $\mathcal{H}$. Moreover, we have the following semigroup decomposition
\begin{equation}
e^{t\Ac}=e^{t\Acd}\Pi_\bullet+\Pi.
\end{equation}
\item $\Acd$ is invertible. For any $\lambda\in\mathbb{C}$, $\lambda\neq 0$, $(\Acd+i\lambda)$ is invertible if and only $(\Ac+i\lambda)$ is invertible. Moreover, there exists $C>0$ such that for all $\lambda$,
\begin{equation}
	C^{-1} \|(\Acd+i\lambda)^{-1}\|_{\Lc(\Hcd)} \leq \nm{(\Ac+i\lambda)^{-1}}_{\Lc(\Hc)} \leq C  (\|(\Acd+i\lambda)^{-1}\|_{\Lc(\Hcd)} + \abs{\lambda}^{-1} ).
\end{equation}
\end{enumerate}
\end{lemma}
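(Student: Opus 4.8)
The plan is to derive the entire lemma from two ingredients: the explicit formula for $(\Ac+i\lambda)^{-1}$ in terms of $\Pli$, and the meromorphic continuation of $\Pli$ across $\lambda=0$ supplied by Lemma \ref{lemmainvertibility} --- this continuation (which survives the loss of relative compactness at $\gamma=\frac12$) is the only genuinely nontrivial input. For (1), given $(f,g)\in\Hc$ I solve $(\Ac+i\lambda)(u,v)=(f,g)$ by hand: the first row forces $v=f-i\lambda u$, and substituting collapses the system to $-\Pl u=g+Q^*Qf-i\lambda f$. Since $f\in H_{1/2}$ gives $Q^*Qf\in H_{-\gamma}\subset H_{-1/2}$ and $g\in H\subset H_{-1/2}$, invertibility of $\Pl:H_{1/2}\to H_{-1/2}$ lets me set $u=-\Pli(g+Q^*Qf-i\lambda f)\in H_{1/2}$ and $v=f-i\lambda u\in H_{1/2}$; a line of algebra with the equation for $u$ yields $Pu+Q^*Qv=\lambda^2u-g+i\lambda f\in H$, so $(u,v)\in\Dc$, and $(\Ac+i\lambda)(u,v)=(f,g)$ by direct substitution. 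Injectivity is the same computation reversed, and boundedness of the inverse then follows by the closed graph theorem.

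For (2), $\ker\Ac=\ker P\times\{0\}$ is immediate from the block form and is nonzero by hypothesis, so $0\in\spec\Ac$. Lemma \ref{lemmainvertibility} provides $\epsilon>0$ with $\Pl$ invertible on the punctured disc $\{0<\abs\lambda<\epsilon\}$; by (1), $\Ac+i\lambda$ is then invertible there, i.e.\ $\Ac-\mu$ is invertible for $0<\abs\mu<\epsilon$, so $0$ is an isolated point of $\spec\Ac$, and its spectral subspace is finite dimensional since the pole of $\Pli$ at $0$ has finite rank. Identifying this subspace with $\ker\Ac$ gives the Riesz projector $\Pi$ onto $\ker\Ac$ used above, together with the relations $\Ac\Pi=\Pi\Ac=0$, $\Pi_\bullet\Ac=\Ac\Pi_\bullet=\Acd\Pi_\bullet$ and $\Hc=\Hcd\oplus\ker\Ac$.

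For (3), I work on $\Hcd$ with the energy inner product; since $\Hcd\cap\ker\Ac=\{0\}$ it is a genuine inner product, equivalent on $\Hcd$ to $\nm\cdot_\Hc$, so $\Hcd$ is a Hilbert space and $\Dcd=\Pi_\bullet\Dc$ is dense in it. Dissipativity is the computation $\langle\Acd(u,v),(u,v)\rangle_{\Hcd}=\langle P^{1/2}v,P^{1/2}u\rangle-\langle Pu,v\rangle-\nm{Qv}_Y^2$, whose first two terms are complex conjugates of one another, leaving $\re\langle\Acd(u,v),(u,v)\rangle_{\Hcd}=-\nm{Qv}_Y^2\le0$. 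For the range condition, $\Pl$ is invertible for $\cim\lambda>0$ (proof of Lemma \ref{lemmainvertibility}), so by (1) $\Ac-\mu_0$ is invertible on $\Hc$ for any $\mu_0>0$; since it commutes with $\Pi_\bullet$, its inverse preserves $\Hcd$ and restricts to $(\Acd-\mu_0)^{-1}$, whence $\operatorname{ran}(\mu_0-\Acd)=\Hcd$. Lumer--Phillips now yields the contraction semigroup $e^{t\Acd}$. Part (4) is then formal: $\Hc=\Hcd\oplus\ker\Ac$ is $\Ac$-invariant with $\Ac\equiv0$ on $\ker\Ac$, so $\Ac$ generates the direct-sum semigroup $e^{t\Ac}=e^{t\Acd}\Pi_\bullet+\Pi$, uniformly bounded in $t$ because $\nm{e^{t\Acd}}_{\Lc(\Hcd)}\le1$ and $\Pi,\Pi_\bullet$ are bounded on $\Hc$ (using the norm equivalence on $\Hcd$).

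For (5), $\Acd$ is invertible because the $0$-spectral subspace of $\Ac$ lies in $\operatorname{ran}\Pi$, so $0\notin\spec\Acd$; and for $\lambda\ne0$ the splitting $\Hc=\Hcd\oplus\ker\Ac$ block-diagonalizes $\Ac+i\lambda$ into $\Acd+i\lambda$ on $\Hcd$ and the invertible scalar $i\lambda$ on $\ker\Ac$, so $\Ac+i\lambda$ is invertible iff $\Acd+i\lambda$ is, with $(\Ac+i\lambda)^{-1}=(\Acd+i\lambda)^{-1}\Pi_\bullet+(i\lambda)^{-1}\Pi$; taking $\Lc$-norms and invoking $\nm\cdot_{\Hcd}\simeq\nm\cdot_\Hc$ on $\Hcd$ produces both displayed inequalities. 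I anticipate no serious obstacle beyond Lemma \ref{lemmainvertibility} itself; the one point requiring care throughout is that $\Pi$ and $\Pi_\bullet$ are \emph{not} orthogonal, so every transfer of invertibility and of resolvent norms between $\Hc$ and $\Hcd$ must be routed through the commutation identity $\Ac\Pi_\bullet=\Pi_\bullet\Ac$ rather than any orthogonal splitting.
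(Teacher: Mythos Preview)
Your plan matches the paper's proof closely: explicit inversion through $P_\lambda^{-1}$ in (1), isolation of $0$ via Lemma~\ref{lemmainvertibility} in (2), Lumer--Phillips on $(\Hcd,\langle\cdot,\cdot\rangle_{\Hcd})$ in (3), and the block-diagonal resolvent identity $(\Ac+i\lambda)^{-1}=(\Acd+i\lambda)^{-1}\Pi_\bullet+(i\lambda)^{-1}\Pi$ in (5). The one structural difference is (4): you assemble $e^{t\Ac}$ directly as the direct sum of $e^{t\Acd}$ and the identity on the $\Ac$-invariant splitting $\Hcd\oplus\ker\Ac$, whereas the paper first proves generation on all of $\Hc$ by Hille--Yosida (using the quasi-dissipativity bound $\cre\langle\Ac U,U\rangle_\Hc\le\tfrac12\|U\|_\Hc^2$) and only afterwards identifies the restriction to $\Hcd$ with $e^{t\Acd}$; your order is slightly more economical and equally valid.

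One small gap to patch: you assert that $\Dcd$ is dense in $\Hcd$ as a preliminary to Lumer--Phillips, but when $\gamma=\tfrac12$ (so $Q^*Q$ is not $H$-bounded) this is not immediate from the definition of $\Dc$. The paper does not assume density either; it invokes the Hilbert-space form of the theorem (Engel--Nagel, Corollary~II.3.20) in which maximal dissipativity alone forces the domain to be dense, and then reads off both density and generation in one stroke. You should cite that version rather than take density for granted.
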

\begin{proof}
1. We claim that if $P_\lambda: H_{1/2}\rightarrow H_{-1/2}$ is invertible, then $-i\lambda\notin \spec(\mathcal{A})$ (the converse direction can be found in \cite[Lemma 2.11]{kw22}). If this claim holds then Lemma \ref{lemmainvertibility} implies that $\mathcal{A}$ has an isolated eigenvalue at $0$. To prove the claim, consider for any $(f,g)\in \mathcal{H}=H_{1/2}\times H$, 
\begin{equation}
\begin{pmatrix}
u\\v
\end{pmatrix}=(\mathcal{A}+i\lambda)^{-1}\begin{pmatrix}
f\\g
\end{pmatrix}=\begin{pmatrix}
-i\lambda^{-1}(\id-P_\lambda^{-1}P) & -P_{\lambda}^{-1}\\
P_\lambda^{-1}P& i\lambda P_{\lambda}^{-1}
\end{pmatrix}\begin{pmatrix}
f\\g
\end{pmatrix}
\end{equation}
is an element in $H_{1/2}\times H_{1/2}$ that satisfies
\begin{equation}
(\mathcal{A}+i\lambda)\begin{pmatrix}
u\\v
\end{pmatrix}=\begin{pmatrix}
i\lambda u+v\\
-P u-(Q^*Q-i\lambda)v
\end{pmatrix}=\begin{pmatrix}
f\\g
\end{pmatrix}. 
\end{equation}
Then $Pu+Q^*Qv=-g+i\lambda v\in H$. Thus $(u,v)\in \mathcal{D}$ and $(\mathcal{A}+i\lambda)(u,v)=(f,g)$. Moreover $(\mathcal{A}+i\lambda)^{-1}(0,0)=(0,0)$ since $P_\lambda$ is injective. Thus $\mathcal{A}+i\lambda$ is bijective if $P_\lambda$ is bijective, as claimed. By Lemma \ref{lemmainvertibility}, $P_\lambda$ is invertible in $\{0<\abs{\lambda}<\epsilon\}$. When $\ker P\neq \{0\}$, $\mathcal{A}$ has an isolated eigenvalue at $0$.  

2. Without loss of generality, let $0$ be a isolated eigenvalue of $\mathcal{A}$. Then the Riesz projector $\Pi$ discussed at the beginning of \S \ref{s3-2} is well-defined. Now we show $\Acd:\Dcd\rightarrow \Hcd$ is maximally dissipative and hence generates a contraction semigroup $e^{t\Ac}$ on $\Hcd$. 

2a. We claim $\Acd-\id$ is invertible from $\Dcd$ to $\Hcd$. Note that the part (1) and Lemma \ref{lemmainvertibility} implies $\mathcal{A}-\id$ is invertible from $\mathcal{D}=\Dcd\oplus \ker\Ac$ to $\mathcal{H}=\Hcd\oplus \ker\Ac$. Note $\Ac=0$ on $\ker\Ac$ and $\mathcal{A}-\id$ maps $\ker\Ac$ to $\ker\Ac$ in an invertible manner. Thus $\Ac-\id$ is invertible from $\Dcd$ to $\Hcd$. Eventually observe that $\Acd-\id=\mathcal{A}|_{\Hcd}-\id$ is invertible on $\Hcd$ as claimed. 

2b. We verify that $\Acd$ is dissipative. This is clear from 
\begin{equation}
\cre\langle \Acd(u,v), (u,v)\rangle_{\Hcd}=-\|Qv\|_{Y}^2\le 0. 
\end{equation}
Now $\Acd$ is maximally dissipative, and we invoke \cite[Corollary II.3.20, p.86]{en99} to see $\Dcd$ is dense in $\Hcd$ and $\Acd$ generates a contraction semigroup on $\Hcd$. 

3. We now show $\Ac$ generates a strongly continuous semigroup on $\mathcal{H}$. Note since $\Dcd$ is dense in $\Hcd$, $\mathcal{D}=\Dcd\oplus \ker\Ac$ is dense in $\mathcal{H}=\Hcd\oplus \ker\Ac$ and $\Ac$ is densely defined. From Step 1 we know $\spec\mathcal{A}\neq \mathbb{C}$ and thus $\Ac$ is closed. Now compute
\begin{equation}
\cre\langle \mathcal{A}(u,v), (u,v)\rangle_{\mathcal{H}}=-\|Qv\|_{Y}^2+\cre\langle v,u\rangle_{H}\le \frac{1}{2}\|v\|_{H}^2+\frac{1}{2}\|u\|_{H}^2\le \frac{1}{2}\|(u,v)\|_{\mathcal{H}}^2. 
\end{equation}
We know from the part (1) and Lemma \ref{lemmainvertibility} that $\mathcal{A}-\lambda:\mathcal{D}\rightarrow \mathcal{H}$ is bijective for every $\lambda>\frac{1}{2}$. Moreover
\begin{equation}
\|(\mathcal{A}-\lambda)(u,v)\|_{\mathcal{H}}\|(u,v)\|_{\mathcal{H}}\ge \cre\langle (\lambda-\mathcal{A})(u,v), (u,v)\rangle_{\mathcal{H}}\ge (\lambda-\frac{1}{2})\|(u,v)\|_{\mathcal{H}}^2
\end{equation}
implies that $\|(\mathcal{A}-\lambda)^{-1}\|_{\mathcal{L}(\mathcal{H})}\le (\lambda-\frac{1}{2})^{-1}$ for all $\lambda>\frac{1}{2}$. Apply the Hille--Yoshida theorem as in \cite[Corollary II.3.6, p. 76]{en99} to conclude $\mathcal{A}$ generates a strongly continuous semigroup $e^{t\Ac}$ on $\mathcal{H}$. 

4. We now decompose $e^{t\Ac}$. We begin with the initial decomposition
\begin{equation}
e^{t\Ac}=e^{t\Ac}\Pi_\bullet+e^{t\Ac}\Pi. 
\end{equation}
Consider on $\Hcd$, 
\begin{equation}
\partial_t e^{t\mathcal{A}}|_{\Dcd}=\mathcal{A}|_{\Dcd}=\Acd, \ e^{t\Ac}|_{t=0}=\id.
\end{equation}
Thus $\Acd$ generates both $e^{t\Ac}|_{\Hcd}$ and $e^{t\Acd}$ on $\Hcd$. By the uniqueness of generators we know $e^{t\Ac}|_{\Hcd}=e^{t\Acd}$, and thus $e^{t\Ac}\Pi_\bullet=e^{t\Acd}\Pi_\bullet$. On the other hand, on $\Pi\mathcal{H}=\ker \Ac$, we have
\begin{equation}
\partial_t e^{t\Ac}|_{\ker\Ac}=\Ac|_{\ker\Ac}=0, \ e^{t\Ac}|_{t=0}=\id. 
\end{equation}
Thus $e^{t\Ac}\Pi=\Pi$. The above observation gives desired decomposition. 

5a. Note $\Hcd=\mathcal{A}(\mathcal{D})$. This implies $\mathcal{A}:\Dcd\oplus \ker\Ac\rightarrow \Hcd$ is surjective, and then $\Acd: \Dcd\rightarrow \Hcd$ is bijective. 

5b. Let $\Acd+i\lambda: \Dcd \ra \Hcd$ be invertible. Consider 
\begin{equation}
\Ac+i\lambda = (\Pi_\bullet + \Pi) (\Ac+i\lambda) = (\Acd+i\lambda)\Pi_\bullet + \Pi(\Ac+i\lambda), 
\end{equation}
since $\Pi_\bullet=\Acd \Pi_\bullet$. For any $V \in \Hc$, since $\Acd+i\lambda$ is invertible, there exists $\dot{U} \in \Acd$ such that $(\Acd+i\lambda) \dot{U} = \Pi_\bullet V$. Then 
\begin{equation}
(\Ac+i\lambda)(\dot{U} - i\lambda^{-1} \Pi V) = (\Acd+i\lambda) \dot{U} - \Pi (\Ac+i\lambda) i\lambda^{-1} \Pi V = \Pi_\bullet V + \Pi V = V,
\end{equation}
where we used $\Ac \Pi=0$. Thus $\Ac+i\lambda$ is surjective. To see injectivity, assume $(\Ac+i\lambda)U=0$ for some $U \in \Dc$. Then 
\begin{equation}
0 = \Pi_\bullet (\Ac+i\lambda) U = (\Acd+i\lambda) \Pi_\bullet U.
\end{equation}
Since $\Acd+i\lambda$ is injective, $\Pi_\bullet U=0$ so $U=\Pi U$. Then
\begin{equation}
0 = \Pi (\Ac+i\lambda) U =i\lambda U,
\end{equation}
and since $\lambda \neq 0$, then $U=0$ and $\Ac+i\lambda$ is bijective. 

5c. Assume $\Ac+i\lambda:\Dc\ra \Hc$ is invertible, that is, bijective on $\Dcd\oplus\ker\Ac \ra \Hcd\oplus\ker\Ac$. Note $\Ac=0$ on $\ker\Ac$ and thus $A+i\lambda$ maps $\ker\Ac$ to $\ker\Ac$ bijectively. Then $\Ac+i\lambda$ is bijective from $\Dcd$ to $\Hcd$. Since $\Acd+i\lambda=(\Ac+i\lambda)$ on $\Dcd$, this means $\Acd+i\lambda$ is bijective from $\Dcd$ to $\Hcd$.

5d. Fix $\dot{V} \in \Dcd$ and let $\dot{U}$ be the unique element in $\Hcd$ such that $(\Acd+i\lambda)\dot{U} = \dot{V}$. Since $\dot{V} \in \Dc$ there exists a unique $U \in \Hc$ such that $(\Ac+i\lambda) U = \dot{V}$. Furthermore 
\begin{equation}
(\Acd+i\lambda) \Pi_{\bullet} U = \Pi_\bullet (\Ac+i\lambda) U = \Pi_\bullet \dot{V} = \dot{V}.
\end{equation}
Thus $\dot{U}=\Pi_\bullet U$. And so 
\begin{equation}
\|\dot{U}\|_{\Hcd} \leq \|\dot{U}\|_{\Hc} \leq C \nm{U}_{\Hc} \leq C \nm{(\Ac+i\lambda)^{-1}}_{\Lc(\Hc)} \nm{\dot{V}}_{\Hc} = C \nm{(\Ac+i\lambda)^{-1}}_{\Lc(\Hc)} \nm{\dot{V}}_{\Hcd},
\end{equation}
the last equality of which comes from $\Pi \dot V=0$. Therefore
\begin{equation}
\nm{(\Acd+i\lambda)^{-1}}_{\Lc(\Hcd)} \leq C \nm{(\Ac+i\lambda)^{-1}}_{\Lc(\Hc)}.
\end{equation}

5e. On the other hand, let $(\Ac+i\lambda) U =V$ for $U \in \Dc, V \in \Hc$. Then 
\begin{equation}
(\Acd+i\lambda)\Pi_\bullet U = \Pi_\bullet(\Ac+i\lambda) U = \Pi_\bullet V,
\end{equation}
implies $\nm{\Pi_\bullet U}_{\Hc} \leq \nm{(\Acd+i\lambda)^{-1}}_{\Lc(\Hcd)} \nm{\Pi_\bullet V}_{\Hc}$. Meanwhile
\begin{equation}
\Pi V = \Pi (\Ac+i\lambda) U = i\lambda \Pi U, 
\end{equation}
since $\Pi \Ac=0$. Thus $|\lambda| \nm{\Pi U}_{\Hc} \leq \nm{\Pi V}_{\Hc}$ and 
\begin{equation}
\nm{U}_\Hc \leq C \left(\nm{(\Acd+i\lambda)^{-1}}_{\Lc(\Hcd)} + |\lambda|^{-1}\right) \nm{v}_{\Hc},
\end{equation}
as claimed. 
\end{proof}
\subsection{Semigroup stability}\label{s3-3}
In this paper, we study the stability of the semigroup generated by $\Ac$ via quantitatively studying the resolvent estimates of the operator family $P_\lambda: H_{1/2}\rightarrow H_{-1/2}$ along the real line. As a first step, we prove the equivalence between estimates for $P_\lambda^{-1}$ and $(\mathcal{A}+i\lambda)^{-1}$. 
\begin{lemma}[Resolvent equivalence on the real line]\label{3t5}
Let $\lambda_0>0$ and $K(\lambda)\ge C$ uniformly for $\abs{\lambda}\ge \lambda_0$. The following are equivalent:
\begin{enumerate}[wide]
\item There exists $C_1>0$ such that for all $\lambda\in\mathbb{R}$ with $\abs{\lambda}\ge \lambda_0$ we have
\begin{equation}
\|P_{\lambda}^{-1}\|_{\mathcal{L}(H)}\le C_1 K(\abs{\lambda})/\abs{\lambda}. 
\end{equation}
\item There exists $C_2>0$ such that for all $\lambda\in\mathbb{R}$ with $\abs{\lambda}\ge \lambda_0$ we have
\begin{equation}\label{3l26}
\|(\mathcal{A}+i\lambda)^{-1}\|_{\mathcal{L}(\mathcal{H})}\le C_2K(\abs{\lambda}). 
\end{equation}
\end{enumerate}
\end{lemma}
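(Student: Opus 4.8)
The plan is to reduce both implications to the explicit block formula for $(\mathcal{A}+i\lambda)^{-1}$ in terms of $P_\lambda^{-1}$ obtained in the proof of Lemma~\ref{semigrouplemma}(1):
\begin{equation}
(\mathcal{A}+i\lambda)^{-1}=\begin{pmatrix}-i\lambda^{-1}(\id-P_\lambda^{-1}P) & -P_\lambda^{-1}\\ P_\lambda^{-1}P & i\lambda P_\lambda^{-1}\end{pmatrix}.
\end{equation}
This identity holds whenever one of $P_\lambda$, $\mathcal{A}+i\lambda$ is invertible: invertibility of $P_\lambda$ gives it by Lemma~\ref{semigrouplemma}(1), and invertibility of $\mathcal{A}+i\lambda$ forces invertibility of $P_\lambda$ by \cite[Lemma~2.11]{kw22}. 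Thus in either direction of the stated equivalence the formula is available, and the task is to compare operator norms.

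The implication (2)$\Rightarrow$(1) is then immediate: applying the formula to $(0,g)$ with $g\in H$ gives $v=i\lambda P_\lambda^{-1}g$ where $(u,v)=(\mathcal{A}+i\lambda)^{-1}(0,g)$, so $\|P_\lambda^{-1}g\|_H=|\lambda|^{-1}\|v\|_H\le|\lambda|^{-1}\|(u,v)\|_{\mathcal{H}}\le C_2K(|\lambda|)|\lambda|^{-1}\|g\|_H$, which is (1).

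For (1)$\Rightarrow$(2) I would bound the four blocks separately in the norms of $\mathcal{H}=H_{1/2}\times H$. For this I first extract from (1) a few mapping estimates for $P_\lambda^{-1}$ between interpolation spaces, all by the same device of pairing $P_\lambda w=h$ with $w$ and taking real parts, which yields $\|P^{1/2}w\|_H^2=\lambda^2\|w\|_H^2+\operatorname{Re}\langle h,w\rangle$. Choosing $h\in H$ and inserting $\|w\|_H\le C_1K(|\lambda|)|\lambda|^{-1}\|h\|_H$ gives $\|P_\lambda^{-1}\|_{H\to H_{1/2}}\le CK(|\lambda|)$; dualising via $(P_\lambda)^{*}=P_{-\lambda}$ for real $\lambda$ together with the evident invariance of (1) under $\lambda\mapsto-\lambda$ gives $\|P_\lambda^{-1}\|_{H_{-1/2}\to H}\le CK(|\lambda|)$; and running the pairing once more, now as a duality between $H_{-1/2}$ and $H_{1/2}$ and feeding in the previous bound, gives $\|P_\lambda^{-1}\|_{H_{-1/2}\to H_{1/2}}\le CK(|\lambda|)\langle\lambda\rangle$. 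Combined with the trivial functional-calculus bound $\|Pu\|_{H_{-1/2}}\le\|u\|_{H_{1/2}}$, these control the blocks: the $(2,2)$-entry is $|\lambda|\,\|P_\lambda^{-1}\|_{\mathcal{L}(H)}\le C_1K(|\lambda|)$; the $(1,2)$-entry $-P_\lambda^{-1}\colon H\to H_{1/2}$ and the $(2,1)$-entry $P_\lambda^{-1}P\colon H_{1/2}\to H$ are both $\le CK(|\lambda|)$; and the non-identity part of the $(1,1)$-entry is $|\lambda|^{-1}\|P_\lambda^{-1}P\|_{H_{1/2}\to H_{1/2}}\le|\lambda|^{-1}CK(|\lambda|)\langle\lambda\rangle\le CK(|\lambda|)$ for $|\lambda|\ge\lambda_0$, while its identity part contributes $|\lambda|^{-1}\le\lambda_0^{-1}$. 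Summing the four block norms and using $K(|\lambda|)\ge C$ to absorb $\lambda_0$-dependent constants gives $\|(\mathcal{A}+i\lambda)^{-1}\|_{\mathcal{L}(\mathcal{H})}\le C_2K(|\lambda|)$.

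The one place needing care is the $(1,1)$-entry $-i\lambda^{-1}(\id-P_\lambda^{-1}P)$: the naive estimate for $P_\lambda^{-1}P$ on $H_{1/2}$ already carries a factor $\langle\lambda\rangle$, so it is only the prefactor $\lambda^{-1}$ that keeps this block bounded, and the whole argument hinges on producing the three mapping estimates above with exactly the stated powers of $\lambda$ — in particular the $H_{-1/2}\to H_{1/2}$ bound with no loss beyond the single $\langle\lambda\rangle$. Everything else is routine bookkeeping of constants.
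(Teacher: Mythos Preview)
Your proof is correct and follows essentially the same route as the paper: both directions are read off the block formula for $(\mathcal{A}+i\lambda)^{-1}$, with the needed mapping bounds $\|P_\lambda^{-1}\|_{H\to H_{1/2}}$, $\|P_\lambda^{-1}\|_{H_{-1/2}\to H}$, and $\|P_\lambda^{-1}\|_{H_{-1/2}\to H_{1/2}}$ obtained from the real part of $\langle P_\lambda u,u\rangle$ together with the duality $P_\lambda^*=P_{-\lambda}$. The paper organises these estimates into a standalone Lemma~\ref{pencilsemilemma} and handles the $(1,1)$-block via the a~priori bound \eqref{3l4} applied to $P_\lambda^{-1}Pf$ rather than by first isolating $\|P_\lambda^{-1}\|_{H_{-1/2}\to H_{1/2}}$, but the ingredients and logic are the same.
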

\begin{remark}
We call estimates of type \eqref{3l26} stability estimates. They are by nature high-frequency estimates that holds uniformly for large $\abs{\lambda}$. The rest of Section \ref{s3-3} aims to explicate the following relations:
\begin{align*}
\text{High Frequencies: }&& \text{Estimates for } P_{\lambda}^{-1} &&\Leftrightarrow & \text{\ \ \ Estimates for }(\mathcal{A}+i\lambda)^{-1},\\
\text{All Frequencies: }&& \text{Decay of } e^{t\mathcal{A}} &&\Leftrightarrow & \text{\ \ \ Estimates for }(\mathcal{A}+i\lambda)^{-1}+\operatorname{UCP}_{P,Q},
\end{align*}
where the unique continuation assumption $\operatorname{UCP}_{P,Q}$ was introduced earlier in Definition \ref{3t6}. The possible failure of $\operatorname{UCP}_{P,Q}$ indicates eigenfunctions of $P$ that cannot be observed via $Q$ and hence cannot be damped by $Q^*Q$. Such eigenfunctions oscillate at frequency $\lambda$ and do not decay, which would prevent a decay rate of $e^{t\Ac}$ from being established. 
\end{remark}

\begin{proof}
Lemma \ref{3t5} follows immediately from the next lemma. 
\end{proof}

\begin{lemma}\label{pencilsemilemma}
Let $\lambda_0>0$ and $\beta\in [0, \frac{1}{2}]$. The following are true uniformly for all $\lambda\in\mathbb{R}$ with $\abs{\lambda}\ge \lambda_0$:
\begin{equation}\label{3l1}
\|P_\lambda^{-1}\|_{H\rightarrow H_{\beta}}\le C\langle \lambda\rangle^{2\beta}\left(\langle \lambda\rangle^{-1} +\|P_{\lambda}^{-1}\|_{\mathcal{L}(H)}\right),
\end{equation}
\begin{equation}\label{3l2}
\|P_{\lambda}^{-1}\|_{H\rightarrow H_{1/2}}\le \|(\Ac+i\lambda)^{-1}\|_{\mathcal{L}(\mathcal{H})}, \ \|P_{\lambda}^{-1}\|_{\mathcal{L}(H)}\le \abs{\lambda}^{-1}\|(\Ac+i\lambda)^{-1}\|_{\mathcal{L}(\mathcal{H})},
\end{equation}
\begin{equation}\label{3l3}
\|(\Ac+i\lambda)^{-1}\|_{\mathcal{L}(\mathcal{H})}\le C\abs{\lambda}\|P_{\lambda}^{-1}\|_{\mathcal{L}(H)}+C\abs{\lambda} \|P_{-\lambda}^{-1}\|_{\mathcal{L}(H)}+C.
\end{equation}
\end{lemma}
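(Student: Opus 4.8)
\emph{Proof proposal.} I would establish the three estimates in the order \eqref{3l1}, \eqref{3l2}, \eqref{3l3}, since the argument for \eqref{3l3} uses \eqref{3l1}. For \eqref{3l1}: given $f\in H$, set $w=P_\lambda^{-1}f\in H_{1/2}$ and pair the identity $P_\lambda w=f$ against $w$ in the $H_{-1/2}$--$H_{1/2}$ duality. Expanding $\langle P_\lambda w,w\rangle=\|P^{1/2}w\|_H^2-i\lambda\|Qw\|_Y^2-\lambda^2\|w\|_H^2$ and taking real parts kills the damping term, so
\begin{equation*}
\|w\|_{1/2}^2=\|w\|_H^2+\|P^{1/2}w\|_H^2=\langle\lambda\rangle^2\|w\|_H^2+\operatorname{Re}\langle f,w\rangle_H\le\langle\lambda\rangle^2\|w\|_H^2+\|f\|_H\|w\|_H.
\end{equation*}
Inserting $\|w\|_H\le\|P_\lambda^{-1}\|_{\mathcal{L}(H)}\|f\|_H$ and using $\langle\lambda\rangle\ge1$ gives \eqref{3l1} for $\beta=\tfrac12$; the range $\beta\in[0,\tfrac12]$ then follows by interpolation along the spectral scale (Lemma \ref{interpolation} with $s=0$, $t=\tfrac12$). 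For \eqref{3l2}: plug $(f,g)=(0,g)$ into the explicit formula for $(\mathcal{A}+i\lambda)^{-1}$ from the proof of Lemma \ref{semigrouplemma}(1), obtaining $(\mathcal{A}+i\lambda)^{-1}(0,g)=(-P_\lambda^{-1}g,\,i\lambda P_\lambda^{-1}g)$; since $\|(0,g)\|_{\mathcal{H}}=\|g\|_H$, reading off the two components of $\|(\mathcal{A}+i\lambda)^{-1}(0,g)\|_{\mathcal{H}}^2=\|P_\lambda^{-1}g\|_{1/2}^2+\lambda^2\|P_\lambda^{-1}g\|_H^2$ and taking suprema over $\|g\|_H\le1$ yields both inequalities in \eqref{3l2}.

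For \eqref{3l3}: write $(\mathcal{A}+i\lambda)(u,v)=(f,g)$ with $(u,v)\in\mathcal{D}$, so in particular $u,v\in H_{1/2}$. Eliminating $u$ from the scalar equations $i\lambda u+v=f$ and $-Pu-Q^*Qv+i\lambda v=g$ gives $v=f-i\lambda u$ and $P_\lambda v=Pf+i\lambda g$. First I bound $\|v\|_H$ by duality: $\|v\|_H=\sup_{\|z\|_H\le1}|\langle P_\lambda^{-1}(Pf+i\lambda g),z\rangle_H|$, and since $P_\lambda^*=P_{-\lambda}$ for real $\lambda$ this equals $\sup_{\|z\|_H\le1}|\langle Pf+i\lambda g,\,P_{-\lambda}^{-1}z\rangle|$; the $i\lambda g$ term is $\le|\lambda|\,\|g\|_H\,\|P_{-\lambda}^{-1}\|_{\mathcal{L}(H)}\|z\|_H$, while the $Pf$ term, written $\langle P^{1/2}f,P^{1/2}P_{-\lambda}^{-1}z\rangle_H$, is $\le\|f\|_{1/2}\|P_{-\lambda}^{-1}z\|_{1/2}$, which \eqref{3l1} (applied with $\lambda\mapsto-\lambda$) controls by $C(1+\langle\lambda\rangle\|P_{-\lambda}^{-1}\|_{\mathcal{L}(H)})\|z\|_H$. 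Thus $\|v\|_H\le C(1+\langle\lambda\rangle\|P_{-\lambda}^{-1}\|_{\mathcal{L}(H)})\|(f,g)\|_{\mathcal{H}}$. Next, pairing $P_\lambda v=Pf+i\lambda g$ against $v$ and again taking real parts gives $\|v\|_{1/2}^2=\langle\lambda\rangle^2\|v\|_H^2+\operatorname{Re}\langle P^{1/2}f,P^{1/2}v\rangle_H-\lambda\operatorname{Im}\langle g,v\rangle_H$; absorbing $\|f\|_{1/2}\|v\|_{1/2}$ into the left and using Young's inequality on the last term yields $\|v\|_{1/2}\le C\langle\lambda\rangle\|v\|_H+C\|(f,g)\|_{\mathcal{H}}$. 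Finally $u=-i\lambda^{-1}(f-v)$ gives $\|u\|_{1/2}\le|\lambda|^{-1}(\|f\|_{1/2}+\|v\|_{1/2})\le C\|v\|_H+C\|(f,g)\|_{\mathcal{H}}$, using $|\lambda|^{-1}\langle\lambda\rangle\le C$ for $|\lambda|\ge\lambda_0$. Combining, $\|(\mathcal{A}+i\lambda)^{-1}(f,g)\|_{\mathcal{H}}^2=\|u\|_{1/2}^2+\|v\|_H^2\le C(1+\langle\lambda\rangle\|P_{-\lambda}^{-1}\|_{\mathcal{L}(H)})^2\|(f,g)\|_{\mathcal{H}}^2$, and since $\langle\lambda\rangle\le C|\lambda|$ for $|\lambda|\ge\lambda_0$ this implies \eqref{3l3} (in fact a stronger bound, the $P_\lambda^{-1}$ term being unnecessary).

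The one place where care is genuinely required is the power of $\langle\lambda\rangle$ in \eqref{3l3}. Bounding $\|u\|_{1/2}$ directly in terms of $\|u\|_H$, or via the representation $u=P_\lambda^{-1}(i\lambda f-g-Q^*Qf)$, loses an extra factor $\langle\lambda\rangle$, which would make \eqref{3l3} weaker than what \eqref{3l2} already forces and hence useless downstream. The fix is the routing above: send $\|u\|_{1/2}$ through $v$ via $u=-i\lambda^{-1}(f-v)$ and then through $\|v\|_{1/2}\le C\langle\lambda\rangle\|v\|_H+\cdots$, so that $\lambda^{-1}$ exactly cancels one power of $\langle\lambda\rangle$; and estimate $\|v\|_H$ by duality against $P_{-\lambda}^{-1}$, using \eqref{3l1} to hold the $Pf$ contribution to a single power of $\langle\lambda\rangle$. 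The remaining work is bookkeeping with the fractional scale: the $H_{-1/2}$--$H_{1/2}$ pairing, the adjoint identity $P_\lambda^*=P_{-\lambda}$ (valid for real $\lambda$, which also makes $P_{-\lambda}$ invertible whenever $P_\lambda$ is, with the same inverse norm), and the multiplier bound $Q^*Q\in\mathcal{L}(H_{1/2},H_{-1/2})$ coming from $\gamma\le\tfrac12$.
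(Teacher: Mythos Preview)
Your proposal is correct and follows essentially the same plan as the paper: for \eqref{3l1} both pair $P_\lambda w=f$ against $w$, take the real part, and interpolate; for \eqref{3l2} both read off the components of $(\mathcal{A}+i\lambda)^{-1}(0,g)$; for \eqref{3l3} both pass from $P_\lambda^{-1}$ acting on $Pf$ to $P_{-\lambda}^{-1}$ via the adjoint identity $P_\lambda^*=P_{-\lambda}$ and then invoke \eqref{3l1}. Your handling of $\|u\|_{1/2}$ in step~3 is slightly cleaner than the paper's: you use $u=-i\lambda^{-1}(f-v)$ directly together with the bound $\|v\|_{1/2}\le C\langle\lambda\rangle\|v\|_H+C\|(f,g)\|_{\mathcal{H}}$, whereas the paper expands $u=-i\lambda^{-1}f+i\lambda^{-1}P_\lambda^{-1}Pf-P_\lambda^{-1}g$ and estimates each piece separately, which is why their final display retains both $\|P_\lambda^{-1}\|_{H\to H_{1/2}}$ and $\|P_{-\lambda}^{-1}\|_{H\to H_{1/2}}$ before applying \eqref{3l1}. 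Note, though, that your remark about the $P_\lambda^{-1}$ term being ``unnecessary'' is cosmetic: since $(P_\lambda^{-1})^*=P_{-\lambda}^{-1}$ on $\mathcal{L}(H)$ one has $\|P_\lambda^{-1}\|_{\mathcal{L}(H)}=\|P_{-\lambda}^{-1}\|_{\mathcal{L}(H)}$, so the two forms of \eqref{3l3} are equivalent.
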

\begin{proof}[Proof of Lemma \ref{pencilsemilemma}]
1. For $u \in H_{\frac{1}{2}}$, consider the pairing
\begin{equation}
\langle P_\lambda u, u\rangle=\|P^{\frac{1}{2}}u\|^2-\lambda^2\|u\|^2+i\lambda \|Qu\|^{2}.
\end{equation}
Rearrange the real part to see
\begin{equation}
\|P^{\frac{1}{2}}u\|^2\le C\epsilon^{-1}\|P_{\lambda}u\|_{-\frac{1}{2}}^2+\langle \lambda\rangle^2\|u\|^2+\epsilon \|u\|_{\frac{1}{2}}^2.
\end{equation}
Note $\|u\|_{\frac{1}{2}}^2\le C\|u\|^{2}+C\|P^{\frac{1}{2}}u\|^2$ and we have
\begin{equation}\label{3l4}
\|u\|_{\frac{1}{2}}^2\le C\|P_{\lambda}u\|_{-\frac{1}{2}}^2+C\langle \lambda\rangle^2\|u\|^2.
\end{equation}
after absorption of the $\epsilon$-term. Applying Lemma \ref{interpolation} to this and $\|u\|$ with $\alpha= \<\lambda\>^{2\beta-1}$ gives \eqref{3l1}. 

2. For $u \in H_{\frac{1}{2}}$, note that $(\Ac+i\lambda)(-u, i\lambda u)=(0, P_\lambda u)$. This implies
\begin{equation}
\|u\|_{H_{1/2}}\le \|(\Ac+i\lambda)^{-1}\|_{\mathcal{L}(\mathcal{H})}\|P_\lambda u\|, \ \|u\|\le \abs{\lambda}^{-1}\|(\Ac+i\lambda)^{-1}\|_{\mathcal{L}(\mathcal{H})}\|P_\lambda u\|,
\end{equation}
and thus \eqref{3l2}.

3. Note that $P^*_\lambda=P_{-\lambda}$. Then we have
\begin{equation}
\left\|P^{-1}_\lambda\right\|_{H_{-1/2}\rightarrow H}=\left\|\left(P^{-1}_\lambda\right)^*\right\|_{H\rightarrow H_{1/2}}=\left\|\left(P^*_\lambda\right)^{-1}\right\|_{H\rightarrow H_{1/2}}=\left\|P_{-\lambda}^{-1}\right\|_{H\rightarrow H_{1/2}}. 
\end{equation}
Now for $(f,g) \in \Hc$, let $(u,v) \in \Hc$ be $(u,v)=(\Ac+i\lambda)^{-1}(f,g)$. Then
\begin{equation}
u=-i\lambda^{-1}f+i\lambda^{-1}P_\lambda^{-1}P f-P_\lambda^{-1}g, \ v=P_\lambda^{-1}P f+i\lambda P_\lambda^{-1}g.
\end{equation}
Note
\begin{equation}\label{3l14}
\|P_\lambda^{-1}P f\|\le \|P_{\lambda}^{-1}\|_{H_{-1/2}\rightarrow H}\|f\|_{\frac{1}{2}}=\|P_{-\lambda}^{-1}\|_{H\rightarrow H_{1/2}}\|f\|_{\frac{1}{2}}. 
\end{equation}
Thus
\begin{equation}
\|v\|\le \left(\|P_{-\lambda}^{-1}\|_{H\rightarrow H_{1/2}}+\abs{\lambda}\|P_{\lambda}^{-1}\|_{\mathcal{L}(H)}\right)\|(f,g)\|_{\mathcal{H}}.
\end{equation}
On another hand, apply \eqref{3l4} and \eqref{3l14} to see
\begin{equation}
\|P_\lambda^{-1}P f\|_{\frac{1}{2}}\le C\langle \lambda\rangle \|P_\lambda^{-1}P f\|+C\|P f\|_{-\frac{1}{2}}\le C\left(\langle \lambda\rangle \|P_{-\lambda}^{-1}\|_{H\rightarrow H_{1/2}}+1\right)\|f\|_{\frac{1}{2}}. 
\end{equation}
Then 
\begin{equation}
\|u\|_{\frac{1}{2}}\le \left(\|P_\lambda^{-1}\|_{H\rightarrow H_{1/2}}+C\abs{\lambda}^{-1}\langle \lambda\rangle \|P_{-\lambda}^{-1}\|_{H\rightarrow H_{1/2}}+C\abs{\lambda}^{-1}\right)\|(f,g)\|_{\mathcal{H}}. 
\end{equation}
Bring together the estimates for $u$ and $v$ to see 
\begin{equation}
\|(\Ac+i\lambda)^{-1}\|_{\mathcal{L}(\mathcal{H})}\le \abs{\lambda}\|P_{\lambda}^{-1}\|_{\mathcal{L}(H)}+\|P_\lambda^{-1}\|_{H\rightarrow H_{1/2}}+C \|P_{-\lambda}^{-1}\|_{H\rightarrow H_{1/2}}+C\abs{\lambda}^{-1}.
\end{equation}
Apply \eqref{3l1} to conclude \eqref{3l3}.
\end{proof}

The following lemma shows that using the above framework, resolvent estimates for $\Ac$ give energy decay results for the semigroup.  
\begin{lemma}[Stability plus unique continuation equals decay]\label{3t7}
Let $K(\lambda):[0,\infty)\ra(0,\infty)$ be a function with positive increase. That is, $K$ is a continuous increasing function for which there are $\alpha, c, \lambda_0>0$, such that $K(s\lambda)\ge cs^\alpha K(\lambda)$ uniformly for all $s\ge1$ and $\lambda\ge \lambda_0$. Then $K$ is bijective from $[0,\infty)$ to its range, and the following are equivalent:
\begin{enumerate}[wide]
\item The unique continuation $\operatorname{UCP}_{P,Q}$ holds, and there exists $C,\lambda_0>0$ such that
\begin{equation}
\|P_{\lambda}^{-1} \|_{\mathcal{L}(H)}\le C  K(\abs{\lambda})/\abs{\lambda}
\end{equation}
for all $\lambda\in\mathbb{R}$ with $\abs{\lambda}\ge \lambda_0$.
\item The unique continuation $\operatorname{UCP}_{P,Q}$ holds, and there exists $C,\lambda_0>0$ such that
\begin{equation}\label{3l28}
\|(\mathcal{A}+i\lambda)^{-1}\|_{\mathcal{L}(\mathcal{H})}\le CK(\abs{\lambda})
\end{equation}
for all $\lambda\in\mathbb{R}$ with $\abs{\lambda}\ge \lambda_0$.
\item There exists $C>0$ such that for all $t\ge 0$ we have
\begin{equation}\label{3l27}
\|e^{t\mathcal{A}}\|_{\mathcal{D}\rightarrow\Hcd} \le \frac{C}{K^{-1}(t)}. 
\end{equation}
In the limit case when $K(\abs{\lambda})=C$, the equivalence still holds with \eqref{3l27} replaced with
\begin{equation}
\|e^{t\mathcal{A}}\|_{\mathcal{H}\rightarrow\Hcd} \le Ce^{-t/C}.
\end{equation}
\end{enumerate}
\end{lemma}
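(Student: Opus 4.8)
The equivalence $(1)\Leftrightarrow(2)$ is immediate: the two statements carry the same hypothesis $\operatorname{UCP}_{P,Q}$, and the two resolvent estimates are equivalent by Lemma~\ref{3t5} (whose hypothesis $K\ge C$ on $\{|\lambda|\ge\lambda_0\}$ holds since $K$ is positive, continuous and increasing). The content is therefore $(2)\Leftrightarrow(3)$, and the plan is to transfer the whole problem to the reduced semigroup $e^{t\Acd}$ on the Hilbert space $\Hcd$, where quantified Gearhart--Pr\"uss / Borichev--Tomilin theory applies verbatim. By Lemma~\ref{semigrouplemma}(4), $e^{t\Ac}=e^{t\Acd}\Pi_\bullet+\Pi$ with $\Pi$ projecting onto $\ker\Ac$; since $\|\cdot\|_{\Hcd}$ is a seminorm that vanishes on $\ker\Ac$, the triangle inequality gives $\|e^{t\Ac}U\|_{\Hcd}=\|e^{t\Acd}\Pi_\bullet U\|_{\Hcd}$. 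Because $\Acd$ is invertible (Lemma~\ref{semigrouplemma}(5)), using $\Acd\Pi_\bullet U=\Ac U$, $\|\Ac U\|_{\Hcd}\le\|U\|_{\mathcal{D}}$ (immediate from the norm definitions together with the spectral identity $\|\Lambda^{1/2}v\|_H^2=\|v\|_H^2+\|P^{1/2}v\|_H^2$) and $\|\Acd^{-1}W\|_{\mathcal{D}}\lesssim\|W\|_{\Hcd}$, one gets that $\|e^{t\Ac}\|_{\mathcal{D}\to\Hcd}$ and $\|e^{t\Acd}\Acd^{-1}\|_{\Lc(\Hcd)}$ coincide up to a fixed constant, so it suffices to control the latter.

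\textbf{Proof of $(2)\Rightarrow(3)$.} The crux is to upgrade the high-frequency bound in $(2)$ to a bound on the whole imaginary axis. First, $\operatorname{UCP}_{P,Q}$ forces $i\Rb\subset\rho(\Acd)$: pairing $P_\lambda u$ with $u$ and separating real and imaginary parts shows $\ker P_\lambda=\{0\}$ for every real $\lambda\ne0$ (this is Lemma~\ref{uniqelemma}(6) when $\lambda>0$, with the identical computation when $\lambda<0$); since $P_\lambda:H_{1/2}\to H_{-1/2}$ is Fredholm of index $0$ along $\Rb$ by Lemma~\ref{lemmainvertibility}, it is in fact invertible for all real $\lambda\ne0$, so by Lemma~\ref{semigrouplemma}(1),(5) every $\Acd+i\lambda$ with $\lambda\ne0$ is invertible, and $\Acd$ itself is invertible. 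By $(2)$ and Lemma~\ref{semigrouplemma}(5), $\|(\Acd+i\lambda)^{-1}\|_{\Lc(\Hcd)}\le CK(|\lambda|)$ for $|\lambda|\ge\lambda_0$; on the compact set $\{|\lambda|\le\lambda_0\}\subset i\Rb$ the resolvent is norm-continuous, hence bounded, and as $K$ is positive and continuous it is bounded below there, whence $\|(\Acd+i\lambda)^{-1}\|_{\Lc(\Hcd)}\le C'K(|\lambda|)$ for all real $\lambda$. Since $K$ has positive increase and $e^{t\Acd}$ is a bounded (indeed contraction) semigroup on a Hilbert space, the quantified Tauberian theorem of Borichev--Tomilin type, in the sharp form of Rozendaal--Seifert--Stahn, yields $\|e^{t\Acd}\Acd^{-1}\|_{\Lc(\Hcd)}\le C/K^{-1}(t)$, and the reduction of the overview gives $(3)$. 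In the limit case $K\equiv C$ (which does not have positive increase and is handled separately), the resolvent of $\Acd$ is \emph{uniformly} bounded on $i\Rb$, so the Gearhart--Pr\"uss theorem gives exponential stability of $e^{t\Acd}$ on $\Hcd$, i.e. $\|e^{t\Ac}\|_{\mathcal{H}\to\Hcd}\le Ce^{-t/C}$.

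\textbf{Proof of $(3)\Rightarrow(2)$.} If $\operatorname{UCP}_{P,Q}$ failed, there would exist $\lambda>0$ and $0\ne u\in H_{1/2}$ with $(P-\lambda^2)u=0$ and $Qu=0$; then $u\in H_s$ for every $s$, $(u,i\lambda u)\in\mathcal{D}$, and one checks directly that $\Ac(u,i\lambda u)=i\lambda(u,i\lambda u)$. Since $u\ne0$ and $\lambda\ne0$ this eigenvector lies outside $\ker\Ac$, and the Riesz projector $\Pi$ annihilates eigenvectors with nonzero eigenvalue, so $(u,i\lambda u)$ is an eigenvector of $\Acd$ on $\Hcd$ with $\|(u,i\lambda u)\|_{\Hcd}^2=2\lambda^2\|u\|_H^2>0$; hence $\|e^{t\Ac}(u,i\lambda u)\|_{\Hcd}$ is constant in $t$, contradicting $(3)$. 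Thus $\operatorname{UCP}_{P,Q}$ holds, and exactly as in the previous step this together with Lemmata~\ref{lemmainvertibility} and~\ref{semigrouplemma}(1),(5) forces $i\Rb\subset\rho(\Acd)$. Next, $(3)$ and the reduction give $\|e^{t\Acd}\Acd^{-1}\|_{\Lc(\Hcd)}\le C/K^{-1}(t)$, and since $K$ has positive increase the converse (necessity) direction of the Rozendaal--Seifert--Stahn theorem yields $\|(\Acd+i\lambda)^{-1}\|_{\Lc(\Hcd)}\le CK(|\lambda|)$ for all large $|\lambda|$; Lemma~\ref{semigrouplemma}(5) converts this into $\|(\Ac+i\lambda)^{-1}\|_{\Lc(\Hc)}\le C(K(|\lambda|)+|\lambda|^{-1})\le C'K(|\lambda|)$, which is $(2)$. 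The limit case $K\equiv C$ follows from Gearhart--Pr\"uss applied in reverse: exponential decay forces $i\Rb\subset\rho(\Acd)$ with uniformly bounded resolvent. Finally, $K$ continuous and strictly increasing is a bijection onto its range, so $K^{-1}$ is well defined, as claimed.

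\textbf{Main obstacle.} The Tauberian input is genuinely black-box; the delicate part is the bookkeeping around $\ker\Ac$. One must (i) legitimately pass from $e^{t\Ac}$ on $\mathcal{H}$ to $e^{t\Acd}$ on $\Hcd$ and back, (ii) establish $i\Rb\subset\rho(\Acd)$ — this is exactly where $\operatorname{UCP}_{P,Q}$, through Lemma~\ref{uniqelemma}(6), the Fredholm structure of Lemma~\ref{lemmainvertibility}, and the resolvent transfer in Lemma~\ref{semigrouplemma}(1),(5), are jointly indispensable (when $Q^*Q$ is not relatively compact, i.e. $\gamma=\tfrac12$, even Fredholmness is only available in the strip $\{\cim\lambda>-w^{-2}\}$, which fortunately contains $\Rb$), and (iii) verify that the graph norm of $\Acd$ is comparable to the $\Dcd$-norm, so that "$\Acd^{-1}$-decay" and "$\mathcal{D}\to\Hcd$-decay" are literally the same statement. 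Once these are in place, $(2)\Leftrightarrow(3)$ is a direct application of the quantified Borichev--Tomilin / Gearhart--Pr\"uss theorems on the Hilbert space $\Hcd$.
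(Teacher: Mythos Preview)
Your proposal is correct and follows essentially the same route as the paper: reduce to the semigroup $e^{t\Acd}$ on $\Hcd$ via the decomposition in Lemma~\ref{semigrouplemma}, establish $i\Rb\cap\spec(\Acd)=\emptyset$ from $\operatorname{UCP}_{P,Q}$, and then apply the Rozendaal--Seifert--Stahn (respectively Gearhart--Pr\"uss) theorem in both directions. The paper packages the implication ``$\operatorname{UCP}_{P,Q}\Leftrightarrow i\Rb\cap\spec(\Acd)=\emptyset$'' together with the uniform resolvent bound on compacts into a separate Lemma~\ref{3t8}, whereas you inline this argument; your use of the Fredholm index-$0$ property from Lemma~\ref{lemmainvertibility} to pass from $\ker P_\lambda=\{0\}$ to invertibility is slightly more direct than the paper's quantitative/compactness argument in the proof of Lemma~\ref{3t8}, but the content is the same.
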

Note that when $K(\lambda)=\lambda^{\alpha}$, $K^{-1}(t) = t^{1/\alpha}$, which recovers the rate of \cite{bt10}. Similarly when $K(\lambda)=e^{c\lambda}$, $K^{-1}(t) = c^{-1}\log(t)$, which recovers the rate of \cite{bur98}.

In semigroup theory, it is standard to utilise stability theorems under the assumption that $\spec(\Acd)\cap i\mathbb{R}=\emptyset$. Here we instead use the unique continuation assumption, since the two are equivalent.
\begin{lemma}\label{3t8}
$\operatorname{UCP}_{P,Q}$ holds if and only if $\spec(\Acd)\cap i\mathbb{R}=\emptyset$. In such case, for every $\lambda_0>0$ we have $C>0,$ such that $\|(\Acd+i\lambda)^{-1}\|_{\mathcal{L}(\mathcal{H})}\le C$ uniformly for $\lambda\in[-\lambda_0, \lambda_0]$. 
\end{lemma}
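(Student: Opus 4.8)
The plan is to reduce the equivalence to the invertibility of the operator pencil $P_\lambda : H_{1/2}\to H_{-1/2}$ on the real axis, using the machinery already assembled. First I would record the chain of equivalences valid for real $\lambda\neq 0$: by Lemma \ref{lemmainvertibility}, $P_\lambda$ is Fredholm of index $0$, so it is invertible iff $\ker P_\lambda=\ker(P-i\lambda Q^*Q-\lambda^2)=\{0\}$; by Lemma \ref{semigrouplemma}(1) together with its converse in \cite[Lemma 2.11]{kw22}, $\Ac+i\lambda$ is invertible iff $P_\lambda$ is invertible; and by Lemma \ref{semigrouplemma}(5) this is in turn equivalent to invertibility of $\Acd+i\lambda$. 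Since $P_{-\lambda}=P_\lambda^*$, invertibility of $P_\lambda$ and of $P_{-\lambda}$ are equivalent, so it suffices to treat $\lambda>0$; and $0\notin\spec(\Acd)$ always, because $\Acd$ is invertible by Lemma \ref{semigrouplemma}(5).

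For the forward implication, assume $\operatorname{UCP}_{P,Q}$ holds. Then Lemma \ref{uniqelemma}(6) gives $\ker(P-i\lambda Q^*Q-\lambda^2)=\{0\}$ for every $\lambda>0$, hence $P_\lambda$ is invertible for all real $\lambda\neq 0$ by the above, hence $\Acd+i\lambda$ is invertible for all real $\lambda\neq 0$, and together with $0\notin\spec(\Acd)$ we conclude $\spec(\Acd)\cap i\mathbb{R}=\emptyset$. For the reverse implication I argue by contraposition: if $\operatorname{UCP}_{P,Q}$ fails there are $\lambda>0$ and $u\in H_{1/2}\setminus\{0\}$ with $(P-\lambda^2)u=0$ and $Qu=0$; then $u\in H_s$ for all $s$, and one checks directly that $(u,i\lambda u)\in\Dcd$ — it lies in $\mathcal{D}$ since $Pu+Q^*Q(i\lambda u)=\lambda^2 u\in H$, and it lies in $\Hcd=\Ac(\mathcal{D})$ because it equals $\Ac$ applied to $(i\lambda)^{-1}(u,i\lambda u)\in\mathcal{D}$ — with $\Acd(u,i\lambda u)=i\lambda(u,i\lambda u)$ and $(u,i\lambda u)\neq 0$. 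Hence $i\lambda\in\spec(\Acd)\cap i\mathbb{R}$, so the latter is nonempty.

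For the uniform bound, once $\operatorname{UCP}_{P,Q}$ holds we have $i\mathbb{R}\subset\rho(\Acd)$. Since $\Acd$ generates a $C_0$-semigroup on $\Hcd$ (Lemma \ref{semigrouplemma}(3)) it is closed, so its resolvent set is open and $\mu\mapsto(\Acd-\mu)^{-1}$ is analytic, in particular continuous, on $\rho(\Acd)$. The set $\{-i\lambda:\lambda\in[-\lambda_0,\lambda_0]\}$ is a compact subset of $\rho(\Acd)$, so $\sup_{\abs{\lambda}\le\lambda_0}\|(\Acd+i\lambda)^{-1}\|_{\Lc(\Hcd)}<\infty$, which supplies the constant $C$.

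The main obstacle is purely bookkeeping rather than analytic: one must keep straight the three-way equivalence between invertibility of $P_\lambda$, of $\Ac+i\lambda$, and of $\Acd+i\lambda$ across the $\ker P$-quotient (and invoke the converse of Lemma \ref{semigrouplemma}(1) from \cite{kw22}), and one must verify that the eigenvector $(u,i\lambda u)$ produced when $\operatorname{UCP}_{P,Q}$ fails genuinely lives in $\Dcd$ and is not annihilated by $\Pi_\bullet$. Both points are routine once Lemmata \ref{lemmainvertibility} and \ref{semigrouplemma} are available.
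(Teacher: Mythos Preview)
Your argument is correct and in fact somewhat cleaner than the paper's. The reductions at the start (via Lemmata \ref{lemmainvertibility}, \ref{semigrouplemma}(1)(5), and the converse in \cite{kw22}) match the paper's Step 1 exactly. The difference lies in how the forward implication is handled.

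\textbf{Forward direction.} You obtain $\ker P_\lambda=\{0\}$ for $\lambda>0$ by simply quoting Lemma~\ref{uniqelemma}(6), then conclude invertibility from the Fredholm-index-zero property supplied by Lemma~\ref{lemmainvertibility} (and handle $\lambda<0$ via $P_{-\lambda}=P_\lambda^*$). The paper instead reproves injectivity from scratch: it first establishes a quantitative estimate $\|u\|_{1/2}\le C_\lambda(\|(P-\lambda^2)u\|_{-1/2}+\|Qu\|_Y)$ by a compactness/normalised-sequence argument, and then runs a second sequence argument to rule out approximate null vectors of $P_\lambda$. Your route is shorter and avoids this redundancy; the paper's route has the minor advantage of yielding the quantitative estimate \eqref{3l31} along the way, though that estimate is not used elsewhere.

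\textbf{Reverse direction.} You argue by contraposition at the level of $\Acd$, exhibiting the explicit eigenvector $(u,i\lambda u)\in\Dcd$; the verification that it lies in $\Hcd=\Ac(\mathcal{D})$ is careful and correct. The paper instead stays at the level of $P_\lambda$ and observes that $(P-\lambda^2)u=0$, $Qu=0$ forces $P_\lambda u=0$. Both are immediate.

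\textbf{Uniform bound.} Your appeal to continuity of the resolvent on the compact set $i[-\lambda_0,\lambda_0]\subset\rho(\Acd)$ is the standard argument; the paper spells out the same conclusion via the resolvent identity. These are equivalent.
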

\begin{remark}
It is somehow surprising that, this equivalence holds given that $Q^*Q$ is not relatively compact and the spectrum of $\Acd$ has not been shown discrete. Despite this, the unique continuation principle is enough to ensure there is no spectrum on the imaginary axis.
\end{remark}
We first prove Lemma \ref{3t7} assuming Lemma \ref{3t8} and then prove Lemma \ref{3t8}.
\begin{proof}[Proof of Lemma \ref{3t7} via Lemma \ref{3t8}]
1. (1)$\Leftrightarrow$(2) follows from Lemma \ref{3t5}. 

2. (2)$\Leftrightarrow$(3): from \eqref{3l29}, we know \eqref{3l27} is equivalent to
\begin{equation}\label{3l30}
\|e^{t\Acd}\|_{\Dcd\rightarrow\Hcd} \le \frac{C}{K^{-1}(t)}. 
\end{equation}
By Lemma \ref{semigrouplemma} part (iv) and the abstract semigroup decay results \cite[Theorem 1.1]{rss19} (see also \cite{bur98}): \eqref{3l28} is equivalent to \eqref{3l30} (hence \eqref{3l27}), if $\spec(\Acd)\cap i\mathbb{R}=\emptyset$, that is, if $\operatorname{UCP}_{P,Q}$ holds by Lemma \ref{3t8}. This observation implies (2)$\Rightarrow$(3). Now assume $\operatorname{UCP}_{P,Q}$ does not hold, such that there is $u\in H_1$, real $\lambda\neq 0$ such that
\begin{equation}
(P-\lambda^2)u=0, \ Qu=0. 
\end{equation}
Then we have
\begin{equation}
\partial_t e^{t\Ac}(u,i\lambda u)=\Ac (u,i\lambda u)=i\lambda (u,i\lambda u), \ e^{t\Ac}(u,i\lambda u)=e^{i\lambda t}(u,i\lambda u). 
\end{equation}
Notably, 
\begin{equation}
\|e^{t\Ac}(u,i\lambda u)\|^2_{\Hcd}=\|P^{\frac{1}{2}}u\|_H^2+\lambda^2\|u\|_{H}^2
\end{equation}
is constant in time. This contradicts the assumption that the right hand side of \eqref{3l27} decays to $0$ in time. Thus we have (2)$\Leftarrow$(3) as well. 

3. In the limit case $K(\lambda)=C$, we use \cite{gea78,pru84,hua85} instead of \cite{rss19}, noting that the required boundedness of $\|(\Acd+i\lambda)^{-1}\|_{\Lc(\Hc)}$ for $\abs{\lambda} \leq \lambda_0$ is guaranteed by Lemma \ref{3t8}.
\end{proof}

\begin{proof}[Proof of Lemma \ref{3t8}]
1. By Lemma \ref{semigrouplemma}, we know $\Acd$ is invertible, and $\spec(\Acd)=\spec(\Ac)\setminus\{0\}$. By Lemma \ref{semigrouplemma}(1), $\spec(\Ac)\cap i\mathbb{R}\setminus\{0\}=\{\lambda\in \mathbb{R}_{\neq 0}: P_\lambda \text{ is not invertible}\}$. Thus
\begin{equation}
\spec(\Acd)\cap i\mathbb{R}=\{\lambda\in \mathbb{R}_{\neq 0}: P_\lambda \text{ is not invertible}\}.
\end{equation}
It suffices to show $\operatorname{UCP}_{P,Q}$ holds if and only if $P_\lambda$ is invertible for all $\lambda\neq 0$.

2. Assume $P_\lambda$ is invertible for all $\lambda\neq 0$. Assume there is nontrivial $u\in H_{1/2}$ and $\lambda>0$ such that
\begin{equation}
(P-\lambda^2)u=0, \ Qu=0.
\end{equation}
Then $P_\lambda u=(P-i\lambda Q^*Q -\lambda^2)u=0$ and $u$ has to be $0$. Thus $\operatorname{UCP}_{P,Q}$ holds. 

3. Assume $\operatorname{UCP}_{P,Q}$ holds. For each $\lambda>0$, we claim the quantitative estimate
\begin{equation}\label{3l31}
\|u\|_{\frac{1}{2}}\le C_\lambda(\|(P-\lambda^2)u\|_{-\frac{1}{2}}+C_\lambda\|Qu\|_{Y})
\end{equation}
holds with $C_\lambda>0$. Assume not, there are $u_n\in H_{1/2}$ with $(P-\lambda^2)u_n=f_n$ such that
\begin{equation}
\|u_n\|_{\frac{1}{2}}\equiv 1, \ \|f_n\|_{-\frac{1}{2}}=o(1), \ \|Q u_n\|_{Y}=o(1). 
\end{equation}
Note $P-i$ is invertible and apply its inverse to $f_n$ to observe
\begin{equation}
(P-i)^{-1}(P-\lambda^2)u_n=(P-i)^{-1} f_n,
\end{equation}
which after simplification becomes
\begin{equation}
u_n=w_n+(P-i)^{-1}f_n, \ w_n=(\lambda^2-i)(P-i)^{-1}u_n. 
\end{equation}
Note $w_n$ are bounded in $H_{3/2}$ and hence converges in $H_{1/2}$ to some $w\in H_{3/2}$. Also $(P-i)^{-1}f_n\rightarrow 0$ in $H_{1/2}$. Thus $u_n\rightarrow w$ in $H_{1/2}$. This implies
\begin{equation}
(P-\lambda^2)w=0, \ Qw=0,
\end{equation}
which contradicts $\operatorname{UCP}_{P,Q}$. Thus we have \eqref{3l31}. 

4. We now show $P_\lambda: H_{1/2}\rightarrow H_{-1/2}$ is invertible for $\lambda\neq 0$. We firstly show it is injective. Assume not, then there are $u_n\in H_{1/2}$, $P_\lambda u_n=(P-i\lambda Q^*Q-\lambda^2)u_n=f_n$ such that
\begin{equation}
\|u_n\|_{\frac{1}{2}}\equiv 1, \ \|f_n\|_{-\frac{1}{2}}=o(1).
\end{equation}
Consider
\begin{equation}
o(1)=\cim\langle f_n, u_n\rangle=\lambda \|Q u_n\|_Y^2 
\end{equation}
implies $\|Qu_n\|_Y\rightarrow 0$. Then $i\lambda Q^*Q u_n\rightarrow 0$ in $H_{-1/2}$ and $(P-\lambda^2) u_n\rightarrow 0$ in $H_{-1/2}$. This contradicts \eqref{3l31}. Thus $P_\lambda$ is injective for $\lambda\neq 0$. But this implies $P_\lambda^*=P_{-\lambda}$ is also injective, and thus $P_\lambda$ is invertible. 

5. Fix $\lambda_0>0$, assume $\|(\Acd+i\lambda)^{-1}\|_{\mathcal{L}(\Hc)}$ is not bounded on $\lambda\in [-\lambda_0, \lambda_0]$. Since $[-\lambda_0, \lambda_0]$ is compact, there are $\lambda_n\rightarrow \lambda$, $\abs{\lambda_n-\lambda}\le 1/n$ such that $\|(\Acd+i\lambda_n)^{-1}\|_{\mathcal{L}(\Hc)}\ge 1/n$. Now by the resolvent identity
\begin{equation}
(\Acd+i\lambda_n)^{-1}=(\Acd+i\lambda)^{-1}+i(\lambda-\lambda_n)(\Acd+i\lambda)^{-1}(\Acd+i\lambda_n)^{-1},
\end{equation}
Note $\|(\Acd+i\lambda)^{-1}\|_{\mathcal{L}(\Hc)}\le C$ and we have
\begin{equation}
\|(\Acd+i\lambda_n)^{-1}\|_{\mathcal{L}(\Hc)}\le C+\frac{C}{n}\|(\Acd+i\lambda_n)^{-1}\|_{\mathcal{L}(\Hc)}.
\end{equation}
For large $n$, we have $\|(\Acd+i\lambda_n)^{-1}\|_{\mathcal{L}(\Hc)}\le 2C$ uniformly. This contradicts the unbounded hypothesis and we have the uniform boundedness. 
\end{proof}

\subsection{Theorem \ref{thmcontrol}: control and stability}\label{s3-4}
By Lemma \ref{3t7}, to understand the energy decay of $e^{t\Ac}$, it suffices to find high-frequency resolvent estimates for $P_{\lambda}^{-1}$. In \S \ref{s3-4}, we will prove Propositions \ref{3t1}, \ref{3t9}, turning control estimates \eqref{1l1} into resolvent estimates and vice versa, and hence prove Theorem \ref{thmcontrol}. 
\begin{proposition}[Control to resolvent]\label{3t1}
Let $Q\in \mathcal{L}(H_{\gamma}, Y)$ for $\gamma\in [0,\frac{1}{2}]$. Fix $\mu\ge 0$. Assume the control estimate \eqref{1l1} holds, that is there exist $\lambda_0\ge 0$, $\delta\in[0,\frac{1}{2}]$ such that 
\begin{equation}\label{3l5}
\|u\|_{H}\le M(\lambda)\langle \lambda\rangle^{-1}\|(P-\lambda^2)\Lambda^{\mu-\gamma} u\|+m(\lambda)\|Q\Lambda^{\mu} u\|_{Y}+o(\lambda^{2\delta})\|u\|_{-\delta}
\end{equation}
uniformly for any $u\in H_{1+\mu-\gamma}$ and all $\lambda\ge \lambda_0$. Assume $M(\lambda)\ge C$, $m(\lambda)\ge C$ for all $\lambda\ge\lambda_0$. Then there are $C,\lambda_0'>0$ such that
\begin{equation}\label{3l9}
\|P_{\lambda}^{-1}\|_{\mathcal{L}(H)}\le C(M^2+m^2)\abs{\lambda}^{-1+4\mu}.
\end{equation} 
uniformly for all $\abs{\lambda}\ge \lambda_0'$. 
\end{proposition}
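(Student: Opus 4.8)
\textit{Overall strategy.} The plan is to prove the a priori estimate
\[
\|u\|_H\le C(M(|\lambda|)^2+m(|\lambda|)^2)\,|\lambda|^{4\mu-1}\,\|P_\lambda u\|_H
\]
for all $u$ in the domain of $P_\lambda$ and all real $\lambda$ with $|\lambda|\ge\lambda_0'$. Combined with Lemma \ref{lemmainvertibility} — which already gives that $P_\lambda:H_{1/2}\to H_{-1/2}$ is Fredholm of index $0$ on the real axis — this a priori bound forces injectivity, hence invertibility, and yields \eqref{3l9}. Since $P_\lambda^*=P_{-\lambda}$, one has $\|P_{-\lambda}^{-1}\|_{\mathcal{L}(H)}=\|P_\lambda^{-1}\|_{\mathcal{L}(H)}$, so it suffices to treat $\lambda>0$. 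Fix such a large $\lambda$ and write $f=P_\lambda u$; from $(P-\lambda^2)u=f+i\lambda Q^*Qu\in H_{-\gamma}$ and the elliptic regularity of $P$ one gets $u\in H_{1-\gamma}$, so that \eqref{3l5} applied to $\Lambda^{-\mu}u$ (legitimate since $\Lambda$ commutes with $P$) and all the pairings below make sense.

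\textit{Two ingredients.} First, pairing $P_\lambda u=f$ with $u$ and taking imaginary parts gives the dissipation identity $\|Qu\|_Y^2=|\lambda|^{-1}|\cim\langle f,u\rangle|\le|\lambda|^{-1}\|f\|_H\|u\|_H$. Second, apply the control estimate \eqref{3l5} to $w=\Lambda^{-\mu}u$: its first term becomes $M(\lambda)\langle\lambda\rangle^{-1}\|(P-\lambda^2)u\|_{-\gamma}$, and inserting $(P-\lambda^2)u=f+i\lambda Q^*Qu$ together with $\|f\|_{-\gamma}\le\|f\|_H$ and $\|Q^*Qu\|_{-\gamma}\le\|Q\|_{\mathcal{L}(H_\gamma,Y)}\|Qu\|_Y$ yields
\[
\|u\|_{-\mu}\le M(\lambda)\langle\lambda\rangle^{-1}\|f\|_H+C(M(\lambda)+m(\lambda))\|Qu\|_Y+o(\lambda^{2\delta})\|u\|_{-\mu-\delta}.
\]

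\textit{Recovering $\|u\|_H$.} Split $u=\mathbbm{1}_{\{\rho^2\le2\lambda^2\}}(P)u+\mathbbm{1}_{\{\rho^2>2\lambda^2\}}(P)u$. On the first piece $1+\rho^2\le C\lambda^2$, hence $\|\mathbbm{1}_{\{\rho^2\le2\lambda^2\}}(P)u\|_H\le C\lambda^{2\mu}\|u\|_{-\mu}$. On the second piece $P-\lambda^2$ is elliptic, and the decay $(1+\rho^2)^{-1}\lesssim\lambda^{-2}$ there (applied both to $f$ and to $Q^*Qu\in H_{-\gamma}$) gives $\|\mathbbm{1}_{\{\rho^2>2\lambda^2\}}(P)u\|_H\le C\lambda^{-2}\|f\|_H+C\|Qu\|_Y$. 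Combining with the previous display, grouping the $\|Qu\|_Y$ terms into $C\lambda^{2\mu}(M+m)\|Qu\|_Y$, inserting the dissipation identity, and using Young's inequality to absorb a fraction of $\|u\|_H$, one is left with
\[
\|u\|_H\le C(M^2+m^2)\lambda^{4\mu-1}\|f\|_H+C\lambda^{2\mu}\,o(\lambda^{2\delta})\,\|u\|_{-\mu-\delta},
\]
where $M\ge C$, $m\ge C$, $\lambda\ge1$ and $4\mu-1\ge-2$ are used to dominate the leftover lower-order powers.

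\textit{The main obstacle: the compact error term.} What remains, and is the only genuinely delicate step, is to absorb $o(\lambda^{2\mu+2\delta})\|u\|_{-\mu-\delta}$ — it cannot be discarded crudely since its prefactor grows. Here one splits $u$ by frequency once more, now tracking negative Sobolev norms throughout. On $\rho^2\asymp\lambda^2$ one has $\|\cdot\|_{-\mu-\delta}\le C\lambda^{-2(\mu+\delta)}\|\cdot\|_H$, which exactly cancels the prefactor and contributes an absorbable $o(1)\|u\|_H$. On $\rho^2\gg\lambda^2$, ellipticity gives $\|\cdot\|_{-\mu-\delta}\le C\|(P-\lambda^2)(\cdot)\|_{-1-\mu-\delta}$, and the gains $(1+\rho^2)^{-1-\mu-\delta}\lesssim\lambda^{-2-2\mu-2\delta}$ on $f$ and $(1+\rho^2)^{\gamma-1-\mu-\delta}\lesssim\lambda^{2\gamma-2-2\mu-2\delta}$ on $Q^*Qu$ produce terms $\lesssim\lambda^{-2}\|f\|_H+o(1)\|Qu\|_Y$. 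On $\rho^2\ll\lambda^2$, $(P-\lambda^2)^{-1}$ has norm $\le2\lambda^{-2}$ on every $H_s$, so feeding the equation $(P-\lambda^2)u=f+i\lambda Q^*Qu$ through it in the $H_{-\mu-\delta}$ norm yields $\lesssim\lambda^{-2}\|f\|_H+\lambda^{2(\gamma-\mu-\delta)_+-1}\|Qu\|_Y$. In every case the dissipation identity and Young's inequality convert the $\|Qu\|_Y$ contributions into $\le\tfrac14\|u\|_H+C(M^2+m^2)\lambda^{4\mu-1}\|f\|_H$, the interpolation inequality of Lemma \ref{interpolation} handles the cutoff losses, and a final absorption closes the a priori estimate. (A compactness/contradiction variant of the same computation is equally available, and is how one may prefer to present the error-absorption step.)
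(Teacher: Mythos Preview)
Your argument is correct and is in the same spirit as the paper's (everything takes place in the functional calculus of $P$, exploiting ellipticity of $P-\lambda^2$ away from $\rho\sim\lambda$ and the dissipation identity near it), but the organisation is genuinely different. The paper first \emph{eliminates} the compact error term and only then bootstraps from $\|u\|_{-\mu}$ to $\|u\|_H$: after semiclassicalising, it uses the one-line spectral bound $1\le C|h^2\rho^2-1|+Ch(1+\rho^2)^{1/2}$, i.e.\ $\|u\|_s\le C\|(h^2P-1)u\|_s+Ch\|u\|_{s+1/2}$, iterates it $\lceil 2\delta\rceil$ times, and interpolates to obtain $\|u\|_{-\mu-\delta}\le Ch^{2\delta}\|(h^2P-1)u\|_{-\mu-1/2}$ plus $h^{2\delta}\|u\|_{-\mu}$, which kills the $o(h^{-2\delta})$-term in one stroke; only afterwards does it upgrade $\|u\|_{-\mu}$ to $\|u\|_H$ by a second iteration/interpolation. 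You reverse the order: you bootstrap to $\|u\|_H$ first via a high/low frequency split, and are then left with an error $o(\lambda^{2\mu+2\delta})\|u\|_{-\mu-\delta}$ carrying an extra $\lambda^{2\mu}$, which you dispatch by a three-regime split. Both routes close, but the paper's ordering is cleaner (the error is removed before it grows) and replaces your three-regime casework by a single iterated identity plus Lemma~\ref{interpolation}. Your approach has the mild advantage of being entirely explicit, with no semiclassical bookkeeping; the paper's buys a shorter and more modular proof.
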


\begin{remark}\label{3t3}
When $\delta>\frac{1}{2}$, \eqref{3l9} still holds with $M^2$ replaced by $(M+\lambda^{\lceil 2\delta \rceil-1})^2$. 
\end{remark}
\begin{proof}
The idea of the proof is the following: considering the $H$-unboundedness of $Q^*\in \mathcal{L}(Y, H_{-\gamma})$, we could only apply the Minkowski inequality at the $H_{-\gamma}$-regularity to $P-\lambda^2=P_{\lambda} + i \lambda Q^* Q$, instead of the $H$-regularity. We use the interpolation inequality to remove the $o(\lambda^{2\delta})$-size error and restore the $H$-regularity at the cost of some $\lambda$.

1. Let $h=\lambda^{-1}$. Apply \eqref{3l5} to $v=\Lambda^{-\mu} u$ to observe
\begin{equation}\label{3l7}
\|u\|_{-\mu}\le M h^{-1}\|(h^2P-1)u\|_{-\gamma}+m\|Qu\|_Y+o(h^{-2\delta})\|u\|_{-\mu-\delta},
\end{equation}
uniformly for any $u\in H_{1-\gamma}$. 

2. We first get rid of the $o(h^{-2\delta})$-term in \eqref{3l7}. For any $s\in \mathbb{R}$, pair $(h^2P-1)u$ with $u$ to see
\begin{equation}
h^2\|P^{\frac{1}{2}}u\|_s^2-\|u\|_s^2=\langle (h^2P-1)u, u\rangle_s.
\end{equation}
It further reduces to 
\begin{equation}\label{3l20}
\|u\|_{s}\le C\|(h^2P-1)u\|_{s}+Ch\|u\|_{s+\frac{1}{2}}.
\end{equation}
Iterative use of \eqref{3l20} gives
\begin{equation}
\|u\|_{s}\le C\|(h^2P-1)u\|_{s+\frac{k-1}{2}}+Ch^k\|u\|_{s+\frac{k}{2}}.
\end{equation}
Let $k=\lceil 2\delta \rceil$ be the least nonnegative integer that bounds $2\delta$ from above. Taking $s$ as $-\mu-\frac{k-1}{2}$ and $-\mu-\frac{k}{2}$ we have
\begin{gather}
\|u\|_{-\mu-\frac{k-1}{2}}\le C\left(h^{-1}\|(h^2P-1)u\|_{-\mu-\frac{1}{2}}+h^{k-1}\|u\|_{-\mu}\right),\\
\|u\|_{-\mu-\frac{k}{2}}\le Ch\left(h^{-1}\|(h^2P-1)u\|_{-\mu-\frac{1}{2}}+h^{k-1}\|u\|_{-\mu}\right).
\end{gather}
Note $-\mu-\frac{k}{2}\le-\mu-\delta<-\mu-\frac{k-1}{2}$ and apply the interpolation inequality \eqref{3l8} with $\alpha=h^{2\delta-k+1}$ to obtain
\begin{equation}
\|u\|_{-\mu-\delta}\le h^{2\delta-\lceil 2\delta \rceil}\|(h^2P-1)u\|_{-\mu-\frac{1}{2}}+h^{2\delta}\|u\|_{-\mu}.
\end{equation}
Since $M\ge C^{-1}h$, $\delta \le \frac{1}{2}$ and $\mu\ge 0$, \eqref{3l7} reduces to
\begin{equation}\label{3l18}
\|u\|_{-\mu}\le CM h^{-1}\|(h^2P-1)u\|_{-\gamma}+m\|Qu\|_Y.
\end{equation}
When $\delta>\frac{1}{2}$, Remark \ref{3t3} follows from replacing $ M $ by $M+h^{1-\lceil 2\delta \rceil}$ in \eqref{3l18}. 

3. We then reinforce the regularity on the left of \eqref{3l8} to $H_0$. Note for any $s\in\mathbb{R}$, 
\begin{equation}
\|u\|_{s+1}=\|(P+1)u\|_{s}\le C h^{-2}\left(\|(h^2P-1)u\|_{s}+\|u\|_{s}\right), \\
\end{equation}
uniformly for $h$ small. For each $k\in\mathbb{N}\cup\{0\}$ such that $-\mu+k\le -\gamma$, iteratively we have from \eqref{3l18} that
\begin{equation}
\|u\|_{-\mu+k+1}\le Ch^{-2k-2}\left(M h^{-1}\|(h^2P-1)u\|_{-\gamma}+m\|Qu\|_Y\right).
\end{equation}
Let $k$ be smallest integer such that $-\mu\le 0<-\mu+k+1$. Apply the interpolation inequality $\eqref{3l8}$ with $\alpha=h^{2(-\mu+k+1)}$ to see
\begin{equation}\label{3l19}
\|u\|_0\le Ch^{-2\mu}\left(M h^{-1}\|(h^2P-1)u\|_{-\gamma}+m\|Qu\|_Y\right),
\end{equation}
uniformly for any $u\in H_{1-\gamma}$.

4. Let $f\in H$. There exists a unique $u\in H$ such that $h^2P_\lambda u=(h^2P-ihQ^*Q-1)u=f$: see further details in Proposition 2.5 of \cite{kw22} and Section 2.2 of \cite{cpsst19}. Pair $f$ with $u$ in $H_{0}$ to observe
\begin{equation}\label{3l6}
h^2\|P^{\frac{1}{2}}u\|_{0}^2-\|u\|_{0}^2-ih\|Qu\|_Y^2 =\langle f, u\rangle_{0}. 
\end{equation}
The imaginary part of \eqref{3l6} implies
\begin{equation}
\|Qu\|_{Y}^2=h^{-1}\cim\langle f, u\rangle<\infty. 
\end{equation}
This implies that $h^2P u=f+u+ihQ^*Q u\in H_{-\gamma}$, and further implies $u\in H_{1-\gamma}$. Note further
\begin{equation}
\|(h^2P-1)u\|_{-\gamma}\le\|f\|_{-\gamma}+ih\|Q^*Qu\|_{-\gamma}\le\|f\|_{-\gamma}+ih\|Qu\|_{Y}. 
\end{equation}
Apply \eqref{3l19} to see
\begin{equation}
\|u\|\le C\left(Mh^{-1-2\mu}\|f\|_{-\gamma}+(M+m)h^{-2\mu}\|Q u\|_Y\right).
\end{equation}
The imaginary part of \eqref{3l6} further gives
\begin{equation}
\|Qu\|_{Y}^2\le h^{-1}\abs{\langle f, u\rangle}\le C\epsilon^{-1}h^{-2}(M+m)^2h^{-4\mu}\|f\|^{2}+\epsilon(M+m)^{-2}h^{4\mu}\|u\|^2
\end{equation}
and thus
\begin{equation}
\|u\|\le CMh^{-1-2\mu}\|f\|_{-\gamma}+C\epsilon^{-1/2}h^{-1}(M+m)^2h^{-4\mu}\|f\|+\epsilon^{1/2}\|u\|.
\end{equation}
After absorption of the $\epsilon$-term we have
\begin{equation}
\|u\|\le C(M+m)^2h^{-1-4\mu}\|f\|.
\end{equation}
Recall $h^2 P_{\lambda} u = f$ and $\lambda=h^{-1}$: for large $\abs{\lambda}$ this implies
\begin{equation}
\|P_\lambda^{-1}\|\le C(M^2+m^2)\abs{\lambda}^{-1+4\mu},
\end{equation}
as desired.
\end{proof}

\begin{proposition}[Resolvent to control]\label{3t9}
Let $Q\in \mathcal{L}(H_{\gamma}, Y)$ for $\gamma\in [0,\frac{1}{2}]$. 
Assume there exists $\lambda_0\ge0,$ such that the resolvent estimates
\begin{equation}
\|P_{\lambda}^{-1}\|_{\mathcal{L}(H)}\le K(\abs{\lambda})\langle \lambda\rangle^{-1},
\end{equation} 
hold for all $\abs{\lambda}\ge \lambda_0$, where $K(\abs{\lambda})\ge C'$ for some $C'>0$. Then for any $\mu\in[0, \frac{1}{2}+\gamma]$, the control estimate \eqref{1l1} holds for $P$ and $Q$, that there is $C\ge0$ such that 
\begin{equation}
\|u\|_{H}\le C\langle \lambda\rangle^{2(\gamma-\mu)}K(\abs{\lambda}) \left(\langle \lambda\rangle^{-1}\|(P-\lambda^2)\Lambda^{\mu-\gamma}u\|_{H}+ \|Q\Lambda^{\mu} u\|_Y\right),
\end{equation}
uniformly for all $\lambda\ge \lambda_0$ and $u\in H_{\frac{1}{2}+\mu}$. 
\end{proposition}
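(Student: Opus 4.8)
The plan is to reduce the whole estimate, by a change of variable, to a single operator–norm bound on the damped resolvent between interpolation spaces, and then to prove that bound from the hypothesis together with Lemma~\ref{pencilsemilemma} and the interpolation inequality \eqref{3l8}.

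\emph{Reduction.} Since $\Lambda^{\mu}$ commutes with $P$, the substitution $v=\Lambda^{\mu}u$ rewrites the three quantities in the target estimate as $\|u\|_H=\|v\|_{-\mu}$, $\|(P-\lambda^{2})\Lambda^{\mu-\gamma}u\|_H=\|(P-\lambda^{2})v\|_{-\gamma}$, and $\|Q\Lambda^{\mu}u\|_Y=\|Qv\|_Y$. One may assume the right–hand side is finite, so $(P-\lambda^{2})v\in H_{-\gamma}$; since also $v\in H_{1/2}\subset H_{\gamma}$, both terms of
\[
P_{\lambda}v=(P-\lambda^{2})v-i\lambda\,Q^{*}(Qv)
\]
lie in $H_{-\gamma}$, and because $Q^{*}\in\mathcal{L}(Y,H_{-\gamma})$ the damping term satisfies $\|Q^{*}(Qv)\|_{-\gamma}\le C\|Qv\|_Y$ — that is, it is controlled by exactly the observation quantity $\|Q\Lambda^{\mu}u\|_Y$ appearing in the conclusion. (Passing to $v$ is essential: for $u$ itself the damping term is $Q^{*}Qu$, controlled by $\|Qu\|_Y$, which is \emph{not} $\|Q\Lambda^{\mu}u\|_Y$.) Hence $v=P_{\lambda}^{-1}(P_{\lambda}v)$ and
\[
\|v\|_{-\mu}\le\bigl\|P_{\lambda}^{-1}\bigr\|_{H_{-\gamma}\to H_{-\mu}}\bigl(\|(P-\lambda^{2})v\|_{-\gamma}+C|\lambda|\,\|Qv\|_Y\bigr),
\]
so it suffices to prove, for $\mu\in[0,\tfrac12+\gamma]$ and $|\lambda|\ge\lambda_0$,
\[
\bigl\|P_{\lambda}^{-1}\bigr\|_{H_{-\gamma}\to H_{-\mu}}\le C\langle\lambda\rangle^{2(\gamma-\mu)-1}K(|\lambda|),
\]
with $C$ independent of $u$ and $\lambda$; feeding this back gives the stated control estimate.

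\emph{The operator bound.} For $\mu=0$ this is immediate: Lemma~\ref{pencilsemilemma}~\eqref{3l1} with $\beta=\gamma$ and the hypothesis $\|P_{\lambda}^{-1}\|_{\mathcal{L}(H)}\le K\langle\lambda\rangle^{-1}$ give $\|P_{\lambda}^{-1}\|_{H\to H_{\gamma}}\le C\langle\lambda\rangle^{2\gamma-1}K$, and dualizing (using $P_{\lambda}^{*}=P_{-\lambda}$ and $K(|\lambda|)=K(|-\lambda|)$) yields $\|P_{\lambda}^{-1}\|_{H_{-\gamma}\to H}\le C\langle\lambda\rangle^{2\gamma-1}K$. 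For $0<\mu\le\tfrac12+\gamma$ one must produce the extra decay $\langle\lambda\rangle^{-2\mu}$. Write $f\in H_{-\gamma}$, $u=P_{\lambda}^{-1}f$, so that $(P-\lambda^{2})u=f+i\lambda\,Q^{*}Qu$; the identity $\operatorname{Im}\langle P_{\lambda}u,u\rangle=-\lambda\|Qu\|_Y^{2}$, combined with the $\mu=0$ bound just proved, gives $\|Qu\|_Y\le C\sqrt{K}\langle\lambda\rangle^{-1}\|f\|_{-\gamma}$. Now decompose $u$ through the spectral measure of $P$ into its part on the \emph{glancing} shell $\{\lambda/2\le\rho\le2\lambda\}$ and its part on the \emph{elliptic} region $\{\rho\le\lambda/2\}\cup\{\rho\ge2\lambda\}$. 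On the elliptic region $P-\lambda^{2}$ is a multiplier bounded below by $c\max\{\rho^{2},\langle\lambda\rangle^{2}\}$, so a direct estimate using the equation for $u$, the relative size of the weights $(1+\rho^{2})^{-\mu}$ and $(1+\rho^{2})^{-\gamma}$, the bound $\|Q^{*}Qu\|_{-\gamma}\le C\|Qu\|_Y$, and the preceding control of $\|Qu\|_Y$ produces the required power of $\langle\lambda\rangle$. On the glancing shell every weight $\Lambda^{s}$ is comparable to $\langle\lambda\rangle^{2s}$, so one is reduced to an $H$–estimate for the glancing part of $u$, supplied by the resolvent hypothesis; the intermediate regularities that appear when the two regimes are matched are removed using the interpolation inequality \eqref{3l8}, exactly as in the proof of \eqref{3l1}, and of Proposition~\ref{3t1}, run in reverse.

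\emph{Main obstacle.} The reason this is not a one–line duality is that $Q^{*}Q$ does not commute with the scaling operators $\Lambda^{s}$, so regularity shifts cannot be effected by conjugation: every application of Minkowski's inequality to $P-\lambda^{2}=P_{\lambda}+i\lambda Q^{*}Q$ must be performed at the $H_{-\gamma}$–level, and the careful bookkeeping of powers of $\langle\lambda\rangle$ through the glancing/elliptic decomposition — in particular extracting the sharp gain $\langle\lambda\rangle^{-2\mu}$ uniformly as $\mu$ ranges up to $\tfrac12+\gamma$ — is the delicate point of the argument.
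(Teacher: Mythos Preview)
Your reduction to the operator bound $\|P_{\lambda}^{-1}\|_{H_{-\gamma}\to H_{-\mu}}\le C\langle\lambda\rangle^{2(\gamma-\mu)-1}K$ is correct and the glancing/elliptic decomposition does yield it, so the overall strategy is sound; but it is genuinely different from the paper's argument, and one of your intermediate claims is off.

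\textbf{Comparison.} The paper never passes through the operator norm $\|P_{\lambda}^{-1}\|_{H_{-\gamma}\to H_{-\mu}}$. Instead, after establishing the $\mu=0$ case exactly as you do (duality plus \eqref{3l1}), it pairs $(P-\lambda^{2})v$ --- not $P_{\lambda}v$ --- with $v$ in $H_{s}$ for $s\le-\tfrac12$ to obtain the elementary step \eqref{3l21}:
\[
\|v\|_{s}\le C\langle\lambda\rangle^{-1}\bigl(\langle\lambda\rangle^{-1}\|(P-\lambda^{2})v\|_{s}+\|v\|_{s+1/2}\bigr).
\]
This iterates from $s=-k/2$ up to $s=0$ (where the $\mu=0$ case feeds in), and one interpolation via \eqref{3l8} finishes. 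The point is that $(P-\lambda^{2})v$ is treated as \emph{data} throughout, so the damping $Q^{*}Q$ never re-enters after the first step; there is no need to control $\|Qv\|_{Y}$ separately or to decompose the spectrum. Your approach, by contrast, keeps $P_{\lambda}$ together, which forces you to control the damping term via the pairing identity and then to split into glancing and elliptic regimes. Both work; the paper's route is shorter and sidesteps the bookkeeping you flag as the ``main obstacle.''

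\textbf{A correction.} Your asserted bound $\|Qu\|_{Y}\le C\sqrt{K}\langle\lambda\rangle^{-1}\|f\|_{-\gamma}$ does not follow from the $\mu=0$ estimate alone: the pairing gives $\lambda\|Qu\|_{Y}^{2}\le\|f\|_{-\gamma}\|u\|_{\gamma}$, and you only have $\|u\|_{H}$, not $\|u\|_{\gamma}$. Using \eqref{3l4} one gets $\|u\|_{\gamma}\le\|u\|_{1/2}\le CK\langle\lambda\rangle^{2\gamma}\|f\|_{-\gamma}$, hence $\|Qu\|_{Y}\le C\sqrt{K}\langle\lambda\rangle^{\gamma-1/2}\|f\|_{-\gamma}$. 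This weaker bound still suffices for your elliptic estimate (since $\gamma\le\tfrac12$ and $K\ge C'$), but the claimed power $\langle\lambda\rangle^{-1}$ is wrong unless $\gamma=0$.
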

\begin{proof}
Apply \eqref{3l1} to see 
\begin{equation}
\|P_\lambda^{-1}\|_{H_{-\gamma}\rightarrow H}=\|P_{-\lambda}^{-1}\|_{H\rightarrow H_{\gamma}}\le C\langle \lambda\rangle^{2\gamma}\|P_{\lambda}^{-1}\|_{\mathcal{L}(H)}=C\langle \lambda\rangle^{2\gamma-1}K(\lambda).
\end{equation}
For any $v\in H_\frac{1}{2}$, we have
\begin{equation}\label{3l23}
\|v\|_H\le C\langle \lambda\rangle^{2\gamma-1}K \|(P-i\lambda Q^*Q-\lambda^2)v\|_{-\gamma}\le C\langle \lambda\rangle^{2\gamma}K \left(\langle \lambda\rangle^{-1}\|(P-\lambda^2)v\|_{-\gamma}+\|Qv\|_Y\right). 
\end{equation}
For each $s\le -\frac{1}{2}$, pair $(P-\lambda^2)v$ with $v$ in $H_s$ to obtain
\begin{equation}
\|P^\frac{1}{2}v\|_{s}^2-\lambda^2 \|v\|_{s}^2=\langle (P-\lambda^2)v, v\rangle_s, 
\end{equation}
and estimate
\begin{equation}\label{3l21}
\|v\|_{s}\le C\langle \lambda\rangle^{-1}\left(\langle \lambda\rangle^{-1}\|(P-\lambda^2)v\|_{s}+\|v\|_{s+\frac{1}{2}}\right).
\end{equation}
Iteratively we have 
\begin{equation}
\|v\|_{-\frac{k}{2}}\le C\langle \lambda\rangle^{-2}\|(P-\lambda^2)v\|_{-\frac{1}{2}}+C\langle \lambda\rangle^{-k}\|v\|_{H}.
\end{equation}
for each positive integer $k$. We fix $k$ to be the largest integer such that $k\le 1+2\gamma$; note that in general we can take $k$ even larger, as long as $\langle \lambda\rangle^{2\gamma-k+2}K(\lambda)$ is asymptotically bounded from below. Then \eqref{3l23} implies
\begin{equation}
\|v\|_{-\frac{k}{2}}\le C \langle \lambda\rangle^{2\gamma-k}K \left(\langle \lambda\rangle^{-1}\|(P-\lambda^2)v\|_{-\gamma}+\|Qv\|_Y\right).
\end{equation}
Use the interpolation inequality \eqref{3l8} with $\|v\|_0$ and $\alpha=\langle \lambda\rangle^{-2\mu}$ gives
\begin{equation}\label{3l22}
\|v\|_{-\mu}\le C\langle \lambda\rangle^{2(\gamma-\mu)}K \left(\langle \lambda\rangle^{-1}\|(P-\lambda^2)v\|_{-\gamma}+\|Qv\|_Y\right),
\end{equation}
for each $\mu\in[0, \frac{k}{2}]$. For each $\mu\in[\frac{k}{2}, \frac{1}{2}+\gamma]$, apply \eqref{3l21} to obtain \eqref{3l22}. Thus \eqref{3l22} holds for all $\mu\in[0, \frac{1}{2}+\gamma]$. For each $u\in H_{\frac{1}{2}+\mu}$, let $v=\Lambda^{\mu}u$ and we have
\begin{equation}
\|u\|_{H}\le C\langle \lambda\rangle^{2(\gamma-\mu)}K \left(\langle \lambda\rangle^{-1}\|(P-\lambda^2)\Lambda^{\mu-\gamma}u\|_{H}+\|Q\Lambda^{\mu}u\|_Y\right),
\end{equation}
as we desired. 
\end{proof}
\begin{proof}[Proof of Theorem \ref{thmcontrol}]
It follows from Propositions \ref{3t1}, \ref{3t9} and Lemma \ref{3t5}. 
\end{proof}

\subsection{Theorem \ref{thmbackward}: optimal backward uniqueness}\label{s3-5}
To show the semigroup $e^{t\Ac}$ is backward unique, we need to show along a pair of radial rays symmetric about the imaginary axis that goes into the lower half plane, one can obtain some uniform estimates over the resolvent. This is characterised in the next proposition. 
\begin{proposition}[Rays into deep spectra]
Assume $Q\in \mathcal{L}(H_{1/4}, Y)$. Then there exists $\theta_0\in (0, \pi)$ small such that along any ray $\lambda=\abs{\lambda}e^{i\theta}$ with $\abs{\theta-\frac{3}{2}\pi}\in (0,\theta_0)$, there are $\lambda_0, C>0,$ such that for any $\lambda\in\mathbb{R}$ with $\abs{\lambda}\ge \lambda_0$
\begin{equation}
\|P^{\frac{1}{2}} u\|^{2}+\abs{\lambda}^2\|u\|^2\le C\abs{\lambda}^{-2}\|(P-i\lambda Q^*Q-\lambda^2)u\|^2,
\end{equation}
which implies that along those rays, 
\begin{equation}\label{3l16}
\|P_{\lambda}^{-1}\|_{\mathcal{L}(H)}\le C\abs{\lambda}^{-1}.
\end{equation} 

\end{proposition}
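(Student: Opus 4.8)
The plan is to read off an identity and an inequality from the sesquilinear form of $P_\lambda=P-i\lambda Q^*Q-\lambda^2$. Fix the ray $\lambda=|\lambda|e^{i\theta}$ with $\e:=\theta-\tfrac32\pi$, $0<|\e|<\theta_0$; since $P_{\bar\lambda}=P_\lambda^*$ we may assume $\e>0$. First one checks (as in the proof of Proposition \ref{3t1} and \cite{kw22}) that $P_\lambda u=f\in H$ forces $u\in H_{1-\gamma}=H_{3/4}$, so all pairings below are legitimate. Writing $A=\|P^{1/2}u\|_H^2$, $B=\|Qu\|_Y^2$, $C=\|u\|_H^2$, the identity $\langle P_\lambda u,u\rangle_H=A-i\lambda B-\lambda^2 C$ together with $-i\lambda=-|\lambda|e^{i\e}$, $-\lambda^2=|\lambda|^2e^{2i\e}$ gives
\begin{align*}
\re\langle P_\lambda u,u\rangle&=A-|\lambda|\cos\e\,B+|\lambda|^2\cos2\e\,C,\\
\im\langle P_\lambda u,u\rangle&=-|\lambda|\sin\e\,B+|\lambda|^2\sin2\e\,C.
\end{align*}
Eliminating $B$ and using $\cos2\e-2\cos^2\e=-1$ yields the exact relation $A-|\lambda|^2C=\re\langle P_\lambda u,u\rangle-\cot\e\,\im\langle P_\lambda u,u\rangle$, whence
\[
\big|A-|\lambda|^2C\big|\le(1+|\cot\e|)\,|\langle P_\lambda u,u\rangle|\le C_\e\,\|P_\lambda u\|_H\,C^{1/2}.
\]

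Next I split according to whether $A$ and $|\lambda|^2C$ are comparable. In the \emph{non-resonant} regime $A\le\tfrac12|\lambda|^2C$ or $A\ge2|\lambda|^2C$, one has $A+|\lambda|^2C\le3\,|A-|\lambda|^2C|\le 3C_\e\|P_\lambda u\|_HC^{1/2}$; feeding $|\lambda|^2C\lesssim_\e\|P_\lambda u\|_HC^{1/2}$ back in gives $C^{1/2}\lesssim_\e|\lambda|^{-2}\|P_\lambda u\|_H$ and then $\|P^{1/2}u\|_H^2+|\lambda|^2\|u\|_H^2\lesssim_\e|\lambda|^{-2}\|P_\lambda u\|_H^2$. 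This part uses nothing about $Q$. In the \emph{resonant} regime $\tfrac12|\lambda|^2C\le A\le2|\lambda|^2C$ one has $\|u\|_{1/2}^2=A+C\le3|\lambda|^2C$ for $|\lambda|\ge1$; combining $Q\in\mathcal L(H_{1/4},Y)$ with the sharp interpolation $\|u\|_{1/4}^2\le\|u\|_{1/2}\|u\|$ — or, more precisely, with the elementary bound $2|\lambda|(1+\rho^2)^{1/2}\le|\lambda|^2+(1+\rho^2)$ integrated against $dE_\rho$, giving $2|\lambda|B\le\|Q\|^2_{\mathcal L(H_{1/4},Y)}\big(|\lambda|^2C+A+C\big)$ — one gets $B\le C_0|\lambda|C$ with $C_0\sim\|Q\|^2_{\mathcal L(H_{1/4},Y)}$. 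Inserting this into the imaginary part gives $|\lambda|^2C\,(\sin2\e-C_0\sin\e)\le|\im\langle P_\lambda u,u\rangle|\le\|P_\lambda u\|_HC^{1/2}$, so if $\theta_0$ is chosen small enough that $2\cos\e-C_0>0$ for $0<\e<\theta_0$ we again obtain $|\lambda|^2C\lesssim_\e\|P_\lambda u\|_HC^{1/2}$, hence $A\lesssim_\e|\lambda|^2C$ and the estimate closes exactly as before. Applying the resulting bound to $u=P_\lambda^{-1}f$ gives $\|P_\lambda^{-1}f\|_H\le C|\lambda|^{-1}\|f\|_H$, which is \eqref{3l16}.

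The main obstacle is the resonant regime at the \emph{limiting} exponent $\gamma=\tfrac14$: there $\|Qu\|_Y^2$ is comparable to $|\lambda|\|u\|^2$ rather than small relative to it, so the damping term in $\re\langle P_\lambda u,u\rangle$ cannot be absorbed and one is forced to exploit the imaginary part as above — this is exactly why $\theta_0$ must be taken small and dependent on $\|Q\|_{\mathcal L(H_{1/4},Y)}$, not a fixed aperture. The delicate point is that the last absorption step only closes when $\|Q\|^2_{\mathcal L(H_{1/4},Y)}$ is below the critical threshold $2$ (consistent with the optimality of $\gamma=\tfrac14$ and the extinction examples of \cite{cc01}); for $Q$ of larger norm one must instead work one ray at a time and use that $P_\lambda^{-1}$ is meromorphic with no accumulation of poles along a fixed deep ray — factoring $P_\lambda=(P-\lambda^2)\big(\id-i\lambda(P-\lambda^2)^{-1}Q^*Q\big)$, where $i\lambda(P-\lambda^2)^{-1}Q^*Q$ is compact and analytic in $\lambda$ off $[0,\infty)$, so analytic Fredholm theory (as in Lemma \ref{lemmainvertibility}) applies — combined with a quantitative lower bound on $|P_\lambda|$ at distance $\gtrsim1$ from the poles, obtained by a microlocalised version of the pairing estimate restricted to frequencies $\rho\approx|\lambda|$. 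Carrying out this localisation cleanly so that the $\theta$-dependent constants come out as claimed is where the real work lies; the rest is bookkeeping.
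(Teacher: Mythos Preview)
Your non-resonant argument is clean, but the resonant-regime closure has a genuine gap that you correctly identify and then fail to repair. When $\tfrac12|\lambda|^2C\le A\le 2|\lambda|^2C$, the sharp interpolation gives $B\le C_0|\lambda|C$ with $C_0\sim\|Q\|^2_{\mathcal L(H_{1/4},Y)}$, and feeding this into the imaginary part produces a coefficient $\sin 2\e-C_0\sin\e=\sin\e(2\cos\e-C_0)$ that stays positive only when $C_0<2$. Shrinking $\theta_0$ does \emph{not} help here: the ratio $\sin 2\e/\sin\e\to 2$ as $\e\to 0$, so the threshold on $\|Q\|$ is absolute. Your proposed Fredholm-theoretic fallback does not deliver the uniform bound $\|P_\lambda^{-1}\|\le C|\lambda|^{-1}$ along the ray: analytic Fredholm theory only tells you the resolvent is meromorphic with isolated poles, not that it decays like $|\lambda|^{-1}$; and the factorisation $P_\lambda=(P-\lambda^2)(\id-i\lambda(P-\lambda^2)^{-1}Q^*Q)$ is itself problematic on $H$ since $Q^*Q$ is only defined on $H_{1/4}$. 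The ``microlocalised pairing at frequencies $\rho\approx|\lambda|$'' you allude to is exactly the resonant regime where your argument already stalled.

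The paper's proof avoids the case split entirely, and the device you are missing is to use the \emph{additive} interpolation $\|Qu\|\le \epsilon'|\delta|^{-1/2}h^{-1/2}\|u\|+C_{\epsilon'}|\delta|^{1/2}h^{1/2}\|u\|_{1/2}$ with a free small parameter $\epsilon'$ (here $h\sim|\lambda|^{-1}$ and $\delta=\cot\theta\sim\e$). After multiplying by $h|\delta|$ as dictated by the imaginary part, the $\|u\|$-coefficient becomes the fixed small number $(\epsilon')^2$, absorbable regardless of $\|Q\|$, while the $\|u\|_{1/2}$-coefficient carries an \emph{extra} factor $\delta^2$. One obtains $\|u\|^2\le C_{\|Q\|}\delta^2h^2\|P^{1/2}u\|^2+C\|f\|^2$, and now the real part closes because the term to be absorbed against $h^2\|P^{1/2}u\|^2$ is $C_{\|Q\|}(|\delta|+\delta^2)\,h^2\|P^{1/2}u\|^2$: choosing $\theta_0$ (hence $\delta_0$) small beats any constant $C_{\|Q\|}$. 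The extra power of $\delta$ gained from the asymmetric choice of interpolation parameter is exactly what your sharp multiplicative bound $\|u\|_{1/4}^2\le\|u\|_{1/2}\|u\|$ throws away.
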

\begin{remark}
In \cite{kw22}, the authors showed that for $Q\in\mathcal{L}(H_{\gamma}, Y)$ for $\gamma<\frac{1}{2}$, along arbitrary pair of radial rays symmetric about the imaginary axis that goes into the lower half plane, one can obtain estimates \eqref{3l16}. In the limit case $\gamma=\frac{1}{2}$, we here show that only along pairs of radial rays that are descending fast enough in $\cim\lambda$ (compared to arbitrary ones when $\gamma=\frac{1}{2}$), such estimates are obtainable. 
\end{remark}
\begin{proof}
Fix $\theta$ such that $\abs{\theta-\frac{3}{2}\pi}\in (0,\theta_0)$, where $\theta_0>0$ will be determined later. Parametrise $\lambda=\abs{\lambda}e^{i\theta}$ by $\lambda=h^{-1}(\delta-i)$, where $h=\langle \delta\rangle\abs{\lambda}^{-1}\rightarrow 0$. Here $\delta=\cot\theta\in (-\delta_0, \delta_0)$, where $\delta_0=-\cot(\frac{3}{2}\pi+\theta_0)$. Let
\begin{equation}
h^2P_\lambda u=(h^2P+(1-\delta^2)-hQ^*Q+i(2-h\delta Q^*Q))u=f. 
\end{equation}
Pair it with $u$ in $H$ to observe
\begin{equation}\label{3l13}
h^2\|P^{\frac{1}{2}}u\|^2+(1-\delta^2)\|u\|^2-h\|Q u\|^2+i(2\|u\|^2-h\delta \|Q u\|^2)=\langle f, u\rangle. 
\end{equation}
Now interpolate
\begin{equation}
\|Qu\|\le C\|u\|_{\frac{1}{4}}\le \epsilon \abs{\delta}^{-\frac{1}{2}}h^{-\frac{1}{2}}\|u\|+C_\epsilon\abs{\delta}^{\frac{1}{2}}h^{\frac{1}{2}}\|u\|_{\frac{1}{2}}. 
\end{equation}
The imaginary part of \eqref{3l13} implies
\begin{multline}
2\|u\|^2\le h\abs{\delta}\|Q u\|^2+\epsilon \|u\|^2+C\epsilon^{-1}\|f\|^2\le 2\epsilon\|u\|^2+C_\epsilon\left(\delta^{2}h^2 \|u\|_{\frac{1}{2}}^2+\|f\|^2\right)\\
\le (2\epsilon+C_\epsilon\delta^2 h^2) \|u\|^2+C_\epsilon\left(\delta^2\|hP^{\frac{1}{2}}u\|^2+\|f\|^2\right),
\end{multline}
where we used the fact that $\|\Lambda^{\frac{1}{2}}u\|\le \|u\|+\|P^{\frac{1}{2}}u\|$. Now without loss of generality we assume $\abs{\delta}<1$, and we fix $\epsilon<\frac{1}{8}$ and there exists $h_1>0$ such that $2\epsilon+C_\epsilon \delta^2h_1^2<\frac{1}{2}$. Then for any $h<h_1$, we can absorb the $\|u\|^2$-term on the right by the left, and we arrive at
\begin{equation}
\|u\|^2\le C\delta^2h^2\|P^{\frac{1}{2}}u\|^2+C\|f\|^2. 
\end{equation}
The imaginary part of \eqref{3l13} also implies
\begin{equation}
h\|Q u\|^2\le C\abs{\delta}h^2\|P^{\frac{1}{2}}u\|^2+C\abs{\delta}^{-1}\|f\|^2.
\end{equation}
Substitute the term back into the real part of \eqref{3l13} to observe
\begin{equation}\label{3l15}
h^2\|P^{\frac{1}{2}}u\|^2+\|u\|^2\le C\|f\|^2+(1+\delta^2)\|u\|^2+h\|Q u\|^2\le C(\abs{\delta}+\delta^2)h^2\|P^{\frac{1}{2}}u\|^2+C(1+\abs{\delta}^{-1})\|f\|^2. 
\end{equation}
Note that the constants here do not depend on $\delta$. Hence we can choose $\delta_0<1$ small such that $C(\delta_0+\delta_0^2)\le \frac{1}{2}$. Then for any $\abs{\delta}\in(0, \delta_0)$, the first term on the right of \eqref{3l15} can be absorbed by the left, and gives
\begin{equation}
h^2\|P^{\frac{1}{2}}u\|^2+\|u\|^2\le C(1+\abs{\delta}^{-1})\|f\|^2. 
\end{equation}
Since $h^2P_{\lambda}u =f$,
\begin{equation}
\|u\|^2+\langle \delta\rangle^2\abs{\lambda}^{-2}\|P^{\frac{1}{2}} u\|^2\le C\langle \delta\rangle^4\abs{\lambda}^{-4}(1+\abs{\delta}^{-1})\| P_{\lambda} u \|^2,
\end{equation}
which further reduces to
\begin{equation}
\|u\|^2+\abs{\lambda}^{-2}\|P^{\frac{1}{2}} u\|^2\le C\abs{\lambda}^{-4}(1+\abs{\tan\theta})\|P_{\lambda} u\|^2,
\end{equation}
for any $\theta$ with $\abs{\theta-\frac{3}{2}\pi}\in (0, \theta_0)$, where $\theta_0=\frac{\pi}{2}-\cot^{-1}(\delta_0)$. This is what we want. 
\end{proof}
\begin{proof}[Proof of Theorem \ref{thmbackward}]
From \eqref{3l3} and \eqref{3l16} we know $\|(\Ac+i\lambda)^{-1}\|_{\mathcal{L}(\mathcal{H})}\le C$ uniformly along along any ray $\lambda=\abs{\lambda}e^{i\theta}$ with $\abs{\theta-\frac{3}{2}\pi}\in (0,\theta_0)$ and $\abs{\lambda}\ge \lambda_0$. Apply Proposition 2.16 of \cite{kw22} (or originally Theorem 3.1 of \cite{lrt01}) to conclude the backward uniqueness. 
\end{proof}

\subsection{Theorem \ref{thmdilate}: dilation and contraction}\label{s3-6}
We now prove a stronger version of Theorem \ref{thmdilate}. 
\begin{proposition}[Dilation and contraction for functional calculus]\label{3t4}
Let $\gamma\in[0, \frac{1}{2}]$, $\mu\ge 0$. Assume the control estimate \eqref{1l1} holds for $P$ and $Q$, that there is $\delta>0$ and for $\lambda>\lambda_0$, 
\begin{equation}
\|u\|_{H}\le M(\lambda)\lambda^{-1}\|(P-\lambda^2)\Lambda^{\mu-\gamma} u\|_{H}+m(\lambda)\|Q\Lambda^{\mu} u\|_{Y}+e(\lambda)\|\Lambda^{-\delta} u\|_{H},
\end{equation}
uniformly for $u\in H_{1+\mu-\gamma}$. Fix positive $\fpow\ge 2\gamma$. Let $f$ be a differentiable function bijectively mapping from $[0,\infty)$ to a subset of $[0,\infty)$, and assume there exist $K, \rho_0>0$ such that
\begin{equation}
f'(s)\ge K^{-1}s^{\alpha-1}
\end{equation}
 for all $s\ge \rho_0^2$. Then there exists $C, \lambda_0'$ such that the control estimate holds for $f(P)$ and $Q$. That is for $\lambda>\lambda_0'$, 
\begin{multline}
\|u\|\le C(M(\tilde\lambda)\tilde\lambda^{1-2\fpow}+m(\tilde\lambda)\tilde\lambda^{-2+2(2\gamma+1-\fpow)_+})\|(f(P)-\lambda^2)\Lambda^{\mu-\gamma} u\|_H\\
+Cm(\tilde\lambda)\|Q\Lambda^{\mu}u\|_Y+Ce(\tilde\lambda)\|\Lambda^{-\delta}u\|_H,
\end{multline}
uniformly for all $u\in H_{\alpha+\mu-\gamma}$, where $(2\gamma+1-\fpow)_+=\max\{2\gamma+1-\fpow, 0\}$. Here $\tilde\lambda=(f^{-1}(\lambda^2))^{\frac{1}{2}}$, $M(\tilde\lambda)=M({(f^{-1}(\lambda^2))^{\frac{1}{2}}})$ and $m(\tilde\lambda)$ is understood similarly. Here we define
\begin{equation}
f(P)=\int_0^\infty f(\rho^{2})\ dE_\rho.
\end{equation}
\end{proposition}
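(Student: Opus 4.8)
The plan is to reduce the control estimate for $f(P)$ to the control estimate for $P$ by a change of spectral variable, carefully separating the low-frequency part (where the differential lower bound on $f'$ need not hold) from the high-frequency part. Write $\lambda^2 = f(\tilde\lambda^2)$, i.e. $\tilde\lambda = (f^{-1}(\lambda^2))^{1/2}$, which is well-defined for $\lambda$ large by bijectivity of $f$, and note $\tilde\lambda\to\infty$ as $\lambda\to\infty$. Split $u = \Pi_0 u + \Pi_0^\perp u$ where $\Pi_0 = \int_0^{\rho_0}dE_\rho$ is the spectral projector onto frequencies $[0,\rho_0)$ as in \eqref{3l34}. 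On the range of $\Pi_0$ the operators $f(P)-\lambda^2$, $\Lambda$, $\Lambda_{(\alpha)}$ are all bounded with bounded inverses (up to powers of $\lambda$), so the $\Pi_0 u$ contribution is harmless and can be absorbed into the compact error term $e(\tilde\lambda)\|\Lambda^{-\delta}u\|_H$ after relabelling; the real work is on $\Pi_0^\perp u$.

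**The key algebraic step.** For $\rho \ge \rho_0$ I want to compare $|f(\rho^2)-\lambda^2|$ with $|\rho^2 - \tilde\lambda^2|$. Since $f$ is differentiable with $f'(s)\ge K^{-1}s^{\alpha-1}$ for $s\ge\rho_0^2$, the mean value theorem gives, for $s = \rho^2$ between $\tilde\lambda^2$ and $\rho^2$,
\begin{equation}
|f(\rho^2)-\lambda^2| = |f(\rho^2)-f(\tilde\lambda^2)| = f'(\xi)\,|\rho^2-\tilde\lambda^2| \ge K^{-1}\xi^{\alpha-1}|\rho^2-\tilde\lambda^2|
\end{equation}
for some $\xi$ between $\rho^2$ and $\tilde\lambda^2$, both of which are $\ge \rho_0^2$. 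Bounding $\xi^{\alpha-1}$ below by $c\min(\rho^2,\tilde\lambda^2)^{\alpha-1}$ (a constant times $\tilde\lambda^{2\alpha-2}$ when $\rho$ stays in a fixed neighbourhood of $\tilde\lambda$, and otherwise using the global structure and absorbing into errors) yields a pointwise inequality in the spectral variable that translates, via the spectral theorem, into
\begin{equation}
\|(P-\tilde\lambda^2)\Pi_0^\perp v\|_{H} \le C\tilde\lambda^{2-2\alpha}\|(f(P)-\lambda^2)\Pi_0^\perp v\|_{H} + (\text{error}),
\end{equation}
and analogous comparisons for the fractional-regularity norms $\|\Lambda^{\mu-\gamma}\cdot\|$ versus $\|\Lambda_{(\alpha)}^{\cdot}\cdot\|$ by comparing $(1+\rho^2)^\beta$ with $(1+\rho^{2\alpha})^{\beta/\alpha}$ for $\rho\ge\rho_0$ — these two weights are comparable up to constants on $[\rho_0,\infty)$. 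Plugging $\tilde\lambda$ into the hypothesised control estimate for $P$, substituting these comparisons, and tracking the powers of $\tilde\lambda$ (and converting $\tilde\lambda^{-1}$ to $\lambda^{-1/\alpha}$-type factors using $\lambda^2 = f(\tilde\lambda^2)\sim \tilde\lambda^{2\alpha}$ for large $\tilde\lambda$) produces the claimed coefficients $M(\tilde\lambda)\tilde\lambda^{1-2\alpha}$ and $m(\tilde\lambda)\tilde\lambda^{-2+2(2\gamma+1-\alpha)_+}$.

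**The main obstacle.** The delicate point is the $(2\gamma+1-\alpha)_+$ exponent on the $m$-term: this comes from the interplay between $\alpha \ge 2\gamma$ and the fact that $Q$ must now be regarded as an element of $\mathcal{L}(H^{(\alpha)}_{\gamma/\alpha},Y)$, which forces a mismatch in how much regularity $Q\Lambda^\mu$ "sees" under the two functional calculi. Concretely, the observation term $m(\tilde\lambda)\|Q\Lambda^\mu v\|_Y$ inherited from $P$ must be rewritten in terms of $\Lambda_{(\alpha)}$-scaling; when $\alpha > 2\gamma+1$ no loss occurs (hence the $(\cdot)_+$), but when $2\gamma < \alpha \le 2\gamma+1$ one loses $\tilde\lambda^{2(2\gamma+1-\alpha)}$ through an interpolation/elliptic estimate between $H^{(\alpha)}_{\gamma/\alpha}$ and $H_\gamma$. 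I expect the bookkeeping here — especially ensuring the constants do not blow up as $\alpha \to 2\gamma^+$, and that the case $\alpha < 1$ (concave $f$, where $f'$ is decreasing) is handled correctly since then $\xi^{\alpha-1}$ is minimised at the \emph{larger} of $\rho^2,\tilde\lambda^2$ rather than the smaller — to be where the proof requires the most care. Everything else (the $\Pi_0$ cutoff, the spectral-theorem translation, absorbing errors into $e(\tilde\lambda)\|\Lambda^{-\delta}u\|_H$) is routine once this comparison is pinned down. Finally, I note that $H_{\alpha+\mu-\gamma}$ is the correct domain: it is exactly $\Lambda_{(\alpha)}^{-(1+(\mu-\gamma)/\alpha)}(H)$ rewritten in terms of the $P$-scaling, so that $f(P)\Lambda^{\mu-\gamma}u\in H$ makes sense.
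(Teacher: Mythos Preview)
Your overall architecture is right in spirit but has two genuine gaps.

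First, the low-frequency piece $\Pi_0 u$ cannot be ``absorbed into the compact error term $e(\tilde\lambda)\|\Lambda^{-\delta}u\|_H$ after relabelling'': the function $e$ is \emph{fixed} by the hypothesis, and the conclusion must hold with that same $e$ (in particular $e\equiv 0$ is allowed, as used in the Carleman application). The paper instead bounds $\|\Pi_{\rho_0}\Lambda^{\gamma+\mu}u\|$ directly by $\|(f(P)-\lambda^2)\Lambda^{\mu-\gamma}u\|$ using ellipticity of $f(\rho^2)-\lambda^2$ on $[0,\rho_0)$ for $\lambda$ large.

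Second, and more seriously, you misidentify the origin of the $(2\gamma+1-\alpha)_+$ exponent. It does \emph{not} come from rewriting $Q\Lambda^\mu$ in $\Lambda_{(\alpha)}$-scaling (that rewriting happens only in the deduction of Theorem~\ref{thmdilate} \emph{from} this proposition, and is lossless). The exponent arises already here, from the following issue: the control estimate for $P$ can only be applied to the spectrally localised piece $\Pi_\sim u$ (frequencies $\sim\tilde\lambda$), since only this piece lies in $H_{1+\mu-\gamma}$. That produces $m(\tilde\lambda)\|Q\Lambda^\mu\Pi_\sim u\|_Y$, and to remove the projector you must control $\|Q\Lambda^\mu(\id-\Pi_\sim)u\|_Y\le C\|(\id-\Pi_\sim)\Lambda^{\gamma+\mu}u\|$. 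This forces you to bound the complementary projection in the $H_{\gamma+\mu}$-norm (not merely $H$), which is exactly where the delicate case analysis on $\operatorname{sgn}(\alpha-1)$ and the exponent $-2+2(2\gamma+1-\alpha)_+$ appear. Your two-regime split $\Pi_0+\Pi_0^\perp$ is too coarse for this; the paper uses four regimes $S_{\rho_0}\cup S_{\tilde\lambda}\cup S_\sim\cup S_\rho$ and the worst of the three ``away'' bounds comes from $S_{\tilde\lambda}$ (low-but-not-very-low frequencies), not from any functional-calculus conversion.
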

\begin{remark}
The proof works for a more general class of functions $f$, where $f$ is a continuous function bijectively mapping from $[0,\infty)$ to a subset of $[0,\infty)$, with the assumption that there exist $K, \rho_0>0$ and $\alpha\ge 2\gamma$ such that
\begin{equation}
\abs{f(s)-f(t)}\ge K^{-1}\min\{s^{\alpha-1}, t^{\alpha-1}\}\abs{s-t}. 
\end{equation}
for all $\rho_0^2\le s<t$, where $K$ does not depend on $s,t$. 
\end{remark}
\begin{figure}
\includegraphics[page=1]{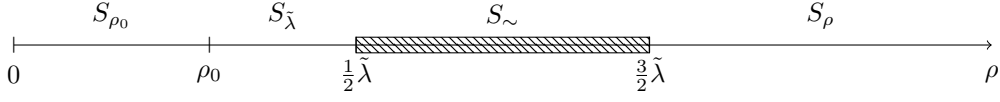}
\caption{Illustration of the spectral decomposition in Proof of Proposition \ref{3t4}. We will estimate $S_{\rho_0}$, $S_{\tilde\lambda}$, $S_\rho$, and apply the control estimate \ref{1l1} to the shaded regime $S_\sim$ only.}\label{f1}
\end{figure}
\begin{proof}
1. Without loss of generality, let $\lambda_0'\ge 2\rho_0$. Consider the spectral decomposition
\begin{equation}
[0, \infty)_\rho=[0, \rho_0)\cup \left[\rho_0, \frac{1}{2}\tilde\lambda\right)\cup\left[\frac{1}{2}\tilde\lambda, \frac{3}{2}\tilde\lambda\right]\cup \left(\frac{3}{2}\tilde\lambda,\infty\right)=S_{\rho_0}\cup S_{\tilde\lambda}\cup S_\sim\cup S_{\rho}.
\end{equation}
The spectral regimes $S_{\tilde\lambda}$ and $S_{\rho}$ indicate where $\tilde\lambda$ and $\rho$ are larger than one another, respectively; the regime $S_\sim$ indicates they are comparable. Let the spectral projector onto one of these sets $S_*$ be
\begin{equation}
\Pi_*=\int_{S_*}~dE_\rho(u), \ *\in \{\rho_0, \tilde\lambda, \sim, \rho\}.
\end{equation}
See Figure \ref{f1} for illustration of the spectral decomposition. 

2. Regime $S_{\rho_0}$: $0\le \rho<\rho_0$. Here we mainly use the ellipticity of $f(\rho^2)-\lambda^2$. In this step we will be able to fix $\lambda_0'$. We first estimate $\Pi_{\rho_0} \Lambda^{\gamma+\mu}u$. From the assumption we know $f$ maps $[0,\infty)$ to $[f(0), \infty)$ bijectively. Without loss of generality assume $\rho_0\ge f(0)$. Note
\begin{equation}
\|\Pi_{\rho_0} \Lambda^{\gamma+\mu} u\|^2=\int \mathbbm{1}_{[0,\rho_0)}(1+\rho^2)^{2(\gamma+\mu)} d\langle E_\rho u, u\rangle. 
\end{equation}
On $[0, \rho_0)$, note that $c<(1+\rho^2) < C$ so
\begin{equation}
(1+\rho^2)^{2(\gamma+\mu)}\le C_{\rho_0}\le C(1+\rho^2)^{2 (\mu-\gamma)}.
\end{equation}
Furthermore $f(\rho^2)$ is bounded on $[0, \rho_0)$, so there exists $\lambda_0'\ge\rho_0$ such that for $\rho\in[0,\rho_0)$ and all $\lambda>\lambda_0'$
\begin{equation}
(1+\rho^2)^{2(\gamma+\mu)}\le C_{\rho_0}\le C(1+\rho^2)^{2 (\mu-\gamma)} \lambda^{-4}\abs{f(\rho^2)-\lambda^2}^2.
\end{equation} 
Thus
\begin{equation}\label{3l33}
\|\Pi_{\rho_0} \Lambda^{\gamma+\mu} u\|\le C\lambda^{-2}\|(f(P)-\lambda^2) \Lambda^{\mu-\gamma} u\|
\end{equation}
uniformly for all $\lambda>\lambda_0'$. 

Now for $s'>\rho_0$ we have
\begin{equation}
f(s')=\int_{\rho_0}^{s'}f'(s)~ds+f(\rho_0)\ge \frac{K}{\alpha}(s')^{\alpha}-C_{\rho_0}.
\end{equation}
By picking a large $\lambda_0'$, we have uniformly for $\lambda>\lambda_0'$ that
\begin{equation}
\lambda^2=f(\tilde\lambda^2)\ge \frac{K}{2\alpha}\tilde\lambda^{2\alpha}.
\end{equation}
Then \eqref{3l33} reduces to, uniformly for $\lambda>\lambda_0'$, 
\begin{equation}
\|\Pi_{\rho_0} \Lambda^{\gamma+\mu} u\|\le C\tilde\lambda^{-2\alpha}\|(f(P)-\lambda^2) \Lambda^{\mu-\gamma} u\|, \label{Szero}
\end{equation}
where $C$ does not depend on $\rho,\tilde\lambda$. 

3. Observe that the mean value theorem implies for any $\rho, \tilde\lambda\in [\rho_0, \infty)$ we have
\begin{equation}
f(\rho^2)-\lambda^2=f'(\tilde\eta^2)(\rho^2-f^{-1}(\lambda^2))=f'(\tilde\eta^2)(\rho^2-\tilde\lambda^2)
\end{equation}
where $\tilde\eta^2$ is in between $\rho^2$ and $\tilde\lambda^2$. Since $f'(s) \ge Ks^{(\fpow-1)}$, for any $\rho, \tilde\lambda\in[\rho_0, \infty)$ we have
\begin{equation}\label{MVT}
K\tilde\eta^{2(\fpow-1)}\abs{\rho^2-\tilde\lambda^2} \leq \abs{f(\rho^2)-\lambda^2}, 
\end{equation}
where $K$ does not depend on $\rho$ or $\lambda$. 

4. Regime $S_{\rho}$: $\rho\ge \frac{3}{2}\tilde\lambda$. For $\rho \in S_{\rho},$ we have $\frac{3 \tilde \lambda}{2}\leq \rho$ and $\tilde \lambda \leq \tilde\eta \leq \rho$. Thus 
\begin{equation}
\frac{5}{9} \rho^2 \leq |\rho^2 -\tilde \lambda^2|. 
\end{equation} 
and so by \eqref{MVT}
\begin{equation}\label{MVTrho}
\tilde\eta^{2(\fpow-1)}  \rho^2 \leq C\tilde\eta^{2(\fpow-1)} \abs{\rho^2 -\tilde \lambda^2}\leq  CK^{-1} \abs{f(\rho^2)-\lambda^2}.
\end{equation}
The argument is slightly different depending on the sign of $\fpow-1$.

4a. When $\fpow-1<0$, we have $\rho^{2(\fpow-1)} \leq \tilde\eta^{2(\fpow-1)}$. So by \eqref{MVTrho} we have
\begin{equation}\label{mvtrho1}
\rho^{2\fpow} \leq \tilde\eta^{2(\alpha-1)} \rho^2 \leq C \abs{f(\rho^2)-\lambda}^2.
\end{equation}
To estimate the left, recall $2 \gamma \leq \fpow$, so $2 \fpow-4 \gamma  \geq 0$. Then $\tilde \lambda^{2\fpow-4\gamma} \leq \rho^{2\fpow-4\gamma}$ and
\begin{equation}
(1+\rho^2)^{\gamma + \mu} (1+\rho^2)^{-\mu+\gamma} \tilde{\lambda}^{2\fpow-4\gamma}= (1+\rho^2)^{2\gamma} \tilde \lambda^{2\fpow-4\gamma}\le C\rho^{4\gamma} \tilde \lambda^{2\fpow-4\gamma} \le C\rho^{2\fpow}.
\end{equation}
Combining this with \eqref{mvtrho1}, we arrive at
\begin{equation}\label{dilrho1}
(1+\rho^2)^{\gamma+\mu} (1+\rho^2)^{-\mu+\gamma} \tilde{\lambda}^{2\fpow-4\gamma}\leq C\rho^{2\fpow} \le C\abs{f(\rho^2)-\lambda^2}.
\end{equation}

4b.  When $\fpow -1 \geq 0$, we have $\tilde{\lambda}^{2(\fpow-1)} \leq \tilde\eta^{2(\fpow-1)}$. So by \eqref{MVTrho} we have
\begin{equation}\label{mvtrho2}
\tilde \lambda^{2(\fpow-1)} \rho^2 \leq \tilde\eta^{2(\alpha-1)} \rho^2 \leq C \abs{f(\rho^2)-\lambda}^2.
\end{equation}
To estimate the left, recall $2\gamma \leq 1$, so $2-4\gamma \geq 0$. Then $\tilde \lambda^{2-4\gamma}\le \rho^{2-4\gamma}$ and 
\begin{equation}
(1+\rho^2)^{\gamma+\mu} (1+\rho^2)^{-\mu+\gamma} \tilde \lambda^{2-4\gamma} \le (1+\rho^2)^{2\gamma} \tilde \lambda^{2-4\gamma}\le C\rho^{4 \gamma} \tilde \lambda^{2-4\gamma} \leq C\rho^{2}.
\end{equation}
Multiplying by $\tilde\lambda^{2(\alpha-1)}$ and applying \eqref{mvtrho2} gets us
\begin{equation}\label{dilrho2}
(1+\rho^2)^{\gamma+\mu} (1+\rho^2)^{-\mu+\gamma} \tilde{\lambda}^{2\fpow-4\gamma} \leq C\tilde{\lambda}^{2(\fpow-1)} \rho^2\le C \abs{f(\rho^2)-\lambda^2}.
\end{equation}

4c. Regardless of the sign of $\fpow-1$, by \eqref{dilrho1} and \eqref{dilrho2}, on $S_{\rho}$ we have
\begin{equation}
(1+\rho^2)^{\gamma+\mu} \leq C \tilde \lambda^{4\gamma-2\fpow} (1+\rho^2)^{\mu-\gamma} \abs{f(\rho^2)-\lambda^2}.
\end{equation}
Therefore we have
\begin{equation}
\|\Pi_{\rho} \Lambda^{\gamma+\mu} u\| \leq C \tilde \lambda^{2(2\gamma-\fpow)} \|\Pi_{\rho} (f(P)-\lambda^2)  \Lambda^{\mu-\gamma} u\| \le C \tilde \lambda^{2(2\gamma-\fpow)} \|(f(P)-\lambda^2) \Lambda^{\mu-\gamma} u\|, \label{Srho}
\end{equation}
where $C$ does not depend on $\rho,\tilde\lambda$.

5. Regime $S_{\tilde\lambda}$: $\rho_0\le \rho< \frac{1}{2}\tilde\lambda$. For $\rho \in S_{\tilde \lambda}$ we have $\rho < \frac{1}{2}\tilde \lambda$ and $\rho < \tilde\eta < \tilde \lambda$. Thus 
\begin{equation}
\frac{3}{4}\tilde\lambda^2 \leq \abs{\rho^2-\tilde\lambda^2}.
\end{equation}
So by \eqref{MVT} we have
\begin{equation}
\tilde\eta^{2(\fpow-1)} \tilde \lambda^2
\leq C\tilde\eta^{2(\fpow-1)} \abs{\rho^2 -\tilde \lambda^2} \leq CK^{-1}
\abs{f(\rho^2)-\lambda^2}.
\end{equation}
This implies
\begin{equation}\label{dillambda}
(1+\rho^2)^{\gamma+\mu} \leq C (1+\rho^2)^{\gamma+\mu}\tilde\eta^{-2(\fpow-1)}\tilde \lambda^{-2}\tilde\eta^{2(\fpow-1)} \tilde \lambda^2\le  C(1+\rho^2)^{\gamma+\mu} \tilde\eta^{-2(\alpha-1)} \tilde \lambda^{-2} \abs{f(\rho^2) -\lambda^2}.
\end{equation}
Again there are separate cases depending on the sign of $\fpow-1$. 

5a. When $\fpow-1 < 0$, we have $\tilde\eta^{-2(\fpow-1)} \leq \tilde{\lambda}^{-2(\fpow-1)}$. Hence \eqref{dillambda} becomes
\begin{equation}
(1+\rho^2)^{\gamma+\mu} \leq C(1+\rho^2)^{\gamma+\mu} \tilde{\lambda}^{2(1-\fpow)} \tilde{\lambda}^{-2} \abs{f(\rho^2)-\lambda^2}.
\end{equation}
We then obtain
\begin{align}
(1+\rho^2)^{\gamma+\mu} &\leq C(1+\rho^2)^{\mu-\gamma} \tilde{\lambda}^{2(2\gamma-\fpow)} \abs{f(\rho^2) -\lambda^2} \label{dillambda2},
\end{align}
by noting $\rho^{4\gamma}\le \tilde\lambda^{4\gamma}$.

5b. When $\fpow-1\ge 0$, we have $\tilde\eta^{-2(\fpow-1)} < \rho^{-2(\fpow-1)}$. Hence \eqref{dillambda} becomes
\begin{equation}
(1+\rho^2)^{\gamma+\mu} \leq C(1+\rho^2)^{\mu-\gamma}\rho^{4\gamma-2(\fpow-1)} \tilde \lambda^{-2} \abs{f(\rho^2) - \lambda^2}.
\end{equation}
Here we can freely give up on $\rho$ having a negative power to make use of $\rho< \tilde \lambda$:
\begin{equation}
\rho^{4\gamma-2(\alpha-1)} \leq C\rho^{2(2\gamma+1-\fpow)_+} \leq C\tilde{\lambda}^{2(2\gamma+1-\fpow)_+}. 
\end{equation}
Hence we obtain
\begin{equation}\label{dillambda1}
(1+\rho^2)^{\gamma+\mu} \leq C(1+\rho^2)^{\mu-\gamma} \tilde \lambda^{-2+2(2\gamma-\fpow+1)_+} \abs{f(\rho^2) -\lambda^2}.
\end{equation}

5c. Regardless of the sign of $\fpow-1$, by \eqref{dillambda2} and \eqref{dillambda1}, we have on $S_{\tilde \lambda}$
\begin{equation}
(1+\rho^2)^{\gamma+\mu} \leq C \tilde \lambda^{-2 + 2(2\gamma+1-\fpow)_+} (1+\rho^2)^{\mu-\gamma} \abs{f(\rho^2) -\lambda^2}.
\end{equation}
Therefore
\begin{align}
\|\Pi_{\tilde\lambda}\Lambda^{\gamma+\mu}u\|&\le C\tilde\lambda^{-2+2(2\gamma+1-\fpow)_+}\| \Pi_{\tilde\lambda} (f(P)-\lambda^2)\Lambda^{\mu-\gamma} u\|\\
&\le C\tilde\lambda^{-2+2(2\gamma+1-\fpow)_+}\|(f(P)-\lambda^2) \Lambda^{\mu-\gamma} u\|. \label{Slambda}
\end{align}

6. Regime $S_\sim$: $\frac{1}{2}\tilde\lambda\le \rho\le \frac{3}{2}\tilde\lambda$. Here we mainly use the control estimate. Combining \eqref{Szero}, \eqref{Srho}, \eqref{Slambda} gives us
\begin{equation}\label{3l24}
\|(\id-\Pi_\sim)\Lambda^{\gamma+\mu}u\|\le C\tilde\lambda^{-2+2(2\gamma+1-\fpow)_+}\|(f(P)-\lambda^2)\Lambda^{\mu-\gamma} u\|,
\end{equation}
uniformly for all $u\in H_{\alpha+\mu-\gamma}$. For $\rho\in S_\sim$, $\rho^2\in [\frac{1}{4}\tilde\lambda^2, \frac{9}{4}\tilde\lambda^2]$, $\tilde\eta^2$ is comparable to $\tilde\lambda^2$ and
\begin{equation}\label{MVTsim}
\rho^2-\tilde\lambda^2\le K^{-1}\tilde\lambda^{2(1-\alpha)}(f(\rho^2)-\lambda^2).
\end{equation}
Here we used \eqref{MVT}. This implies
\begin{align}\label{3l32}
\|(P-\tilde{\lambda}^2 )&\Lambda^{\mu-\gamma}\Pi_{\sim}u\|^2=\int_{\frac{1}{2}\tilde\lambda}^{\frac{3}{2}\tilde\lambda}(\rho^2-\tilde{\lambda}^2)^2(1+\rho^2)^{2(\mu-\gamma)}\ d\langle E_\rho u, u\rangle\\
&\le K^{-2}\tilde\lambda^{4(1-\fpow)}\int_{\frac{1}{2} \tilde{\lambda}}^{\frac{3}{2} \tilde{\lambda}} (1+ \rho^2)^{2(\mu-\gamma)}(f(\rho^2)-\lambda^2)^2\ d\langle E_\rho u, u\rangle\\
&=K^{-2}\tilde\lambda^{4(1-\fpow)}\|(f(P)-\lambda^2)\Lambda^{\mu-\gamma}\Pi_{\sim}u\|^2.
\end{align}
Now we claim that $\Pi_\sim u\in H_s$ for any $s>0$: indeed, 
\begin{equation}
\|\Lambda^s \Pi_\sim u\|^2=\int_{0}^{\infty}(1+\rho^2)^{2s}\mathbbm{1}_{[\frac{1}{2}\tilde\lambda, \frac{3}{2}\tilde\lambda]}(\rho)~d\langle E_\rho u, u\rangle\le C_{\tilde\lambda}\|u\|^2. 
\end{equation}
That $\Pi_\sim u\in H_{1+\mu-\gamma}$ allows us to apply the control estimate \eqref{1l1}. Thus for any $\lambda\ge \lambda_0$,
\begin{equation}\label{dilhautus}
\|\Pi_{\sim}u\|\le M(\tilde\lambda)\tilde\lambda^{-1}\|(P-\tilde{\lambda}^2)\Lambda^{\mu-\gamma} \Pi_{\sim}u\|+m(\tilde\lambda)\|Q\Lambda^\mu\Pi_{\sim}u\|_Y+e(\tilde\lambda)\|\Lambda^{-\delta}\Pi_{\sim}u\|,
\end{equation}
uniformly for any $u\in H_{\alpha+\mu-\gamma}$, where $C$ does not depend on $\lambda$.  Since $Q\in \mathcal{L}(H_{\gamma}, Y)$
\begin{equation}
\|Q\Lambda^\mu(1-\Pi_{\sim})u\|_Y \le \|(1-\Pi_{\sim})\Lambda^{\gamma+\mu}u\| \le C\tilde\lambda^{-2+2(2\gamma+1-\fpow)_+}\|(f(P)-\lambda^2) \Lambda^{\mu-\gamma} u\|,
\end{equation}
uniformly for all $u\in H_{\alpha+\mu-\gamma}$, where we used \eqref{3l24}. We estimate
\begin{multline}
\|Q\Lambda^\mu\Pi_{\sim}u\|_Y \le \|Q\Lambda^\mu u\|_Y +\|Q\Lambda^\mu(1-\Pi_{\sim})u\|_Y \\
\le \|Q\Lambda^\mu u\|_Y+C\tilde\lambda^{-2+2(2\gamma+1-\fpow)_+}\|(f(P)-\lambda^2)\Lambda^{\mu-\gamma} u\|.
\end{multline}
Combining it with \eqref{3l32}, \eqref{dilhautus} reduces to 
\begin{align}
\|\Pi_{\sim}u\|&\le M(\tilde\lambda)\tilde\lambda^{-1}\|(P-\tilde \lambda^2)\Lambda^{\mu-\gamma} u\|+m(\tilde\lambda)\|Q\Lambda^\mu u\|_Y+C\|\Lambda^{-\delta}u\|
\\ \le & C(M(\tilde\lambda)\tilde\lambda^{1-2\fpow}+m(\tilde\lambda)\tilde\lambda^{-2+2(2\gamma+1-\fpow)_+})\|(f(P)-\lambda^2)\Lambda^{\mu-\gamma} u\|\\
&+ m(\tilde\lambda)\|Q\Lambda^\mu u\|_Y+ e(\tilde\lambda)\|\Lambda^{-\delta}u\|. 
\end{align}
Bring in \eqref{3l24} to observe
\begin{align}
\|u\|\le &C(M(\tilde\lambda)\tilde\lambda^{1-2\fpow}+m(\tilde\lambda)\tilde\lambda^{-2+2(2\gamma+1-\fpow)_+})\|(f(P)-\lambda^2)\Lambda^{\mu-\gamma} u\|\\
&+m(\tilde\lambda)\|Q\Lambda^\mu u\|_Y+e(\tilde\lambda)\|\Lambda^{-\delta}u\|,
\end{align}
as desired. 
\end{proof}

\begin{proof}[Proof of Theorem \ref{thmdilate}]
Let $f(s)=s^{\alpha}$, then $\tilde\lambda=\lambda^{1/\alpha}$. By Proposition \ref{3t4} we have
\begin{align}
\|v\|\le& C(M(\tilde\lambda)\tilde\lambda^{1-2\fpow}+m(\tilde\lambda)\tilde\lambda^{-2+2(2\gamma+1-\fpow)_+})\|(P^\alpha-\lambda^2)\Lambda^{\mu-\gamma} v\|_H\\
&+m(\tilde\lambda)\|Q\Lambda^{\mu}v\|_Y+e(\tilde\lambda)\|\Lambda^{-\delta}v\|_H,
\end{align}
for any $v\in H_{1+\mu-\gamma}$. Let $v=\Lambda^{-\mu}\Lambda^{\mu/\alpha}_{(\alpha)} u$. We have
\begin{align}\label{3l25}
\|\Lambda^{-\mu}\Lambda^{\mu/\alpha}_{(\alpha)} u\|_H\le& C(M(\tilde\lambda)\tilde\lambda^{1-2\fpow}+m(\tilde\lambda)\tilde\lambda^{-2+2(2\gamma+1-\fpow)_+})\|(P^\alpha-\lambda^2)\Lambda^{-\gamma}\Lambda^{\mu/\alpha}_{(\alpha)} u\|_H\\
&+m(\tilde\lambda)\|Q\Lambda_{(\alpha)}^{\mu/\alpha} u\|_Y+e(\tilde\lambda)\|\Lambda^{-\delta}\Lambda^{-\mu}\Lambda^{\mu/\alpha}_{(\alpha)} u\|_H,
\end{align}
all terms under the $H$-norm of which we will further simplify. Now note the algebraic inequality
\begin{equation}
C^{-1}(1+\rho^{2\alpha})^{\frac{1}{\alpha}}\le (1+\rho^2)\le C(1+\rho^{2\alpha})^{\frac{1}{\alpha}}
\end{equation}
implies
\begin{equation}
C^{-1}\le (1+\rho^2)^{-\mu}(1+\rho^{2\alpha})^{\frac{\mu}{\alpha}}\le C. 
\end{equation}
We then have
\begin{equation}
\|u\|_H\le C\|\Lambda^{-\mu}\Lambda^{\mu/\alpha}_{(\alpha)} u\|_H, \ \|\Lambda^{-\delta}\Lambda^{-\mu}\Lambda^{\mu/\alpha}_{(\alpha)} u\|_H\le C\|\Lambda_{(\alpha)}^{-\delta/\alpha}u\|_H.
\end{equation}
On the other hand, we can estimate
\begin{multline}
\|(P^\alpha-\lambda^2)\Lambda^{-\gamma}\Lambda^{\mu/\alpha}_{(\alpha)} u\|_H^2=\int_0^{\infty}(\rho^{2\alpha}-\lambda^2)^2(1+\rho^2)^{-2\gamma}(1+\rho^{2\alpha})^{\frac{2\mu}{\alpha}} d\langle E_\rho u, u\rangle\\
\le \int_0^{\infty}(\rho^{2\alpha}-\lambda^2)^2(1+\rho^{2\alpha})^{2(\mu-\gamma)/\alpha} d\langle E_\rho u, u\rangle\le C\|(P^\alpha-\lambda^2)\Lambda^{(\mu-\gamma)/\alpha}_{(\alpha)} u\|_H^2.
\end{multline}
Bring those estimates back into \eqref{3l25} to obtain
\begin{multline}
\|u\|_H\le C(M(\tilde\lambda)\tilde\lambda^{1-2\fpow}+m(\tilde\lambda)\tilde\lambda^{-2+2(2\gamma+1-\fpow)_+})\|(P^\alpha-\lambda^2)\Lambda^{(\mu-\gamma)/\alpha}_{(\alpha)} u\|_H\\
+Cm(\tilde\lambda)\|Q\Lambda_{(\alpha)}^{\mu/\alpha} u\|_Y+Ce(\tilde\lambda)\|\Lambda_{(\alpha)}^{-\delta/\alpha}u\|_H.
\end{multline}
Note $\tilde\lambda=\lambda^{1/\alpha}$ to conclude the proof. 
\end{proof}

\subsection{Theorem \ref{t4}: non-uniform Hautus test}\label{s3-7}
We will give a complementary proof of Theorem \ref{t4} proposed in \S \ref{s2-1}. To prove it, one can indeed follow our remarks in \cite[Remark 2.29]{kw22} to apply the theorems \cite{cpsst19} to obtain $\gamma$-dependent rates. But to make the paper self-contained, we here give a semiclassical proof via only looking at the  resolvent estimates for the semiclassical damped wave operator $P_h=h^2P_\lambda=h^2-ihQ^*Q-1$, where we semiclassicalise the system using $h=\abs{\lambda}^{-1}$. The proof mostly follows the ideas using the wavepacket decomposition as in \cite{cpsst19}. 

From now we abuse the notation and denote by $ M $ either $M(\abs{\lambda})$ or $M(h^{-1})$, and similarly by $m$ either $m(\abs{\lambda})$ or $m(h^{-1})$. Fix $h_0,\epsilon>0$ small. The assumption \eqref{2l1} now reads
\begin{equation}\label{al2}
\|u\|_{H}\le M h^{-1}\|(h^2P-1)u\|_H+m\|Qu\|_Y
\end{equation}
uniformly for $h\in (0, h_0)$. The major goal of this section is to prove a semiclassical resolvent estimate:
\begin{proposition}[Semiclassical resolvent estimate]\label{at1}
Let $Q\in\mathcal{L}(H_\gamma, Y)$ for $\gamma\in[0,\frac{1}{2}]$. Assume the control estimate \eqref{al2} holds for all $h\in (0, h_0)$. Then 
\begin{equation}
\|P_h^{-1}\|_{\mathcal{L}(L^2)}\le CM^2m^2h^{-1-8\gamma},
\end{equation}
uniformly for $h\in (0, h_0)$. 
\end{proposition}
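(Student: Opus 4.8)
The plan is to prove Proposition \ref{at1} by a wavepacket (dyadic frequency) decomposition of $u$ relative to the semiclassical elliptic zone of $h^2P-1$, following the strategy of \cite{cpsst19}. Write $P_h = h^2P - ihQ^*Q - 1$ and suppose $P_h u = f$ in $L^2$; the goal is $\|u\| \le CM^2m^2 h^{-1-8\gamma}\|f\|$. The first step is the standard energy identity: pair $P_h u$ with $u$ in $H_0$ to get $h^2\|P^{1/2}u\|^2 - \|u\|^2 - ih\|Qu\|_Y^2 = \langle f, u\rangle$, whose imaginary part gives $h\|Qu\|_Y^2 = -\cim\langle f,u\rangle \le \|f\|\|u\|$, so $\|Qu\|_Y \le h^{-1/2}\|f\|^{1/2}\|u\|^{1/2}$. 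This handles the control term in \eqref{al2} provided we can control the elliptic term $h^{-1}\|(h^2P-1)u\|$; but $(h^2P-1)u = f + ihQ^*Q u$, and the trouble is that $Q^*Qu$ lives only in $H_{-\gamma}$, not in $H$, so one cannot simply estimate $\|(h^2P-1)u\|_H$ by $\|f\|_H + h\|Q^*Qu\|_H$.

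The core of the argument is therefore a frequency localization. Decompose $u = \Pi_{\mathrm{ell}} u + \Pi_{\sim} u$, where $\Pi_{\sim}$ projects onto the spectral window $\rho^2 \in [\tfrac14 h^{-2}, \tfrac94 h^{-2}]$ (say) — the region where $h^2\rho^2 - 1$ is small — and $\Pi_{\mathrm{ell}}$ projects onto its complement. On the elliptic region $|h^2\rho^2 - 1| \ge c$, so $\|\Pi_{\mathrm{ell}}u\| \le C\|(h^2P-1)\Pi_{\mathrm{ell}}u\| \le C\|(h^2P-1)u\| \le C\|f\| + Ch\|Q^*Qu\|_{-\gamma} + (\text{error})$; here $\|Q^*Qu\|_{-\gamma} \le \|Q\|_{H_\gamma\to Y}\|Qu\|_Y$ since $Q^*\in\mathcal{L}(Y,H_{-\gamma})$. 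Moreover on the elliptic region $\langle\rho\rangle \sim h^{-1}$ is comparable to $h^{-1}$, which lets one trade $\gamma$ derivatives for powers of $h$: $\|Q^*Qu\|_{-\gamma}$ paired suitably costs $h^{-2\gamma}$ relative to the $L^2$ norm when the frequency support is truncated. Thus the elliptic piece is bounded by $C\|f\| + Ch^{1-2\gamma}\|Qu\|_Y$ type quantities, up to terms supported on $\Pi_\sim$.

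For the resonant piece $\Pi_\sim u$, I would apply the hypothesis \eqref{al2} directly to $v = \Pi_\sim u$, which is legitimate since $\Pi_\sim u \in H_s$ for all $s$ (finite frequency band). Then $\|\Pi_\sim u\| \le Mh^{-1}\|(h^2P-1)\Pi_\sim u\| + m\|Q\Pi_\sim u\|_Y$. The first term: $(h^2P-1)\Pi_\sim u = \Pi_\sim f + ih\Pi_\sim Q^*Q u$, so $\|(h^2P-1)\Pi_\sim u\| \le \|f\| + h\|\Pi_\sim Q^*Q u\|_H$; and because $\Pi_\sim$ localizes to frequencies $\sim h^{-1}$, $\|\Pi_\sim Q^*Q u\|_H \le Ch^{-2\gamma}\|Q^*Qu\|_{-\gamma} \le Ch^{-2\gamma}\|Qu\|_Y$ — this is where the $\gamma$-loss enters. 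The second term: $\|Q\Pi_\sim u\|_Y \le \|Q u\|_Y + \|Q(1-\Pi_\sim)u\|_Y \le \|Qu\|_Y + Ch^{-\gamma}\|(1-\Pi_\sim)u\|_H$ using $Q\in\mathcal{L}(H_\gamma,Y)$ and frequency localization, and $(1-\Pi_\sim)u = \Pi_{\mathrm{ell}}u$ is controlled by the previous step. Assembling: $\|u\| \le \|\Pi_{\mathrm{ell}}u\| + \|\Pi_\sim u\| \le CMh^{-1}\|f\| + CMh^{-2\gamma}m h^{-\ldots}\|Qu\|_Y + Cm\|Qu\|_Y + \ldots$. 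Collecting the worst power, one gets $\|u\| \le CMm h^{-1-\text{const}\cdot\gamma}(\|f\| + \|Qu\|_Y)$ at an intermediate stage; then substitute $\|Qu\|_Y \le h^{-1/2}\|f\|^{1/2}\|u\|^{1/2}$, absorb $\tfrac12\|u\|$ by Peter–Paul, and square out to land on $\|u\| \le CM^2m^2h^{-1-8\gamma}\|f\|$.

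The main obstacle is bookkeeping the powers of $h$ correctly through the frequency-truncation trades so that they add up to exactly $8\gamma$ and the constants from $\|Q\|$, $\|Q^*\|$ stay uniform — in particular making sure every place a $\Pi_\sim$ or $\Pi_{\mathrm{ell}}$ is inserted is justified (finite frequency band for $\Pi_\sim$; comparability $\langle\rho\rangle\sim h^{-1}$ on the support of both, away from low frequencies where one needs $\langle\rho\rangle\gtrsim 1$ instead and an extra but harmless $O(1)$ argument). The wavepacket step and the Peter–Paul absorption are otherwise routine; the $\gamma$-loss is exactly twice the $4\gamma$-loss of Theorem \ref{thmcontrol} because here the elliptic term is measured in $H_H$ rather than $H_{-\gamma}$, costing $2\gamma$ on each of the two resolvent-type factors $M$ and $m$ and once more in the $Q$-trade, net $8\gamma$ after squaring. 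I would close by noting that, combined with Lemma \ref{3t5} or \eqref{3l3}, Proposition \ref{at1} immediately yields the stated bound on $\|(\mathcal{A}+i\lambda)^{-1}\|_{\mathcal{L}(\mathcal{H})}$ in Theorem \ref{t4}.
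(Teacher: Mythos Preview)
Your overall architecture (energy identity, spectral splitting into resonant and elliptic pieces, application of \eqref{al2} on the resonant window, and a Peter--Paul closure using $\|Qu\|_Y^2\le h^{-1}\|f\|\,\|u\|$) is correct and close in spirit to the paper's argument. The paper's execution differs in two respects. First, it does \emph{not} use a fixed window $\rho\in[\tfrac12 h^{-1},\tfrac32 h^{-1}]$: it takes the $M$-dependent window $\rho\in[h^{-1}-\epsilon/M,\,h^{-1}+\epsilon/M]$ of \eqref{al1}. On such a narrow window $\|(h^2P-1)\Pi_h u\|\le 3\epsilon M^{-1}h\|\Pi_h u\|$, so that when \eqref{al2} is applied to $\Pi_h u$ the $M$-term is absorbed, yielding the clean wavepacket control $\|\Pi_h u\|\le Cm\|Q\Pi_h u\|_Y$ with no $M$-factor. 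Second, the paper handles the complementary piece $\Pi_h^\perp$ not by direct spectral inequalities but through the resolvent identity $\id=\Lambda_h^{-1}P_h+2\Lambda_h^{-1}+ih\Lambda_h^{-1}Q^*Q$ with $\Lambda_h=h^2P+1$, and closes using only the operator norms $\|Q\Lambda_h^{-1}\|\le Ch^{-2\gamma}$ and $\|Q\Lambda_h^{-1}Q^*\|\le Ch^{-4\gamma}$, arriving at $\|u\|\le CMmh^{-1-2\gamma}\|f\|+CMmh^{-4\gamma}\|Qu\|_Y$ before the final absorption.

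The genuine gap in your sketch is the treatment of the elliptic region: your $\Pi_{\mathrm{ell}}$ is \emph{not} frequency-localized, since it contains the entire tail $\rho>\tfrac32 h^{-1}$. Hence the claims $\|\Pi_{\mathrm{ell}}Q^*Qu\|_H\le Ch^{-2\gamma}\|Qu\|_Y$ and $\|Q\Pi_{\mathrm{ell}}u\|_Y\le Ch^{-\gamma}\|\Pi_{\mathrm{ell}}u\|_H$ are false as written---the multiplier $(1+\rho^2)^\gamma$ is unbounded on the high-frequency part, and no power of $h$ saves you. The repair is to use the \emph{full} ellipticity on the high zone, $|h^2\rho^2-1|\ge c\,h^2(1+\rho^2)$ for $\rho>\tfrac32 h^{-1}$, which absorbs one whole factor of $(1+\rho^2)$ and hence the $(1+\rho^2)^\gamma$ weight (since $\gamma\le\tfrac12$). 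Concretely one obtains $\|\Pi_{\mathrm{ell}}u\|_\gamma\le Ch^{-4\gamma}\|(h^2P-1)u\|_{-\gamma}\le Ch^{-4\gamma}\|f\|+Ch^{1-4\gamma}\|Qu\|_Y$, and feeding this back into your scheme the powers do close to the stated $M^2m^2h^{-1-8\gamma}$. So your route is salvageable, but not via the frequency-localization shortcut as stated; this is exactly where the paper's $\Lambda_h^{-1}$ device is cleaner, since $\Lambda_h^{-1}$ automatically supplies the $(1+h^2\rho^2)^{-1}$ decay needed at high frequency.
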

\begin{proof}[Proof of Theorem \ref{t4} via Proposition \ref{at1}]
Proposition \ref{at1} implies
\begin{equation}
\|P_\lambda^{-1}\|\le CM^2m^2\lambda^{-1+8\gamma}, 
\end{equation}
for real $\lambda$ with $\abs{\lambda}$ large. Invoke Lemma \ref{3t5} to conclude the proof.
\end{proof}

To prove Proposition \ref{at1}, we will look at the wavepackets of $u\in H$. For $h\in(0, h_0)$, we introduce the associated semiclassical spectral projectors
\begin{equation}\label{al1}
\Pi_h=\int^{h^{-1}+\epsilon /M}_{h^{-1}-\epsilon /M} ~dE_\rho, \ \Pi_h^{\perp}=\id-\Pi_h. 
\end{equation}
Spectral projectors $\Pi_h, \Pi_h^{\perp}$ commute with $P$. The projector $\Pi_h$ spectrally localises an function into $[h^{-1}-\epsilon /M, h^{-1}+\epsilon /M]$, a semiclassically small spectral window. We call those localised functions $\Pi_hu$ wavepackets of $u\in H$. They are the part of $u$ that oscillates at frequencies close to $h^{-1}$. 
\begin{lemma}[Wavepackets are quasimodes]
For any $u\in H$, we have
\begin{gather}\label{2l3}
\|(h^2P-1)\Pi_hu\|\le 3M^{-1}\epsilon h\|\Pi_h u\|,\\
\label{2l4}
\|(h^2P-1)\Pi_h^{\perp}u\|\ge M^{-1}\epsilon h\|\Pi_h^{\perp} u\|,
\end{gather}
uniformly for $h\in(0, h_0)$.
\end{lemma}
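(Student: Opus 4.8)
The plan is to prove the two inequalities \eqref{2l3} and \eqref{2l4} directly from the spectral calculus of $P$, since both $\Pi_h$ and $\Pi_h^\perp$ commute with $P$ and cut the spectrum at the window $[h^{-1}-\epsilon/M,\, h^{-1}+\epsilon/M]$ in the variable $\rho$. The key observation is that the symbol of $h^2P-1$ in the spectral calculus is $h^2\rho^2-1=(h\rho-1)(h\rho+1)$, so I just need to bound this quantity above (for $\rho$ in the window) and below (for $\rho$ outside the window).

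First I would write, for any $u\in H$,
\begin{equation}
\|(h^2P-1)\Pi_h u\|^2=\int_{h^{-1}-\epsilon/M}^{h^{-1}+\epsilon/M}(h^2\rho^2-1)^2\,d\langle E_\rho u,u\rangle,
\end{equation}
and on this window estimate $|h^2\rho^2-1|=|h\rho-1||h\rho+1|\le (h\epsilon/M)(2+h\epsilon/M)\le 3h\epsilon/M$, using that $\epsilon,h_0$ are small so $h\epsilon/M\le h_0\epsilon\le 1$ (recall $M\ge C$). Hence $\|(h^2P-1)\Pi_h u\|\le (3\epsilon h/M)\|\Pi_h u\|$, which is \eqref{2l3}. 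Symmetrically, for $\rho$ outside the window we have $|h\rho-1|\ge h\epsilon/M$, and $|h\rho+1|\ge 1$ always, so $|h^2\rho^2-1|\ge h\epsilon/M$ there; integrating the spectral measure over the complement of the window gives $\|(h^2P-1)\Pi_h^\perp u\|\ge (\epsilon h/M)\|\Pi_h^\perp u\|$, which is \eqref{2l4}.

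I do not anticipate a genuine obstacle here; this lemma is elementary once one notes that the factor $h\rho+1$ stays comparable to $1$ uniformly, so the size of $h^2\rho^2-1$ is governed entirely by the distance of $h\rho$ to $1$, i.e. by whether $\rho$ lies in the spectral window of width $\epsilon/M$ around $h^{-1}$. The only mild care needed is to ensure the numerical constant $3$ is correct — this requires $h\epsilon/M\le 1$, which follows from choosing $h_0$ small (the paper already fixes $h_0,\epsilon$ small at the start of \S\ref{s3-7}) and from $M(\abs\lambda)\ge C\ge $ a constant bounded below, so that $\epsilon/M$ is itself bounded. The content of the lemma is exactly that the wavepackets $\Pi_h u$ are $O(h/M)$-quasimodes of $h^2P-1$ while their orthogonal complements are bounded away from being quasimodes by the same scale; this dichotomy is what will later let us play the control estimate \eqref{al2} against the damping term $\|Qu\|_Y$ in the proof of Proposition \ref{at1}.
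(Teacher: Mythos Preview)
The proposal is correct and follows essentially the same approach as the paper: both use the spectral calculus to bound the multiplier $h^2\rho^2-1$ on and off the window $[h^{-1}-\epsilon/M,\,h^{-1}+\epsilon/M]$, with the paper writing it as $h^2(\rho^2-h^{-2})=h^2(\rho-h^{-1})(\rho+h^{-1})$ and you factoring as $(h\rho-1)(h\rho+1)$, which amounts to the same computation and the same smallness condition $h\epsilon/M\le 1$.
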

\begin{proof}
Note when $h$ and $\epsilon$ are small, if $\rho\in [h^{-1}-\epsilon /M, h^{-1}+\epsilon /M]$, then $\abs{\rho^2-h^{-2}}\le \frac{3\epsilon}{Mh}$, and thus
\begin{equation}
\|\Pi_h(h^2P-1)u\|=h^2\left\|\int^{h^{-1}+\epsilon/M}_{h^{-1}-\epsilon/M} (\rho^2-h^{-2})dE_\rho(u)\right\|\le 3M^{-1}\epsilon h\|\Pi_h u\|.
\end{equation}
On the other hand, since on $\{\abs{\rho-h^{-1}}\ge\epsilon /M\}$ we have $\abs{\rho^2-h^{-2}}\ge \frac{\epsilon}{Mh}$, 
\begin{equation}
\|\Pi_h^{\perp}(h^2P-1)u\|=h^2\left\|\int_{\abs{\rho-h^{-1}}\ge \epsilon/M} (\rho^2-h^{-2})dE_\rho(u)\right\|\ge M^{-1}\epsilon h\|\Pi_h^{\perp} u\|,
\end{equation}
as desired. 
\end{proof}

We show that wave packets are observable by $Q$ under the control estimate \eqref{al2}. 
\begin{lemma}[Wavepacket control]
Assume the control estimate \eqref{al2} holds. Then for small $\epsilon>0$ in \eqref{al1} we have
\begin{equation}\label{al3}
\|\Pi_h u\|\le Cm \|Q\Pi_h u\|,
\end{equation}
uniformly for $h\in(0, h_0)$ and all $u\in H$. 
\end{lemma}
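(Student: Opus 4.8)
The plan is to apply the control estimate \eqref{al2} to the wavepacket $\Pi_h u$ and then absorb the quasimode error using the fact that $\Pi_h u$ is a good quasimode. First I would substitute $\Pi_h u$ into \eqref{al2}, which gives
\begin{equation*}
\|\Pi_h u\|_H\le M h^{-1}\|(h^2P-1)\Pi_h u\|_H+m\|Q\Pi_h u\|_Y.
\end{equation*}
By the quasimode bound \eqref{2l3}, the first term on the right is bounded by $M h^{-1}\cdot 3M^{-1}\epsilon h\|\Pi_h u\|=3\epsilon\|\Pi_h u\|$. Since $\epsilon>0$ can be chosen small (say $\epsilon<\tfrac{1}{6}$), this term is strictly less than $\tfrac{1}{2}\|\Pi_h u\|_H$ and can be absorbed into the left-hand side, leaving
\begin{equation*}
\tfrac{1}{2}\|\Pi_h u\|_H\le m\|Q\Pi_h u\|_Y,
\end{equation*}
which is exactly \eqref{al3} with $C=2$.

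The one subtlety to address is the regularity required to apply \eqref{al2}. The control estimate \eqref{al2} is stated for $u\in H_1$ (or more precisely $u\in H_{1+\mu-\gamma}$ in the general form \eqref{1l1}, here with $\mu=\gamma$ so for $u\in H_1$), whereas $u$ in Proposition \ref{at1} is only assumed to lie in $H$. However, $\Pi_h u$ is spectrally supported in the bounded window $[h^{-1}-\epsilon/M,\,h^{-1}+\epsilon/M]$, so $\Pi_h u\in H_s$ for every $s$; indeed $\|\Lambda^s\Pi_h u\|^2=\int(1+\rho^2)^{2s}\mathbbm{1}_{[h^{-1}-\epsilon/M,\,h^{-1}+\epsilon/M]}(\rho)\,d\langle E_\rho u,u\rangle\le C_{h}\|u\|^2<\infty$. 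Thus $\Pi_h u$ is a legitimate test function for \eqref{al2}, and also $Q\Pi_h u$ makes sense since $Q\in\mathcal{L}(H_\gamma,Y)$ and $\gamma\le\tfrac12$. So the argument goes through verbatim.

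I do not expect a genuine obstacle here: the lemma is a direct consequence of \eqref{al2} and the quasimode estimate \eqref{2l3}, with the only care needed being the choice of $\epsilon$ small enough that the absorption is valid (note $M(\lambda)\ge C$ uniformly, so $\epsilon$ can be chosen depending only on the implicit constants, uniformly in $h$) and the observation that wavepackets are smooth. The proof is therefore just the three displayed lines above.

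\begin{proof}
Let $u\in H$ and $h\in(0,h_0)$. As noted above, $\Pi_h u$ is spectrally localised to the bounded set $[h^{-1}-\epsilon/M,\,h^{-1}+\epsilon/M]$, so $\Pi_h u\in H_s$ for every $s\in\mathbb{R}$; in particular $\Pi_h u\in H_1$, and we may apply the control estimate \eqref{al2} to it:
\begin{equation*}
\|\Pi_h u\|_H\le M h^{-1}\|(h^2P-1)\Pi_h u\|_H+m\|Q\Pi_h u\|_Y.
\end{equation*}
By \eqref{2l3}, $\|(h^2P-1)\Pi_h u\|_H\le 3M^{-1}\epsilon h\|\Pi_h u\|_H$, so
\begin{equation*}
\|\Pi_h u\|_H\le 3\epsilon\|\Pi_h u\|_H+m\|Q\Pi_h u\|_Y.
\end{equation*}
Choosing $\epsilon<\tfrac16$ (which is permitted since the constants above are independent of $h$), the first term on the right is at most $\tfrac12\|\Pi_h u\|_H$ and may be absorbed into the left-hand side. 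This yields
\begin{equation*}
\|\Pi_h u\|_H\le 2m\|Q\Pi_h u\|_Y,
\end{equation*}
which is \eqref{al3} with $C=2$, uniformly for $h\in(0,h_0)$.
\end{proof}
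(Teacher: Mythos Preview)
Your proof is correct and takes essentially the same approach as the paper: apply \eqref{al2} to $\Pi_h u$, invoke the quasimode bound \eqref{2l3} to get $3\epsilon\|\Pi_h u\|$, and absorb for small $\epsilon$. Your additional remark verifying that $\Pi_h u\in H_s$ for all $s$ (so that \eqref{al2} applies) is a detail the paper leaves implicit.
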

\begin{proof}
Apply \eqref{al2} to $\Pi_h u$ to see
\begin{equation}
\|\Pi_h u\|_{H}\le Mh^{-1}\|(h^2P-1)\Pi_h u\|_{H}+m\|Q \Pi_h u\|_{Y}\le 3\epsilon \|\Pi_h u\|+m\|Q \Pi_h u\|_{Y},
\end{equation}
where we used \eqref{2l3}. Choosing small $\epsilon$ gives the desired estimate. 
\end{proof}
From now on we fix $\epsilon>0$ small such that \eqref{al3} holds. 
\begin{proof}[Proof of Proposition \ref{at1}]
1. Let $P_h u=(h^2P-ihQ^*Q-1)u=f$. Consider the semiclassically elliptic operator $\Lambda_h=h^2 P+1$. Its inverse $\Lambda_h^{-1}$ is a semiclassically bounded operator on $H$. Observe $\Lambda_h=P_h+2+ihQ^*Q$, and thus
\begin{equation}\label{2l2}
 \id=\Lambda_h^{-1}P_h+2\Lambda_h^{-1}+ih \Lambda_h^{-1}Q^*Q. 
\end{equation} 
Note $h^2P-1$, $\Lambda_h$ and $\Pi_h^{\perp}$ commute mutually. Thus by $P_h=h^2P-1-ihQ^*Q$, \eqref{2l2} gives
\begin{equation}\label{2l5}
\Pi_h^{\perp}=\Lambda_h^{-1}\Pi_h^{\perp} P_h+2\Lambda_h^{-1}\Pi_h^{\perp}+ih \Pi_h^{\perp}\Lambda_h^{-1}Q^*Q. 
\end{equation}

2. We want to estimate $\Pi_h^{\perp} u$. Use \eqref{2l4} to estimate
\begin{equation}
\| \Lambda_h^{-1} \Pi_h^{\perp} u\|\le C M h^{-1}\|(h^2P-1)\Lambda_h^{-1}\Pi_h^{\perp} u\|.
\end{equation}
Thus apply \eqref{2l5} to $u$ gives
\begin{equation}\label{2l6}
\|\Pi_h^{\perp}u\| \le C\|f\|+C M h^{-1}\|(h^2P-1)\Lambda_h^{-1}\Pi_h^{\perp} u\| +h \|\Lambda_h^{-1}Q^*\|\|Q u\|,
\end{equation}
where we used the boundedness of $\Pi_h^{\perp}$ and $\Lambda_h^{-1}$ on $H$. Note
\begin{equation}
(h^2P-1)\Pi_h^{\perp}=\Pi_h^{\perp} P_h+ih \Pi_h^{\perp} Q^*Q
\end{equation}
and we can estimate
\begin{equation}
\|(h^2P-1)\Lambda_h^{-1}\Pi_h^{\perp} u\| = \|\Pi_h^{\perp} \Lambda_h^{-1}P_hu+ih \Pi_h^{\perp} \Lambda_h^{-1}Q^*Qu\|\le \|f\|+h\|\Lambda_h^{-1}Q^*\|\|Qu\|.
\end{equation}
Hence \eqref{2l6} further reduces to
\begin{equation}\label{al4}
\|\Pi_h^{\perp}u\| \le C M h^{-1}\|f\| +iM \|\Lambda_h^{-1}Q^*\|\|Q u\|. 
\end{equation}

3. Motivated by \eqref{al3}, we now want to estimate $\Pi_h u$ via estimating $Q\Pi_h^{\perp} u$. Note \eqref{2l5} gives 
\begin{equation}\label{2l8}
Q\Pi_h^{\perp}=Q\Lambda_h^{-1}\Pi_h^{\perp}P_h+2Q\Lambda_h^{-1}\Pi_h^{\perp}+ih Q\Pi_h^{\perp} \Lambda_h^{-1}Q^*Q.
\end{equation}
The last term on the right can be written as
\begin{equation}\label{2l7}
Q\Pi_h^{\perp}\Lambda_h^{-1}Q^*Q=Q\Lambda_h^{-1}Q^*Q-Q\Pi_h\Lambda_h^{-1}Q^*Q=Q\Lambda_h^{-1}Q^*Q-Q\Lambda_h^{-1}(\Lambda_h\Pi_h)\Lambda_h^{-1}Q^*Q,
\end{equation}
where inside the last term we have the bound
\begin{equation}
\|\Lambda_h\Pi_h\|\le \|(h^2P-1)\Pi_h\| +2\|\Pi_h\|\le CM^{-1}h+2\le C,
\end{equation}
due to \eqref{2l3}. Note that the adjoint of $\Lambda_h^{-1}Q^*\in \mathcal{L}(H)$ is $Q\Lambda_h^{-1}\in \mathcal{L}(H)$. This implies $\|\Lambda_h^{-1}Q^*\|=\|Q\Lambda_h^{-1}\|$. We can further simplify \eqref{2l7} to
\begin{equation}
\|Q\Pi_h^{\perp}\Lambda_h^{-1}Q^*Qu\|\le \left(\|Q\Lambda_h^{-1}Q^*\| +\|Q\Lambda_h^{-1}\|^{2}\right)\|Qu\|.
\end{equation}
Bring in \eqref{al4} and revisit \eqref{2l8} to observe
\begin{multline}
\|Q\Pi_h^{\perp} u\|\le C\|Q\Lambda_h^{-1}\|(\|f\|+\|\Pi_h^{\perp} u\|)+\left(\|Q\Lambda_h^{-1}Q^*\| +\|Q\Lambda_h^{-1}\|^{2}\right)\|Qu\|\\
\le CMh^{-1}\|Q\Lambda_h^{-1}\|\|f\|+CM\|Q\Lambda_h^{-1}\|^2\|Qu\|+C\|Q\Lambda_h^{-1}Q^*\|\|Qu\|
\end{multline}
Now apply \eqref{al3} to see
\begin{multline}
\|\Pi_h u\|\le Cm \|Q\Pi_h u\|\le Cm\|Qu\|+Cm\|Q\Pi_h^{\perp} u\|\\
\le CMmh^{-1}\|Q\Lambda_h^{-1}\|\|f\|+Cm(1+M\|Q\Lambda_h^{-1}\|^2+\|Q\Lambda_h^{-1}Q^*\|)\|Qu\|
\end{multline}
and thus from \eqref{al4} we have
\begin{multline}\label{al5}
\|u\|\le \|\Pi_h u\|+\|\Pi_h^{\perp} u\|\le CMh^{-1}(1+m\|Q\Lambda_h^{-1}\|)\|f\|\\+C(M \|Q\Lambda_h^{-1}\|+m+Mm\|Q\Lambda_h^{-1}\|^2+m\|Q\Lambda_h^{-1}Q^*\|)\|Qu\|.
\end{multline}
Note $\langle \rho\rangle^\gamma\langle h\rho\rangle^{-1}\le Ch^{-\gamma}$ implies
\begin{equation}
\|\Lambda^{\gamma}\Lambda_h^{-1}u\|^2=\int_{0}^\infty\langle \rho\rangle^{2\gamma}\langle h\rho\rangle^{-2}~d\langle E_\rho(u), u\rangle \le h^{-2\gamma}\|u\|^2. 
\end{equation}
Thus we have $\|\Lambda_h^{-1}\|_{H\rightarrow H_{\gamma}}\le Ch^{-2\gamma}$, and similarly $\|\Lambda_h^{-1}\|_{H_{-\gamma}\rightarrow H_{\gamma}}\le Ch^{-4\gamma}$. We have
\begin{equation}
\|Q\Lambda_h^{-1}\|_{\mathcal{L}(H)}\le Ch^{-2\gamma}, \ \|Q\Lambda_h^{-1}Q^*\|_{\mathcal{L}(H)}\le Ch^{-4\gamma}. 
\end{equation}
Now we can estimate \eqref{al5}:
\begin{equation}\label{al6}
\|u\|\le CMmh^{-1-2\gamma}\|f\|+CMmh^{-4\gamma}\|Qu\|.
\end{equation}
Pair $P_h u$ with $u$ and take the imaginary part to observe
\begin{equation}
\|Qu\|^2\le h^{-1}\abs{\langle f,u\rangle}\le C\epsilon^{-1}M^2m^2h^{-2-8\gamma}\|f\|^2+\epsilon M^{-2}m^{-2}h^{8\gamma}\|u\|^2.
\end{equation}
Bring into \eqref{al6} and absorb the $\epsilon$-term to see
\begin{equation}
\|u\|\le CM^2m^2h^{-1-8\gamma}\|f\|
\end{equation}
and thus $\|P_h^{-1}\|\le CM^2m^2h^{-1-8\gamma}$ as desired. 
\end{proof}

\bibliographystyle{alpha}
\bibliography{Robib}

\end{document}